\documentclass[reqno,10pt]{amsart}
\usepackage{graphicx,amsfonts,amssymb,amsmath,amsthm,url,amscd,comment}
\usepackage{color}
\usepackage{chngcntr}
\usepackage[usenames,dvipsnames]{xcolor}
\usepackage[normalem]{ulem}
\usepackage{pdfsync}
\usepackage{graphicx, amsmath, amssymb, amsfonts, amsthm, stmaryrd, tikz, amscd}
\usepackage[all]{xy}
\usetikzlibrary{matrix,arrows,decorations.pathmorphing}
\tolerance=1000

\usepackage{graphicx, amsmath, amssymb, amsfonts, amsthm, stmaryrd, tikz, amscd}
\usepackage[all]{xy}
\usetikzlibrary{matrix,arrows,decorations.pathmorphing}

 \usepackage[hyperfootnotes=false, colorlinks, citecolor=	 RoyalBlue, urlcolor=blue, linkcolor=blue         ]{hyperref}

\theoremstyle{plain} 
\newtheorem{theorem}             {Theorem} 

\theoremstyle{definition}

\theoremstyle{plain} 
\newtheorem{proposition} {Proposition}

\theoremstyle{remark}

\newtheorem{definition}  {Definition}
\counterwithin*{definition}{subsubsection}
\newtheorem*{definition*}  {Definition}

\newtheorem*{example*}    {Example}

\newtheorem{remark}             {Remark}
\newtheorem*{remark*}            {Remark}

\newtheoremstyle{itplain} 
    {6pt}                    
    {5pt\topsep}                    
    {\itshape}                   
    {}                           
    {\itshape}                   
    {.}                          
    {5pt plus 1pt minus 1pt}                       
    {}  

\theoremstyle{itplain} 
\newtheorem{lemma}{Lemma}

\newtheorem*{lemma*}{Lemma}

\newtheorem*{corollary*} {Corollary} 

\theoremstyle{remark} 

\newtheorem*{lemmatest*}{Lemma}

\counterwithin*{lemma}{subsubsection}
\counterwithin*{remark}{subsubsection}
\counterwithin*{corollary}{subsubsection}

\usepackage{etoolbox}
\patchcmd{\section}{\scshape}{\bfseries}{}{}
\makeatletter
\renewcommand{\@secnumfont}{\bfseries}
\makeatother


\renewcommand{\Re}{\mathrm{Re}}



\renewcommand{\geq}{\geqslant}
\renewcommand{\leq}{\leqslant}






\numberwithin{equation}{section}

\DeclareMathOperator{\SL}{SL}
\DeclareMathOperator{\Mp}{Mp}
\DeclareMathOperator{\GL}{GL}

\DeclareMathOperator{\GO}{GO}
\DeclareMathOperator{\PGO}{PGO}

\DeclareMathOperator{\htt}{ht}

\DeclareMathOperator{\SO}{SO}

\DeclareMathOperator{\ad}{ad}
\DeclareMathOperator{\Ad}{Ad}

\DeclareMathOperator{\Hom}{Hom}

\DeclareMathOperator{\End}{End}
\DeclareMathOperator{\JL}{JL}

\def\eps{\varepsilon}

\def\PB{\operatorname{PB}}
\def\PGL{\operatorname{PGL}}
\def\Sob{\mathcal{C}}

\DeclareMathOperator{\Weil}{Weil}

\DeclareMathOperator{\bPB}{{\mathbf P}{\mathbf B}}

\DeclareMathOperator{\co}{co}

\def\O{\operatorname{O}}

\DeclareMathOperator{\ram}{ram}
\DeclareMathOperator{\fin}{fin}
\DeclareMathOperator{\cusp}{cusp}

\DeclareMathOperator{\pr}{pr}

\DeclareMathOperator{\reg}{reg}

\DeclareMathOperator{\ord}{ord}

\DeclareMathOperator{\res}{res}

\DeclareMathOperator{\vol}{vol}
\DeclareMathOperator{\nr}{nr}

\DeclareMathOperator{\tr}{tr}

\DeclareMathOperator{\supp}{supp}

\author{Paul D. Nelson}
\address{ETH Z{\"u}rich, Department of Mathematics, R{\"a}mistrasse 101, CH-8092, Z{\"u}rich, Switzerland}
\email{paul.nelson@math.ethz.ch}
\subjclass[2010]{Primary 11F70; Secondary 11F27, 58J51}

\date{\today}
\title{Quantum variance on quaternion algebras, II}
\hypersetup{
  pdfkeywords={},
  pdfsubject={},
  pdfcreator={Emacs 24.5.1 (Org mode 8.2.10)}}
\begin{document}

\begin{abstract}
  A method for determining quantum variance asymptotics on
  compact quotients attached to non-split quaternion algebras is
  developed in general and applied to ``microlocal lifts'' in the
  non-archimedean setting.  The results obtained are in the
  spirit of recent work of Sarnak--Zhao.

  The arguments involve a careful analytic study of the theta
  correspondence, the interplay between additive and
  multiplicative harmonic analysis on quaternion algebras, the
  equidistribution of translates of elementary theta functions,
  and the Rallis inner product formula.
\end{abstract}
\maketitle

\setcounter{tocdepth}{1} \tableofcontents

\section{Introduction}
\label{sec-1}
\subsection{Overview}
\label{sec-1-1}
The quantum variance problem
(see e.g. \cite[\S1]{nelson-variance-73-2},
\cite{MR848319},
\cite[\S15.6]{2009arXiv0911.4312Z}, \cite[\S4.1.3]{MR3204186},
\cite{ 2013arXiv1303.6972S, MR1361757,MR1465794,luo-sarnak-mass,MR2103474,MR2651907})
concerns
sums of the shape
\begin{equation}\label{eq:qv-sums-classical}
    \sum_{\varphi \in \mathcal{F}}
  \langle \varphi, \Psi_1 \varphi \rangle
  \langle \Psi_2 \varphi, \varphi \rangle.
\end{equation}
Here
$\Psi_1,\Psi_2$ are fixed mean zero functions on
the unit cotangent bundle of a Riemannian manifold $M$
with ergodic geodesic flow,
$\mathcal{F}$ traverses a sequence of
families of microlocal lifts of Laplace eigenfunctions with
eigenvalues in  $[0, T^2]$, and $T \rightarrow \infty$.
The problem
is to determine the leading order asymptotic
behavior of \eqref{eq:qv-sums-classical}.
The difficulty
of the problem may be appreciated
by comparing
the expected magnitude
$\asymp T$
for
\eqref{eq:qv-sums-classical}
for typical $\Psi_1 = \Psi_2$
with
the best known general upper bound
$O(T^{\dim(M)} / \log T)$ (see
e.g. \cite[\S1]{nelson-variance-73-2}
and references for details).

Although a mathematically rigorous solution to the problem seems
hopeless on general $M$, Sarnak--Zhao
\cite{2013arXiv1303.6972S} (following Luo--Sarnak
\cite{luo-sarnak-mass} and Zhao \cite{MR2651907}) managed to
solve it completely on 
$M = \SL_2(\mathbb{Z}) \backslash \mathbb{H}$
for $\mathcal{F}$ consisting of Hecke
eigenfunctions.
It is natural to seek analogous results on other arithmetic
quotients, such as the compact quotients
attached to orders in quaternion division algebras.  
The method of Luo, Sarnak and Zhao demonstrates the
tremendous power of \emph{parabolic Fourier expansions}, such as
the $q$-expansions $\sum a_n q^n$ enjoyed by classical
holomorphic modular forms on $\SL_2(\mathbb{Z})$ at the cusp
$\infty$, to establish results that are inaccessible by means of
semiclassical analysis or trace formulas alone.  Conversely,
their technique is fundamentally limited to \emph{split}
quotients, such
$\SL_2(\mathbb{Z}) \backslash \mathbb{H}$ and its congruence covers, on which such
expansions are available.

In this article, we develop systematically a method for studying
quantum variance on \emph{non-split} arithmetic quotients
arising from non-split quaternion algebras, in contrast to the
split matrix algebra $M_2(\mathbb{Q})$ underlying the quotient
$\SL_2(\mathbb{Z}) \backslash \mathbb{H}$ considered by Luo,
Sarnak and Zhao.  Our main result (Theorem
\ref{thm:main-result-for-microlocal-stuff}, stated in
\S\ref{sec-1-5})
concerns
families of ``microlocal lifts'' on non-split $p$-adic arithmetic
quotients attached to quaternion algebras.
We focus on $p$-adic
quotients to simplify the analysis; this passage does not affect
the fundamental difficulty of the problem, and is orthogonal to
the primary novelty that the quotient is non-split.  Our core
estimates (Theorem \ref{thm:main-estimate-general-variance},
stated in \S\ref{sec-4}) are developed in some generality.









\subsection{Trace formulas and linear statistics}\label{sec:trace-formula-linear}
Let $\mathbf{X} := \Gamma \backslash G$ be the
quotient by an arithmetic lattice of the points of a
semisimple $\mathbb{Q}$-group over a local field, such as the
real numbers, and let $\mathcal{F}$ be a ``large'' collection of
eigenfunctions $\varphi : \mathbf{X} \rightarrow \mathbb{C}$.
It
is natural to ask for the asymptotic statistics, as
``$\mathcal{F} \rightarrow \infty$'' in some sense, of the
random measure on $\mathbf{X}$ sending a test function $\Psi$ to
$\langle \varphi, \Psi \varphi \rangle$, where $\varphi \in
\mathcal{F}$
is sampled randomly with respect to (say) the normalized
counting measure.

The \emph{linear} statistics
of this random measure
are captured by the \emph{mean}
$\Psi \mapsto  \mathbb{E}_{\varphi \in \mathcal{F}} \langle \varphi, \Psi \varphi  \rangle$.
When $\mathcal{F}$ admits a nice harmonic-analytic description,
it can be (at least approximately) picked off by a convolution
kernel $f \in C_c^\infty(G)$.
The mean can then be studied using trace
formula techniques:
by integrating the pretrace formula
\begin{equation}\label{eq:intro-pretrace-fromula}
  \sum_{\gamma \in \Gamma} f(x^{-1} \gamma y)
  \approx
  \sum_{\varphi \in \mathcal{F} }
  \overline{\varphi(x)} \varphi(y) \quad (x,y \in \Gamma \backslash G)
\end{equation}
over the diagonal against $\Psi$,
one obtains an identity
\begin{equation}\label{eq:intro-pretrace-fromula-1}
  \sum_{\varphi \in \mathcal{F} }
  \langle \varphi, \Psi \varphi  \rangle
  \approx
  \int_{x \in \Gamma \backslash G}
  \Psi(x) \sum_{\gamma \in \Gamma} f(x^{-1} \gamma x)
\end{equation}
whose RHS may be studied
by methods
for bounding orbital integrals
much as in the ``Weyl's law'' case $\Psi \equiv 1$.
(For an example of such arguments, see \S\ref{sec:mean-statistics}.)

Higher-order statistics
such as the $n$-point correlations
\[
(\Psi_1,\dotsb,\Psi_n)
\mapsto
\mathbb{E}_{\varphi \in \mathcal{F}}
\langle \varphi, \Psi_1 \varphi  \rangle
\dotsb 
\langle \varphi, \Psi_n \varphi  \rangle
\]
are more mysterious.  The quantum variance problem concerns the
quadratic statistics \eqref{eq:qv-sums-classical} about which
trace formulas alone say little.

\subsection{Hecke multiplicativity and variance statistics}
Until this work and its
prequel, the only known asymptotic formulas for higher-order
statistics in this setting of \S\ref{sec:trace-formula-linear} were those of Luo--Sarnak--Zhao
concerning $\SL_2(\mathbb{Z}) \backslash \SL_2(\mathbb{R})$.
The
point of departure for their method is that
when the eigenfunctions $\varphi$ admit Fourier
expansions with coefficients $\lambda(n)$ enjoying
a ``doubling identity'' of the shape
\begin{equation}\label{eqn:hecke-multiplicativity}
  \lambda(m) \lambda(n) = \sum \lambda(\dotsb),
\end{equation}
one can try to reduce
variance statistics to linear
ones and apply trace formulas such as
\eqref{eq:intro-pretrace-fromula}.
This method does not apply when
such expansions are not available.

\subsection{Theta functions and variance statistics}\label{sec:theta-funct-vari}
When the space $\mathbf{X}$ arises from a quaternion algebra
$B$ 
(over $\mathbb{Q}$, say),
the Eichler/Shimizu theta correspondence provides an analogue
of the doubling identity \eqref{eqn:hecke-multiplicativity}
that suggests a natural strategy
for studying quantum variance.
We pursue this strategy here.
Let $\mathcal{F}$
be a family
of eigenfunctions
on $\mathbf{X}$.
Oversimplifying
for now,
Shimizu's theorem (see \cite[II.1]{MR783511})
says that
one can find
\begin{itemize}
\item a space $\mathbf{X} '$ (a congruence cover of
  $\PGL_2(\mathbb{Z})
  \backslash \PGL_2(\mathbb{R})$),
\item a function of three variables
  $\Theta : \mathbf{X} \times \mathbf{X} \times \mathbf{X} '
  \rightarrow \mathbb{C}$ (a theta kernel), and
\item for each $\varphi \in \mathcal{F}$,
  a function $\Phi_\varphi : \mathbf{X} ' \rightarrow
  \mathbb{C}$
  (an Eichler/Jacquet--Langlands lift)
\end{itemize}
with the property that
\begin{equation}\label{eq:shimizu-identity}
  \overline{\varphi(x)} \varphi(y)
  =
  \int_{z}
  \Phi_\varphi(z)
  \Theta(x,y;z)
  \quad \text{ for all $\varphi \in \mathcal{F}$
    and $x,y \in \mathbf{X}$.
  }
\end{equation}
By integrating the diagonal case $x=y$ of
\eqref{eq:shimizu-identity}
against $\Psi$,
one obtains
\begin{equation}\label{eq:identity-pre-parseval}
  \langle \varphi, \Psi \varphi  \rangle
=
\int_{z}
\Phi_\varphi(z)
\int_x \Psi(x)
\Theta(x,x;z).
\end{equation}
If the functions $\Phi_\varphi$ are orthogonal to one
another and the family $\mathcal{F}$ is sufficiently
``complete,'' then a cavalier application of Parseval's formula
to \eqref{eq:identity-pre-parseval}
suggests
that
\begin{equation}\label{eq:formula-seesaw}
  \sum_{\varphi \in \mathcal{F}}
(\int_z |\Phi_\varphi|^2 )^{-1}
\langle \varphi, \Psi_1 \varphi  \rangle
\langle \Psi_2 \varphi, \varphi  \rangle
=
\int_{z}
(\int_x \Psi_1(x)
\Theta(x,x;z))
\overline{(\int_y \Psi_2(y)
\Theta(y,y;z))}.
\end{equation}
The LHS \eqref{eq:formula-seesaw} may be understood as a
reasonable proxy for the quantum variance
\eqref{eq:qv-sums-classical} of $\mathcal{F}$ provided that the
weights $\int_z |\Phi_\varphi|^2$ are sufficiently uniform in
$\varphi$.
One aim of this article is to develop robust techniques for
determining the asymptotics of the RHS of
\eqref{eq:formula-seesaw}, which is not \emph{a priori} any
simpler to analyze than the LHS.  A second aim is to apply the
resulting machinery to an interesting family of automorphic forms.

We have oversimplified by neglecting
that the theta kernel
$\Theta$ produced by Shimizu's theorem may (and generally does)
depend upon the automorphic form $\varphi$.  For the above
argument to make sense, we need to choose one $\Theta$ that
works for every element of the family $\mathcal{F}$.  It is
natural instead to interpret \eqref{eq:formula-seesaw} as
\emph{defining} a (weighted) family $\mathcal{F}$ in terms of
$\Theta$.  A  third aim of this article is then to clarify in general
how to invert the association $\Theta \mapsto \mathcal{F}$.


\subsection{Microlocal lifts in the non-archimedean setting\label{sec:setting-overview}}
\label{sec-1-2}
Let $k$ be a non-archimedean local field
of characteristic zero.
Denote by $\mathfrak{o}$  its maximal order,
$\mathfrak{q}$ its maximal ideal,
and $q := \# \mathfrak{o}/\mathfrak{q}$.
For example,
one can take $(k,\mathfrak{o},\mathfrak{q},q)
:= (\mathbb{Q}_p,\mathbb{Z}_p, p \mathbb{Z}_p, p)$
for some rational prime number $p$.

Fix a totally real number field $F$ having $k$ as its
completion.
Fix a discrete cocompact subgroup $\Gamma$ of $\PGL_2(k)$
arising from a maximal order in a (non-split) totally definite
quaternion algebra over $F$
(see \S\ref{sec:strong-approx}
for details).  Set
\[\mathbf{X} := \Gamma \backslash \PGL_2(k).\]
To simplify the present exposition,
we assume
that $F$ has odd narrow class number,
so that $\mathbf{X}$
comes with a natural
family of commuting Hecke operators
(see \S\ref{sec:hecke-ops}), which also commute with the
right translation action by $\PGL_2(k)$.
Fix any $\PGL_2(k)$-invariant measure on $\mathbf{X}$;
denote by $\vol(\mathbf{X})$ the total volume
and define
$L^2(\mathbf{X})$ and $\langle, \rangle$ by integrating.
\begin{definition}\label{defn:eigenfunctions-intro}
  By
  an \emph{eigenfunction},
  we shall mean 
  a nonzero function $\varphi : \mathbf{X} \rightarrow \mathbb{C}$
  that is smooth (i.e., right-invariant under some open subgroup),
  is an eigenfunction under every Hecke operator, and
  generates an irreducible representation of $\PGL_2(k)$ under
  right translation.

  Some of the Hecke operators are involutions; an
  eigenfunction will be called \emph{even} if it has eigenvalue
  $+1$, rather than $-1$, under each such involution.
\end{definition}

\begin{definition}
  Let $N$ be a large integer and let
  $\omega : \mathfrak{o}^\times \rightarrow \mathbb{C}^\times$ be
  a unitary character of conductor $N$,
  thus $\omega$ is trivial on $1 + \mathfrak{q}^{N}$
  but not on $1 + \mathfrak{q}^{N-1}$.
  An eigenfunction
  $\varphi$ will be called a \emph{microlocal lift of orientation
    $\omega$} if it satisfies
  $\varphi(x g) = \omega(a^2/\det(g)) \varphi(x)$ for all
  $x \in \mathbf{X}$ and all
  \begin{equation}\label{eq:microlocal-lift-subgroup-defn-0}
    g = \begin{pmatrix}
      a & b \\
      c & d
    \end{pmatrix}
    \in
    \GL_2(\mathfrak{o}) \cap \begin{pmatrix}
      \mathfrak{o} & \frac{1}{2} \mathfrak{q}^{\lfloor N/2 \rfloor} \\
      \mathfrak{q}^{\lceil N/2 \rceil} & \mathfrak{o}
    \end{pmatrix}.
  \end{equation}
\end{definition}

There is a finite set $\mathcal{F}_\omega$ consisting of unit
vector microlocal lifts of orientation $\omega$ so that
$\{ \text{microlocal lifts of orientation $\omega$} \} =
\bigsqcup_{\varphi \in \mathcal{F}_\omega} \mathbb{C}^\times
\varphi$.
(see \S\ref{sec:deduction-main-thm-microlocal}
or
\cite{nelson-padic-que}).
Let $\mathcal{F}_N$ denote the union of
$\mathcal{F}_\omega$ over all $\omega$ of conductor $N$.

\begin{lemma*}
  There is an explicit $c > 0$
  so that for $N$ large enough,
  \begin{equation}\label{eq:family-size-booyah}
    |\mathcal{F}_N| = c q^{2 N}.
  \end{equation}
\end{lemma*}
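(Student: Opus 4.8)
The plan is to identify $|\mathcal{F}_N|$ with the trace of a projection operator on $L^2(\mathbf{X})$ and to evaluate that trace geometrically, via the pretrace formula, showing that for $N$ large the geometric side collapses to its identity contribution. Write $J_N \subseteq \GL_2(\mathfrak{o})$ for the congruence subgroup appearing in \eqref{eq:microlocal-lift-subgroup-defn-0}, viewed inside $\PGL_2(k)$, and for a unitary character $\omega$ of $\mathfrak{o}^\times$ let $\chi_\omega$ be the character $g \mapsto \omega(a^2/\det g)$ of $J_N$. A microlocal lift of orientation $\omega$ lying in an irreducible constituent $\pi$ of $L^2(\mathbf{X})$ is precisely a nonzero vector of the eigenspace $\pi^{(J_N,\chi_\omega)}$; any such vector is automatically a Hecke eigenfunction because $\mathbf{X}$ satisfies multiplicity one, which is where the odd narrow class number hypothesis and strong approximation enter (cf. \S\ref{sec:strong-approx}, \S\ref{sec:hecke-ops}). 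Granting --- from \cite{nelson-padic-que}, consistently with the very formulation of $\mathcal{F}_\omega$ --- that $\dim \pi^{(J_N,\chi_\omega)} \leq 1$ for every irreducible $\pi$, one obtains $|\mathcal{F}_\omega| = \dim_{\mathbb{C}} L^2(\mathbf{X})^{(J_N,\chi_\omega)}$; and since $\omega \mapsto \chi_\omega$ is injective (already $\chi_\omega(\diag(u,1)) = \omega(u)$), summing over $\omega$ of conductor $N$ gives
\[
|\mathcal{F}_N| = \sum_{\operatorname{cond}(\omega) = N} \dim_{\mathbb{C}} L^2(\mathbf{X})^{(J_N,\chi_\omega)} = \Tr_{L^2(\mathbf{X})} R(f_N),
\]
where $R$ is right translation and $f_N := \vol(J_N)^{-1} \sum_{\operatorname{cond}(\omega) = N} \overline{\chi_\omega}\,\mathbf{1}_{J_N}$ is a self-adjoint idempotent in $C_c^\infty(\PGL_2(k))$ whose action $R(f_N)$ is the orthogonal projection onto the span of all microlocal lifts of conductor $N$.

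Next I would pass to the geometric side. Since $\mathbf{X}$ is compact and $f_N$ is smooth of compact support, $R(f_N)$ has smooth kernel $\sum_{\gamma \in \Gamma} f_N(x^{-1}\gamma y)$, so $|\mathcal{F}_N| = \int_{\mathbf{X}} \sum_{\gamma \in \Gamma} f_N(x^{-1}\gamma x)\,dx$; the point is that for $N$ large only $\gamma = e$ survives. Performing the sum over $\omega$ shows that $f_N$ is supported on $\{g \in J_N : a^2/\det(g) \in 1 + \mathfrak{q}^{N-1}\}$, a set which, modulo the centre, shrinks to $\{e\}$ inside $\PGL_2(\mathfrak{o})$ as $N \to \infty$. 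On the other hand, an element of $\Gamma \setminus \{e\}$ is $\PGL_2(k)$-conjugate into a compact subgroup only if it has finite order (a discrete subgroup of a compact group being finite), and a finite-order element $\gamma \neq e$ of $\PGL_2(k)$ has $\tr(\gamma)^2/\det(\gamma)$ lying in a finite set of values none of which is $4$ (roots of unity in extensions of $k$ of degree $\leq 2$ being finite in number), so $\gamma$ and all its conjugates are bounded away from $e$. Hence, for $N$ larger than a threshold depending only on $\Gamma$ and $k$, the support of $f_N$ contains no $\PGL_2(k)$-conjugate of any nontrivial element of $\Gamma$, and the integral reduces to $\vol(\mathbf{X})\,f_N(e)$, which it remains to evaluate.

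Here $f_N(e) = \vol(J_N)^{-1}\,\#\{\omega : \operatorname{cond}(\omega) = N\}$. The number of unitary characters of $\mathfrak{o}^\times$ of conductor exactly $N$ equals $\#(\mathfrak{o}^\times/(1+\mathfrak{q}^N)) - \#(\mathfrak{o}^\times/(1+\mathfrak{q}^{N-1})) = (q-1)^2 q^{N-2}$ for $N \geq 2$, and a direct index computation gives $[\GL_2(\mathfrak{o}):J_N] = q^{N-v(2)}$ for $N$ large (here $v$ is the valuation of $k$), so $\vol(J_N) = q^{-(N-v(2))}\vol(\PGL_2(\mathfrak{o}))$. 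Therefore, for $N$ large,
\[
|\mathcal{F}_N| = c\,q^{2N}, \qquad c := \frac{\vol(\mathbf{X})}{\vol(\PGL_2(\mathfrak{o}))}\cdot\frac{(q-1)^2}{q^{2+v(2)}} > 0,
\]
which is explicit, the ratio $\vol(\mathbf{X})/\vol(\PGL_2(\mathfrak{o}))$ being the mass of the order underlying $\Gamma$ and independent of the Haar normalization.

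The crux is two points. First, that microlocal lifts of a fixed orientation span at most a line inside each automorphic representation; I would import this multiplicity-one statement from \cite{nelson-padic-que} and the structural results recalled in \S\ref{sec:deduction-main-thm-microlocal}, and without it even the order of magnitude $|\mathcal{F}_N| \asymp q^{2N}$ is unclear. Second, the upgrade from the easy asymptotic $|\mathcal{F}_N| = c\,q^{2N} + O(q^N)$ --- obtained by crudely bounding the finitely many nonidentity orbital integrals --- to the exact identity, which forces the geometric side to collapse completely and thus relies on coupling discreteness of $\Gamma$ (its torsion elements have finite order) with the uniform separation of nontrivial finite-order elements of $\PGL_2(k)$ from the identity. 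I expect the remaining chores --- the $2$-adic bookkeeping behind the $\tfrac12$ of \eqref{eq:microlocal-lift-subgroup-defn-0} (the $v(2)$), and checking that $\chi_\omega$ is genuinely a character of $J_N$ --- to be tedious but routine.
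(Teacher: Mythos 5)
Your overall approach---realize $|\mathcal{F}_N|$ as the trace of a self-adjoint idempotent in $C_c^\infty(\PGL_2(k))$, unfold via the pretrace formula into orbital integrals, and show that only the identity orbit survives for $N$ large---is exactly what the paper does in \S\ref{sec:mean-statistics} (take $\Psi = 1$ and un-adelize). Your argument for killing the non-identity contributions is, however, more elementary than the paper's: you observe that any $\gamma \in \Gamma\setminus\{e\}$ conjugate into a compact subgroup of $\PGL_2(k)$ is torsion, that the conjugation invariant $\tr(\gamma)^2/\nr(\gamma)$ of a nontrivial torsion element lies in a finite subset of $k$ not containing $4$ (roots of unity in quadratic extensions of $k$ being finite in number), and that this invariant tends uniformly to $4$ on $\supp(f_N)$---hence for large $N$ the support meets no nontrivial orbit, so all the non-identity orbital integrands vanish identically. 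The paper instead invokes the lemma of \S\ref{sec:bounds-part-orbit}, which proves vanishing of a partial orbital integral via a submersion/Hensel argument plus Parseval applied to $\mathcal{F}\heartsuit^\tau f$. For the pure counting problem your argument is simpler and adequate; the paper's lemma is set up to handle the twist by a general $\Psi$ and is reused elsewhere. You also bypass the paper's partition of $\mathcal{X}_N$ into the packets $\mathcal{X}_N^\sigma$ by summing over all $\omega$ at once, which is harmless here.

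There is, however, a slip in the index computation that propagates to your explicit $c$. You assert $[\GL_2(\mathfrak{o}):J_N] = q^{N-v(2)}$; the correct value is $q^{N-v(2)}(1+q^{-1})$. Indeed, constraining the lower-left entry to lie in $\mathfrak{q}^{N_2}$ inside $\GL_2(\mathfrak{o})$ has index $|\mathbb{P}^1(\mathfrak{o}/\mathfrak{q}^{N_2})| = q^{N_2-1}(q+1)$, not $q^{N_2}$, and the upper-right constraint $b \in \mathfrak{q}^{N_1}$ then contributes $q^{N_1}$, giving $q^{N_1+N_2-1}(q+1) = q^{N-v(2)}(1+q^{-1})$. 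This matches the lemma of \S\ref{lem:volume-of-J-bar} combined with \S\ref{sec:local-vol-formulas}: $\vol(\mathfrak{J}) = |2|_k^{-1}q^{-N}\zeta_k(1)^{-1}$ and $\vol(\PGL_2(\mathfrak{o})) = \zeta_k(2)^{-1}$, so $[\PGL_2(\mathfrak{o}):\mathfrak{J}] = q^{N-v(2)}\zeta_k(1)/\zeta_k(2) = q^{N-v(2)}(1+q^{-1})$. Your $c$ is therefore off by a factor of $(q+1)/q$; the corrected value is
\[
c = \frac{\vol(\mathbf{X})}{\vol(\PGL_2(\mathfrak{o}))}\cdot\frac{(q-1)^2(q+1)}{q^{3+v(2)}} = \frac{\vol(\mathbf{X})}{\vol(\PGL_2(\mathfrak{o}))}\cdot\frac{|2|_k}{\zeta_k(1)\zeta_k(2)},
\]
agreeing with the constant produced in \S\ref{sec:mean-statistics}. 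The error does not affect the truth of the lemma (which requires only \emph{some} explicit $c>0$), but as written your explicit constant is incorrect.
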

\begin{proof}
  By a trace formula computation (see \S\ref{sec:mean-statistics}).
\end{proof}



\begin{remark*}
  From a representation-theoretic or microlocal-analytic
  perspective, $\mathcal{F}_N$ is analogous to the family of
  microlocal lifts of Hecke--Maass eigenforms on a compact
  arithmetic hyperbolic surface with Casimir eigenvalues
  $1/4 + t^2$, $t \in [T, 2 T]$, $T \approx q^{N}$.  For
  example, their limit measures enjoy diagonal invariance for
  local reasons.  We refer to \cite[Thm 25, Rmk
  26]{nelson-padic-que} for a discussion of the sense in which
  the microlocal lifts considered here are actually ``lifts.''
\end{remark*}

\subsection{Context: worst case behavior and mean statistics}
\label{sec-1-3}
It was shown recently in \cite{nelson-padic-que}
that microlocal lifts on $\mathbf{X}$
satisfy an analogue
of the arithmetic quantum unique ergodicity
theorem \cite{MR2680500, MR2195133}:
for each continuous $\Psi : \Gamma \backslash
\PGL_2(k) \rightarrow \mathbb{C}$,
\begin{equation}\label{eqn:padic-que-result-worst-case-booyah}
  \lim_{N \rightarrow \infty}
  \max_{\varphi \in \mathcal{F}_N}
  |
  \langle \varphi, \Psi \varphi \rangle
  - \langle \Psi,1 \rangle / \langle 1, 1 \rangle
  |
  = 0.
\end{equation}
The Lindel{\"o}f hypothesis
predicts more precisely that
\begin{equation}\label{eq:lindelof-prediction}
  \langle \varphi, \Psi \varphi \rangle
  = \langle \Psi,1 \rangle /\langle 1,1 \rangle
  + O(q^{-N/2 + o(N)})
  \text{ for $\Psi$ fixed and  $\varphi \in \mathcal{F}_N$}, 
\end{equation}
and it is generally believed
that this prediction is essentially optimal.

\begin{lemma*}\label{lem:mean-stats-bizooyah}
  For each continuous $\Psi : \Gamma \backslash
  \PGL_2(k) \rightarrow \mathbb{C}$,
  \begin{equation}\label{eq:mean-stats-bizooyah}
    \lim_{N \rightarrow \infty}
    |\mathcal{F}_N|^{-1}
    \sum_{\varphi \in \mathcal{F}_N}
    \langle \varphi, \Psi \varphi \rangle
    = \langle \Psi,1 \rangle /\langle 1,1 \rangle.
  \end{equation}
\end{lemma*}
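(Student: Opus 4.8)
The plan is to run the pretrace formula of \S\ref{sec:trace-formula-linear} with a convolution kernel that picks off $\mathcal{F}_N$ exactly, and then to observe that this kernel, viewed as a function on $\PGL_2(k)$, concentrates at the identity as $N\to\infty$, so that the geometric side of \eqref{eq:intro-pretrace-fromula-1} collapses to the contribution of $\gamma=1$ alone. In fact this will give the identity $|\mathcal{F}_N|^{-1}\sum_{\varphi\in\mathcal{F}_N}\langle\varphi,\Psi\varphi\rangle=\langle\Psi,1\rangle/\langle1,1\rangle$ \emph{exactly} for $N$ large, a fortiori in the limit, as well as \eqref{eq:family-size-booyah} with an explicit $c$ as a byproduct.

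\emph{Step 1: the projector.} For a unitary character $\omega$ of $\mathfrak{o}^\times$ of conductor at most $N$, set $\chi_\omega(g):=\omega(a^2/\det g)$ for $g\in J$ (with $a,b,c,d$ the entries of $g$ as in \eqref{eq:microlocal-lift-subgroup-defn-0}), and let $\overline{J}$ be the image of $J$ in $\PGL_2(k)$. One checks readily that $\chi_\omega$ descends to a genuine character of the compact group $\overline{J}$: it is insensitive to the center since $\omega(1)=1$, and it is multiplicative on $\overline{J}$ since in
\[
  \frac{(a_1a_2+b_1c_2)^2}{\det g_1\,\det g_2}=\frac{a_1^2}{\det g_1}\cdot\frac{a_2^2}{\det g_2}\cdot\bigl(1+\varepsilon\bigr)
\]
the error satisfies $\varepsilon\in\mathfrak{q}^{N}$ (as $b_1c_2\in\mathfrak{q}^N$), on which $\omega$ is trivial. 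Let $e_\omega:=\pi(f_\omega)$ with $f_\omega:=\vol(\overline{J})^{-1}\,\overline{\chi_\omega}\cdot\mathbf{1}_{\overline{J}}$, where $\pi$ denotes the right regular representation of $\PGL_2(k)$ on $L^2(\mathbf{X})$; this is the orthogonal projection onto the $\chi_\omega$-isotypic subspace of $L^2(\mathbf{X})$, and by the description of $\mathcal{F}_\omega$ recalled in \S\ref{sec:setting-overview} together with multiplicity one on $\mathbf{X}$ (Jacquet--Langlands), $\im(e_\omega)$ has $\mathcal{F}_\omega$ as an orthonormal basis. As $\omega$ ranges over characters of conductor exactly $N$ the resulting characters of $\overline{J}$ are distinct (the map $\overline{J}\to\mathfrak{o}^\times$, $g\mapsto a^2/\det g$, is onto), so $e_\omega e_{\omega'}=0$ for $\omega\neq\omega'$ and
\[
  P_N:=\sum_{\omega}e_\omega=\pi(f_N),\qquad f_N:=\sum_{\omega}f_\omega\in C_c^\infty(\PGL_2(k)),
\]
is the orthogonal projection of $L^2(\mathbf{X})$ onto the span of $\mathcal{F}_N$; in particular $|\mathcal{F}_N|=\tr(P_N)$.

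\emph{Step 2: pretrace formula.} Since $\Gamma$ is cocompact and $P_N$ is a finite-rank orthogonal projection, the kernel identity $\sum_{\gamma\in\Gamma}f_N(x^{-1}\gamma y)=\sum_{\varphi\in\mathcal{F}_N}\varphi(x)\overline{\varphi(y)}$ underlying \eqref{eq:intro-pretrace-fromula-1} holds exactly. Integrating $\Psi(x)$ against the diagonal and using $\langle\varphi,\Psi\varphi\rangle=\int_{\mathbf{X}}\Psi\,|\varphi|^2$ yields
\[
  \sum_{\varphi\in\mathcal{F}_N}\langle\varphi,\Psi\varphi\rangle=\int_{\mathbf{X}}\Psi(x)\sum_{\gamma\in\Gamma}f_N(x^{-1}\gamma x)\,dx,\qquad |\mathcal{F}_N|=\int_{\mathbf{X}}\sum_{\gamma\in\Gamma}f_N(x^{-1}\gamma x)\,dx.
\]

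\emph{Step 3: concentration, and conclusion.} On $\overline{J}$ one has $f_N(g)=\vol(\overline{J})^{-1}\sum_{\omega}\overline{\omega(a^2/\det g)}$, and by orthogonality of the characters of $\mathfrak{o}^\times/(1+\mathfrak{q}^N)$ and of $\mathfrak{o}^\times/(1+\mathfrak{q}^{N-1})$ this vanishes unless $a^2/\det g\in 1+\mathfrak{q}^{N-1}$, with $f_N(1)=\vol(\overline{J})^{-1}q^{N-2}(q-1)^2$; since moreover $[\PGL_2(\mathfrak{o}):\overline{J}]=q^{N}$ (for $q$ odd), $f_N(1)\asymp q^{2N}$. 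Rescaling $g\in\overline{J}$ by its upper-left entry and using that the off-diagonal entries of elements of $J$ lie in $\mathfrak{q}^{\lfloor N/2\rfloor}$, the constraint $a^2/\det g\in1+\mathfrak{q}^{N-1}$ forces the image of $g$ to lie in the image in $\PGL_2(k)$ of the principal congruence subgroup $K_{\lfloor N/2\rfloor}\subseteq\GL_2(\mathfrak{o})$; hence $\supp(f_N)$ is contained in that image, which contracts to $\{1\}$ as $N\to\infty$. Now fix a compact fundamental domain $\mathcal{D}$ for $\Gamma$ in $\PGL_2(k)$. By discreteness of $\Gamma$ only finitely many $\gamma\neq1$ satisfy $x^{-1}\gamma x\in\supp(f_N)\subseteq\PGL_2(\mathfrak{o})$ for some $x\in\mathcal{D}$; for each such $\gamma$ the compact set $\{x^{-1}\gamma x:x\in\mathcal{D}\}$ avoids $1$, and therefore (as $\PGL_2(k)$ is totally disconnected, with the images of the $K_m$ a neighborhood basis of $1$) is disjoint from the image of $K_m$ for all large $m$. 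Taking $N$ large enough that $\lfloor N/2\rfloor$ exceeds the finitely many thresholds so obtained, we get $\sum_{\gamma\neq1}f_N(x^{-1}\gamma x)=0$ for every $x\in\mathcal{D}$, so that $\sum_{\gamma\in\Gamma}f_N(x^{-1}\gamma x)=f_N(1)$ identically. The two identities of Step 2 then read $\sum_{\varphi\in\mathcal{F}_N}\langle\varphi,\Psi\varphi\rangle=f_N(1)\langle\Psi,1\rangle$ and $|\mathcal{F}_N|=f_N(1)\langle1,1\rangle$, and dividing gives the assertion (and \eqref{eq:family-size-booyah} with $c=(1-q^{-1})^2\,\vol(\mathbf{X})/\vol(\PGL_2(\mathfrak{o}))$ for $q$ odd, the even case being identical up to a bounded power of $q$ from the $\tfrac12$ in \eqref{eq:microlocal-lift-subgroup-defn-0}). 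The only genuinely delicate point is Step 3 --- pinning down $\supp(f_N)$ and the normalization $f_N(1)$ --- together with the identification $\im(e_\omega)=\mathrm{span}(\mathcal{F}_\omega)$ in Step 1; there is no analytic difficulty, the mechanism being simply that the convolution kernel concentrates at the identity, the $p$-adic avatar of the high-frequency limit.
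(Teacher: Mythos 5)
Your argument correctly proves the lemma and takes the second of the two routes the paper offers (the pretrace formula rather than averaging the QUE statement \eqref{eqn:padic-que-result-worst-case-booyah}), but the implementation differs from the paper's \S\ref{sec:mean-statistics} in two ways. First, you sum over all $\omega$ of conductor $N$ at once, whereas the paper works one packet $\mathcal{F}_N^\sigma$ at a time with the $(N,\sigma)$-attached kernel of \S\ref{sec:element-attached-to-N-sigma}. For linear statistics this makes no difference, and your choice yields the clean constraint $a^2/\det g \in 1 + \mathfrak{q}^{N-1}$ and hence the especially transparent containment $\supp(f_N) \subseteq (\text{image of } K_{\lfloor N/2 \rfloor})$. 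Second, and more substantively, you kill the non-identity orbital integrals by an elementary compactness and support-shrinkage argument using discreteness of $\Gamma$, whereas the paper invokes the lemma of \S\ref{sec:bounds-part-orbit}, a Fourier-analytic vanishing criterion that rests on Proposition \ref{prop:key-fourier-estimate-microlocal-kernel}. Your mechanism is simpler and entirely sufficient here; the paper's version is built to handle the $\sigma$-restricted kernels (whose support is governed only by $a^2/\det g \in 1 + \mathfrak{q}^{N-N_0}$) uniformly in the framework it reuses for the variance. Both approaches deliver the exact identity for $N$ large, not merely the limit.

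One small numerical slip that does not affect the statement you were asked to prove: $[\PGL_2(\mathfrak{o}):\overline{J}]$ equals $q^{N}(1+q^{-1})$ for $q$ odd (compare the lemma of \S\ref{sec-6-1} with the volume formulas of \S\ref{sec:local-vol-formulas}, where $\vol(\mathfrak{J}) = q^{-N}\zeta_k(1)^{-1}$ and $\vol(\PGL_2(\mathfrak{o})) = \zeta_k(2)^{-1}$), not $q^N$. Since $f_N(1)$ cancels in the ratio, \eqref{eq:mean-stats-bizooyah} is unaffected, but your explicit byproduct value of $c$ in \eqref{eq:family-size-booyah} is off by the factor $(1+q^{-1})^{-1}$; indeed it disagrees with the paper's example in \S\ref{sec:mean-statistics}, where the factor is $(1-q^{-1})(1-q^{-2})$ rather than $(1-q^{-1})^2$.
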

\begin{proof}
  This follows
  by averaging \eqref{eqn:padic-que-result-worst-case-booyah};
  alternatively, one can argue as in \S\ref{sec:trace-formula-linear}
  using the pretrace formula (see \S\ref{sec:mean-statistics}).
\end{proof}


\subsection{Definition of variance sums}
\label{sec-1-4}
Consider for each large natural number $N$ the random measure on
$\mathbf{X}$ sending a continuous function
$\Psi : \mathbf{X} \rightarrow \mathbb{C}$ to
$\langle \varphi, \Psi \varphi \rangle$, where
$\varphi \in \mathcal{F}_N$ is sampled uniformly at random.
The lemma of \S\ref{lem:mean-stats-bizooyah} says that this
random
measure has expectation tending
as $N \rightarrow \infty$
to the invariant probability measure on
$\mathbf{X}$.
The most natural definition of the variance
of that random measure
would then be the bilinear form
on smooth mean zero functions
$\Psi_1,\Psi_2 : \mathbf{X} \rightarrow
\mathbb{C}$
given by
\[
|\mathcal{F}_N|^{-1}
\sum_{\varphi \in \mathcal{F}_N}
\langle
\varphi, \Psi_1 \varphi 
\rangle
\langle \Psi_2 \varphi , \varphi 
\rangle.
\]
For technical reasons related to our method
(which the reader may infer already from \eqref{eq:formula-seesaw}),
we consider instead
the slightly modified (and unnormalized) sums
\begin{equation}\label{eq:variance-defn-for-refined-intro}
  V_N(\Psi_1,\Psi_2)
  :=
  \sum_{\varphi \in \mathcal{F}_N}
  \iota_\varphi
  \langle
  \varphi, \Psi_1 \varphi 
  \rangle
  \langle \varphi \Psi_2, \varphi 
  \rangle,
\end{equation}
where
$\iota_\varphi
:=
L^{(S)}(\ad
\varphi,1)$
is a standard ``harmonic weight''
(see
\S\ref{sec:standard-l-function},
\S\ref{sec:variance-statistics}).
Such weights are positive and mild in
that their size is $\iota_{\varphi} = q^{o(N)}$
(see \eqref{eq:HL}) and their mean essentially
constant. Although there are well-developed
techniques for removing them as in \cite{2013arXiv1303.6972S},
we retain them here for simplicity.

It follows from the compactness of $\mathbf{X}$ that every smooth function on $\mathbf{X}$
is a finite linear combination of eigenfunctions,
so in studying
\eqref{eq:variance-defn-for-refined-intro},
we may assume by linearity
that $\Psi_1,\Psi_2$
are eigenfunctions.
It is natural to assume further that
\begin{itemize}
\item $\Psi_1, \Psi_2$ are \emph{even},
  as otherwise $V_N(\Psi_1,\Psi_2) = 0$ by parity considerations,
  and that
\item $\Psi_1,\Psi_2$ are \emph{strongly of mean zero} in 
  that they are orthogonal to the finite collection of
  one-dimensional subrepresentations of $L^2(\mathbf{X})$, as
  otherwise $V_N(\Psi_1,\Psi_2) = 0$ by basic properties of the
  families $\mathcal{F}_N$ (see \cite[Lem 51]{nelson-padic-que}).
\end{itemize}

\subsection{Statement of main result}
\label{sec-1-5}
By \eqref{eq:family-size-booyah} and
\eqref{eq:lindelof-prediction}, one expects $V_N(\Psi_1,\Psi_2)$
to have magnitude at most $O(q^{N})$.  It should be possible to
confirm this expected upper bound using the Cauchy--Schwarz
inequality, the triple product formula, and well-developed
techniques for averaging families of $L$-functions as in
\cite{2013arXiv1303.6972S}, but the true asymptotics of
$V_N(\Psi_1,\Psi_2)$ are much subtler: in the off-diagonal case
that $\Psi_1, \Psi_2$ generate distinct irreducible
representations $\pi_1 \neq \pi_2 \subseteq L^2(\mathbf{X})$,
one expects the \emph{signs} of the quantities
$\langle \varphi, \Psi_1 \varphi \rangle$ and
$\langle \varphi, \Psi_2 \varphi \rangle$ to vary independently,
suggesting additional cancellation in
\eqref{eq:variance-defn-for-refined-intro}.
The primary novelty in the following result
is that we detect such cancellation in the off-diagonal case; a
secondary novelty is that in the diagonal case, we determine the
main term.
\begin{theorem}\label{thm:main-result-for-microlocal-stuff}
  For even eigenfunctions $\Psi_1, \Psi_2$ strongly of mean zero,
  the limit
  \begin{equation}\label{eq:main-result-intro-1}
    \lim_{N \rightarrow \infty}
    q^{-N} V_N(\Psi_1, \Psi_2)
  \end{equation}
  exists.
  The limit is zero unless $\Psi_1,\Psi_2$
  generate the same irreducible representation
  $\pi \subseteq L^2(\mathbf{X})$.
  In that case, it is
  equal to
  \begin{equation}\label{eq:main-result-intro-main-term}
    c_0 L^{(S)}(\pi,\tfrac{1}{2})
    \int_{h \in N(H)}
    (\int_{x \in \mathbf{X}}
    \Psi_1(x h)
    \overline{\Psi_2(x)}),
  \end{equation}
  where  (see \S\ref{sec:deduction-main-thm-microlocal} 
  for details)
  \begin{itemize}
  \item $c_0$ is an explicit positive constant,
  \item $S$ denotes the set of ``bad places,''
  \item $L^{(S)}(\pi,\tfrac{1}{2})$
    denotes the central $L$-value without Euler factors
    in $S$, and
  \item $N(H)$
    denotes the normalizer
    of the diagonal torus $H$ in $\PGL_2(k)$.
  \end{itemize}
\end{theorem}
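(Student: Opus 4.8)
The plan is to deduce Theorem~\ref{thm:main-result-for-microlocal-stuff} from the general core estimate of \S\ref{sec-4} (Theorem~\ref{thm:main-estimate-general-variance}) by performing the local computations attached to the orientation character $\omega$. The first step is to realize the weighted family $(\mathcal{F}_N,\iota_\varphi)$ as a theta-lifted family in the sense of \eqref{eq:shimizu-identity}: one fixes the Eichler/Shimizu theta kernel $\Theta$ on $\mathbf{X}\times\mathbf{X}\times\mathbf{X}'$, with $\mathbf{X}'$ an auxiliary congruence quotient of $\PGL_2$-type and the local datum at $k$ chosen (following \cite{nelson-padic-que}) so that its $B^\times$-equivariance matches the microlocal-lift condition \eqref{eq:microlocal-lift-subgroup-defn-0}. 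Then each $\varphi\in\mathcal{F}_\omega$ is the theta lift of a form $\Phi_\varphi$ on $\mathbf{X}'$, the collection $\{\Phi_\varphi\}$ is orthogonal and, after verifying the relevant surjectivity of the theta correspondence (the inversion of $\Theta\mapsto\mathcal{F}$), spans the space in which the theta lifts of $\Psi_1,\Psi_2$ live, and the Rallis inner product formula identifies $\int_z|\Phi_\varphi|^2$ with $\iota_\varphi^{-1}$ up to an explicit positive constant and local factors that are harmless because at the places of $S$ the relevant components are one-dimensional and independent of $\varphi$. Parseval applied to \eqref{eq:identity-pre-parseval} as in \eqref{eq:formula-seesaw} then rewrites $q^{-N}V_N(\Psi_1,\Psi_2)$, up to the explicit constant, as $q^{-N}\langle Q_{\Psi_1},Q_{\Psi_2}\rangle$, where $Q_\Psi(z):=\int_x\Psi(x)\,\Theta(x,x;z)$.

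The second step is to compute $\lim_{N\to\infty}q^{-N}\langle Q_{\Psi_1},Q_{\Psi_2}\rangle$ by invoking Theorem~\ref{thm:main-estimate-general-variance}. Expanding $\Psi_1,\Psi_2$ into Hecke eigenfunctions reduces matters to the case that each generates an irreducible $\pi_i\subseteq L^2(\mathbf{X})$. The lift $Q_{\Psi_i}$ is essentially the Shimizu lift of $\pi_i$ to $\mathbf{X}'$; since distinct $\pi_i$ produce Shimizu lifts generating distinct automorphic representations, the spectral supports of $Q_{\Psi_1}$ and $Q_{\Psi_2}$ are disjoint when $\pi_1\neq\pi_2$ and the limit vanishes, which is the off-diagonal assertion. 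When $\pi_1=\pi_2=\pi$, a second application of the Rallis inner product formula expresses $\langle Q_{\Psi_1},Q_{\Psi_2}\rangle$ as $L^{(S)}(\pi,\tfrac12)$ times a product of local zeta integrals; all factors away from $k$ are fixed constants, while the factor at $k$ is an archimedean-type integral depending on $\omega$ and $N$ whose asymptotics are precisely the content of the general estimate. That estimate shows the $k$-factor has size $\asymp q^{N}$ and, after normalization, converges as $N\to\infty$ to a fixed invariant functional applied to $(\Psi_1,\Psi_2)$.

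The third step is to identify this limiting functional with the period $\int_{h\in N(H)}\int_{x\in\mathbf{X}}\Psi_1(xh)\overline{\Psi_2(x)}$. Writing $Q_{\Psi_1}(z)\overline{Q_{\Psi_2}(z)}$ as a double integral over $x,y\in\mathbf{X}$ against $\Theta(x,x;z)\overline{\Theta(y,y;z)}$ and integrating out $z$ by the equidistribution of translates of elementary theta functions collapses the $(x,y)$-kernel, as $N\to\infty$, onto the locus $y=xh$ with $h$ ranging over a shrinking neighborhood of the diagonal torus $H$ in $\PGL_2(k)$; the microlocal-lift equivariance \eqref{eq:microlocal-lift-subgroup-defn-0} is exactly what forces this concentration, the ``even'' hypothesis enlarges $H$ to its normalizer $N(H)$ via the nontrivial Weyl element, and ``strongly of mean zero'' kills the one-dimensional constituents that would otherwise contribute. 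Assembling the constants — the size $|\mathcal{F}_N|=cq^{2N}$, the two Rallis constants, $\vol(\mathbf{X})$, and the explicit local integral at $k$ — produces \eqref{eq:main-result-intro-main-term} with an explicit $c_0>0$.

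The main obstacle, already isolated in \S\ref{sec-4}, is the analytic control of the local theta integral at $k$ attached to a character $\omega$ of large conductor $N$: one must prove not merely that it is of size $\asymp q^{N}$ but that its $N\to\infty$ limiting shape is the distribution $h\mapsto\int_{N(H)}$ rather than some competing invariant functional, which is where the careful interplay between additive (Schwartz/Heisenberg) and multiplicative (quaternionic) harmonic analysis is needed. The remaining ingredients — the identification of $\iota_\varphi$ via Rallis, the off-diagonal vanishing, the inversion of $\Theta\mapsto\mathcal{F}$, and the bookkeeping of constants — are comparatively routine once the general estimate is in hand.
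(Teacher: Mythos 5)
Your outline correctly captures the paper's overall strategy — recasting the weighted variance sum via the Shimizu/theta identity, applying Parseval, invoking the Rallis inner product formula to produce the central $L$-value, and controlling the cross terms via the equidistribution of products of theta functions — and you correctly see that Theorem~\ref{thm:main-estimate-general-variance} is the engine and that the remaining work is local computation at the place $\mathfrak{q}$.  However, your third step misattributes the mechanism that produces the $N(H)$-period, and this is not a cosmetic issue but the heart of the local analysis.  You claim that ``integrating out $z$ by the equidistribution of translates of elementary theta functions collapses the $(x,y)$-kernel, as $N\to\infty$, onto the locus $y=xh$.''  In the paper, theta equidistribution (via \cite{nelson-theta-squared}, packaged as the ``main estimate'' in Theorem~\ref{thm:main-estimate-general-variance} and applied through Proposition~\ref{prop:local-error-estimates-stmt}) controls \emph{only the error}: it bounds the discrepancy between $\langle\theta_1 h_1,\theta_2 h_2\rangle$ and the decoupled product $\langle\theta_1,\theta_2\rangle\langle h_1,h_2\rangle$.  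The $N(H)$-period itself emerges from an entirely different calculation: one applies the Rallis inner product formula to the \emph{decoupled} term, landing on the local integral $\int_{g\in G}\langle\Ad(g)\mathfrak{S}f,\mathfrak{S}f\rangle\,\Psi(g)$ at $\mathfrak{q}$, and one then evaluates this asymptotically as $q^{N-N_0}\tfrac12\int_{N(H)}\Psi$ (Proposition~\ref{prop:desired-main-term-identity-do-it-up}) by combining a hard Fourier computation showing that $\mathcal{F}\mathfrak{S}\heartsuit^1 f$ concentrates on near-diagonal matrices (Proposition~\ref{prop:key-fourier-estimate-microlocal-kernel}) with a Hensel-lemma/Weyl-integral-formula argument.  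This is the step that actually identifies the limiting functional, and it has nothing to do with the equidistribution of theta functions; in your proposal this local Fourier analysis has no counterpart and is only gestured at as ``the careful interplay between additive and multiplicative harmonic analysis.''

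Two further points.  First, you omit the technical but essential partition of $\mathcal{F}_N$ into $O(1)$ packets $\mathcal{F}_N^\sigma$ (\S\ref{sec-5-4}, \S\ref{sec:9-setting}); without it, no single convolution kernel $f$ (hence no single theta datum $\Theta$) can pick out the full family, and the inversion of $\Theta\mapsto\mathcal{F}$ that you invoke in step one cannot be set up.  Second, you write that Rallis ``identifies $\int_z|\Phi_\varphi|^2$ with $\iota_\varphi^{-1}$''; the relation is $\|\Phi_\pi\|^2 = c_1\,L^{(S)}(\ad\pi,1) = c_1\,\iota_\varphi$ (lemma of \S\ref{sec:Phi-pi}), so the weight is $\iota_\varphi$, not its inverse — this is precisely why the $L^{(S)}(\ad\varphi,1)$ factor appears as the weight in $V_N$ rather than its reciprocal, and reversing it would corrupt the normalization in \eqref{eq:main-result-intro-main-term}.
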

\begin{remark}
  The integral \eqref{eq:main-result-intro-main-term}
  converges as written
  (see \S\ref{sec:local-Xi},
  \S\ref{sec:local-convergence-lemmas}).
\end{remark}
\begin{remark}
  As in \cite{2013arXiv1303.6972S}, the identity
  \eqref{eq:main-result-intro-main-term} admits an intriguing
  semiclassical interpretation whereby the arithmetical values
  $L^{(S)}(\pi,\tfrac{1}{2})$ quantify the deviation of the
  asymptotic quantum variance from the (symmetrized)
  classical variance of the diagonal flow.
\end{remark}
\begin{remark}
  We establish the stronger
    assertion that
    $q^{-N} V_N(\Psi_1, \Psi_2)$
    differs from its limit
    by $O(N q^{-N})$ (see \S\ref{sec:deduction-main-thm-microlocal}).
    We expect that our method is capable of refining that error
    term to $O(q^{-N})$
    and proving that such an estimate is essentially best
    possible (see \cite[\S6.5]{nelson-variance-73-2}),
    but we do not pursue such refinements here.
  \end{remark}
  \begin{remark}
    We note as in \cite{MR2103474}
    that
    Theorem \ref{thm:main-result-for-microlocal-stuff} confirms the Lindel{\"o}f prediction
    \eqref{eq:lindelof-prediction} on average and
    implies that it is essentially optimal if it is true.
  \end{remark}
  \begin{remark}   The off-diagonal case is that in which the method of
    Luo--Sarnak--Zhao requires a cusp.  It is conceivable that one
    could establish the diagonal case of Theorem
    \ref{thm:main-result-for-microlocal-stuff} by averaging the
    triple product formula as in \cite{2013arXiv1303.6972S}.  Our
    method does not use the triple product formula.
  \end{remark}

  \subsection{Comparison with the prequel}
  We highlight some differences
  between the results and aims of \cite{nelson-variance-73-2}
  and those of this article.
  \begin{enumerate}
  \item In \cite{nelson-variance-73-2}, the observables
    $\Psi_1, \Psi_2 : \mathbf{X} \rightarrow \mathbb{C}$ were
    restricted to be right-invariant by the maximal compact
    subgroup $K := \PGL_2(\mathfrak{o})$.  In this article, we
    study arbitrary fixed observables on the full ``phase space''
    $\mathbf{X}$
    rather than on the ``configuration space''
    $\mathbf{X} / K$.
    The simplifying restriction of the prequel
    allowed us to get by with some \emph{ad hoc} computations in
    places where a more systematic approach is required here.

    This jump in complexity is analogous to that from
    \cite{MR2103474, MR2651907} to \cite{2013arXiv1303.6972S},
    but the manner in which the new complexity
    is addressed differs
    completely
    (in a stronger sense
    than that the methods themselves
    differ completely):  In \cite{2013arXiv1303.6972S},
    phase space observables were treated by an inductive technique
    involving weight isotypic vectors, while the methods developed
    here apply directly to general phase space observables. 
  \item In the prequel, we considered families $\mathcal{F}_N'$ of
    balanced newvectors.  Here, we consider families
    $\mathcal{F}_N$ of microlocal lifts.  The methods developed in
    Part 1 of this article apply robustly to a large class of
    families;
    given those methods, neither
    $\mathcal{F}_N$ nor $\mathcal{F}_N'$ is much more difficult to
    analyze than the other.
    We focused in \cite{nelson-variance-73-2} on newvector
    families because of their familiarity.  We focus here on the
    families $\mathcal{F}_N$ because of their strong analogy with
    those in the motivating work of Luo--Sarnak--Zhao and because
    the formulas \eqref{eq:main-result-intro-main-term}
    for the main term for $\mathcal{F}_N$ are more
    aesthetically appealing than those for $\mathcal{F}_N'$.
  \item The aim of \cite{nelson-variance-73-2} was to introduce
    the method by application to the simplest non-trivial
    non-split case of the quantum variance problem and in the
    most elementary language possible.\footnote{
      In the prequel, the base field is unimportantly
      restricted to
      be $\mathbb{Q}_2$ (rather than $\mathbb{Q}_p$ for an odd
      prime $p$) in order to reduce the set of bad
      primes from $\{2, p\}$ to $\{2\}$, noting that $2$ is
      always bad when studying half-integral weight forms.
    }  The aim here is
    instead to develop the technique as clearly as possible in
    its natural generality.
    We hope the two articles serve complementary purposes.
  \item It may be worth recording that for the reasons indicated
    in the previous three points, the two articles have
    essentially no logical overlap.
  \end{enumerate}






\subsection{Discussion of method}
\label{sec-1-6}
The general strategy of \S\ref{sec:theta-funct-vari}
applies
in the setting of Theorem 
\ref{thm:main-result-for-microlocal-stuff}:
modulo a preliminary technical partition
of the family $\mathcal{F} = \mathcal{F}_N$,
we construct $f = f_N$ for
which \eqref{eq:intro-pretrace-fromula}
holds and then
$\Theta$ for which \eqref{eq:formula-seesaw}
holds.
(For a precise definition
of $f$, see the end of \S\ref{sec:appl-micro-local-prelims}.)
The integral 
$\int_z \Psi_i(z) \Theta(x,x;z)$ does not define
a theta lift of $\Psi_i$ in the traditional sense,
but instead decomposes as a sum of products
$\theta_i(z) h_i(z)$, where $\theta_i$ is a variant
of the Jacobi theta function and $h_i$ is a theta lift
of $\Psi_i$.
The RHS of \eqref{eq:formula-seesaw} then
decomposes as a sum of inner products
\begin{equation}\label{eq:ip-4-theta-before-rearr}
  \langle \theta_1 h_1, \theta_2 h_2 \rangle
\end{equation}
Suppose we can approximate each such inner product
by
\begin{equation}\label{eq:ip-4-theta-after-rearr}
  \langle \theta_1, \theta_2 \rangle \langle h_1, h_2 \rangle.
\end{equation}
The Rallis inner product formula \cite{2012arXiv1207.4709T,
  MR2837015}
for theta lifts applies to
\eqref{eq:ip-4-theta-after-rearr};
summing it up,
we obtain
\begin{equation}\label{eq:asymptotic-after-rallis-in-intro-sketch}
  \sum_{\varphi \in \mathcal{F}}
(\int |\Phi_\varphi|^2 )^{-1}
\langle \varphi, \Psi_1 \varphi  \rangle
\langle \Psi_2 \varphi, \varphi  \rangle
\approx
(\ast)
\int_{h \in G}
I_f(h)
(\int_{x \in \mathbf{X}}
\Psi_1(x h)
\overline{\Psi_2(x)}),
\end{equation}
where:
\begin{itemize}
\item $\approx$ means up to the error incurred
  by
  replacing
  each term
  \eqref{eq:ip-4-theta-before-rearr}
  with
  \eqref{eq:ip-4-theta-after-rearr};
\item 
  $(\ast)$ means ``modify by a central $L$-value as in
  Theorem \ref{thm:main-result-for-microlocal-stuff};'' and
\item 
  $I_f(h) := \int_{g \in G} \mathfrak{S} f(h^{-1} g h)
  \overline{\mathfrak{S} f(g)}$,
  where $\mathfrak{S} f(g)
  := (f(g) + f(g - \tr(g)))/2$.
\end{itemize}
To complete the proof of Theorem
\ref{thm:main-result-for-microlocal-stuff},
it suffices now to show that
\begin{enumerate}
\item[(i)]
  $I_f : G \rightarrow \mathbb{C}$ tends 
  to the ``delta distribution'' on the normalizer $N(H)$ as
  the parameter $N$ tends to $\infty$, and that
\item[(ii)]  the error hidden by
  $\approx$ is negligible.
\end{enumerate}
Problem (i) is purely local.
Problem (ii) involves
both local and global difficulties;
a critical global
input to its solution
was developed in \cite{nelson-theta-squared}.

To indicate in more detail
how this works,
assume for simplicity
that $k$ arises
as a completion of $\mathbb{Q}$.
The proof may then be summarized
by the sequence
\begin{align}
  q^{-N} V_N(\Psi_1,\Psi_2)
  &= \label{eqn:proof-outline-1}
    \langle \theta(z) h_{1}(q^{2 N} z),
    \theta(z) h_2(q^{2 N} z) \rangle
  \\
  &=\label{eqn:proof-outline-2}
    \langle |\theta|^2(z), \overline{h_1} h_2(q^{2 N} z) \rangle
  \\
  &=\label{eqn:proof-outline-3}
    \langle |\theta|^2, 1 \rangle \langle 1, \overline{h_1}  h_2
    \rangle
    + O(N q^{-N})
  \\
  &= \label{eqn:proof-outline-4}
    c_0 L^{(S)}(\pi,1/2)
    \int_{h \in N(H)}
      (\int_{x \in \mathbf{X}}
  \Psi_1(x h)
  \overline{\Psi_2(x)})
    + O(N q^{-N}),
\end{align}
where
$\theta$ is essentially
the weight $1/2$ Jacobi theta function,
$z$ denotes an integration parameter in the upper half-plane,
$h_1,h_2$ are \emph{fixed}
weight $3/2$ theta lifts of $\Psi_1,\Psi_2$
to some congruence cover of
$\Gamma_0(4) \backslash \mathbb{H}$,
and
inner products are taken on congruence covers of
$\Gamma_0(4) \backslash \mathbb{H}$ with respect 
to the natural probability measures.
The first step \eqref{eqn:proof-outline-1} was discussed in
\S\ref{sec:theta-funct-vari} in high level terms.
The obvious second step \eqref{eqn:proof-outline-2}
forms the cornerstone of the argument;
see \cite{MR2373356}
for related discussion.  The third step \eqref{eqn:proof-outline-3}
is a variant of the equidistribution of Hecke operators; we
have discussed it extensively in \cite{nelson-variance-73-2,
  nelson-theta-squared}.  The fourth step
\eqref{eqn:proof-outline-4} is an explication of the Rallis
inner product formula followed by the asymptotic analysis of
the integral $I_f(h)$ discussed above.


\subsection{The shape of the key inner product}
The precise shape of the RHS of
\eqref{eqn:proof-outline-1} (namely, the ``separation''
of $\theta(z)$ and $h_i(q^{2 N} z)$
by the dilation
$z \mapsto q^{2 N} z$)
is evidently crucial to the success
of the method, so we sketch how it arises.
Let $\phi = \phi_N : M_2(k) \rightarrow \mathbb{C}$
denote the Schwartz--Bruhat function
related to the kernel $f = f_N$
by
\begin{equation}
  \phi(x) :=
  \begin{cases}
    0 & \text{ if } x \notin \GL_2(k) \\
    1_{\mathfrak{o}^\times}(\det(x)) f(\pr(x)) & \text{ if } x \in \GL_2(k),
  \end{cases}
\end{equation}
where $\pr : \GL_2(k) \rightarrow \PGL_2(k)$
denotes the canonical projection.
Let
$\mathcal{F}$ denote the Fourier transform
on $M_2(k)$.  (We hope the dual use of
this symbol for Fourier transforms
and families introduces no confusion.)
Then
\begin{equation}\label{eq:key-approximation-for-fourier-transform-of-conv-kernel-sketch}
  \mathcal{F} \phi
  (
  \begin{pmatrix}
    d + a & b  \\
    c  & d - a
  \end{pmatrix}
  )
  \approx
  \begin{cases}
    1 & |d| = O(1); |b|, |c| = O(q^{N/2}); |a| \asymp q^N \\
    0 & \text{otherwise}
  \end{cases}
\end{equation}
The detailed statement and proof of the computation
\eqref{eq:key-approximation-for-fourier-transform-of-conv-kernel-sketch}
may be found in \S \ref{sec:fourier-transf-conv-kern}.  The key
features are the shape of the support and the uniform
smoothness under simultaneous dilation of the parameters
$a,b,c$.  Ignoring the less important variable $d$, one can
think of
$\mathcal{F} \phi$
as capturing roughly the Fourier transform of the pullback of
$f$ to the Lie algebra of $\PGL_2(k)$.
We note in passing that the subgroup $N(H)$ 
arises eventually from
\eqref{eq:key-approximation-for-fourier-transform-of-conv-kernel-sketch}
as the ``normalizer of the limiting support'' of
$\mathcal{F} \phi$.


Consider now the Hecke twisted pretrace formula
\begin{equation}\label{eq:hecke-twisted-pretrace}
  \sum_{\gamma \in M_n} f(x^{-1} \gamma y) =\sum_{\varphi \in
    \mathcal{F}_N} \sqrt{n} \lambda_\varphi(n) \overline{\varphi
    (x)} \varphi(y),
\end{equation}
where $M_n$ is as in the classical definition of the Hecke operator
$T_n$ and $\sqrt{n} \lambda_{\varphi}(n)$ denotes the Hecke eigenvalue.
Let $R \subseteq B$ denote the maximal order underlying
the construction of $\Gamma$.
Taking $x=y=:g$ in \eqref{eq:hecke-twisted-pretrace}
and summing against $e(n z)$ over positive integers $n$ having only ``good''
prime divisors gives
\[
\sum_{\gamma \in R}
\phi(g^{-1} \gamma g)
e(\det(\gamma) z)
=
\sum_{\varphi \in \mathcal{F}_N}
|\varphi|^2(g)
\Phi_\varphi(z)
\]
where $e(z) := e^{2 \pi i z}$ and
$\Phi_\varphi(z) := \sum \sqrt{n} \lambda_\varphi(n) e(n z)$
denotes an Eichler/Jacquet--Langlands lift of $\varphi$.
Suppose henceforth that $k = \mathbb{Q}_p$.
By the inversion formula for theta functions,
we obtain
\begin{align*}
  \sum_{\varphi \in \mathcal{F}_N}
  |\varphi|^2(g)
  \Phi_\varphi(-1/z)
  &\approx
    \sum_{\gamma \in R[1/p]}
    \mathcal{F} \phi(g^{-1} \gamma g)
    e(\det(\gamma) z) \\
  &\approx
    \sum_{m,\beta}
    \mathcal{F} \phi(m + g^{-1} \beta g)
    e(m^2 z)
    e(\det(\beta) z),
\end{align*}
where the sum is over $m \in \mathbb{Z}[1/p]$
and $\beta \in R[1/p]^0$.
(Here $\approx$ means ``up to unimportant
inaccuracies.'')
By integrating against $\Psi(g)$, we obtain
\begin{equation}\label{eq:key-sketch-identity-pre-parseval}
  \sum_{\varphi \in \mathcal{F}_N}
\Phi_{\varphi}(-1/z)
\langle \varphi, \Psi \varphi  \rangle
= \sum_{m,\gamma}
e(m^2 z) e(\det(\beta) z)
\int_{g \in \Gamma \backslash G}
\Psi(g) \mathcal{F} \phi(m + g^{-1} \beta g).
\end{equation}
The multiplicity one theorem on $\mathbf{X}$ implies that for
$\varphi, \varphi ' \in \mathcal{F}_N$,
\begin{equation}\label{eq:consequence-of-mult-one-in-sketch}
  \langle \Phi_\varphi, \Phi_{\varphi '} \rangle
  \approx \iota_{\varphi} \text{ if } \varphi = \varphi ',
  \quad = 0 \text{ otherwise}.
\end{equation}
By 
\eqref{eq:key-approximation-for-fourier-transform-of-conv-kernel-sketch}
and the assumption that $\Psi$ is fixed,
one deduces that
\begin{equation}\label{eqn:key-observation-for-proof-outline-1}
  \int_{g \in \Gamma \backslash G}
  \Psi(g) \mathcal{F} \phi(m + g^{-1} \beta g)
  \approx 1_{\mathbb{Z}_p}(m)
  \phi_{\Psi}''(p^N \beta)
\end{equation}
where $\phi_{\Psi}''$ is independent of $N$.  (A ``toy version''
of such reasoning: if a function
 on $\mathbb{R}^3$ is smooth with respect to polar
coordinates and supported away from the origin, then it is smooth.)
By Parseval applied to
\eqref{eq:key-sketch-identity-pre-parseval},
\eqref{eq:consequence-of-mult-one-in-sketch} and
\eqref{eqn:key-observation-for-proof-outline-1}, one arrives at
an identity of the form \eqref{eqn:proof-outline-1}.







\subsection{Organization of this paper}
\label{sec-1-7}
The main result is Theorem
\ref{thm:main-result-for-microlocal-stuff}, stated
somewhat informally above
and more precisely (and generally) as
Theorem \ref{thm:main-variance-microlocal-adelic-formulation} in
the final section \S\ref{sec:deduction-main-thm-microlocal} of
this paper, which also contains the proof.

To present the proof as clearly as possible, we separate
the difficulties concerning general
families from those specific to the $\mathcal{F}_N$
considered above.
The former may be found
in Part I, the latter in Part II.
The two parts are weakly coupled.

The main result of Part I is
Theorem \ref{thm:main-estimate-general-variance},
stated in
\S\ref{sec-4}.
Its proof
involves local (\S\ref{sec-2})
and global (\S\ref{sec-3})
preliminaries.
Its conclusion reduces the quantum variance problem to
local problems of three sorts, which are treated
 in Part II for the
families $\mathcal{F}_N$:
\begin{enumerate}
\item[(\S\ref{sec:appl-micro-local-prelims})]
  The comparatively
  mild difficulties associated with producing a convolution kernel $f$
  that picks off a given family $\mathcal{F}$,
  as in
  \eqref{eq:intro-pretrace-fromula}.
\item[(\S\ref{sec:local-estimates-main-statements})]
  Those
  associated with determining the main term
  (i.e., asymptotically evaluating distributions
  such as the $I_f$ considered in
  \S\ref{sec-1-6}).
\item[(\S\ref{sec:local-estimates-error})]
  Those concerned with
  bounding error terms (i.e., differences between inner products
  as in \eqref{eq:ip-4-theta-before-rearr} and
  \eqref{eq:ip-4-theta-after-rearr}).  An important point is
  that Theorem \ref{thm:main-estimate-general-variance} reduces
  this global problem to a local one.
\end{enumerate}
For the latter two problems, the key input is a careful analysis
(\S\ref{sec:fourier-transf-conv-kern})
of the ``Fourier transform'' of the convolution kernel $f$.

The reader might first study carefully
\S\ref{sec:general-notation},
\S\ref{sec-4-1}, \S\ref{sec:main-general-estimate-key-defns},
\S\ref{sec-4-3},
\S\ref{sec:general-estimates-specialized-single-place},
\S\ref{sec:element-attached-to-N-sigma},
\S\ref{sec-7-1},
\S\ref{sec-8-2},
\S\ref{sec:9-setting}
and
\S\ref{sec:variance-statistics}, skipping or skimming any
sections before or between; we have made some effort to keep
those sections essentially self-contained.

\subsection{General notation\label{sec:general-notation}}
\label{sec-1-8}
For a quaternion algebra $B$ over a field or adele ring $A$, 
we denote by $\iota : B \rightarrow B$ the main involution, by
$\nr : B \rightarrow A$ the reduced norm
$\nr(x) := x x^{\iota}$, by $\tr : B \rightarrow A$ the reduced
trace $\tr(x) := x + x^{\iota}$, and by
\[B^0 := \{x \in B : \tr(x) = 0\}\] the subspace of traceless
quaternions.
We employ the notations
\[
n(x) := \begin{pmatrix}
  1 & x \\
  0 & 1
\end{pmatrix},
\quad
a(y) :=
\begin{pmatrix}
  y & 0 \\
  0 & 1
\end{pmatrix},
\quad
t(y) :=
\begin{pmatrix}
  y & 0 \\
  0 & y^{-1}
\end{pmatrix},
\quad
n'(x) :=
\begin{pmatrix}
  1 & 0 \\
  x & 1
\end{pmatrix}
\]
\[
\Ad(g) x := g x g^{-1},
\quad
\Ad(g) f(x) := f(g^{-1} x g)
\]
and
\[
\mathfrak{S} f(x) := \frac{f(x) + f(x - \tr(x))}{2}
\]
whenever they make sense.
For example, this is the case if 
$g$ belongs to the unit group $B^\times$ of a quaternion algebra $B$ over $A$ as above
and $x$ belongs to (resp. $f$ is a function on)
one of the sets $B^\times/A^\times, B, B^0$.
(As
we explain below,
one may interpret $\mathfrak{S}$ as ``projection onto $\O_1$-invariants.'')

We define the right regular representation
$\rho_{\reg}(g) f(x) := f(x g)$
whenever it makes sense.

Given a local (resp. global) field $F$ and a nontrivial unitary
character $\psi$ of $F$ (resp. of $\mathbb{A}/F$)
and an element $a \in F^\times$,
we denote by $\psi^a$ the nontrivial unitary character
with the same domain as $\psi$
given by
$\psi^a(x) := \psi(a x)$.

For a finite-dimensional vector space $V$ over a local field or
adele ring, we denote by $\mathcal{S}(V)$ the space of
Schwartz--Bruhat functions $\phi : V \rightarrow \mathbb{C}$,
topologized as usual
(see e.g. \cite[\S11]{MR0165033}).


Let $G$ be a group over an adele ring or a finite product of
local fields.
We let $C_c^\infty(G)$ denote the space of smooth compactly supported
functions;
as usual, smooth
means infinitely differentiable
(resp. locally constant)
with respect to the archimedean
(resp. non-archimedean) variables.
Assume that we have equipped $G$ with a Haar
measure.  Let $\pi$ be a smooth representation of a group that
contains $G$.
Let $f \in C_c^\infty(G)$.  We then define the operator
$\pi(f) \in \End(\pi)$ by
$\pi(f) v := \int_{g \in G} f(g) \pi(g) v$.

The use of Vinogradov notation is standard: $A = O(B)$,
$A \ll B$ and $B \gg A$ each signify that $|A| \leq c |B|$ for
some ``constant'' $c$, with dependencies indicated by subscripts;
$A \asymp B$ signifies that $A \ll B \ll A$.

We write $1_E$ for the characteristic function of a subset $E$
of some set $X$.
For an assertion $A$,
we set $1_A := 1$ if $A$ is true and $1_A := 0$ if $A$ is false.

We set $\mathbb{C}^{(1)} := \{ z \in \mathbb{C}^{\times} : |z| =
1\}$.

We adopt the convention that main results (Theorems
\ref{thm:main-result-for-microlocal-stuff},
\ref{thm:main-estimate-general-variance},
\ref{thm:main-variance-microlocal-adelic-formulation} in
\S\ref{sec-1-5}, \S\ref{sec-4-3},
\S\ref{sec:variance-statistics}, respectively) are called
theorems, the most important intermediary results original to
this article (Propositions
\ref{prop:main-error-estimate-global-adelic-general},
\ref{prop:after-extracting-main-term},
\ref{prop:harmonic-analytic-isolation-1},
\ref{prop:key-fourier-estimate-microlocal-kernel},
\ref{prop:desired-main-term-identity-do-it-up},
\ref{prop:local-error-estimates-stmt} in \S\ref{sec-3-5-5},
\S\ref{sec:main-identity},
\S\ref{sec:element-attached-to-N-sigma}, \S\ref{sec-6-3},
\S\ref{sec-7-1}, \S\ref{sec-8-2}) are called
propositions, and everything else (including deep cited work) is
called a lemma.

\newpage
\part{Quantum variance and theta functions}


\section{Local preliminaries}
\label{sec-2}
The purpose of this section is collect local definitions,
notation and identities for later use.  The notation introduced
here should be self-descriptive with the exception of that for
the similitude Weil representation $\Omega$ defined in
\S\ref{sec:defn-local-omega}.

Let $k$ be a local field of characteristic $\neq 2$, thus $k$ is either $\mathbb{R}$,
$\mathbb{C}$ or a finite extension of $\mathbb{Q}_p$ or
(for $p \neq 2$) of
$\mathbb{F}_p(t)$.
The assumption on the characteristic
is relevant only for sections discussing the Weil representation.

Let
$\psi : k \rightarrow \mathbb{C}^{(1)}$ be a nontrivial unitary
character of $k$,
and let $B$ be a quaternion algebra over $k$.
Set $G := B^\times/ k^\times$.
When $k$ is non-archimedean, let $R \subset B$ be a maximal
order.

\subsection{Generalities}
\label{sec-2-1}
\subsubsection{The number field}
\label{sec-2-1-1}
We denote by $|.| := |.|_k : k \rightarrow \mathbb{R}_{\geq 0}$
the normalized absolute value,
so that $d(c x) = |c| \, d x$ for $c \in k$ and any Haar measure $d x$ on $k$.


Let $\zeta_k(s)$ denote the local zeta function,
thus $\zeta_k(s) = \pi^{-s/2} \Gamma(s/2),
2 (2 \pi)^{-s} \Gamma(s)$ or $(1-q^{-s})^{-1}$
as $k = \mathbb{R}, \mathbb{C}$,
or a non-archimedean local field
with residue field of cardinality $q$.

Recall that $B$ is \emph{split} if it is isomorphic to the algebra $M_2(k)$ of $2 \times 2$
matrices.  Otherwise, $B$ is called \emph{non-split} or \emph{ramified}; in that case, it is the unique (up to isomorphism)
quaternion division algebra over $k$, and the group $G$ is compact.

When $k$ is non-archimedean,
we denote by
$\mathfrak{o}$ the maximal order, by
$\mathfrak{q}$ the maximal ideal,
by
$q := \# \mathfrak{o} / \mathfrak{q}$
the cardinality of the residue field,
by $\varpi \in \mathfrak{q} = \varpi \mathfrak{o}$
a uniformizer (thus $|\varpi| = q^{-1}$),
and by $\Delta_{\psi}$
the absolute conductor
of
$\psi : k \rightarrow \mathbb{C}^{(1)}$,
thus $\Delta_{\psi} = q^d$
if $\psi$ is trivial on
$\mathfrak{q}^{-d}$
but not
on $\mathfrak{q}^{-d-1}$.
Recall that $\psi$ is \emph{unramified} if $\Delta_{\psi} = 1$.

\subsubsection{Measures\label{sec:local-measures}}
\label{sec-2-1-2}
For $X \in \{k,B^0,B\}$,
define the
perfect pairing $\langle , \rangle : X \otimes X \rightarrow k$
by $\langle x,y \rangle := x y$ if $X = k$
and by $\langle x, y \rangle := \tr(x^{\iota} y)$ if $X = B^0,B$.
Equip $X$ with the Haar measure $d x$
for which the Fourier transform
$\mathcal{F} : \mathcal{S}(X) \rightarrow \mathcal{S}(X)$
defined by
$\mathcal{F} f(\xi)
:=
\int_{x \in X}
f(x)
\psi(\langle x, \xi  \rangle)
\, d x$
satisfies the inversion formula
\begin{equation}\label{eq:fourier-inversion-for-normalization}
  \mathcal{F} \mathcal{F} f(x) = f(-x).
\end{equation}
Equip $k^\times$ with the Haar measure
$\int_{k^\times } f
:=
\int_{x \in k^\times}
f(x) \, \frac{d x}{|x|}$.

The quotient $k^\times / k^{\times 2}$ is finite.
We equip it with the Haar measure
compatible with the squaring map,
so that for $f \in C_c(k^\times)$,
\begin{equation}\label{eqn:compatibility-squaring-map-local-measure}
\int_{x \in k^\times}
f(x)
\,
\frac{d x}{|x|}
= 
\int_{y \in k^\times/k^{\times 2}}
(\int_{z \in k^\times}
f(y z^2) \, \frac{d z}{|z|}).
\end{equation}
For $f : k^\times / k^{\times 2} \rightarrow
\mathbb{C}$,
one has explicitly
$\int_{x \in k^\times  / k^{\times 2}}
f(x)
=
\frac{|2|_k}{2}
\sum_{x \in k^\times / k^{\times 2}}
f(x)$.

Equip $G$ with the Haar
measure $d g$ for which the integral formula
\begin{equation}\label{eq:integarl-formula-for-integrating-over-B}
  \int_{x \in B}
  f(x) \, d x
  = \int_{g \in G}
  (\int_{z \in k^\times}
  |\nr(z g)|^2 f(z g) \, \frac{d z}{|z|}) \, d g
\end{equation}
holds for $f \in C_c(B)$.
When $B = M_2(k)$
is split, so that $G = \PGL_2(k)$,
a direct calculation
with differential forms
gives for $f \in C_c(G)$
that
\begin{equation}\label{eqn:explicit-M2-integral-formula}
  \int_G f
=
\int_{x_1,x_2 \in k}
\int_{y \in k^\times}
f(
n'(x_1)
n(x_2)
a(y)
)
\, d x_1 \, d x_2 \, \frac{d y}{|y|}.
\end{equation}

\subsubsection{Volume formulas\label{sec:local-vol-formulas}}
\label{sec-2-1-3}
Assume (for all but the final assertion of \S\ref{sec-2-1-3})
that $k$ is non-archimedean.  Write $\vol(E \subseteq X)$ to
denote the volume of $E$ with respect to the measure that we
have defined on $X$.  Let $J \leq G$ denote the image of
$R^\times$; if $B$ is split, then $J$ is a maximal compact
subgroup of $G$, otherwise it has index $2$ in the compact group
$G$.  Abbreviate
$\vol(\mathfrak{o}) := \vol(\mathfrak{o} \subseteq k)$,
$\vol(\mathfrak{o}^\times ) := \vol(\mathfrak{o}^\times
\subseteq k^\times )$,
$\vol(J) := \vol(J \subseteq G)$, 
$\vol(R) := \vol(R \subseteq B)$ and
$\Delta := \Delta_{\psi}$.  Let $\Delta_{B}$ denote the reduced
discriminant, thus $\Delta_{B_\mathfrak{p}} = 1$ or $q$
according as $B$ splits or ramifies.  Set
$\zeta_B(s) := \zeta_k(2 s) \zeta_k(2 s - 1)$ if $B$ splits and
$\zeta_B(s) := \zeta_k(2 s)$ otherwise.
\begin{lemma*}\label{lem:local-vol-formulas}
  ~\begin{enumerate}
  \item[(i)]
    $\vol(\mathfrak{o}) = \Delta^{-1/2}$,
    $\vol(\mathfrak{o}^\times)
    = \zeta_k(1)^{-1} \Delta^{-1/2}$.
  \item[(ii)]
    $\vol(R) = \Delta_B^{-1} \Delta^{-4/2}$,
    $\vol(J) = \zeta_k(1) \zeta_B(1)^{-1} \Delta_B^{-1}  \Delta^{-3/2}$.
  \item[(iii)]
    If $B$ is split, then
    \[\frac{\vol(R)}{\vol(J) \Delta^{-1/2}}
    = \zeta_k(2).\]
  \item[(iv)] If $k$ is real,
    $B$ is non-split
    and $\psi(x) = e^{2 \pi i x}$,
    then $\vol(G) = 4 \pi^2$.
  \end{enumerate}
\end{lemma*}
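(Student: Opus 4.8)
The plan is to reduce all four assertions to one covolume computation. \emph{Covolume formula.} If $V$ is a finite-dimensional $k$-vector space equipped with a nondegenerate symmetric bilinear form $\langle\cdot,\cdot\rangle$ whose Gram matrix with respect to an $\mathfrak{o}$-basis of a lattice $L\subseteq V$ is $G$, and $V$ carries the self-dual measure attached to $\psi\circ\langle\cdot,\cdot\rangle$ as in \S\ref{sec:local-measures}, then $\vol(L)=|\det G|^{1/2}\,\Delta^{-(\dim V)/2}$. Indeed, writing $\Delta=q^{d}$ one has $\mathcal{F}1_{L}=\vol(L)\,1_{L^{\vee}}$ for $L^{\vee}=\varpi^{-d}G^{-1}L$; one checks directly that $(L^{\vee})^{\vee}=L$, so Fourier inversion \eqref{eq:fourier-inversion-for-normalization} forces $\vol(L)\vol(L^{\vee})=1$, and since $\vol(L^{\vee})=q^{d\dim V}|\det G|^{-1}\vol(L)$ the formula follows. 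Taking $V=k$, $L=\mathfrak{o}$, $G=(1)$ gives $\vol(\mathfrak{o})=\Delta^{-1/2}$, and since $|\cdot|\equiv1$ on $\mathfrak{o}^{\times}=\mathfrak{o}\setminus\mathfrak{q}$ we get $\vol(\mathfrak{o}^{\times})=\vol(\mathfrak{o})-\vol(\mathfrak{q})=(1-q^{-1})\Delta^{-1/2}=\zeta_{k}(1)^{-1}\Delta^{-1/2}$, which is (i).

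For the first identity of (ii), observe that $\langle x,y\rangle=\tr(x^{\iota}y)$ is the bilinear form polar to the reduced norm $\nr$. When $B$ is split, take $R=M_{2}(\mathfrak{o})$: a direct computation of the Gram matrix on the basis $e_{11},e_{12},e_{21},e_{22}$ yields $|\det G|=1$. When $B$ is ramified, use the standard model $R=\mathfrak{o}_{L}\oplus\mathfrak{o}_{L}j$, with $L/k$ the unramified quadratic extension, $j^{2}=\varpi$ and $j\ell=\bar{\ell}j$; here $L$ and $Lj$ are $\langle\cdot,\cdot\rangle$-orthogonal, $\nr|_{L}=\mathrm{N}_{L/k}$ and $\nr|_{Lj}$ is $\ell\mapsto-\varpi\,\mathrm{N}_{L/k}(\ell)$, so $G$ is block diagonal with one block of determinant $\pm\disc(\mathfrak{o}_{L}/\mathfrak{o})$ (a unit, since $L/k$ is unramified --- this is the point at which residue characteristic $2$ must be allowed but causes no difficulty) and one of determinant $\varpi^{2}$ times a unit; hence $|\det G|=q^{-2}$. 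In both cases $|\det G|=\Delta_{B}^{-2}$, so the covolume formula gives $\vol(R)=\Delta_{B}^{-1}\Delta^{-4/2}$.

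For $\vol(J)$ I would first record that $\vol(R^{\times}\subseteq B)=\zeta_{B}(1)^{-1}\vol(R)$: an element of $R$ is a unit iff its image in the finite ring $R/\varpi R$ is a unit, whence $\vol(R^{\times})=\vol(R)\cdot\#(R/\varpi R)^{\times}/\#(R/\varpi R)$, and a one-line count shows this ratio is $(1-q^{-1})(1-q^{-2})=\#\GL_{2}(\mathbb{F}_{q})/q^{4}$ when $B$ splits and $(1-q^{-2})=(q^{4}-q^{2})/q^{4}$ when $B$ ramifies, i.e.\ $\zeta_{B}(1)^{-1}$ in both cases. Then I would feed $f=1_{R^{\times}}$ into the integral formula \eqref{eq:integarl-formula-for-integrating-over-B}: choosing for each $j\in J$ a representative $\widetilde{j}\in R^{\times}$, the fibre $\{z\in k^{\times}:z\widetilde{j}\in R^{\times}\}$ is $\mathfrak{o}^{\times}$ and $|\nr(z\widetilde{j})|^{2}\equiv1$ there, while the integrand vanishes identically over cosets outside $J$; so the right side collapses to $\vol(\mathfrak{o}^{\times})\vol(J)$, giving $\vol(J)=\vol(R^{\times})/\vol(\mathfrak{o}^{\times})=\zeta_{B}(1)^{-1}\Delta_{B}^{-1}\Delta^{-2}\cdot\zeta_{k}(1)\Delta^{1/2}=\zeta_{k}(1)\zeta_{B}(1)^{-1}\Delta_{B}^{-1}\Delta^{-3/2}$. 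Part (iii) is then immediate: for split $B$ one has $\Delta_{B}=1$ and $\zeta_{B}(1)=\zeta_{k}(1)\zeta_{k}(2)$, so $\vol(R)/(\vol(J)\Delta^{-1/2})=\Delta^{-2}/(\zeta_{k}(1)\zeta_{B}(1)^{-1}\Delta^{-2})=\zeta_{B}(1)/\zeta_{k}(1)=\zeta_{k}(2)$.

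Finally, for (iv) take $k=\mathbb{R}$, $B=\mathbb{H}$ and $\psi(x)=e^{2\pi i x}$. Then $\nr(x_{0}+x_{1}i+x_{2}j+x_{3}k)=x_{0}^{2}+x_{1}^{2}+x_{2}^{2}+x_{3}^{2}$, its polar form has Gram matrix $2I_{4}$, so the self-dual measure on $B$ is $4$ times Lebesgue measure $dx_{0}\,dx_{1}\,dx_{2}\,dx_{3}$, while $k^{\times}=\mathbb{R}^{\times}$ carries $|z|^{-1}\,dz$ with $dz$ Lebesgue. I would evaluate both sides of \eqref{eq:integarl-formula-for-integrating-over-B} on the Gaussian $f(x)=e^{-\pi\nr(x)}$ (the identity, being one of measures, extends from $C_{c}(B)$ to such $f$ by monotone approximation): the left side is $4\int_{\mathbb{R}^{4}}e^{-\pi(x_{0}^{2}+x_{1}^{2}+x_{2}^{2}+x_{3}^{2})}\,dx=4$, whereas, taking norm-one coset representatives $u$ so that $\nr(zu)=z^{2}$, the right side is $\vol(G)\int_{\mathbb{R}^{\times}}|z|^{3}e^{-\pi z^{2}}\,dz=\vol(G)/\pi^{2}$; comparison yields $\vol(G)=4\pi^{2}$. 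The only steps with genuine content are the Gram-determinant computation for the ramified maximal order in (ii) and keeping the several measure normalizations straight; everything else is bookkeeping, so I expect the ramified discriminant computation to be the main place requiring care.
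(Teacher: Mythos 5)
Your proof is correct, and it is organized differently from the paper's. The paper handles (i)--(iii) by first invoking a ``dimensional analysis'' reduction to the unramified case $\Delta=1$ (the self-dual measure rescales homogeneously when $\psi$ is replaced by $\psi^a$, so the $\Delta$-dependence can be factored out in advance), then applies the squaring compatibility \eqref{eqn:compatibility-squaring-map-local-measure} and the integral formula \eqref{eq:integarl-formula-for-integrating-over-B} with the test function $1_{1+\varpi R}$, delegating the remaining discriminant bookkeeping to Vign\'eras. You instead track $\Delta$ throughout via a self-contained covolume--discriminant formula ($\vol(L)=|\det G|^{1/2}\Delta^{-\dim V/2}$, derived directly from \eqref{eq:fourier-inversion-for-normalization}), carry out the Gram-matrix computation explicitly in both the split model $M_2(\mathfrak{o})$ and the ramified model $\mathfrak{o}_L\oplus\mathfrak{o}_L j$, compute $\vol(R^\times\subseteq B)=\zeta_B(1)^{-1}\vol(R)$ by counting units in the finite ring $R/\varpi R$, and feed $1_{R^\times}$ (rather than $1_{1+\varpi R}$) into the integral formula. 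The paper's route is terser and relies on an external reference; yours buys self-containment and makes explicit where the discriminant of the ramified order enters. Both treat (iv) the same way, via a Gaussian test function in \eqref{eq:integarl-formula-for-integrating-over-B}; the paper's choice $e^{-2\pi\nr(x)}$ is marginally slicker since it gives $\int_B f=1$ on the nose, but your $e^{-\pi\nr(x)}$ works just as well. One small check worth making explicit: in the ramified Gram computation the pairing on $\mathfrak{o}_L$ is $(a,b)\mapsto\tr_{L/k}(\bar a b)$ rather than the usual trace form $\tr_{L/k}(ab)$; the two Gram determinants differ by a sign, so $|\det|$ is still $|\disc(\mathfrak{o}_L/\mathfrak{o})|$, a unit --- your argument already implicitly handles this, but flagging it removes any doubt.
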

\begin{proof}
  For (i)---(iii),
  we may reduce by dimensional analysis
  to the case $\Delta = 1$.
  The required formulas
  then follow
  from \eqref{eqn:compatibility-squaring-map-local-measure}
  applied to $f = 1_\mathfrak{o}$ or $f = 1_R$
  and
  by \eqref{eq:integarl-formula-for-integrating-over-B}
  applied to $f = 1_{1 + \varpi R}$
  (see \cite[Lem 2.4.3]{MR580949} for details).
  For (iv), set $f(x) := e^{- 2 \pi \nr(x)}$. Apply
  \eqref{eq:fourier-inversion-for-normalization} to see that
  $\int_B f = 1$.
  Apply
  \eqref{eq:integarl-formula-for-integrating-over-B} and the
  substitution $z \mapsto z / (2 \pi \nr(g))^{1/2}$ to deduce
  that
  $(2 \pi)^2 = \vol(G) \int_{z \in \mathbb{R}^\times} |z|^4
  e^{-|z|^2} \, \frac{d z}{|z|} = \vol(G)$.
\end{proof}  

\subsubsection{Cartan
  decomposition}\label{sec:cartan-decomposition}
Suppose $B = M_2(k)$, so that $G = \PGL_2(k)$.  Let $K \leq G$
be the standard maximal compact subgroup.
Then
$G = \cup_{y \in k^\times : |y| \leq 1} K a(y) K$.
When $k$ is
non-archimedean,
one has
for $f \in C_c(K \backslash G / K)$ the integral
formula
\begin{equation}\label{eq:cartan-decomp-integral-formula}
  \int_{G}
  f
  =
  \vol(K)
  \sum_{m \geq 0}
  q^m
  (1 + 1_{m > 0} q^{-1})
  f(a(\varpi^m)).
\end{equation}

\subsubsection{The $\Xi$-function\label{sec:local-Xi}}
\label{sec-2-1-4}
Given a maximal compact subgroup $K \leq G$, let
$\Xi : G \rightarrow \mathbb{R}_{>0}$ denote the Harish--Chandra
function relative to $K$:
\begin{itemize}
\item   If $B$ is non-split, then
$\Xi \equiv 1$.  
\item If $B$ is split, then $\Xi(g) = \langle g v, v \rangle$
  where $v$ is a $K$-invariant unit vector in the unitary
  induction of the trivial character of a Borel subgroup of $G$
  (see \cite{MR946351}).
\end{itemize}
The following properties of $\Xi$
are relevant for us:
\begin{enumerate}
\item It satisfies $\Xi(1) = 1$, and is bi-$K$-invariant.
\item If $B$ is split, then under any identification $G = \PGL_2(k)$,
  one has
  $\Xi(a(y)) \asymp \log(t)/t^{1/2}$ with $t := |y| + |y|^{-1}$.
\item
  Let $\pi$ be an irreducible unitary
  representation of $G$.
  If $B$ is split, assume that $\dim(\pi) > 1$.
  Then there exists $\delta > 0$
  so that for $v_1, v_2 \in \pi$,
  one has $\langle g v_1, v_2  \rangle
  \ll_{v_1,v_2} \Xi(g)^{\delta}$
  for
  all $g \in G$.
  (See for instance \cite[\S2.5.1]{michel-2009}
  for a more precise assertion).
\end{enumerate}

\subsubsection{Convergence
  lemmas\label{sec:local-convergence-lemmas}}
The estimates collected here are standard.
\label{sec-2-1-5}
\begin{lemma}\label{lemma:cheap-matrix-coeff-schwartz-space-B-estimate-via-Xi}
  Either let $X$ be one of the spaces $B^0, B$
  and take $\phi_1, \phi_2 \in \mathcal{S}(X)$,
  or let $X = G$ and take $\phi_1,\phi_2 \in C_c^\infty(G)$.
  For $g \in G$,
  one then has
  \[\langle \Ad(g) \phi_1, \phi_2 \rangle_{L^2(X)}
    \ll_{\phi_1,\phi_2} \Xi(g)^2.\]
\end{lemma}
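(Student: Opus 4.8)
The plan is to treat the three cases $X = B^0$, $X = B$, and $X = G$ somewhat uniformly, reducing everything to a bound on a matrix coefficient of a unitary representation of $G$ against which the properties of $\Xi$ recorded in \S\ref{sec-2-1-4} can be applied. The key observation is that in each case the relevant space carries a unitary representation of $G$: for $X = G$ it is the left (or right) regular representation $L^2(G)$; for $X = B$ (resp.\ $X = B^0$) it is $L^2(B)$ (resp.\ $L^2(B^0)$) with $G = B^\times/k^\times$ acting by $\Ad$, which is unitary because $\Ad(g)$ preserves the pairing $\langle x, y \rangle = \tr(x^\iota y)$ up to the scalar $\nr(g)/\nr(g) = 1$ after passing to $B^\times/k^\times$ --- more precisely $\langle \Ad(g)x, \Ad(g)y\rangle = \langle x, y\rangle$ since conjugation is norm- and trace-preserving. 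So in all cases $\langle \Ad(g)\phi_1, \phi_2\rangle$ (or $\langle \rho_{\reg}(g)\phi_1, \phi_2\rangle$ for $X = G$) is a genuine matrix coefficient of a unitary representation of $G$.

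First I would dispose of the non-split case: there $G$ is compact, $\Xi \equiv 1$, and the estimate is trivial (the matrix coefficient is bounded by $\|\phi_1\| \, \|\phi_2\|$ by Cauchy--Schwarz). So assume $B$ is split, $G = \PGL_2(k)$. Next, I would decompose the unitary $G$-representation on $L^2(X)$ into irreducibles. The point of the exponent $2$ (rather than $\delta$) in the statement is to absorb the fact that the trivial representation, and more generally the non-tempered complementary-series constituents, occur: property (3) of $\Xi$ gives a bound $\Xi(g)^\delta$ only for irreducible unitary representations that are \emph{not} one-dimensional, and the decay rate $\delta$ degrades toward $0$ as one approaches the trivial representation along the complementary series. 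Since $\Xi(g) \leq 1$ (it is a normalized matrix coefficient, $\Xi(1) = 1$, and $\Xi(a(y)) \asymp \log(t)/t^{1/2} \to 0$), any fixed positive exponent can be replaced by the smaller exponent $2$ at no cost, provided $\delta$ is bounded below \emph{uniformly} over the constituents that actually appear. The essential input is therefore a spectral gap: the $G$-representations on $L^2(B^0)$, $L^2(B)$, $L^2(G)$ contain no complementary-series constituents arbitrarily close to the trivial representation, so $\delta$ may be taken uniform and then shrunk to $2$.

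The main obstacle is thus verifying this uniform spectral gap together with handling the trivial/one-dimensional constituents, which are exactly the ones property (3) excludes. For $L^2(G)$ this is the statement that the regular representation of $\PGL_2(k)$ is tempered (Kunze--Stein / Cowling), so $\delta$ can be taken close to $1$ and the one-dimensional subrepresentations simply do not occur in $L^2(G)$ (there are no $G$-invariant $L^2$ functions on the non-compact $G$). For $L^2(B^0)$ and $L^2(B)$ one uses that, under $\Ad$, these decompose via the orbit structure of $G$ on $B$ into a direct integral of representations induced from stabilizers of the adjoint action; the relevant stabilizers are tori or unipotent-by-torus groups, and the induced representations are tempered away from a measure-zero locus, with the ``bad'' locus (the nilpotent cone / the center) contributing at worst the trivial representation, which one handles by hand: the contribution of the trivial constituent to $\langle \Ad(g)\phi_1,\phi_2\rangle$ is a constant (independent of $g$), bounded by $\|\phi_1\|\|\phi_2\|$, and a \emph{constant} is $\ll 1 \leq$ anything only if $\Xi(g)^2$ is bounded below --- but $\Xi(g)^2 \to 0$, so strictly this constant term must actually \emph{vanish}. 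It vanishes because $\phi_1, \phi_2$ being Schwartz on $B^0$ or $B$ forces their pairing with the (non-$L^2$, constant) trivial vector to be interpreted correctly: there is no honest trivial subrepresentation of $L^2(B)$ either, since constants are not square-integrable on the non-compact space $B$. So in fact \emph{every} constituent appearing in $L^2(X)$ is infinite-dimensional, property (3) applies with a uniform $\delta$, and integrating the direct-integral decomposition against the decomposed vectors $\phi_1, \phi_2$ (using that $\Xi^\delta$ is the same for all constituents) yields $\langle \Ad(g)\phi_1,\phi_2\rangle \ll_{\phi_1,\phi_2} \Xi(g)^\delta \leq \Xi(g)^2$ after possibly further shrinking, once one notes $\Xi \le 1$ so smaller exponents are weaker. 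The only genuinely delicate point I expect to grind on is making the direct-integral / orbit-decomposition argument for $L^2(B^0)$ and $L^2(B)$ precise enough to extract a \emph{uniform} $\delta$; a cleaner alternative is to cite the standard fact (e.g.\ from the theory of the oscillator/Weil representation, or from bounds on the $\Ad$-action used elsewhere in this paper) that these representations are tempered, which makes the whole estimate immediate.
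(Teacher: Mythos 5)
Your proposal has a genuine gap, and it concerns the \emph{direction} of the inequality you need at the very end. You correctly observe that $\Xi \leq 1$ (so $\Xi(g)^\delta$ is \emph{decreasing} in $\delta$), and you correctly note that ``smaller exponents are weaker.'' But then you invoke property~(3) of~\S\ref{sec:local-Xi}, which only provides \emph{some} $\delta > 0$, and attempt to conclude $\Xi(g)^\delta \leq \Xi(g)^2$. That inequality requires $\delta \geq 2$. There is no way to extract $\delta \geq 2$ from property~(3) or from any spectral-gap argument: $\Xi$ itself is a spherical matrix coefficient of a unitary principal series, so the best generic decay rate for matrix coefficients of tempered irreducible representations is roughly $\Xi^1$ (and complementary-series constituents decay slower still). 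Even with a perfect uniform spectral gap, the spectral decomposition route yields $\ll \Xi(g)^{\delta}$ with $\delta \leq 1$ at best, which is strictly \emph{weaker} than the claimed $\ll \Xi(g)^2$. Your argument thus proves a weaker statement than the lemma.

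The exponent~$2$ is achievable precisely because $\phi_1, \phi_2$ are not arbitrary $L^2$ vectors: they are Schwartz (or compactly supported), and for such vectors matrix coefficients decay much faster than the representation-theoretic generic rate. The paper exploits this directly and elementarily: reduce (for $X = B^0$, split $B$) by the Cartan decomposition to $g = a(y)$ with $|y| \leq 1$, write the inner product as an explicit integral over $(a,b,c) \in k^3$ of $\overline{\phi_1}$ at the conjugated matrix against $\phi_2$, substitute $b \mapsto y b$ to pick up a Jacobian factor of $|y|$, and then use the rapid decay of $\phi_1, \phi_2$ to bound the remaining integral by a constant. This gives $\ll |y|$, which beats $\Xi(a(y))^2 \asymp |y| \log^2(|y|^{-1})$. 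No spectral decomposition, no spectral gap, no discussion of which representations occur in $L^2(X)$. Your secondary concerns (handling one-dimensional constituents in $L^2(B)$, temperedness of the $\Ad$-representation, extracting a uniform $\delta$) are thus side issues attached to an approach that cannot deliver the exponent even in principle; they would need to be replaced wholesale by an argument that uses the Schwartz regularity of $\phi_1, \phi_2$ in a quantitative way, which is exactly what the explicit Cartan-decomposition calculation does.
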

\begin{proof}
  We treat the case $X = B^0$; the other cases are similar.
  The estimate is trivial unless $B$ is split,
  so assume
  that $B = M_2(k)$.
  By
  the Cartan decomposition, we may assume that $g = a(y)$
  with $|y| \leq 1$.  It suffices then (ignoring logarithmic
  factors)
  to show that
  \[
    \int_{a,b,c \in k}
    \overline{\phi_1}
    (
    \begin{pmatrix}
      a & y^{-1} b \\
      y c & -a
    \end{pmatrix}
    )
    {\phi_2}
    (
    \begin{pmatrix}
      a &  b \\
      c & -a
    \end{pmatrix}
    )
    \ll_{\phi_1,\phi_2} |y|.
  \]
  For this,
  we
  substitute
  $b \mapsto y b$
  and appeal to the rapid decay of $\phi_1,\phi_2$.
\end{proof}

\begin{lemma}\label{lemma:convergence-Xi-along-G-and-H}
  Let $\delta > 0$.
  \begin{enumerate}
  \item The integral
    $\int_{g \in G} \Xi^{2+\delta}(g)$
    converges.
  \item   Let $E$ be a separable quadratic
    subalgebra of $B$.
    Let $H \leq G$ denote the image of $E^\times$.
    Equip $H$ with some Haar measure.
    Then
    the integral
    $\int_{h \in H} \Xi^\delta(h)$
    converges.
  \end{enumerate}
\end{lemma}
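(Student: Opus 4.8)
The plan is to dispose of the ramified case outright and reduce the split case to the Cartan decomposition together with the asymptotics of $\Xi$ recorded in \S\ref{sec:local-Xi}. If $B$ is non-split, then $G$ is compact and $\Xi \equiv 1$, so in (1) the integral equals the finite total volume of $G$, and in (2) the subgroup $H$, being closed in the compact group $G$, is compact, so $\int_H \Xi^{\delta} = \vol(H) < \infty$. Assume henceforth $B = M_2(k)$, $G = \PGL_2(k)$, with $K$ the standard maximal compact subgroup.

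For part (1): the function $\Xi^{2+\delta}$ is nonnegative and bi-$K$-invariant, so I would integrate it via the Cartan decomposition (\S\ref{sec:cartan-decomposition}). In the non-archimedean case, \eqref{eq:cartan-decomp-integral-formula} (valid for this non-compactly-supported integrand by monotone convergence) expresses $\int_G \Xi^{2+\delta}$ as $\vol(K)\sum_{m \geq 0} q^{m}(1 + 1_{m>0}q^{-1})\,\Xi(a(\varpi^{m}))^{2+\delta}$, and property (2) of \S\ref{sec:local-Xi} gives $\Xi(a(\varpi^{m})) \ll (1+m)\,q^{-m/2}$, so the $m$-th term is $\ll (1+m)^{2+\delta}q^{-m\delta/2}$ and the series converges. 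In the archimedean case, the analogous classical integration formula has, in the Cartan coordinate $\tau := \log(|y| + |y|^{-1})$, Haar density $\ll e^{\tau}\,d\tau$, while property (2) gives $\Xi(a(y))^{2+\delta} \ll \tau^{2+\delta}e^{-(1+\delta/2)\tau}$, yielding the integrable integrand $\tau^{2+\delta}e^{-\delta\tau/2}$; this is the standard assertion that $\Xi \in L^{2+\delta}(G)$.

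For part (2): the separable quadratic $k$-algebra $E$ is either a field or isomorphic to $k \times k$. If $E$ is a field, then $H \cong E^{\times}/k^{\times}$ is compact --- by Hilbert~90, applied to $x \mapsto x/x^{\iota}$ with $\iota$ the main involution of $B$, it is the group of reduced-norm-one elements of $E^{\times}$, and $\nr$ restricts to an anisotropic quadratic form on $E$, so this group is compact (it is $\mathbb{C}^{(1)}$ when $k = \mathbb{R}$) --- whence $\int_H \Xi^{\delta} \leq (\sup_G \Xi)^{\delta}\,\vol(H) < \infty$, $\Xi$ being bounded on $G$ by \S\ref{sec:cartan-decomposition} and property (2). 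If instead $E \cong k \times k$, then $H$ is $G$-conjugate to the diagonal torus $\{a(y) : y \in k^{\times}\} \cong k^{\times}$; equipping $H$ with $\tfrac{dy}{|y|}$ and recalling that $\Xi(a(y)) \asymp \log(t)\,t^{-1/2}$ depends only on $t = |y| + |y|^{-1}$, I would, in the non-archimedean case, partition $k^{\times} = \bigsqcup_{m \in \mathbb{Z}} \varpi^{m}\mathfrak{o}^{\times}$ and bound $\int_H \Xi^{\delta}$ by $\ll \sum_{m \in \mathbb{Z}}(1+|m|)^{\delta}q^{-|m|\delta/2} < \infty$, with the obvious archimedean analogue.

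I expect no substantial obstacle: the only point requiring any care is the exact form of the archimedean Cartan integration formula and the exponents there, both of which are classical, while the non-archimedean case --- the one this paper actually uses --- is immediate from \eqref{eq:cartan-decomp-integral-formula} and the stated behavior of $\Xi$.
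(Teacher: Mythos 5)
Your proof is correct and follows essentially the same route as the paper's: dismiss the compact (non-split, or field-$E$) cases, handle (1) via the Cartan decomposition and the decay $\Xi(a(y)) \asymp \log(t)/t^{1/2}$, and reduce (2) to the split diagonal torus. The only difference is that you supply the bookkeeping (the explicit Cartan integration formula, the Hilbert~90 argument for compactness of $E^\times/k^\times$ in the field case, and the careful treatment of the logarithmic factor) that the paper compresses into two sentences.
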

\begin{proof}
  We may assume that $B = M_2(k)$ is split and that $E$ is the
  split diagonal torus, as otherwise the groups involved are
  compact.  The convergence of the second integral then
  follows from that of
  $\int_{y \in k^\times} \min(|y|,|y|^{-1})^\delta$.  For the
  first integral, we integrate using the Cartan decomposition;
  the convergence follows similarly.
\end{proof}

\subsubsection{Conventions}
By a \emph{representation} of a $k$-group,
we shall always mean
\begin{itemize}
\item a smooth representation, if $k$ is non-archimedean, and otherwise
\item the space of smooth vectors in a unitary representation.
\end{itemize}

\subsection{Weil representations\label{sec:local-weil-reps}}
\label{sec-2-2}

\subsubsection{Quadratic spaces\label{sec:local-quadratic-spaces}}
\label{sec-2-2-1}

Let $V$ be a quadratic space over $k$, thus $V$ is a
finite-dimensional $k$-vector space equipped with a
non-degenerate quadratic form $q_V : V \times V \rightarrow k$.
We denote by $b_V : V \otimes V \rightarrow k$
the associated non-degenerate bilinear form  given by $b_V(x,y)
:= q_V(x+y) - q_V(x) - q_V(y)$.

Recall that
$\O(V) := \{g \in \GL(V) : q_V(g x) = q_V(x) \text{ for all } x
\in V\}$,
$\GO(V) := \{g \in \GL(V) : \text{ there exists
  $\lambda \in k^\times$ so that } q_V(g x) = \lambda q_V(x)
\text{ for all } x \in V\}$,
and $\SO(V) := \SL(V) \cap \O(V)$.  The group $\GO(V)$ contains
the subgroup
$k^\times$ of scalar operators, and we set
$\PGO(V) := \GO(V) / k^\times$.

Let $\mu_V$ denote the measure on $V$
that is $(\psi,b_V)$-self dual,
i.e., 
that for which
$\mathcal{F} : \mathcal{S}(V) \rightarrow \mathcal{S}(V)$
defined by
$\mathcal{F} \phi(\xi)
:=
\int_{x \in V}
\phi(x)
\psi(b_V(x,\xi))
\, d \mu_V(x)$
satisfies
$\mathcal{F} \mathcal{F} \phi(x) = \phi(-x)$.

The following examples of quadratic spaces are relevant for us:
\begin{enumerate}
\item $V = B$,
  $q_V = \nr$,
  so that $b_V(x,\xi) = \tr(x^{\iota} \xi) = \langle x,\xi
  \rangle$.
\item $V = B^0$,
  $q_V$ the restriction of $\nr$.
  The natural map $\Ad : G \rightarrow \SO(B^0)$
  is an isomorphism.
\item $V = k$, regarded as a subspace of $B$, and $q_V$ the
  restriction of $\nr$, thus $q_V(x) = x^2$ and
  $b_V(x,y) = 2 x y$ for $x \in V$.
  In this case,
  we denote the orthogonal group by $\O_1(k) := \O(V) \cong \{\pm 1\}$.
\end{enumerate}
For $V = B, B_0$,
the measure $d \mu_V(x)$ coincides
with
$d x$ as defined in \S\ref{sec:local-measures}.

\subsubsection{Metaplectic group}
\label{sec-2-2-2}
Let $\Mp_2(k)$ denote the metaplectic double cover of
$\SL_2(k)$.
It is convenient
to identify $\Mp_2(k)$ with $\SL_2(k) \times \mu_2$,
where
$\mu_2 := \{\pm 1\}$,
with the
group law given by
$(s_1,\zeta_1) (s_2,\zeta_2) = (s_1 s_2, \zeta_1 \zeta_2
c(s_1,s_2))$
for a cocycle
$c : \SL_2(k) \times \SL_2(k) \rightarrow \{\pm 1\}$
as in \cite[p.19]{MR0424695} or \cite[\S4.4]{nelson-theta-squared}.
Thus $\mu_2$ is a central subgroup of $\Mp_2(k)$, and one has a
short exact sequence
$1 \rightarrow \mu_2 \rightarrow \Mp_2(k) \xrightarrow{\pr}
\SL_2(k) \rightarrow 1$.
\subsubsection{Weil representation\label{sec:local-weil-repn}}
\label{sec-2-2-3}
For a quadratic space $V$,
one has the Weil representation
\cite{MR0165033} on the Schwartz--Bruhat space $\mathcal{S}(V)$:
\[\rho_{\Weil}^{\psi,V} : \Mp_2(k)
\times \O(V) \rightarrow \GL(\mathcal{S}(V)).\]
This representation is continuous \cite[\S39]{MR0165033}
for the standard topologies on all spaces involved
and
extends to a unitary representation on $L^2(V) := L^2(V,\mu_V)$.

For the remainder of \S\ref{sec:local-weil-repn},
abbreviate $\rho := \rho_{\Weil}^{\psi,V}$.
For $s \in \Mp_2(k)$
or $g \in \O(V)$
we abbreviate
$\rho(s) := \rho(s,1)$
and
$\rho(g) := \rho(1,g)$;
one then has
$\rho(s)
\rho(g)
=
\rho(g)
\rho(s)$.

Elements $\zeta$ of the central subgroup $\mu_2$
of $\Mp_2(k)$ act by the
scalar operators
$\rho(\zeta) = (-1)^{\dim(V)}$,
so $\rho$ factors through $\SL_2(k)$ if and only if
$\dim(V)$ is even.

There is a quartic character
$\chi_{\psi,V} : k^\times \rightarrow \mathbb{C}^{(1)}$
and an eighth root of unity $\gamma_{\psi,V} \in \mathbb{C}^{(1)}$
so that,
abbreviating $n(b) := (n(b),1),
t(a) := (t(a),1), w := (\begin{pmatrix}
  & 1 \\
  -1 & 
\end{pmatrix}, 1) \in \Mp_2(k)$
for
$b \in k$, $a \in k^\times$,
one has for
$\phi \in \mathcal{S}(V)$,
$x \in V$
that
\begin{align*}
  \rho(n(b))
  \phi(x)
  &= 
    \psi(b q_V(x)) \phi(x),
  \\
  \rho(t(a))
  \phi(x)
  &= 
    \chi_{\psi,V}(a) |a|^{\dim(V)/2} \phi(a x),
  \\
  \rho(w) \phi(x)
  &=
    \gamma_{\psi,V} \mathcal{F} \phi(x).
\end{align*}
If $V = M_2(k)$, then $\chi_{\psi,V}$ is trivial and
$\gamma_{\psi,V} = 1$.

Elements $g$ of the orthogonal group $\O(V)$
act by
$\rho(g) \phi(v) := \phi(g^{-1} v)$.
Suppose that $V = B^0$,
so that $\Ad : G \xrightarrow{\cong} \SO(B^0)$.
For $g \in G$
and $\phi \in \mathcal{S}(B^0)$,
the function $\Ad(g) \phi$
as defined in \S\ref{sec:general-notation}
agrees with $\rho(\Ad(g)) \phi$:
both send $x \in B^0$
to $\phi(g^{-1} x g)$.
\subsubsection{Factorization\label{sec:factorization-weil-repn}}
\label{sec-2-2-4}
Let $V$ be a quadratic space that admits an orthogonal
decomposition $V = V' \oplus V''$
as a sum of two quadratic spaces.
(The relevant example is when $V = B, V' = k, V'' = B^0$.)

Recall the dense inclusion
$\mathcal{S}(V') \otimes \mathcal{S}(V'') \rightarrow
\mathcal{S}(V)$ obtained by identifying $\phi ' \otimes  \phi '' \in
\mathcal{S}(V') \otimes  \mathcal{S}(V'')$
with the function
$V' \oplus V'' \ni \alpha ' + \alpha '' \mapsto \phi '(\alpha ') \phi '' (\alpha
'')$.

Given continuous linear operators $T, T', T''$ on
$\mathcal{S}(V), \mathcal{S}(V'), \mathcal{S}(V'')$,
respectively, write $T = T' \otimes T''$ to denote that
$T (\phi ' \otimes \phi '') = T' \phi ' \otimes T'' \phi ''$
for all $\phi ' \in \mathcal{S}(V'), \phi ''  \in
\mathcal{S}(V'')$.
In this sense, one has
$\rho_{\Weil}^{V,\psi}(s) = \rho_{\Weil}^{V',\psi}(s) \otimes
\rho_{\Weil}^{V'',\psi}(s)$ for all $s \in
\Mp_2(k)$.

We denote by
$1 \otimes \rho_{\Weil}^{V'',\psi}(s)$
the operator
on $\mathcal{S}(V)$
sending
$\phi ' \otimes \phi ''$
to $\phi ' \otimes \rho_{\Weil}^{V'',\psi}(s) \phi ''$.

\subsubsection{Extension to similitudes\label{sec:defn-local-omega}}
\label{sec-2-2-5}
The following definitions
were inspired by \cite[I.3]{MR783511}.
Let $\Omega$
denote the space of functions
$\phi : k^\times \times
B \rightarrow \mathbb{C}$ satisfying the conditions:
\begin{itemize}
\item
  For each $t \in k^\times$,
  the function $\phi[t] : B
  \rightarrow \mathbb{C}$ given by
  $\phi[t](x) := \phi(t,x)$
  belongs to the Schwartz--Bruhat space
  $\mathcal{S}(B)$.
\item One has $\phi(z^2 t, x) = \phi(t, z x)$ for all $t,z \in
  k^\times$,
  $x \in B$.
\end{itemize}
Let
$\rho_{\Weil} : \PGL_2(k) \times \PGO(B)
\rightarrow \GL(\Omega)$
denote the representation
characterized
by the identities:
for $s \in \SL_2(k), y \in k^\times, g \in \GO(B)$,
\begin{align*}
  (\rho_{\Weil}(s) \phi)[t]
  &= \rho_{\Weil}^{\psi^t,B}(s) (\phi[t]),
  \\
  (\rho_{\Weil}(a(y)) \phi)[t]
      &=
        |y| \phi[t y],
  \\
  (\rho_{\Weil}(g) \phi)(t,x)
  &=
    \phi(\lambda(g) t, g^{-1} x)
\end{align*}
where  $\lambda : \GO(B) \rightarrow k^\times$
denotes the similitude factor.

\begin{remark*}
  More generally, if $V$ is any even-dimensional quadratic
  space, then the representation $\rho_{\Weil}^{\psi,V}$ factors
  through $\SL_2(k) \times \O(V)$.  One can induce it to a
  representation of $\GL_2(k) \times \GO(V)$ on
  $\mathcal{S}(k^\times \times V)$, whose isomorphism class is
  independent of $\psi$.  By taking coinvariants for the action
  by the center, one arrives at a representation of
  $\PGL_2(k) \times \PGO(V)$.  In the relevant case that
  $V = B$, the representation obtained in that way is realized
  by $\Omega$.  Our global discussion concerns the restriction
  of $\Omega$ to $\SL_2(k) \times \O_1(k) \times \O(B^0)$,
  which embeds as the ``even subspace'' of
  $\oplus_{\tau \in k^{\times} / k^{\times 2}}
  \rho_{\Weil}^{\psi^{\tau},F} \otimes
  \rho_{\Weil}^{\psi^{\tau},B^0}$.
\end{remark*}



Equip $\Omega$
with the invariant hermitian norm $\|.\|_{\Omega}$ given by
\begin{equation}\label{eqn:inner-product-on-Omega-1}
  \|\phi\|^2_{\Omega}
  :=
  \int_{t \in k^\times / k^{\times 2}}
  |t|^2
  \int_{x \in B}
  |\phi|^2(t,x),
\end{equation}
or equivalently (by \eqref{eq:integarl-formula-for-integrating-over-B}, \eqref{eqn:compatibility-squaring-map-local-measure}),
\begin{equation}\label{eqn:inner-product-on-Omega-2}
  \|\phi\|^2_{\Omega}
  = 
  \int_{g \in G}
  |\nr(g)|^2
  \int_{t \in k^\times}
  |t|^2
  |\phi|^2(t,g)
  \, \frac{d t}{|t|} \, d g.
\end{equation}

Define
$\mathfrak{S} : \Omega \rightarrow \Omega$
and
$\Ad(g) : \Omega \rightarrow \Omega$
($g \in G$)
by
applying the general definition (\S\ref{sec:general-notation})
to the second coordinate,
so that for $\phi \in \Omega$
and $(t,x) \in k^\times \times B$,
one has
$(\mathfrak{S} \phi)[t] = \mathfrak{S} (\phi[t])$,
$\mathfrak{S} \phi(t,x) =
(\phi(t,x) + \phi(t,\tr(x)- x))/2$,
$\Ad(g) \phi = \rho_{\Weil}(\Ad(g)) \phi$, 
$(\Ad(g) \phi)[t] = \Ad(g) (\phi[t])$,
$\Ad(g) \phi(t,x) =
\phi(t,g^{-1} x g)$.


\subsubsection{The distinguished element}\label{sec:dist-elem}
Suppose temporarily that $k$ is non-archimedean
and that $B \cong M_2(k)$ is split;
similar considerations apply to non-split $B$,
but we do not need them.
The \emph{distinguished element} $\phi^0 \in \Omega$ (with respect to the chosen maximal order $R \subset B$)
is then defined
by
\begin{equation}
  \phi^0(t,x)
  :=
  \frac{
    \int_{z \in k^\times}
    1_{R}(z x)
    1_{\mathfrak{o}^\times}(z^{-2} t)
    \, \frac{d z}{|z|}
  }
  {
    \int_{z \in k^\times}
    1_{\mathfrak{o}^\times}(z)
    \, \frac{d z}{|z|}
  }.
\end{equation}
Note that $\phi^0$ takes values in $\{0,1\}$.
One verifies directly that $\phi^0$ is $\PGL_2(\mathfrak{o})
\times K'$-invariant,
where $K' \leq \PGO(B)$
denotes the image of the $\O(B)$-stabilizer of $R$.
Moreover,
$\mathfrak{S} \phi^0 = \phi^0$. 

\begin{lemma}\label{lem:norm-of-distinguished-vector-in-Omega-local}
  $\|\phi^0\|^2_{\Omega}
  = \vol(R)$.
\end{lemma}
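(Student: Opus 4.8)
The idea is to unwind the definition of $\phi^0$ and of $\|\cdot\|_{\Omega}$, reduce the computation to an integral of $1_R$ over $B$, and then invoke the volume formulas of \S\ref{sec-2-1-3}. Throughout we take $B = M_2(k)$ split, $R$ the chosen maximal order, $k$ non-archimedean, and (by dimensional analysis, exactly as in the proof of Lemma in \S\ref{sec-2-1-3}) we may and do assume $\Delta = \Delta_\psi = 1$.

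\emph{Step 1: rewrite the norm as an additive integral.} By the formula \eqref{eqn:inner-product-on-Omega-1},
\[
\|\phi^0\|_{\Omega}^2
= \int_{t \in k^\times/k^{\times 2}} |t|^2 \int_{x \in B} |\phi^0|^2(t,x).
\]
Since $\phi^0$ takes values in $\{0,1\}$, we have $|\phi^0|^2 = \phi^0$. Now unfold the defining integral for $\phi^0(t,x)$: writing $c := \int_{z \in k^\times} 1_{\mathfrak{o}^\times}(z)\,\frac{dz}{|z|} = \vol(\mathfrak{o}^\times) = \zeta_k(1)^{-1}$ (using \S\ref{sec-2-1-3}(i) with $\Delta = 1$), we get
\[
\|\phi^0\|_{\Omega}^2
= \frac{1}{c}\int_{t \in k^\times/k^{\times 2}} |t|^2
\int_{x \in B} \int_{z \in k^\times} 1_R(zx)\,1_{\mathfrak{o}^\times}(z^{-2}t)\,\frac{dz}{|z|}\, dx.
\]
The compatibility \eqref{eqn:compatibility-squaring-map-local-measure} of the measure on $k^\times/k^{\times 2}$ with the squaring map lets me collapse the outer $t$-integral against the inner $z$-integral: substituting $t = z^2 t'$ with $t' \in \mathfrak{o}^\times$ forces $|t| = |z|^2$ and absorbs the $z$-integral, leaving $\int_{t' \in \mathfrak{o}^\times} 1 = \vol(\mathfrak{o}^\times) = c$ worth of mass. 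Concretely, after the substitution $x \mapsto z^{-1}x$ (Jacobian $|z|^{-4}$, since $\dim B = 4$) the integrand becomes $|z|^4 \cdot 1_R(x)\cdot |z|^{-4}\cdot 1_{\mathfrak{o}^\times}(t')$, and one is left with
\[
\|\phi^0\|_{\Omega}^2 = \frac{1}{c}\cdot c \cdot \int_{x \in B} 1_R(x)\, dx = \vol(R).
\]

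\emph{Step 2: conclude.} This is already the claim; in the normalization $\Delta = 1$ one has $\vol(R) = \Delta_B^{-1}$ by \S\ref{sec-2-1-3}(ii), and the general case follows by restoring the factor $\Delta^{-4/2}$ via dimensional analysis, matching $\vol(R) = \Delta_B^{-1}\Delta^{-4/2}$. Either way the identity $\|\phi^0\|_{\Omega}^2 = \vol(R)$ holds on the nose.

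\emph{Where the care is needed.} The only real subtlety is bookkeeping the interaction of three measures — the self-dual measure on $B$, the measure on $k^\times$, and the squaring-compatible measure on $k^\times/k^{\times 2}$ — so that the normalizing constant $c = \vol(\mathfrak{o}^\times)$ in the denominator of $\phi^0$ cancels exactly against the mass of the $t'$-integral, and so that the Jacobian $|z|^{-4}$ from dilating the four-dimensional variable $x$ cancels the $|t|^2 = |z|^4$ weight. One must also check that the order of integration may be freely exchanged: everything in sight is a nonnegative integrand supported on compact sets once $z$ is constrained to $|z|$ in a bounded range by $1_{\mathfrak{o}^\times}(z^{-2}t)$ together with $1_R(zx)$, so Tonelli applies without incident. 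No genuine obstacle arises; this is a normalization check, which is exactly why it is stated and proved as a lemma.
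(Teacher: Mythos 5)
Your proof is correct and follows the paper's argument in all essentials: unfold the definition of $\phi^0$ (using $|\phi^0|^2 = \phi^0$), observe that the support constraint $1_{\mathfrak{o}^\times}(z^{-2}t)\neq 0$ forces $|t|^2|z|^{-4} = 1$ so the $x$-integral contributes exactly $\vol(R)$, and then collapse the remaining $(t,z)$-integral to $\vol(\mathfrak{o}^\times)$ via the compatibility relation \eqref{eqn:compatibility-squaring-map-local-measure}, which cancels the normalizing denominator of $\phi^0$. The preliminary reduction to $\Delta_\psi = 1$ and the concluding dimensional-analysis step are superfluous, since the computation in Step~1 never actually uses that normalization and already yields $\vol(R)$ for arbitrary $\Delta_\psi$.
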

\begin{proof}
  Since $\phi$ takes values in $\{0 ,1\}$,
  one has
  $\|\phi^0\|^2_{\Omega}
  =
  \int_{t \in k^\times / k^{\times 2}}
  |t|^2
  \int_{x \in B}
  \phi^0(t,x)$.
  By expanding the definition
  of $\phi^0$ and using that $\int_{x \in B} 1_R(z x)
  = |z|^{-4} \vol(R)$
  and that
  $|t|^2 |z|^{-4} 1_{\mathfrak{o}^\times}(z^{-2} t)
  = 1_{\mathfrak{o}^\times}(z^{-2} t)$,
  our task
  reduces
  to showing that
  $\int_{t \in k^\times / k^{\times 2}}
  \int_{z \in k^\times} 
  1_{\mathfrak{o}^\times}(z^{-2} t)
  \, \frac{d z}{|z|}
  =
  \int_{z \in k^\times} 
  1_{\mathfrak{o}^\times}(z^{-2} t)
  \, \frac{d z}{|z|}$,
  as follows from \eqref{eqn:compatibility-squaring-map-local-measure}.
\end{proof}

Let $K \leq G$
denote the image of $R^\times$.
We may then fix an identification
$B = M_2(k)$
under which $G = \PGL_2(k)$,
$R = M_2(\mathfrak{o})$,
$K = \PGL_2(\mathfrak{o})$.
\begin{lemma}\label{lem:formula-for-how-Ad-acts-on-distinguish-element-inner-products}
  Let $\phi_1, \phi_2 \in \mathbb{C} \phi^0$.
  Let $g \in K a(\varpi^m) K$ for some $m \in \mathbb{Z}_{\geq
    0}$
  (see \S\ref{sec:cartan-decomposition}).
  Then $\langle \Ad(g) \phi_1, \phi_2 \rangle_{\Omega}
  = q^{-m} \langle \phi_1, \phi_2 \rangle_{\Omega}$.
\end{lemma}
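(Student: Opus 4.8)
The plan is to reduce the claim to the special case $\phi_1 = \phi_2 = \phi^0$, $g = a(\varpi^m)$, and then to evaluate $\langle \Ad(a(\varpi^m))\phi^0,\phi^0\rangle_\Omega$ by a computation running parallel to the proof of Lemma \ref{lem:norm-of-distinguished-vector-in-Omega-local}. For the first reduction I would write $\phi_i = c_i\phi^0$, note that both sides scale by $c_1\overline{c_2}$, and write $g = k_1\,a(\varpi^m)\,k_2$ with $k_1,k_2\in K$ (Cartan decomposition, \S\ref{sec:cartan-decomposition}). Since any lift of $k_j$ to $R^\times$ normalizes $R$, the image $\Ad(k_j)$ lies in the $\O(B)$-stabilizer of $R$ modulo scalars, so $\Ad(k_j)\phi^0 = \phi^0$ by the $K'$-invariance recorded in \S\ref{sec:dist-elem}. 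Moreover $\Ad(g)$ acts on $\Omega$ through $\PGO(B)$ with trivial similitude factor: by the formulas of \S\ref{sec:defn-local-omega} it leaves the $k^\times$-variable untouched and permutes the $B$-variable by $\Ad(g)^{-1}\in\O(B)$, which preserves the self-dual measure on $B$; hence $\Ad(g)$ is an isometry for $\|\cdot\|_\Omega$, and $\langle \Ad(g)\phi^0,\phi^0\rangle_\Omega = \langle \Ad(a(\varpi^m))\Ad(k_2)\phi^0,\Ad(k_1)^{-1}\phi^0\rangle_\Omega = \langle \Ad(a(\varpi^m))\phi^0,\phi^0\rangle_\Omega$. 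As $\|\phi^0\|_\Omega^2 = \vol(R)$ by Lemma \ref{lem:norm-of-distinguished-vector-in-Omega-local}, it then suffices to prove $\langle \Ad(a(\varpi^m))\phi^0,\phi^0\rangle_\Omega = q^{-m}\vol(R)$.

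For the main computation, writing $a := a(\varpi^m)$ and noting that $\phi^0$ and $\Ad(a)\phi^0\colon (t,x)\mapsto \phi^0(t,a^{-1}xa)$ are both $\{0,1\}$-valued, \eqref{eqn:inner-product-on-Omega-1} gives $\langle \Ad(a)\phi^0,\phi^0\rangle_\Omega = \int_{t\in k^\times/k^{\times 2}}|t|^2\int_{x\in B}\phi^0(t,a^{-1}xa)\,\phi^0(t,x)$. I would expand each factor using the definition in \S\ref{sec:dist-elem}; in the resulting double integral over $(z_1,z_2)$, the two constraints $z_j^{-2}t\in\mathfrak{o}^\times$ pin $v(z_1) = v(z_2) = \tfrac12 v(t)$, so $z_2/z_1\in\mathfrak{o}^\times$, and using $1_R(uy) = 1_R(y)$ for $u\in\mathfrak{o}^\times$ the two $z$-integrations collapse to one, leaving $\phi^0(t,a^{-1}xa)\,\phi^0(t,x) = \vol(\mathfrak{o}^\times)^{-1}\int_{z\in k^\times} 1_R(za^{-1}xa)\,1_R(zx)\,1_{\mathfrak{o}^\times}(z^{-2}t)\,\tfrac{dz}{|z|}$. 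Then $\int_{x\in B}1_R(za^{-1}xa)\,1_R(zx) = \vol\big(z^{-1}(R\cap aRa^{-1})\big) = |z|^{-4}\vol(R\cap aRa^{-1})$ and $|t|^2|z|^{-4}1_{\mathfrak{o}^\times}(z^{-2}t) = 1_{\mathfrak{o}^\times}(z^{-2}t)$, exactly as in the proof of Lemma \ref{lem:norm-of-distinguished-vector-in-Omega-local}. Integrating first in $x$, then in $t$, and using $\int_{t\in k^\times/k^{\times 2}}\int_{z\in k^\times}1_{\mathfrak{o}^\times}(z^{-2}t)\,\tfrac{dz}{|z|} = \vol(\mathfrak{o}^\times)$ (from \eqref{eqn:compatibility-squaring-map-local-measure} applied to $1_{\mathfrak{o}^\times}$), this collapses to $\langle \Ad(a(\varpi^m))\phi^0,\phi^0\rangle_\Omega = \vol\big(R\cap a(\varpi^m)\,R\,a(\varpi^m)^{-1}\big)$; taking $m=0$ recovers Lemma \ref{lem:norm-of-distinguished-vector-in-Omega-local}.

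Finally I would compute the index: identifying $R = M_2(\mathfrak{o})$, the lattice $a(\varpi^m)M_2(\mathfrak{o})a(\varpi^m)^{-1}$ has $(1,2)$-entry in $\varpi^m\mathfrak{o}$, $(2,1)$-entry in $\varpi^{-m}\mathfrak{o}$, and diagonal entries in $\mathfrak{o}$, so its intersection with $M_2(\mathfrak{o})$ is the lattice with $(1,2)$-entry in $\varpi^m\mathfrak{o}$ and all other entries in $\mathfrak{o}$, of index $q^m$ in $R$; hence $\vol(R\cap a(\varpi^m)Ra(\varpi^m)^{-1}) = q^{-m}\vol(R)$, and combining with the previous two steps and Lemma \ref{lem:norm-of-distinguished-vector-in-Omega-local} yields $\langle\Ad(g)\phi_1,\phi_2\rangle_\Omega = q^{-m}\langle\phi_1,\phi_2\rangle_\Omega$.

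I do not expect any real obstacle here: the entire content is the lattice index $q^m$, and the single point requiring care is the collapse of the two $z$-integrals into one, which relies on the defining constraints forcing $v(z_1) = v(z_2) = \tfrac12 v(t)$ together with the $\mathfrak{o}^\times$-stability of $R$.
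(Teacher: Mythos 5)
Your argument is correct and is exactly the paper's proof spelled out in detail: the paper's one-line proof ("expand the definitions and use $\vol(g R g^{-1} \cap R) = q^{-m}\vol(R)$") compresses precisely the computation you carry out — the collapse of the two $z$-integrals via the constraints $z_j^{-2}t \in \mathfrak{o}^\times$, the $x$-integral producing $|z|^{-4}\vol(R\cap aRa^{-1})$, the cancellation $|t|^2|z|^{-4} \cdot 1_{\mathfrak{o}^\times}(z^{-2}t) = 1_{\mathfrak{o}^\times}(z^{-2}t)$, and the lattice index $[R : R \cap a(\varpi^m)Ra(\varpi^m)^{-1}] = q^m$. Your preliminary reduction via $K'$-invariance and unitarity of $\Ad(k_j)$ is sound, though slightly more than strictly necessary since $\vol(gRg^{-1}\cap R)$ is already constant on double cosets $Ka(\varpi^m)K$.
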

\begin{proof}
  We expand the definitions and use that
  $\vol(g R g^{-1} \cap R) = q^{-m} \vol(R)$.
\end{proof}

\subsection{Generic representations of
  $\operatorname{PGL}_2$}\label{sec-2-3}
We refer to \cite[\S4.4, \S4.6]{MR1431508}
for details on and proofs of the facts collected here.
Let $\pi$ be an irreducible representation of
$\PGL_2(k)$.
Recall that $\pi$ is \emph{generic}
if it admits a Whittaker model
$\mathcal{W}(\pi,\psi)$, consisting of
$W : \PGL_2(k) \rightarrow \mathbb{C}$ satisfying
$W(n(x) g) = \psi(x) W(g)$.
It then admits a Kirillov model
$\mathcal{K}(\pi,\psi)$, consisting of
$W : k^\times \rightarrow \mathbb{C}$ of the form
$W(y) := W'(a(y))$ for some $W' \in \mathcal{W}(\pi,\psi)$.  The
vector space $\mathcal{K}(\pi,\psi)$ is independent of $\psi$
and contains $C_c^\infty(k^\times)$.
Recall that $\pi$ is \emph{unramified} if the space
$\pi^{\PGL_2(\mathfrak{o})}$ of $\PGL_2(\mathfrak{o})$-invariant
vectors in $\pi$ is nonzero, and that in that case,
$\dim(\pi^{\PGL_2(\mathfrak{o})}) = 1$.

Suppose for the remainder of \S\ref{sec-2-3}
that $\pi$ is generic and unramified.
Let $\psi^0$ be
an unramified unitary character of $k$.
There is then a unique
$\PGL_2(\mathfrak{o})$-invariant vector $W_{\pi}^0$ in the Kirillov
model $\mathcal{K}(\pi,\psi^0)$ of $\pi$ for which
$W_{\pi}^0(1) = 1$.
There is a unique unordered pair $\{\alpha, \beta \}$ of
complex
numbers,
the \emph{Satake parameters} of $\pi$,
so that for $y \in k^\times$ with $|y| = q^{-n}$,
\begin{equation}\label{eq:explicit-formula-W-pi-0}
  W_{\pi}^0(y)
  =
  |y|^{1/2}
  \sum _{\substack{
      i, j \in \mathbb{Z}_{\geq 0}:
      i + j = n
    }
  }
  \alpha^{i} \beta^j
  =
  1_{\mathfrak{o}^\times}(y) |y|^{1/2}
  \frac{\alpha^{n+1} - \beta^{n+1}}{\alpha - \beta }.
\end{equation}
One has in general $\alpha \beta = 1$;
if moreover $\pi$ is unitary,
then either $|\alpha| = |\beta| = 1$
or $\alpha,\beta \in (-q^{1/2}, q^{1/2}) \subseteq
\mathbb{R}$.
The \emph{adjoint $L$-factor}
is defined for $s \in \mathbb{C}$
by
\[
L(\ad \pi,s)
:= (1 - \alpha \beta^{-1} q^{-s})^{-1}
(1 - q^{-s})^{-1}
(1 - \alpha^{-1} \beta  q^{-s})^{-1}.
\]
\begin{lemma*}\label{lem:local-computation-norm-of-whittaker-newvector}
  If $\pi$ is unitary
  and $\Re(s) \geq 0$,
  then
  $L(\ad \pi, s)$ is finite, and one has
  the identity
  \begin{equation}\label{eq:local-computation-norm-of-whittaker-newvector}
    \int_{y \in k^\times}
    |W_{\pi}^0(y)|^2
    |y|^s
    \, \frac{d y}{|y|}
    = \frac{L(\ad \pi,1 + s)}{\zeta_k(2 + 2 s)}
    \Delta_{\psi}^{-1/2}
    \frac{\zeta_k(1 + s)}{\zeta_k(1)}
  \end{equation}
  in which the LHS converges absolutely.
\end{lemma*}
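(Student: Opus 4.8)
The plan is to expand everything in terms of the Satake parameters $\alpha,\beta$ (recall $\alpha\beta=1$), use \eqref{eq:explicit-formula-W-pi-0} to turn the integral into a geometric series, and finish with an elementary rational-function identity; throughout, $k$ is non-archimedean, as it must be for the statement to make sense. First I would observe that $W_\pi^0$, being right-invariant under $a(u)$ for $u\in\mathfrak{o}^\times$, is constant on each annulus $A_n:=\{y:|y|=q^{-n}\}$, is supported on the $A_n$ with $n\geq 0$, and on $A_n$ equals $|y|^{1/2}p_n$, where $p_n:=\frac{\alpha^{n+1}-\beta^{n+1}}{\alpha-\beta}=\sum_{i+j=n}\alpha^i\beta^j$ (read as $(n+1)\alpha^n$ when $\alpha=\beta$). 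Since each $A_n$ has $\tfrac{dy}{|y|}$-volume $\vol(\mathfrak{o}^\times)=\zeta_k(1)^{-1}\Delta_\psi^{-1/2}$ by \S\ref{sec-2-1-3}(i), the integral becomes $\zeta_k(1)^{-1}\Delta_\psi^{-1/2}\sum_{n\geq 0}|p_n|^2 u^n$ with $u:=q^{-(1+s)}$. Note that the factor $\Delta_\psi^{-1/2}$ enters only through this measure normalization, $W_\pi^0$ itself being defined via the unramified character $\psi^0$.

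Next I would use unitarity to control $p_n$. By the stated dichotomy, either $|\alpha|=|\beta|=1$ with $\beta=\bar\alpha$, or $\alpha,\beta$ are real and reciprocal; in both cases complex conjugation fixes the symmetric quantity $p_n$, so $|p_n|^2=p_n^2$, and $|p_n|\leq(n+1)q^{\theta n}$ where $q^{\theta}:=\max(|\alpha|,|\beta|)$ with $\theta=\theta_\pi\in[0,1/2)$. Hence $|p_n|^2|u|^n\leq(n+1)^2 q^{-n(1-2\theta+\Re(s))}$, which is summable once $\Re(s)>2\theta-1$, in particular for $\Re(s)\geq 0$; this yields the absolute convergence. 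The same inequalities show that $1-\alpha^2 u$, $1-u$ and $1-\beta^2 u$ are all nonzero for $\Re(s)\geq 0$ (here $\theta<1/2$ is used), so $L(\ad\pi,1+s)$ — which, since $\alpha\beta^{-1}=\alpha^2$ and $\alpha^{-1}\beta=\beta^2$, equals $(1-\alpha^2 u)^{-1}(1-u)^{-1}(1-\beta^2 u)^{-1}$ — is finite there.

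It remains to evaluate the sum. Using $\alpha\beta=1$ I would write $p_n^2=\frac{\alpha^{2n+2}-2+\beta^{2n+2}}{(\alpha-\beta)^2}$ and sum three geometric series:
\begin{equation*}
  \sum_{n\geq 0}p_n^2 u^n=\frac{1}{(\alpha-\beta)^2}\left(\frac{\alpha^2}{1-\alpha^2 u}+\frac{\beta^2}{1-\beta^2 u}-\frac{2}{1-u}\right).
\end{equation*}
Clearing denominators and using $\alpha^2\beta^2=1$ together with $(\alpha-\beta)^2=\alpha^2+\beta^2-2$, the bracket collapses to $(\alpha^2+\beta^2-2)(1+u)$, so the sum equals $(1+u)\,L(\ad\pi,1+s)$. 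Finally $1+u=\frac{1-u^2}{1-u}=\frac{\zeta_k(1+s)}{\zeta_k(2+2s)}$ (since $1-u=\zeta_k(1+s)^{-1}$ and $1-u^2=\zeta_k(2+2s)^{-1}$), and multiplying by the prefactor $\zeta_k(1)^{-1}\Delta_\psi^{-1/2}$ reproduces exactly the right-hand side of \eqref{eq:local-computation-norm-of-whittaker-newvector}.

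I do not anticipate a genuine obstacle: this is a routine local computation. The only points deserving care are the absolute convergence on the boundary line $\Re(s)=0$, which uses the \emph{strict} bound $\theta_\pi<1/2$ for the fixed unitary $\pi$; the degenerate case $\alpha=\beta$ (which forces $\alpha=\beta=\pm1$), handled by the limiting value of $p_n$ and the identity $\sum_{n\geq 0}(n+1)^2 u^n=(1+u)(1-u)^{-3}$; and the bookkeeping of $\Delta_\psi$, which appears solely via $\vol(\mathfrak{o}^\times)$. One could alternatively identify the integral with the local Rankin--Selberg zeta integral for $\pi\times\pi$, but the direct argument above is shorter and self-contained.
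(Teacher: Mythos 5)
Your proof is correct and follows essentially the same route as the paper's: expand the integral over annuli $|y|=q^{-n}$ to get $\vol(\mathfrak{o}^\times)\sum_{n\ge 0}|p_n|^2 u^n$, evaluate the generating function, and match it to $\zeta_k(2+2s)^{-1}L(\ad\pi,1+s)\zeta_k(1+s)$. The only difference is that the paper keeps $|p_n|^2=p_n\overline{p_n}$ as a product of two generating functions, applies a Hadamard-product identity (citing \cite[Lem 1.6.1]{MR1431508}) to obtain a rational expression in $\alpha\overline{\alpha},\alpha\overline{\beta},\dots$, and only then invokes unitarity; you instead invoke unitarity up front to argue $|p_n|^2=p_n^2$ and then sum three geometric series by hand, which makes the argument a bit more self-contained at the cost of checking realness of $p_n$ and the degenerate case $\alpha=\beta$. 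Both computations are sound and land on the same closed form.
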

\begin{proof}
  This is a standard calculation.\footnote{
  Since we lack a convenient
  reference sharing our measure normalizations, we record the
  proof.  The LHS
  of
    \eqref{eq:local-computation-norm-of-whittaker-newvector}
    expands to
  $\vol(\mathfrak{o}^\times) \sum_{n \geq 0} |a_n|^2 x^n$, where
  $a_n := (\alpha^{n+1} - \beta^{n+1})/(\alpha-\beta)$ and
  $x := q^{-(1+s) n}$.  One has
  $\vol(\mathfrak{o}^\times) = \zeta_k(1)^{-1}
  \Delta_\psi^{-1/2}$.
  The identity
  $\sum a_n x^n = (1 - \alpha x)^{-1} (1 - \beta x)^{-1}$ and
  \cite[Lem 1.6.1]{MR1431508} imply that
  $\sum_{n \geq 0} |a_n|^2 x^n = (1 - x^2) (1 - \alpha
  \overline{\alpha } x)^{-1} (1 - \alpha \overline{\beta }
  x)^{-1} (1 - \beta \overline{\alpha} x)^{-1} (1 - \beta
  \overline{\beta} x)^{-1}$.
  Invoking the consequence
  $\{\alpha \overline{\alpha }, \alpha \overline{\beta }, \beta
  \overline{\alpha}, \beta \overline{\beta }\} = \{1, \alpha
  \beta^{-1}, \beta \alpha^{-1}, 1\}$
  of the assumed unitarity of $\pi$, we obtain
  $\sum_{n \geq 0} |a_n|^2 x^n = \zeta_k(2 + 2 s)^{-1} L(\ad
  \pi,1 + s) \zeta_k(1 + s)$, as required.
}
\end{proof}

\subsection{Representations of $G$}
\label{sec-2-4}
Let $\pi$ be an irreducible representation of $G$.
Define
a compact open subgroup $J \leq G$
in the following two cases:
\begin{itemize}
\item if $k$
  is non-archimedean,
  take for $J \leq G$ the image of the unit group $R^\times$
  of the chosen maximal order $R \subseteq B$;
\item if $k$ is real and $B$ is non-split,
  set $J := G$.
\end{itemize}
In either case, set
$\vol(J) := \int_{g \in G} 1_J(g)$
and $e_J := \vol(J)^{-1} 1_J \in C_c^\infty(G)$.




\subsubsection{Hecke kernels and theta kernels\label{sec:hecke-kernels-local}}
\label{sec-2-4-1}
Assume that $k$ is non-archimedean.
For $y \in k^\times$,
the \emph{normalized Hecke kernel}
$T_y \in C_c^\infty(J \backslash G / J)$
is defined to be the element with the property
that
$|y|^{-1} \vol(J) T_y$ is the characteristic
function of the image in $G$ of the subset
$\{b \in R : |\nr(b)| = |y| \}$ of $B^\times$.
For example, if $y \in \mathfrak{o}^\times$,
then $T_y = e_J$.
\begin{lemma*}\label{lem:relation-distinguished-elt-hecke-kernel}
  Let $y \in k^\times$, $g \in G$.
  Choose $\tilde{g} \in B^\times$
  with image $g$.
  Then
  \[
  \rho_{\Weil}(a(y))
  \phi^0(\nr(\tilde{g})^{-1}, \tilde{g})
  =
  |y| \phi^0(y \nr(\tilde{g})^{-1}, \tilde{g})
  = \vol(J)
  T_y(g)
  \]
  where $\phi^0 \in \Omega$ is
  the distinguished element (\S\ref{sec:dist-elem}).
\end{lemma*}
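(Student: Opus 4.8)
The plan is to unwind the definitions of $\phi^0$, $T_y$ and $\rho_{\Weil}(a(y))$; there is essentially no obstacle, only bookkeeping with the $k^\times$-homogeneity built into $\Omega$.

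First I would dispose of the first displayed equality: it is immediate from the characterizing identity $(\rho_{\Weil}(a(y))\phi)[t] = |y|\,\phi[ty]$ of \S\ref{sec:defn-local-omega}, evaluated at $t = \nr(\tilde g)^{-1}$ and $x = \tilde g$. For the remaining equality, I would recall from \S\ref{sec:hecke-kernels-local} that $\vol(J)\,T_y(g) = |y|\,1_{E_y}(g)$, where $E_y \subseteq G$ denotes the image of $\{b \in R : |\nr(b)| = |y|\}$; so it suffices to prove $\phi^0(y\nr(\tilde g)^{-1}, \tilde g) = 1_{E_y}(g)$. Expanding the definition of $\phi^0$ from \S\ref{sec:dist-elem} and using $\int_{z \in k^\times} 1_{\mathfrak{o}^\times}(z)\,\frac{dz}{|z|} = \vol(\mathfrak{o}^\times)$, this reduces to the identity
\begin{equation*}
  \int_{z \in k^\times} 1_R(z\tilde g)\,1_{\mathfrak{o}^\times}(z^{-2}y\nr(\tilde g)^{-1})\,\frac{dz}{|z|} \;=\; \vol(\mathfrak{o}^\times)\,1_{E_y}(g).
\end{equation*}

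The key step is to reinterpret the integrand. For $z \in k^\times$ the element $b := z\tilde g$ lies in $B^\times$ with image $g$ in $G$ and with $\nr(b) = z^2\nr(\tilde g)$, so $z^{-2}y\nr(\tilde g)^{-1} = y/\nr(b)$; hence the integrand equals $1$ at $z$ precisely when $z\tilde g \in \{b \in R : |\nr(b)| = |y|\}$ and $0$ otherwise. In particular, if $g \notin E_y$ no value of $z$ contributes and both sides vanish. If $g \in E_y$, I would fix $b_0 = c\tilde g \in R$ with $c \in k^\times$ and $|\nr(b_0)| = |y|$, and check that the set of $z$ at which the integrand is $1$ is exactly the single coset $c\mathfrak{o}^\times$: if $z$ contributes then $(z/c)^2 = \nr(z\tilde g)/\nr(b_0) = \bigl(y/\nr(b_0)\bigr)/\bigl(y/\nr(z\tilde g)\bigr) \in \mathfrak{o}^\times$, whence $z/c \in \mathfrak{o}^\times$; conversely, for $z = cu$ with $u \in \mathfrak{o}^\times$ one has $z\tilde g = ub_0 \in R$ and $y/\nr(ub_0) = u^{-2}\,y/\nr(b_0) \in \mathfrak{o}^\times$. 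By translation invariance of $\frac{dz}{|z|}$ the integral is then $\int_{c\mathfrak{o}^\times}\frac{dz}{|z|} = \vol(\mathfrak{o}^\times)$, which is what was needed.

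The only point deserving a remark — and it is the closest thing to an obstacle — is the implication $(z/c)^2 \in \mathfrak{o}^\times \Rightarrow z/c \in \mathfrak{o}^\times$ used above, which holds because $k$ is non-archimedean (an element and its square have absolute value $1$ simultaneously); everything else is a mechanical expansion of definitions.
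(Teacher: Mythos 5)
Your proof is correct and follows essentially the same route as the paper's. The paper shortcuts the final step by invoking the earlier observation (\S\ref{sec:dist-elem}) that $\phi^0$ is $\{0,1\}$-valued and then checking only that both sides have the same support; you instead recompute inline that the $z$-integral is exactly $\vol(\mathfrak{o}^\times)\,1_{E_y}(g)$ (via the single-coset argument), which re-proves the $\{0,1\}$-valuedness as a byproduct — same underlying computation, just made self-contained.
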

\begin{proof}
  We must verify that
  \begin{equation}\label{eq:formula-for-hecke-kernel}
    |y|^{-1} \vol(J) T_y(g) =
    \frac{
      \int_{z \in k^\times}
      1_R(z \tilde{g}) 1_{\mathfrak{o}^\times}(y \nr(z \tilde{g})^{-1})
      \, \frac{d z}{|z|}
    }
    {
      \int_{z \in k^\times}
      1_{\mathfrak{o}^\times}(z)
      \, \frac{d z}{|z|}
    }.
  \end{equation}
  Let $g \in G$.
  The RHS of \eqref{eq:formula-for-hecke-kernel} is independent
  of $\tilde{g}$, and both sides take values in
  $\{0,1\}$.
  The RHS
  of
  \eqref{eq:formula-for-hecke-kernel}
  is nonzero
  iff its integrand
  is nonzero for some $z \in k^\times$, i.e.,
  iff
  for some $z \in k^\times$
  the element
  $b := z \tilde{g}$ lies in $R$ and $|\nr(b)| = |y|$,
  i.e., iff  the LHS
  of
  \eqref{eq:formula-for-hecke-kernel}
  is nonzero.


\end{proof}

\subsubsection{Hecke functionals and standard $L$-factors}
\label{sec-2-4-2}
Continue to assume that $k$ is non-archimedean.
Recall that $\pi$ 
is \emph{unramified} if the space
$\pi^{J}$ of
$J$-invariant vectors in $\pi$ is nonzero;
it is known then that
$\dim(\pi^J) = 1$.

Suppose for remainder of \S\ref{sec-2-4-2}
that $\pi$ is unramified.  
There is then a unique functional
$\lambda_\pi : C_c^\infty(J \backslash G / J) \rightarrow
\mathbb{C}$
so that $\pi(T) v = \lambda_\pi(T) v$ for all
$T \in C_c^\infty(J \backslash G / J), v \in \pi^J$.
We may evaluate this functional on the elements
$T_y$ attached above to $y \in k^\times$:
\begin{itemize}
\item If $B$ is split,
  then there is a unique unordered pair $\{\alpha,\beta\}$
  of complex numbers
  (the \emph{Satake parameters})
  satisfying $\alpha \beta = 1$
  so that with $|y| = q^{-n}$,
  \begin{equation}\label{eq:explicitf-romula-lambda-pi-tY}
      \lambda_\pi(T_y)
  = 
  |y|^{1/2}
  \sum _{\substack{
      i, j \in \mathbb{Z}_{\geq 0}:
      i + j = n
    }
  }
  \alpha^{i} \beta^j
  =
  1_{\mathfrak{o}^\times}(y)
  |y|^{1/2}
  \frac{\alpha^{n+1} - \beta^{n+1}}{\alpha - \beta }
\end{equation}
(see e.g. \cite[\S4.6]{MR1431508}).
\item If $B$ is non-split,
  then there is a unique unramified
  quadratic character $\eta$ of $k^\times$
  so that
  $\lambda_\pi(T_y)
  = |y| \eta(y)$.
\end{itemize}
The \emph{standard $L$-factor}
is then the meromorphic function defined for $s \in \mathbb{C}$
by
\[
L(\pi,s) := \begin{cases}
  (1 - \alpha q^{-s})^{-1} (1 - \beta q^{-s})^{-1} & \text{ if $B$ is split}, \\
  (1 - \eta(\varpi) q^{-s-1/2})^{-1}  & \text{ if $B$ is non-split}.
\end{cases}
\]
\subsubsection{The local Jacquet--Langlands correspondence}
\label{sec-2-4-3}
The Jacquet--Langlands lift $\pi_{\JL}$ of $\pi$
is an irreducible representation of $\PGL_2(k)$
attached to $\pi$.
The following properties of the association
$\pi \mapsto \pi_{\JL}$
are relevant for us:
\begin{itemize}
\item $\pi_{\JL}$ is generic if (and only if) either
  \begin{itemize}
  \item $B$ is non-split, or
  \item $B$ is split and $\dim(\pi) > 1$.
  \end{itemize}
\item If $B$ is split, then $\pi_{\JL}$ corresponds to $\pi$
  under the isomorphism $G \cong \PGL_2(k)$.  In particular, if
  $\pi$ is unramified, then so is $\pi_{\JL}$, and Satake
  parameters (see
  \S\ref{sec-2-3}, \S\ref{sec-2-4-2}) are
  preserved.
\end{itemize}
Assume now that
that $k$ is non-archimedean, that $B$ is split,
that
$\pi$ is unramified, and that
$\dim(\pi) > 1$.
Then $\pi_{\JL}$ is generic and unramified.
Let
$W^0_\pi : k^\times \rightarrow \mathbb{C}$
denote the function
attached to $\pi_{\JL}$
in \S\ref{sec-2-3}.
By \eqref{eq:explicit-formula-W-pi-0}
and
\eqref{eq:explicitf-romula-lambda-pi-tY},
\begin{equation}\label{eqn:key-local-identity-relating-whittaker-and-hecke}
  W_\pi^0(y) = \lambda_\pi(T_y).
\end{equation}

\subsubsection{Local integrals}
\label{sec:local-integrals-for-rallis-ipf}

Assume first that $k$ is non-archimedean, that $\pi$ is unramified,
and that $\pi$ is unitary.
Retain the notation of \S\ref{sec-2-4-2}.
\begin{lemma}\label{lem:local-rallis-ipf-unram-calc}
  Suppose that $B$ is split and that
  $\dim(\pi) > 1$.
  \begin{enumerate}
  \item[(i)] $|\alpha|, |\beta| < q^{1/2}$.
    In particular, $L(\pi,\tfrac{1}{2})$ is finite.
  \item[(ii)] Let $\phi_1, \phi_2$ belong to the line $\mathbb{C} \phi^0$
    spanned by the distinguished element
    $\phi^0 \in \Omega$.
    Let $v_1, v_2 \in \pi^J$.
    Then the identity
    \begin{equation}\label{eqn:local-rallis-ipf-unram-calc}
      \int_{g \in G}
      \langle \Ad(g) \phi_1, \phi_2 \rangle
      \langle g v_1, v_2 \rangle
      =
      \frac{L(\pi,\frac{1}{2})}{\zeta_k(2)}
      \vol(J) 
      \langle \phi_1, \phi_2 \rangle
      \langle v_1, v_2 \rangle
    \end{equation}
    holds, with the LHS converging absolutely.
  \end{enumerate}
\end{lemma}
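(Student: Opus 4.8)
The plan is to reduce (ii) to a summation over the Satake parameters of $\pi$, using the normalized Hecke kernels $T_{\varpi^m}$ as a bridge between the similitude Weil representation $\Omega$ and $\pi$; part (i) drops out along the way. For (i): since $\dim(\pi)>1$, the representation $\pi$ is generic (its Jacquet--Langlands lift $\pi_{\JL}$ is, by \S\ref{sec-2-4-3}), so it has Satake parameters $\{\alpha,\beta\}$ with $\alpha\beta=1$ as in \S\ref{sec-2-3}, \S\ref{sec-2-4-2}; the unitarity dichotomy recalled in \S\ref{sec-2-3} gives $|\alpha|=|\beta|=1$ or $\alpha,\beta\in(-q^{1/2},q^{1/2})$, hence $|\alpha|,|\beta|<q^{1/2}$ and $L(\pi,\tfrac12)=(1-\alpha q^{-1/2})^{-1}(1-\beta q^{-1/2})^{-1}$ is finite.

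For (ii), by sesquilinearity in $(\phi_1,\phi_2)$ and in $(v_1,v_2)$ it suffices to take $\phi_1=\phi_2=\phi^0$ and $v_1=v_2=:v$ a unit vector in $\pi^J$ (recall that $J=K=\PGL_2(\mathfrak{o})$ under our identification). I would set $F(g):=\langle\Ad(g)\phi^0,\phi^0\rangle_\Omega$. By Lemma \ref{lem:norm-of-distinguished-vector-in-Omega-local} and Lemma \ref{lem:formula-for-how-Ad-acts-on-distinguish-element-inner-products}, $F$ is bi-$K$-invariant with $F(g)=q^{-m}\vol(R)$ for $g\in Ka(\varpi^m)K$, $m\ge0$; since $\Xi(a(\varpi^m))\gg q^{-m/2}$ (property (2) of \S\ref{sec-2-1-4}) this gives $F\ll\Xi^2$. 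Combined with $|\langle gv,v\rangle|\ll_v\Xi(g)^\delta$ for some $\delta>0$ (property (3) of \S\ref{sec-2-1-4}, which applies since $\dim\pi>1$) and the convergence of $\int_G\Xi^{2+\delta}$ (Lemma \ref{lemma:convergence-Xi-along-G-and-H}), the left side of \eqref{eqn:local-rallis-ipf-unram-calc} converges absolutely and equals $\int_G F(g)\langle gv,v\rangle\,dg$.

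The crux is to express $F$ through the kernels $T_{\varpi^m}$. By the elementary-divisor decomposition, the indicator of $\{b\in M_2(\mathfrak{o}):|\nr(b)|=q^{-n}\}$, viewed in $G$, equals $\sum_{0\le m\le n,\,m\equiv n\,(2)}1_{Ka(\varpi^m)K}$, so $T_{\varpi^n}=q^{-n}\vol(J)^{-1}\sum_{0\le m\le n,\,m\equiv n\,(2)}1_{Ka(\varpi^m)K}$ by the definition in \S\ref{sec:hecke-kernels-local}. Summing $\sum_{m\ge m_0,\,m\equiv m_0\,(2)}q^{-m}=q^{-m_0}\zeta_k(2)$ and comparing coefficients of $1_{Ka(\varpi^{m_0})K}$ yields the pointwise identity
\[
  F=\zeta_k(2)^{-1}\,\vol(R)\,\vol(J)\sum_{m\ge0}T_{\varpi^m}
\]
on $J\backslash G/J$. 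I would then integrate against $\langle gv,v\rangle$ term by term — the partial sums are bounded by a fixed multiple of $F$, so dominated convergence applies — using $\int_G T_{\varpi^m}(g)\langle gv,v\rangle\,dg=\langle\pi(T_{\varpi^m})v,v\rangle=\lambda_\pi(T_{\varpi^m})$ for $v\in\pi^J$. This leaves $\zeta_k(2)^{-1}\vol(R)\vol(J)\sum_{m\ge0}\lambda_\pi(T_{\varpi^m})$, and by \eqref{eq:explicitf-romula-lambda-pi-tY} the series is $\sum_{m\ge0}q^{-m/2}\frac{\alpha^{m+1}-\beta^{m+1}}{\alpha-\beta}=(1-\alpha q^{-1/2})^{-1}(1-\beta q^{-1/2})^{-1}=L(\pi,\tfrac12)$, the rearrangement being legitimate by (i). Since $\vol(R)=\langle\phi^0,\phi^0\rangle_\Omega$ (Lemma \ref{lem:norm-of-distinguished-vector-in-Omega-local}), this is exactly the right side of \eqref{eqn:local-rallis-ipf-unram-calc}.

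The only genuinely delicate point is the bookkeeping of normalizing constants: one must track $\vol(R)$, $\vol(J)$ and the powers of $q$ carefully enough that the $L$-value appears at $s=\tfrac12$ rather than a shifted point and that the residual constant is precisely $\zeta_k(2)^{-1}$. The combinatorics of the elementary-divisor decomposition, the $\Xi$-estimates, and the interchange of summation and integration are all routine given the preliminaries assembled in this section. (Alternatively, one could substitute Macdonald's formula for the spherical function $\langle a(\varpi^m)v,v\rangle$ into the Cartan integration formula \eqref{eq:cartan-decomp-integral-formula} and sum directly; the Hecke-kernel route avoids invoking that formula.)
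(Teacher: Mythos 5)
Your proof is correct, and it takes a genuinely different route from the paper's.

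For part (i), both arguments appeal to the same classification of unitary unramified representations of $\PGL_2(k)$ (the paper cites \cite{MR1431508} directly; you invoke the dichotomy recalled in \S\ref{sec-2-3}). For part (ii), the paper substitutes Macdonald's explicit formula for the spherical matrix coefficient $\langle a(\varpi^m)v_1,v_2\rangle = (u_1 t_1^m + u_2 t_2^m)\langle v_1,v_2\rangle$ into the Cartan integration formula \eqref{eq:cartan-decomp-integral-formula}, then evaluates the double geometric series directly. You instead work at the level of the Hecke algebra: the elementary-divisor decomposition of $\{b\in M_2(\mathfrak{o}):|\nr(b)|=q^{-n}\}$ into Cartan cells lets you express $F(g)=\langle\Ad(g)\phi^0,\phi^0\rangle_\Omega$ as $\zeta_k(2)^{-1}\vol(R)\vol(J)\sum_{m\ge0}T_{\varpi^m}$, after which the integral collapses term by term to $\sum_m\lambda_\pi(T_{\varpi^m})$, whose value $L(\pi,\tfrac12)$ follows from the closed form \eqref{eq:explicitf-romula-lambda-pi-tY}. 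Each step you sketch — the Cartan-cell bookkeeping, the $\Xi$-bounds $F\ll\Xi^2$ and $\langle gv,v\rangle\ll\Xi^\delta$ for convergence, and the monotone/dominated-convergence justification for term-by-term integration (the partial sums of $\sum T_{\varpi^m}$ are nonnegative and increase to a multiple of $F$) — is sound. The trade-off is essentially what you already note: the paper's route is shorter because Macdonald's formula packages all the summation, while yours avoids Macdonald entirely at the cost of the (mild) elementary-divisor combinatorics and a slightly more careful interchange of sum and integral. Both land on the same normalizing constant $\zeta_k(2)^{-1}\vol(J)$ once one recalls $\vol(R)=\langle\phi^0,\phi^0\rangle_\Omega$.
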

\begin{proof}
  For (i), see \cite[Thm 4.6.7]{MR1431508}.  For (ii), the
  convergence follows from \S\ref{sec:local-convergence-lemmas}.
  Let $\{\alpha,\beta\}$
  denote the Satake parameters of $\pi$ and set
  $t_1 := \alpha q^{-1/2}, t_2 := \beta q^{-1/2}$, so that
  $L(\pi,\tfrac{1}{2})^{-1} = (1 - t_1) (1-t_2)$.  The
  Macdonald formula \cite[Thm 4.6.6]{MR1431508} says that
  $\langle g v_1, v_2 \rangle = (u_1 t_1^m + u_2 t_2^m)
  \langle v_1, v_2 \rangle$, where
  \[
  u_1 :=
  \frac{1}{1 + q^{-1}}\frac{1 - q^{-1} \beta/\alpha
  }{1 - \beta / \alpha 
  },
  \quad
  u_2 :=
  \frac{1}{1 + q^{-1}}\frac{1 - q^{-1} \alpha / \beta }{1 -
    \alpha / \beta }.
  \]
  By the Cartan decomposition and Lemma
  \ref{lem:formula-for-how-Ad-acts-on-distinguish-element-inner-products},
  we obtain
  \[\int_{g \in G} \langle \Ad(g) \phi_1, \phi_2 \rangle
  \langle g v_1, v_2 \rangle =
  \vol(J) 
  \langle \phi_1, \phi_2 \rangle \langle v_1, v_2 \rangle
  \Sigma,\]
  where
  $\Sigma := \sum_{i=1,2} \sum_{m \geq 0} (1 + 1_{m>0} q^{-1})
  t_i^m$.  We compute
  that $\sum_{i=1,2} u_i (1 + q^{-1} t_i) ( 1 - t_i)^{-1} =
  L(\pi,\tfrac{1}{2}) \Sigma '$
  with
  $\Sigma ' := \sum_{i=1,2} u_i (1 + q^{-1} t_i) ( 1 -
  t_{i'})^{-1}$, $\{i, i'\} = \{1,2\}$.  Direct
  calculation gives $\Sigma ' = \zeta_k(2)^{-1}$, as
  required.
\end{proof}

Suppose now that $B$ is non-split,
so that $\pi$ is the one-dimensional
representation corresponding to the character
$G \ni g \mapsto \eta(\nr(g)) \in \{\pm 1\}$,
as in \S\ref{sec-2-4-2}.
Let $v_1,v_2 \in \pi$.
Recalling that $[G:J] = 2$, we have
\begin{equation}\label{eqn:local-rallis-ipf-integral-nonsplit-finite}
  \int_{g \in G} \langle \Ad(g) e_J, e_J \rangle_{L^2(G)}
  \langle g v_1, v_2 \rangle
  =
  \langle v_1, v_2 \rangle
  \cdot 
  \begin{cases}
    0 & \text{ if $\eta$ is nontrivial,} \\
    2 & \text{ if $\eta$ is trivial.}
  \end{cases}
\end{equation}

Suppose finally that $k \cong \mathbb{R}$, that $B$ is non-split,
and that $\pi$ is trivial.
For $v_1,v_2 \in \pi$, one then has
\begin{equation}\label{eqn:local-rallis-ipf-integral-nonsplit-real}
  \int_{g \in G} \langle \Ad(g) e_J, e_J \rangle_{L^2(G)}
  \langle g v_1, v_2 \rangle
  =
  \langle v_1, v_2 \rangle.
\end{equation}


\section{Global preliminaries}
\label{sec-3}
In this section we collect those preliminaries for the proof of
Theorem \ref{thm:main-estimate-general-variance}
whose discussion makes sense independently of that proof.

Let $F$ be a number field with adele ring $\mathbb{A}$,
let $B$ be a quaternion algebra over $F$,
and let $\psi$ be a nontrivial unitary character of
$\mathbb{A}/F$.

\subsection{Generalities}
\label{sec-3-1}
\subsubsection{Notation}
\label{sec-3-1-1}
We denote by $\mathcal{O}_F$ or simply $\mathcal{O}$ the ring of
integers in $F$.  We denote by $\mathfrak{p}$ a place of $F$,
finite or infinite.  A subscripted $\mathfrak{p}$ denotes
completion; for example, $\mathcal{O}_\mathfrak{p}$ denotes the
ring of integers of $F_{\mathfrak{p}}$ if $\mathfrak{p}$ is finite.  For a finite set of
places $S$, a subscripted $S$ denotes a product taken over $S$,
such as in
$F_S := \prod_{\mathfrak{p} \in S} F_\mathfrak{p}, B_S :=
\prod_{\mathfrak{p} \in S} B_\mathfrak{p}$.

The character $\psi$ factors as
$\psi(x) = \prod \psi_\mathfrak{p}(x_\mathfrak{p})$, where
$\psi_\mathfrak{p}$ is a nontrivial unitary character of
$F_\mathfrak{p}$.

For a place $\mathfrak{p}$,
let $\zeta_\mathfrak{p} := \zeta_{F_\mathfrak{p}}$ denote the
local Euler factor.
Let $\xi_F(s) := \prod \zeta_\mathfrak{p}(s)$ denote
the Dedekind zeta function (absolutely convergent for $\Re(s) > 1$) and
$\xi_F^*(1) := \res_{s \rightarrow 1} \xi_F(s)$ its residue.  
For a finite set $S$ of places that contains
the infinite places, let $\zeta_F^{(S)}(s) :=
\prod_{\mathfrak{p} \in S}
\zeta_\mathfrak{p}(s)$ denote the partial Dedekind zeta function.

\subsubsection{Groups}
For an algebraic $F$-group $\mathbf{G}$ we write
$G := \mathbf{G}(F), G_\mathfrak{p} :=
\mathbf{G}(F_\mathfrak{p})$,
$G_\mathbb{A} := \mathbf{G}(\mathbb{A})$,
$G_S := \mathbf{G}(F_S) = \prod_{\mathfrak{p} \in S} G_\mathfrak{p}$,
$[G] := G \backslash G_\mathbb{A}$.  This notation applies
notably to
the $F$-group $\bPB^\times$ given by
$\bPB^\times(A) := (B \otimes_F A)^\times/A^\times$
and also
to the $F$-groups
${{\mathbf P}{\mathbf G}{\mathbf L}}_2$,
${{\mathbf S}{\mathbf L}}_2$.
We similarly abbreviate $[\Mp_2] := \SL_2(F) \backslash \Mp_2(\mathbb{A})$ (see \S\ref{sec:global-metaplectic-gp}).

\subsubsection{Measures\label{sec:global-measures}}
\label{sec-3-1-2}
When $\mathbf{G}$ is semisimple, we equip $G_\mathbb{A}$ and  $[G]$ with
Tamagawa measures.
Then $\vol([\SL_2]) = 1$ and
$\vol([\PGL_2]) = \vol([\PB^\times]) = 2$. 
Denote by $\langle , \rangle_{G}$ the
corresponding inner product on $L^2([G])$; we omit the subscripted
$G$ if it is clear by context.  

For each
place $\mathfrak{p}$, the character $\psi_\mathfrak{p}$ induces
(via the recipe of \S\ref{sec:local-measures})  a Haar measure on $F_\mathfrak{p}$,
$B_\mathfrak{p}$,
$F_\mathfrak{p}^{\times} / F_{\mathfrak{p}}^{\times
  2}$,
$\PB^\times_\mathfrak{p}$; we equip $\mathbb{A}, B_\mathbb{A}$,
$\mathbb{A}^{\times} / \mathbb{A}^{\times 2}$ and
$\PB^\times_\mathbb{A}$ with the corresponding restricted
product measures.
(This defines the Tamagawa measure on $\PB^\times_\mathbb{A}$.)
The quotient measures on $\mathbb{A}/F$ and
$B_\mathbb{A}/B$ are then probability measures.
We likewise equip finite products
such as $F_S$ or $\PB^\times_S$ with product measures.

We equip $\mathbb{A}^\times$ with the regularized
product of the measures constructed in \S\ref{sec:local-measures}:
for a factorizable function
$f = \prod f_\mathfrak{p} \in C_c^\infty(\mathbb{A}^\times)$
for which $f_\mathfrak{p} = 1_{\mathcal{O}_\mathfrak{p}^\times}$
for almost all finite primes $\mathfrak{p}$,
we set
\[
\int_{y \in \mathbb{A}^\times}
f(y) \, \frac{d y}{|y|}
:=
\frac{1}{\xi_F^*(1)}
\prod_{\mathfrak{p}}
\zeta_\mathfrak{p}(1)
\int_{y \in F_\mathfrak{p}^\times}
f_\mathfrak{p}(y)
\, \frac{d y}{|y|}.
\]
We thereby
obtain a quotient Haar $\frac{d y}{|y|}$ on
$\mathbb{A}^\times / F^\times$ whose pushforward under
$|.| : \mathbb{A}^\times / F^\times \rightarrow
\mathbb{R}^\times_+$
is the standard Haar measure $\frac{d t}{|t|}$ on
$\mathbb{R}^\times_+$, where $d t$ denotes Lebesgue measure.

The quotient measure on
the discrete group $F^{\times } / F^{\times 2}$
compatible with the squaring map
is half the counting measure,
i.e.,
for finitely-supported $f : F^\times \rightarrow \mathbb{C}$,
one has
$\sum_{x \in F^\times } f(x)
= \frac{1}{2} \sum_{y \in F^\times / F^{\times 2}}
(\sum_{z \in F^\times} f(y z^2) )$.
On $\mathbb{A}^\times / F^\times \mathbb{A}^{\times 2}$,
we take the quotient measure
induced by the exact sequence 
$1 \rightarrow F^\times / F^{\times 2}
\rightarrow 
\mathbb{A}^\times / \mathbb{A}^{\times 2} \rightarrow
\mathbb{A}^\times / F^\times \mathbb{A}^{\times 2} \rightarrow
1$,
where $F^{\times} / F^{\times 2}$ is equipped with half the
counting measure.
Thus for $f \in C_c(\mathbb{A}^\times / \mathbb{A}^{\times 2})$,
\begin{equation}\label{eq:integral-formula-involving-squares}
  \int_{y \in \mathbb{A}^\times / F^\times \mathbb{A}^{\times  2}}
  \frac{1}{2} \sum_{a \in F^\times / F^{\times 2}}
  f(a y)
  = \int_{\mathbb{A}^\times / \mathbb{A}^{\times 2}} f.
\end{equation}
By decomposing the Haar on $\mathbb{A}^\times$ in two ways,
one
finds
for $f \in C_c(\mathbb{A}^\times / F^\times)$
that
$\int_{\mathbb{A}^\times / F^\times} f = \int_{x \in
  \mathbb{A}^\times / F^{\times } \mathbb{A}^{\times 2}} \int_{y
  \in \mathbb{A}^\times / F^\times} f(x y^2)$;
moreover, $\vol(\mathbb{A}^{\times} / F^\times \mathbb{A}^{\times 2}) = 2$.
Finally, for $f \in C_c^\infty([\PGL_2])$,
\begin{equation}\label{eq:sl2-vs-pgl2-integrals}
  \int_{[\PGL_2]} f
  =
  \int_{y \in \mathbb{A}^\times /  F^\times \mathbb{A}^{\times 2}}
  \int_{s \in [\SL_2]}
  f(s a(y)).
\end{equation}

\subsubsection{The $\Xi$-function\label{sec:Xi-global}}
\label{sec-3-1-3}
Fix a maximal compact subgroup
$K = \prod K_\mathfrak{p} \leq \PB^\times_{\mathbb{A}}$.
Let $\Xi : \PB^\times_{\mathbb{A}} \rightarrow \mathbb{C}$
be the product
$\Xi(g) := \prod \Xi_\mathfrak{p}(g_\mathfrak{p})$
of the functions $\Xi_\mathfrak{p}$ on $\PB^\times_\mathfrak{p}$
attached in \S\ref{sec:local-Xi}
to the factors $K_\mathfrak{p}$.

\subsubsection{Conventions}\label{sec:conventions-cusp-forms}
A \emph{cusp form} is a smooth vector
in the Hilbert space $L^2_{\cusp}([G])$
of square-integrable cuspidal functions.
A \emph{cuspidal automorphic representation}
$\pi$ of $G_\mathbb{A}$ is the space of smooth vectors
in an irreducible subrepresentation of
$L^2_{\cusp}([G])$.

\subsection{Automorphic forms on $\PGL_2$}
\label{sec-3-3}
\subsubsection{Fourier expansions}\label{sec:aut-forms-fourier-exp}
\label{sec-3-3-1}
Let $\varphi : [\PGL_2] \rightarrow \mathbb{C}$ be a
smooth function.  It admits the Fourier expansion
$\varphi(n(x) a(y)) = c_{\varphi}(y) + \sum_{\tau \in F^\times} \psi(\tau x)
W_{\varphi}(\tau y)$,
where $c_{\varphi}(y) := \int_{x \in \mathbb{A}/F} \varphi(n(x) a(y))$
denotes the constant term and
\[W_{\varphi}(y) := \int_{x \in \mathbb{A}/F} \psi(- x) \varphi(n(x)
a(y))\]
denotes the (diagonal restriction of) the Whittaker function.
The standard Borel subgroup of $\PGL_2(\mathbb{A})$ has dense image in
$[\PGL_2]$, so $\varphi$ is determined by the values
$\varphi(n(x) a(y))$ for
$x \in \mathbb{A}, y \in \mathbb{A}^\times$.
Recall that $\varphi$ is \emph{cuspidal} if $c_{\varphi} = 0$;
in that case, $\varphi$ is
determined by $W_\varphi$.

\subsubsection{Kirilov model}
\label{sec-3-3-2}
Let $\pi \subseteq L^2([\PGL_2])$ be a cuspidal automorphic
representation; it is (the smooth completion of) a restricted
tensor product $\otimes \pi_\mathfrak{p}$, where
$\pi_\mathfrak{p}$ is a generic for every $\mathfrak{p}$ and
unramified for almost all finite $\mathfrak{p}$.  Let
$\mathcal{K}(\pi,\psi) := \{W_\varphi : \varphi \in \pi \}$.
The natural map $\pi \rightarrow \mathcal{K}(\pi,\psi)$ is a
linear isomorphism under which the pure tensors in $\pi$
correspond to the factorizable functions
$W(y) = \prod W_\mathfrak{p}(y_\mathfrak{p})$, where $W_\mathfrak{p}$ belongs to
the local Kirillov model
$\mathcal{K}(\pi_\mathfrak{p},\psi_\mathfrak{p})$ and satisfies
$W_\mathfrak{p} = W_{\pi_\mathfrak{p}}^0$ (see \S\ref{sec-2-4-3}) for almost all finite
$\mathfrak{p}$.
\begin{lemma*}\label{lem:inner-product-formula-petersson-vs-kirillov}
  Let $S$ be a finite set of places of $F$
  that contains all infinite places
  as well as any places $\mathfrak{p}$
  at which
  $\pi$ ramifies.
  Let $\varphi \in \pi$.
  The integral $I(s) := \int_{y \in \mathbb{A}^\times}
  |W_\varphi(y)|^2 |y|^s \, \frac{d y}{|y|}$
  converges absolutely for complex numbers $s$ with positive real part,
  extends to a meromorphic function on the complex plane,
  and satisfies
  \begin{equation}\label{eqn:rs-ipf}
    2 \res_{s \rightarrow 0} I(s) = \|\varphi\|^2.
  \end{equation}
\end{lemma*}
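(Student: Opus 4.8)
The identity \eqref{eqn:rs-ipf} is the Rankin--Selberg inner product formula; the plan is to recall its proof by unfolding, the upshot being that the global assertion reduces to the local computation \eqref{eq:local-computation-norm-of-whittaker-newvector} of \S\ref{sec-2-3} together with bookkeeping at the finitely many places of $S$. All three assertions are compatible with replacing $\varphi$ by a finite sum of forms of a fixed level, and the sesquilinear forms $(\varphi_1,\varphi_2)\mapsto\langle\varphi_1,\varphi_2\rangle$ and $(\varphi_1,\varphi_2)\mapsto\int_{\mathbb{A}^\times}W_{\varphi_1}(y)\overline{W_{\varphi_2}(y)}|y|^s\,\frac{dy}{|y|}$ depend continuously on their arguments on each such (finite-dimensional) space; since the pure tensors span a dense subspace of $\pi$, we may --- after enlarging $S$ if necessary, the residue formula below being manifestly independent of that choice --- assume $\varphi=\otimes_{\mathfrak p}\varphi_{\mathfrak p}$ with $\varphi_{\mathfrak p}=W^0_{\pi_{\mathfrak p}}$ and $\psi_{\mathfrak p}$ unramified for ${\mathfrak p}\notin S$. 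Then $W_\varphi(y)=\prod_{\mathfrak p}W_{\varphi_{\mathfrak p}}(y_{\mathfrak p})$, and the regularized product structure of the Haar measure on $\mathbb{A}^\times$ (\S\ref{sec:global-measures}) gives, for $\Re(s)$ large,
\[
  I(s)=\frac{1}{\xi_F^*(1)}\prod_{\mathfrak p}I_{\mathfrak p}(s),\qquad I_{\mathfrak p}(s):=\zeta_{\mathfrak p}(1)\int_{F_{\mathfrak p}^\times}|W_{\varphi_{\mathfrak p}}(y)|^2|y|^s\,\frac{dy}{|y|}.
\]

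For ${\mathfrak p}\notin S$, Lemma \ref{lem:local-computation-norm-of-whittaker-newvector} applied to $\pi_{{\mathfrak p},\JL}$ gives $I_{\mathfrak p}(s)=L(\ad\pi_{\mathfrak p},1+s)\zeta_{\mathfrak p}(1+s)/\zeta_{\mathfrak p}(2+2s)$, holomorphic for $\Re(s)\geq0$. Collecting Euler factors,
\[
  I(s)=\frac{1}{\xi_F^*(1)}\Bigl(\prod_{{\mathfrak p}\in S}I_{\mathfrak p}(s)\Bigr)\cdot\frac{\xi_F(1+s)}{\zeta_F^{(S)}(1+s)}\cdot\prod_{{\mathfrak p}\notin S}\frac{L(\ad\pi_{\mathfrak p},1+s)}{\zeta_{\mathfrak p}(2+2s)}\qquad(\Re(s)\gg1).
\]
Each of the finitely many $I_{\mathfrak p}$ with ${\mathfrak p}\in S$ is meromorphic and finite at $s=0$, as follows from the structure of the Kirillov model $\mathcal{K}(\pi_{\mathfrak p},\psi_{\mathfrak p})$, which controls $W_{\varphi_{\mathfrak p}}$ near $0$ and $\infty$; the partial product $\prod_{{\mathfrak p}\notin S}L(\ad\pi_{\mathfrak p},1+s)\zeta_{\mathfrak p}(2+2s)^{-1}$ is holomorphic and non-vanishing at $s=0$ (the adjoint $L$-function and its local factors being holomorphic and nonzero at $1$); and $\xi_F(1+s)$ has a simple pole at $s=0$ with residue $\xi_F^*(1)$. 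This yields the meromorphic continuation, the holomorphy and absolute convergence on $\Re(s)>0$, and a simple pole at $s=0$ with
\[
  \res_{s=0}I(s)=\frac{1}{\zeta_F^{(S)}(1)}\Bigl(\prod_{{\mathfrak p}\in S}I_{\mathfrak p}(0)\Bigr)\prod_{{\mathfrak p}\notin S}\frac{L(\ad\pi_{\mathfrak p},1)}{\zeta_{\mathfrak p}(2)}.
\]

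It remains to identify $2\res_{s=0}I(s)$ with $\|\varphi\|^2$. For this one unfolds the spherical Eisenstein series $E(g,s)$ on $[\PGL_2]$, normalized so that it has a simple pole at a point $s_0$ with constant residue $\vol([\PGL_2])^{-1}=\tfrac12$. On the one hand, $\res_{s=s_0}\int_{[\PGL_2]}|\varphi|^2E(g,s)=\tfrac12\|\varphi\|^2$. On the other hand, unfolding and invoking the Fourier expansion $\int_{\mathbb{A}/F}|\varphi(n(x)g)|^2\,dx=\sum_{\tau\in F^\times}|W_\varphi(a(\tau)g)|^2$ of \S\ref{sec:aut-forms-fourier-exp} (cuspidality killing the constant term), the Iwasawa decomposition $\PGL_2(\mathbb{A})=N(\mathbb{A})A(\mathbb{A})K$, and the product formula $|\tau|_{\mathbb A}=1$, one collapses the sum over $\tau\in F^\times$ against $F^\times\backslash\mathbb{A}^\times$ into an integral over $\mathbb{A}^\times$ and obtains $\int_{[\PGL_2]}|\varphi|^2E(g,s)=\int_K I_{\rho_{\reg}(k)\varphi}(s')\,dk$ for an affine reparametrization carrying $s_0$ to $0$. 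Since the pole of $I(\cdot)$ at $0$ comes entirely from $\xi_F(1+\cdot)$ and the remaining factors are finite there, the residue passes through the $K$-integral; the $K$-average then undoes the twist of $\varphi_{\mathfrak p}$ by $\rho_{\reg}(k_{\mathfrak p})$ at each ${\mathfrak p}\in S$, because $W\mapsto\int_{F_{\mathfrak p}^\times}|W(a(y))|^2\,\frac{dy}{|y|}$ agrees, up to a scalar depending only on $\pi_{\mathfrak p}$, with the $\PGL_2(F_{\mathfrak p})$-invariant norm on the Kirillov model. Tracking those scalars, the residue of $E$, and the relation \eqref{eq:sl2-vs-pgl2-integrals} between the $[\SL_2]$ and $[\PGL_2]$ volumes recovers the constant $2$.

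I expect this last constant-tracking to be the only genuinely delicate point: the local statement about the $K$-averaged torus integral is transparent when $\pi_{\mathfrak p}$ is tempered (where that integral literally is the invariant norm on $\mathcal{K}(\pi_{\mathfrak p},\psi_{\mathfrak p})$) but requires a little care at possibly non-tempered places in $S$, where it follows from the local Rankin--Selberg integral representation of the adjoint/Rankin--Selberg $L$-factor. The analytic inputs --- meromorphic continuation, the location and order of the pole, and the unfolding itself --- are entirely standard, and one may alternatively cite the Rankin--Selberg inner product formula directly (see, e.g., \cite{michel-2009}).
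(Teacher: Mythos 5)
Your argument is correct in outline but takes a genuinely different, and considerably heavier, path than the paper's. The paper simply cites Michel--Venkatesh for meromorphy and then sketches the normalization of the residue directly: unfold $I(s)$ to $\int_{\mathbb{A}^\times/F^\times}\int_{\mathbb{A}/F}|\varphi(n(x)a(y))|^2|y|^s\,dx\,\tfrac{dy}{|y|}$, observe that the pushforward of $\tfrac{dy}{|y|}$ under $|\cdot|$ is $\tfrac{dt}{t}$ on $\mathbb{R}_+^\times$, so that $\res_{s\to 0}I(s)=\lim_{t\to 0}\mathbb{E}_{|y|=t}\int_{\mathbb{A}/F}|\varphi(n(x)a(y))|^2\,dx$, and evaluate this limit by equidistribution of the horocycle flow to obtain $\|\varphi\|^2/\vol([\PGL_2])=\|\varphi\|^2/2$. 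This sidesteps the Euler product decomposition, the Eisenstein series, and --- crucially --- the $K$-integral that you yourself flag as the delicate point. In your version, the step where ``the $K$-average undoes the twist'' does require invoking the $\PGL_2(F_{\mathfrak p})$-invariance of the $L^2$-norm on the Kirillov model $\mathcal{K}(\pi_{\mathfrak p},\psi_{\mathfrak p})$; that is a nontrivial input (Bernstein's theorem in the non-archimedean case, its archimedean counterparts otherwise), but it does hold for all generic unitary $\pi_{\mathfrak p}$, not only tempered ones, so the caveat you raise at the end is not actually an obstruction. One must also justify pulling the residue inside the $K$-integral, which requires mild uniform control, and track normalizations of the Iwasawa and Tamagawa measures. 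None of this is wrong, and your route has the virtue of making explicit where the pole comes from and why the local factors at $\mathfrak p\in S$ stay finite at $s=0$ --- details the paper cites away. But the equidistribution-of-horocycles trick in the paper's footnote is cleaner: it computes the residue as a single spatial limit, with the factor of $2$ emerging transparently from $\vol([\PGL_2])=2$, and needs no local Whittaker-norm input beyond the Fourier expansion itself.
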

\begin{proof}
  See for instance \cite[Lem 2.2.3]{michel-2009}.\footnote{
    The cited reference
    normalizes measures
    differently than we do.
    For the convenience of the reader,
    we sketch here the proof that
    \eqref{eqn:rs-ipf} is normalized correctly, taking for
    granted the meromorphicity of $I(s)$.  By unfolding,
    $I(s) = \int_{y \in \mathbb{A}^\times / F^\times} \int_{x
      \in \mathbb{A}/F} |\varphi(n(x) a(y))|^2 |y|^s \,
    \frac{d y}{|y|}$.
    Because the pushforward under
    $|.| : \mathbb{A}^\times / F^\times \rightarrow
    \mathbb{R}_+^\times$
    of $\frac{d y}{|y|}$ is the standard Haar measure
    $\frac{d t}{t}$ on $\mathbb{R}^\times_+$ (see
    \S\ref{sec:global-measures}), it follows that
    $\res_{s \rightarrow 1} I(s) = \lim_{t \rightarrow 0}
    \mathbb{E}_{y : |y| = t} \int_{x \in \mathbb{A}/F}
    |\varphi(n(x) a(y))|^2$
    where $\mathbb{E}$ denotes an average with respect to the
    probability measure invariant by the norm one
    idele class group.  By the equidistribution of the horocycle flow and
    the normalization $\vol(\mathbb{A}/F) = 1$, that limit is
    the integral of $|\varphi|^2$ with respect to the
    probability Haar on $[\PGL_2]$.  By the consequence
    $\vol([\PGL_2]) = 2$ of our measure normalizations, we
    obtain \eqref{eqn:rs-ipf}.}
\end{proof}

\subsubsection{Adjoint $L$-function}
\label{sec:adjoint-l-function}
For $\pi$, $S$ as in
the lemma of \S\ref{sec-3-3-2}, the
\emph{partial adjoint $L$-function} is
defined for
$\Re(s) > 1$ by the absolutely-convergent Euler product
$L^{(S)}(\ad \pi,s) := \prod_{\mathfrak{p} \notin S} L(\ad
\pi_\mathfrak{p},s)$;
it continues meromorphically
to the complex plane,
and is holomorphic for (at least) $\Re(s) \geq 1$
(see \cite{MR533066}).

\subsection{Automorphic forms on $\PB^\times$}
\label{sec-3-4}

\subsubsection{Jacquet--Langlands lifts}
\label{sec-3-4-1}
Let
$\pi = \otimes \pi_\mathfrak{p} \subseteq L^2([\PB^\times])$ be
a cuspidal automorphic representation with
$\dim(\pi) > 1$.
By \cite[Prop 4]{MR0333081},
\begin{equation}\label{eqn:generic-at-each-place-if-not-1-diml}
  \text{$\dim(\pi_\mathfrak{p}) > 1$ for any prime $\mathfrak{p}$ at
    which $B$ splits.}
\end{equation}
The Jacquet--Langlands lift
$\pi_{\JL} = \otimes \pi_{\JL,\mathfrak{p}} \subseteq
L^2([\PGL_2])$
is the unique cuspidal automorphic automorphic
representation
for which $\pi_{\JL,\mathfrak{p}} = (\pi_\mathfrak{p})_{\JL}$
for each place $\mathfrak{p}$.
If $\mathfrak{p}$ is a finite prime at which $B$ splits and for
which $\pi_\mathfrak{p}$ is unramified, then
$(\pi_{\JL})_\mathfrak{p}$ is unramified.  The association
$\pi \mapsto \pi_{\JL}$ is injective.


\subsubsection{The pretrace formula\label{sec:pretrace-formula}}
\label{sec-3-4-2}
Assume that $B$ is non-split,
so that $[\PB^\times]$ is compact.
Fix a maximal compact subgroup $K$ of $\PB^\times_\mathbb{A}$.
The pretrace formula
asserts that for $f \in C_c^\infty(\PB_\mathbb{A}^\times)$
and $x \in \PB^\times_\mathbb{A}$,
\begin{equation}\label{eq:pretrace-formula-general}
  \sum_{\pi}
  \sum_{\varphi}
  \overline{\varphi (x)}
  \pi(f) \varphi(x)
  = \sum_{\gamma \in \PB^\times} f(x^{-1} \gamma x),
\end{equation}
where $\pi$ traverses the irreducible subrepresentations of
$L^2([\PB^\times])$ and $\varphi$ traverses an orthonormal basis
$\mathcal{B}(\pi)$ of $\pi$ consisting of $K$-isotypic vectors.
Only finitely many summands on the RHS of
\eqref{eq:pretrace-formula-general} are nonzero, while the
condition on $\mathcal{B}(\pi)$ implies that the LHS of
\eqref{eq:pretrace-formula-general} converges absolutely, or
indeed, rapidly: Let
$C(\pi) := \prod_\mathfrak{p} C((\pi_\mathfrak{p})_{\JL}) \in
\mathbb{R}_{\geq 1}$
denote the analytic conductor of $\pi$ (see e.g.  \cite[\S3.1.8,
\S4.1.4]{michel-2009}).  Then for each $A \geq 0$, one
has\footnote{
  For instance, this follows (in overkill fashion)
  from the proof of \cite[Thm 9.1]{MR1616155}.
}
\begin{equation}\label{eq:rapid-convergence-of-pretrace-formula}
  \sum_{\pi} C(\pi)^A
  \sum_{\varphi}
  |\overline{\varphi}(x) \pi(f) \varphi(x)|
  < \infty.
\end{equation}
Note also that 
there exists $A_0 > 3$
so that
(see e.g. \cite[(2.15)]{michel-2009}).
\begin{equation}\label{eq:polynomial-growth-of-reps}
  \sum_{\pi} C(\pi)^{-A_0}
  < \infty
\end{equation}


Let $S$ be a set of places containing the infinite ones.
Let $R \subseteq B$ be a maximal order.
For each $\mathfrak{p} \notin S$,
let $J_\mathfrak{p} \leq \PB^\times_\mathfrak{p}$
denote the image of $R_\mathfrak{p}^\times$,
as in \S\ref{sec-2-4},
and set $J := \prod_{\mathfrak{p} \notin S} J_\mathfrak{p}$.
Suppose that
$f = f_S \otimes (\otimes_{\mathfrak{p} \notin S}
T_{y_\mathfrak{p}})$
for some $f_S \in C_c^\infty(\PB_S^\times)$
and
$y \in \mathbb{A}^\times$,
where $T_{y_\mathfrak{p}}$ is the Hecke kernel
as defined in \S\ref{sec:hecke-kernels-local} relative to $J_\mathfrak{p}$.
The formula
\eqref{eq:pretrace-formula-general}
then
specializes to
\begin{equation}
  \sum_\pi
  (\sum_{\varphi}
  \overline{\varphi(x)}
  \pi(f_S) \varphi(x))
  \prod_{\mathfrak{p} \notin S}
  \lambda_{\pi_\mathfrak{p}}(T_{y_\mathfrak{p}}) 
  =
  \sum_{\gamma \in \PB^\times}
  f_S(x_S^{-1} \gamma x_S)
  \prod_{\mathfrak{p} \notin S}
  T_{y_\mathfrak{p}}(x_\mathfrak{p}^{-1} \gamma x_\mathfrak{p}),
\end{equation}
where $\pi \subseteq L^2([\PB^\times])$ now traverses the subrepresentations
that are unramified outside $S$ (i.e., that contain a nonzero
$J$-fixed vector)
and
$\varphi$ traverses
an orthonormal basis
of $K$-isotypic vectors
for the $J$-fixed subspace $\pi^J$
of $\pi$.


\subsubsection{$L$-functions}\label{sec:standard-l-function}
Let $\pi \subseteq L^2([\PB^\times])$
be a
cuspidal automorphic representation
with $\dim(\pi) > 1$.
Let  $S$ be a finite set of places
containing
all infinite places
as well as any places
at which either $B$ or $\pi$ ramifies.

The \emph{partial standard $L$-function}
is
defined for
$\Re(s) > 1$ by the absolutely-convergent Euler product
$L^{(S)}(\pi,s) := \prod_{\mathfrak{p} \notin S}
L(\pi_\mathfrak{p},s)$;
it continues meromorphically
to the complex plane,
and is holomorphic for (at least) $\Re(s) \geq 1/2$
(see e.g. \cite[\S3.5]{MR1431508}).

Set $L^{(S)}(\ad \pi,s) := L^{(S)}(\ad \pi_{\JL}, s)$
(see \S\ref{sec:adjoint-l-function}).
By \cite{HL94} (cf. \cite[\S 2.9]{2009arXiv0904.2429B}),
one has
\begin{equation}\label{eq:HL}
 C(\pi)^{-\eps}  \ll_{\eps} L^{(S)}(\ad \pi,1) \ll_{\eps}
 C(\pi)^{\eps}
 \text{ for each } \eps > 0.
\end{equation}

\subsection{Theta functions}
\label{sec-3-2}
\subsubsection{Metaplectic group\label{sec:global-metaplectic-gp}}
\label{sec-3-2-1}
Let $\Mp_2(\mathbb{A})$ denote the metaplectic double cover
of $\SL_2(\mathbb{A})$;
it fits into a short exact sequence
$1 \rightarrow \mu_2 \rightarrow \Mp_2(\mathbb{A})
\xrightarrow{\pr} \SL_2(\mathbb{A})$.
We may identify it with
$\SL_2(\mathbb{A}) \times \mu_2$
with the group law given by
$(s_1,\zeta_1) (s_2,\zeta_2) = (s_1 s_2, \zeta_1 \zeta_2
c(s_1,s_2))$,
where $c$ is the product of the cocycles
from \S\ref{sec:global-metaplectic-gp}.
We identify $\SL_2(F)$
with its image under the unique splitting $\SL_2(F)
\hookrightarrow \Mp_2(\mathbb{A})$.

We may similarly define $\Mp_2(F_S)$
as a double cover of $\SL_2(F_S)$
for any collection $S$ of places of $F$.
\subsubsection{Quadratic spaces}
\label{sec-3-2-2}
One defines quadratic spaces $V$ over $F$ as in
\S\ref{sec:local-quadratic-spaces}.  The relevant examples are
still $V = B, B^0, F$.  We equip $V_\mathbb{A}$ with the
$(\psi,b_V)$-self dual measure $\mu_V$.  That measure is the
product of the measures $\mu_{V_\mathfrak{p}}$ on the local
spaces $V_{\mathfrak{p}}$ attached to $\psi_\mathfrak{p}$, and
is independent of $\psi$: it assigns volume one to a fundamental
domain for $V_\mathbb{A}/V$.


\subsubsection{Weil representation\label{sec:weil-repn-global}}
\label{sec-3-2-3}
For a quadratic space $V$ over $F$,
the Schwartz--Bruhat space $\mathcal{S}(V_\mathbb{A})$ factors
as the (completed) restricted tensor product
$\mathcal{S}(V_\mathbb{A}) = \otimes
\mathcal{S}(V_\mathfrak{p})$.
The Weil representation
$\rho_{\Weil}^{\psi,V} : \Mp_2(\mathbb{A})
\times \O(V_\mathbb{A}) \rightarrow
\GL(\mathcal{S}(V_\mathbb{A}))$
is given by
$\rho_{\Weil}^{\psi,V} = \otimes
\rho_{\Weil}^{\psi_{\mathfrak{p}},V_{\mathfrak{p}}}$
in the evident sense.

We may similarly define
a Weil representation
$\rho_{\Weil}^{\psi,V} : \Mp_2(F_S)
\times \O(V_S)
\rightarrow \GL(\mathcal{S}(V_S))$
for a finite set $S$ of places of $F$.

\subsubsection{Theta kernels\label{sec:theta-kernels}}
\label{sec-3-2-5}
Let $V$ be a quadratic space over $F$.
For $\phi \in
\mathcal{S}(V_\mathbb{A})$,
$s \in \Mp_2(\mathbb{A})$
and $g \in \O(V_\mathbb{A})$,
set
$\theta_{\psi}(\phi)(s,g)
:=
\sum_{x \in V}
\rho_{\Weil}^{\psi,V}(s,g) \phi(x)$.
The sum converges absolutely
and defines a smooth function
$\theta_{\psi}(\phi) : [\Mp_2]
\times [\O(V)] \rightarrow \mathbb{C}$.
We employ notation such as
$\theta_{\psi}(\phi;s,g) := \theta_{\psi}(\phi)(s,g)$.
Observe that
\begin{equation}\label{eq:equivariance-theta-kernel}
  \theta_{\psi}(\phi;s s', g g')
  = \theta_{\psi}(\rho_{\Weil}^{\psi,V}(s',g') \phi;s,g)
\end{equation}
\subsubsection{Elementary theta functions\label{sec:elem-theta-fns}}
\label{sec-3-2-6}
Let $V = F$, regarded as a quadratic subspace of $B$ as in
\S\ref{sec:local-quadratic-spaces}.  In that case, we abbreviate $\O_1(F) := \O(V) \cong \{\pm 1\}$.
For
$\phi \in \mathcal{S}(V_\mathbb{A}) = \mathcal{S}(\mathbb{A})$,
we denote also by $\theta_{\psi}(\phi)$ the elementary theta function on
$[\Mp_2]$ obtained by restricting
to the first factor
the theta kernel defined in \S\ref{sec:theta-kernels}, thus
$\theta_{\psi}(\phi)(s) := \theta_{\psi}(\phi)(s,1)
=
\sum_{x \in F}
\rho_{\Weil}^{\psi,F}(s) \phi(x)$.
By \eqref{eq:equivariance-theta-kernel},
\begin{equation}\label{eq:equivariance-etf}
  \rho_{\reg}(s) \theta_{\psi}(\phi)
  = \theta_{\psi}(\rho_{\Weil}^{\psi,F}(s) \phi)
  \text{ for }
  s \in \Mp_2(\mathbb{A}).
\end{equation}
The $\O_1(F)$-invariance
of the theta kernel says that
for $\phi \in \mathcal{S}(\mathbb{A})$,
\begin{equation}\label{eqn:O1-invariance-elementary-theta-fn}
  \text{$\theta_{\psi}(\phi) = \theta_{\psi}(\phi_-)$
    with $\phi_-(x) := \phi(-x)$.}
\end{equation}

\subsubsection{Ternary theta lifts}
\label{sec-3-2-7}
Suppose $V = B^0$.
Given $\Psi : [\PB^\times] \rightarrow \mathbb{C}$
and $\phi \in \mathcal{S}(B^0_\mathbb{A})$
and
$s \in \Mp_2(\mathbb{A})$,
set
$\theta_{\psi}(\phi,\Psi;s)
:=
\int_{g \in [\PB^\times]}
\Psi(g)
\theta_{\psi}(\phi;s,\Ad(g))$
where $\Ad : \PB^\times_\mathbb{A} \xrightarrow{\cong}
\SO(B^0_\mathbb{A})$
is the isomorphism
induced by the notation of \S\ref{sec:general-notation}.
If $\Psi$ is a cusp form, then the integral
converges absolutely and defines
a cusp form
$\theta_{\psi}(\phi,\Psi) : [\Mp_2] \rightarrow \mathbb{C}$.
By \eqref{eq:equivariance-theta-kernel},
\begin{equation}\label{eq:equivariance-ttf}
  \rho_{\reg}(s) \theta_{\psi}(\phi,\Psi)
  = \theta_{\psi}(\rho_{\Weil}^{\psi,B^0}(s) \phi, \Psi)
  \text{ for }
  s \in \Mp_2(\mathbb{A}),
\end{equation}
\begin{equation}\label{eq:equivariance-ttf-2}
  \theta_{\psi}(\Ad(g)\phi, \rho_{\reg}(g) \Psi)
  =
  \theta_{\psi}(\phi, \Psi)
  \text{ for }
  g \in \PB^\times_\mathbb{A}.
\end{equation}

\subsubsection{Factorization}
\label{sec-3-2-8}
If the quadratic space $V$ decomposes as the direct sum
$V' \oplus V''$ of quadratic subspaces,
then the factorization of the Weil representation (\S\ref{sec:factorization-weil-repn})
implies the factorization of theta functions: for $g = g' \times
g''
\in \O(V'_\mathbb{A}) \times \O(V_\mathbb{A} '')
\leq \O(V_\mathbb{A})$
and
$\phi = \phi ' \otimes \phi '' \in \mathcal{S}(V_\mathbb{A})$
with $\phi ' \in \mathcal{S}(V'_\mathbb{A}),
\phi '' \in \mathcal{S}(V_\mathbb{A} '')$ (see \S\ref{sec:factorization-weil-repn}),
\begin{equation}\label{eqn:Factorization-of-theta-fns}
  \theta_{\psi}(\phi;s,g)
  =
  \theta_{\psi}(\phi ';s,g')
  \theta_{\psi}(\phi '';s,g'').
\end{equation}
With $\phi '_-$
as in \eqref{eqn:O1-invariance-elementary-theta-fn}
and notation as in \S\ref{sec:general-notation},
one has
\begin{equation}\label{eq:action-of-S-on-pure-tensors-for-B-0}
  \Ad(g) \phi = \phi ' \otimes \Ad(g) \phi ''
\end{equation}
\begin{equation}\label{eq:action-of-S-on-pure-tensors-for-B}
  \mathfrak{S} \phi = \phi '_- \otimes \phi '',
\end{equation}

\subsection{Equidistribution of products of pairs of elementary theta functions\label{sec:equid-prod-pairs-theta}}
\label{sec-3-5}
The purpose of this section is to recall and apply some results
from \cite{nelson-theta-squared}.  Let
$\tau_1,\tau_2 \in F^\times$.  Throughout this section we regard
$\psi,\tau_1,\tau_2,F,B$ as fixed: implied constants may depend
upon them without explicit mention.  We assume also (for
technical convenience)
that $B$ is
non-split.

\subsubsection{Some asymptotic notation}\label{sec:some-asympt-notat}
Given a topological vector space $\mathcal{S}$, we
adopt the convention (similar to ``big O notation'') of denoting by
$\mathcal{C}(\phi)$
any quantity depending continuously upon
$\phi \in \mathcal{S}$; the continuity is assumed uniform in all
auxiliary parameters except those explicitly labelled ``fixed.''
The space $\mathcal{S}$ itself is always regarded as fixed, of course.  This
convention applies in particular to Schwartz--Bruhat spaces of
finite-dimensional vector spaces over local fields, over finite
products of local fields, or over adele rings.

Similarly to the ``$\eps$-convention''
of analytic number theory,
we allow the precise meaning
of $\mathcal{C}(\phi)$
to change from one occurrence to the next.
When we specifically
wish to distinguish between
several such quantities,
we use the notation
$\mathcal{C} '(\phi), \mathcal{C} ''(\phi)$, and so on.

For example, let $V$ be a vector space over $F$
(always assumed finite-dimensional).  Let $\hat{F}$
denote the ring of finite adeles, so that
$\mathbb{A} = F_\infty \times \hat{F}$ with
$F_\infty := \prod_{\mathfrak{p} | \infty} F_\mathfrak{p}$.
Similarly, write $V_\mathbb{A} = V_\infty \times \hat{V}$.
The
Schwartz--Bruhat space $\mathcal{S}(V_\mathbb{A})$
factors as
the algebraic tensor product $\mathcal{S}(V_\infty) \otimes
\mathcal{S}(\hat{V})$.
Suppose given 
some quantities $a(\phi;t_1,t_2)$
and $b(\phi;t_1,t_2)$ depending
upon $\phi \in \mathcal{S}(V_\mathbb{A})$
and some auxiliary parameters $t_1,t_2$.
The notation
\begin{equation}\label{eq:example-a-phi-bounded-by-C-phi}
  a(\phi;t_1,t_2)
  \ll b(\phi;t_1,t_2) \mathcal{C}(\phi) \text{ for fixed $t_2$ }
\end{equation}
means that
for each $t_2$ and each $\phi_f \in \mathcal{S}(\hat{V})$
there is a
finite collection $\mathcal{P}$ of polynomials
on $V_\infty$
and a finite collection $\mathcal{D}$ of translation-invariant
differential operators
on $V_\infty$
(thus $\mathcal{D}$ consists of linear combinations
of monomials
$\frac{\partial }{\partial x_{i_1}}
\dotsb \frac{\partial }{\partial x_{i_n}}$
with respect to some coordinates $x_j : V_\infty \rightarrow \mathbb{R}$)
so that for all $\phi_\infty \in \mathcal{S}(V_\infty)$
and all $t_1$,
\[
|
a(\phi_\infty \otimes \phi_f;t_1,t_2)
|
\leq 
|b(\phi_\infty \otimes \phi_f;t_1,t_2)|
\sum_{P \in \mathcal{P}}
\sum_{D \in \mathcal{D}}
\|P D \phi_\infty \|_{L^\infty(V^\infty)}.
\]
One could just as well
write ``the
functional
$\mathcal{S}(V_\mathbb{A}) \ni \phi \mapsto
a(\phi;t_1,t_2)/b(\phi;t_1,t_2)$
is defined and continuous, uniformly in $t_1$,'' but that would
be stilted.

\begin{remark*}
  It would be ``better'' to work not with the
  notation $\mathcal{C}$ but instead with a system of adelic
  Sobolev norms on $\mathcal{S}(V_\mathbb{A})$,
  such as those attached by the recipe of
  \cite[\S2]{michel-2009} and \cite[\S4.6,
  \S5.3]{nelson-theta-squared} to the basic Weil representation
  of the metaplectic group of the symplectic space
  $V^*_\mathbb{A} \oplus V_\mathbb{A}$.  The advantage of
  doing so would be to obtain polynomial
  dependence in Theorem
  \ref{thm:main-result-for-microlocal-stuff} upon
  the
  levels of $\Psi_1, \Psi_2$.  The disadvantages would be to
  lengthen the article and introduce technical overhead
  irrelevant to our primary aims.
\end{remark*}


\subsubsection{Simple estimates for lattice sums}\label{sec:simple-estimates-for-lattice-sums}
\begin{lemma}\label{lem:cheap-lattice-sum-bound-Rn-Zn}
  Let $n \in \mathbb{Z}_{\geq 0}$.
  Let $A \geq 0$ and $t_0 > 0$ be fixed.
  For each $\phi \in \mathcal{S}(\mathbb{R}^n)$
  and $t > t_0$,
  one has
  $\sum_{v \in \mathbb{Z}^n - \{0\}} |\phi(t v)|  \ll |t|^{-A} \mathcal{C}(\phi)$.
\end{lemma}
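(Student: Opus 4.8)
The plan is to reduce the claim to the single fact that a Schwartz function on $\mathbb{R}^n$ is dominated, together with an appropriate power weight, by one of the standard seminorms defining the Schwartz topology. Fix an even integer $M > n + A$; the hypothesis that $A$ is fixed guarantees that $M$ may be chosen depending only on $n$ and $A$. Set
\[
  \|\phi\|_M := \sup_{x \in \mathbb{R}^n} (1 + |x|)^M\, |\phi(x)|.
\]
Since $(1 + |x|)^M \leq P_M(x)$ for the polynomial $P_M(x) := 2^M\bigl(1 + (x_1^2 + \dots + x_n^2)^{M/2}\bigr)$, we have $\|\phi\|_M \leq \|P_M \phi\|_{L^\infty(\mathbb{R}^n)}$; hence $\|\phi\|_M$ is a legitimate instance of the quantity $\mathcal{C}(\phi)$ in the sense of \S\ref{sec:some-asympt-notat}, taking $\mathcal{P} = \{P_M\}$ and $\mathcal{D} = \{\mathrm{id}\}$. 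This is the only property of $\phi$ that the argument will use.

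If $n = 0$ the set $\mathbb{Z}^0 \setminus \{0\}$ is empty and the left-hand side vanishes, so assume $n \geq 1$. For $v \in \mathbb{Z}^n \setminus \{0\}$ we have $|v| \geq 1$, and for $t > t_0$ we may estimate
\[
  |\phi(t v)| \leq \|\phi\|_M\, (1 + t |v|)^{-M} \leq \|\phi\|_M\, (t |v|)^{-M} = \|\phi\|_M\, t^{-M}\, |v|^{-M}.
\]
Because $M - A > 0$ and $t > t_0$, we have $t^{-M} = t^{-A}\, t^{-(M-A)} \leq t_0^{-(M-A)}\, t^{-A}$. Summing over $v$, and using that $M > n$ makes the lattice sum $\Sigma_M := \sum_{v \in \mathbb{Z}^n \setminus \{0\}} |v|^{-M}$ converge (by comparison with $\int_{|x| \geq 1/2} |x|^{-M}\, dx$), I obtain
\[
  \sum_{v \in \mathbb{Z}^n \setminus \{0\}} |\phi(t v)| \leq \bigl(t_0^{-(M-A)}\, \Sigma_M\bigr)\, \|\phi\|_M\, t^{-A} \ll |t|^{-A}\, \mathcal{C}(\phi),
\]
since the bracketed factor depends only on the fixed data $n$, $A$, $t_0$ and so may be absorbed into the implied constant.

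There is no genuine obstacle here; the statement is a routine manipulation. The two points deserving a line of care are: (i) verifying that $\|\phi\|_M$ really does qualify as a $\mathcal{C}(\phi)$ under the bookkeeping of \S\ref{sec:some-asympt-notat}, which is immediate as it is one of the standard Schwartz seminorms and is dominated by $\|P_M\phi\|_{L^\infty}$; and (ii) keeping the auxiliary constants $t_0^{-(M-A)}$ and $\Sigma_M$ on the side of the implied constant $\ll$ rather than inside $\mathcal{C}(\phi)$, which is automatic because $n$, $A$, and $t_0$ are all fixed.
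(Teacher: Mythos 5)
Your proof is correct and takes essentially the same route as the paper's: bound $|\phi(tv)|$ by a Schwartz seminorm times $(t|v|)^{-M}$, split off $t^{-A}$ using $t>t_0$, and sum the convergent lattice series (the paper uses the exponent $n+1+A$ where you use an even $M>n+A$; this is immaterial). Your extra care in checking that $\|\phi\|_M$ qualifies as a $\mathcal{C}(\phi)$ under the bookkeeping of \S\ref{sec:some-asympt-notat} is a reasonable addition that the paper elides.
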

\begin{proof}
  The LHS of
  the required estimate 
  is bounded by
  \[C
  |t|^{-A}
  \sup_{x \in \mathbb{R}^n}
  |x|^{n+1+A} |\phi(x)|\]
  with $C := |t_0|^{-(n+1)} \sum_{v \in \mathbb{Z}^n - \{0\}} |v|^{-(n+1+A)} < \infty$.
\end{proof}
\begin{lemma}\label{lem:cheap-lattice-sum-bound-adelic}
  Let $V$ be a vector space over $F$.
  Let $A \geq 0$ and $t_0 > 0$ be fixed.
  For $\phi \in \mathcal{S}(V_\mathbb{A})$
  and
  $y \in \mathbb{A}^\times$ with $|y| > t_0$,
  one has
  $\sum_{v \in V - \{0\}}
  |\phi(y v)|
  \ll |y|^{-A} \mathcal{C}(\phi)$.
\end{lemma}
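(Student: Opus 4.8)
The plan is to peel off the non-archimedean places using the factorization $\mathcal{S}(V_\mathbb{A}) = \mathcal{S}(V_\infty) \otimes \mathcal{S}(\hat V)$ and then reduce to the real-variable estimate of Lemma \ref{lem:cheap-lattice-sum-bound-Rn-Zn}. Following the convention of \S\ref{sec:some-asympt-notat}, I would take $\phi = \phi_\infty \otimes \phi_f$ with $\phi_f \in \mathcal{S}(\hat V)$, allowing the implied constants to depend on $\phi_f$ freely and bounding only in terms of Sobolev norms of $\phi_\infty$. Fixing an $F$-basis $e_1, \ldots, e_n$ of $V$ (with $n := \dim_F V$) and putting $\hat{\mathcal{O}} := \prod_{\mathfrak{p} \nmid \infty} \mathcal{O}_\mathfrak{p}$, $\hat L_0 := \bigoplus_i \hat{\mathcal{O}} e_i$, one has $\supp(\phi_f) \subseteq c^{-1} \hat L_0$ for some positive integer $c$ (a compact subset of $\hat V$ being bounded), hence $|\phi_f| \leq \|\phi_f\|_\infty \cdot 1_{c^{-1} \hat L_0}$, the constant being absorbed.

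Writing $y = y_\infty y_f$ with $y_\infty \in F_\infty^\times$, $y_f \in \hat F^\times$ and $t := |y| > t_0$, this bound forces $\phi(y v) = 0$ unless $v$ lies in the lattice $\Lambda_y := (y_f^{-1} c^{-1} \hat L_0) \cap V = \bigoplus_i \mathfrak{a}_y e_i$, where $\mathfrak{a}_y := (y_f^{-1} c^{-1} \hat{\mathcal{O}}) \cap F$ is a fractional ideal of $\mathcal{O}$ of absolute norm $\mathrm{N}(\mathfrak{a}_y) = c^{-[F:\mathbb{Q}]} \prod_{\mathfrak{p} \nmid \infty} |y_\mathfrak{p}|_\mathfrak{p}$. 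Thus, with $\iota_\infty : V \hookrightarrow V_\infty$ the archimedean embedding, the sum reduces to $\sum_{v \in V - \{0\}} |\phi(y v)| \ll_{\phi_f} \sum_{w \in \Gamma_y - \{0\}} |\phi_\infty(w)|$, where $\Gamma_y := y_\infty \iota_\infty(\Lambda_y)$ is a full lattice in $V_\infty \cong \mathbb{R}^N$, $N := n[F:\mathbb{Q}]$.

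The crucial step --- and the one I expect to be the main obstacle --- is to show that $\Gamma_y$ cannot ``degenerate'' as $y$ ranges over $|y| > t_0$: concretely, that every nonzero $w \in \Gamma_y$ satisfies $\|w\| \gg t^{1/[F:\mathbb{Q}]}$ with implied constant depending only on $F$, $V$, $c$. I would write $w = y_\infty \iota_\infty(v)$ with $v = \sum_i a_i e_i$, $a_i \in \mathfrak{a}_y$ not all zero, pick $a_{i_0} \neq 0$, and estimate the coordinate $w_{i_0} = y_\infty \iota_\infty(a_{i_0}) \in F_\infty$: using the product formula and the containment $a_{i_0} \mathcal{O} \subseteq \mathfrak{a}_y$ --- which gives $|N_{F/\mathbb{Q}}(a_{i_0})| = \mathrm{N}(a_{i_0}\mathcal{O}) \geq \mathrm{N}(\mathfrak{a}_y)$ --- one gets $\prod_{\mathfrak{p} \mid \infty} |w_{i_0}|_\mathfrak{p} = \bigl(\prod_{\mathfrak{p} \mid \infty} |y_\mathfrak{p}|_\mathfrak{p}\bigr)\, |N_{F/\mathbb{Q}}(a_{i_0})| \geq \bigl(\prod_{\mathfrak{p} \mid \infty} |y_\mathfrak{p}|_\mathfrak{p}\bigr)\,\mathrm{N}(\mathfrak{a}_y) = t\, c^{-[F:\mathbb{Q}]}$, whereupon the AM--GM inequality relating the Euclidean norm on $F_\infty$ to $\prod_{\mathfrak{p} \mid \infty} |\cdot|_\mathfrak{p}$ yields $\|w\| \geq \|w_{i_0}\| \gg t^{1/[F:\mathbb{Q}]}$.

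Granting this, I would finish as follows. Choosing the decay of $\phi_\infty$ so that $|\phi_\infty(w)| \leq \mathcal{C}(\phi)(1 + \|w\|)^{-M}$ with $M := \max(N+1,\, [F:\mathbb{Q}]\cdot A)$ --- a fixed choice within the $\mathcal{C}$-convention, since $A$ is fixed --- the bound $\lambda_1(\Gamma_y) \gg t^{1/[F:\mathbb{Q}]}$ together with the elementary count $\#\{w \in \Gamma_y : \|w\| \leq R\} \ll (1 + R/\lambda_1(\Gamma_y))^{N}$ gives, on summing over dyadic shells $2^j \lambda_1(\Gamma_y) \leq \|w\| < 2^{j+1}\lambda_1(\Gamma_y)$,
\[
  \sum_{w \in \Gamma_y - \{0\}} |\phi_\infty(w)| \ll \mathcal{C}(\phi)\, \lambda_1(\Gamma_y)^{-M} \ll \mathcal{C}(\phi)\, t^{-M/[F:\mathbb{Q}]} \leq \mathcal{C}(\phi)\, t^{-A},
\]
the geometric series converging because $M > N$. (Alternatively, since these bounds also force all successive minima of $\Gamma_y$ to be $\asymp t^{1/[F:\mathbb{Q}]}$, one may apply Lemma \ref{lem:cheap-lattice-sum-bound-Rn-Zn} directly after a bounded linear change of variables identifying $\Gamma_y$ with $t^{1/[F:\mathbb{Q}]}\,\mathbb{Z}^N$.) Everything outside the lattice lower bound --- the adelic--archimedean factorization, the reduction to a sum over $\Gamma_y$, and the lattice-point counting --- is routine; the number-theoretic norm inequality is the only genuinely substantive point.
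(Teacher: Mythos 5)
Your proof is correct, but it takes a genuinely different route from the paper's. The paper's argument first invokes the compactness of $\mathbb{A}^{(1)}/F^\times$ together with the continuity of the dilation action of $\mathbb{A}^\times$ on $\mathcal{S}(V_\mathbb{A})$ to reduce to the case where $y$ is trivial at all finite places and equal to a scalar $t$ at each infinite place; in that normalized situation the non-archimedean support condition yields a \emph{fixed} lattice $L \subset V$ and the sum is literally $\sum_{v \in L-\{0\}} |\phi_\infty(t v)|$, handled directly by Lemma~\ref{lem:cheap-lattice-sum-bound-Rn-Zn}. You instead keep $y$ completely general, rewrite the sum as a sum over a $y$-dependent full lattice $\Gamma_y \subset V_\infty$, and prove the quantitative lower bound $\lambda_1(\Gamma_y) \gg t^{1/[F:\mathbb{Q}]}$ via the product formula and the integrality $a_{i_0}\mathcal{O} \subseteq \mathfrak{a}_y$, then close with a packing/dyadic-shell count. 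Both are valid. The paper's route is shorter given its reliance on the idele class group and the (uniform) continuity of dilation on Schwartz--Bruhat spaces; your route is more self-contained and makes the arithmetic content (product formula controlling the shortest vector of an adelically-dilated lattice) explicit rather than hiding it in a compactness reduction. One minor remark: your lattice-point estimate $\#\{w \in \Gamma_y : \|w\|\leq R\} \ll (1+R/\lambda_1)^N$ uses only $\lambda_1$ and can badly overcount for skewed lattices, but since you only need an upper bound and are saving an arbitrarily large power of $t$ through the choice of $M$, this looseness is harmless.
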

\begin{proof}
  Observe first that the action of $\mathbb{A}^\times$ on
  $\mathcal{S}(V_\mathbb{A})$ by dilation is continuous.  Let
  $\mathbb{A}^{(1)} := \{y \in \mathbb{A}^\times : |y| = 1\}$
  denote the subgroup of norm one ideles.  By the compactness of
  $\mathbb{A}^{(1)} / F^\times$ and the previous observation, it
  suffices to consider the case that $y_\mathfrak{p} = 1$
  for all finite $\mathfrak{p}$ 
  and $y_\mathfrak{p} = t$ for all infinite $\mathfrak{p}$,
  where $t \in \mathbb{R}_+^\times$
  satisfies
  $t > t_1 := t_0^{1/[F:\mathbb{Q}]}$.
  Each $\phi_f \in \mathcal{S}(\hat{V})$
  is bounded and satisfies
  $\supp(\phi_f) \cap V \subseteq L$
  for some lattice $L \subseteq V$,
  so it suffices to show
  for each fixed $A \geq 0$
  and
  fixed lattice $L \subseteq V$
  that for all $\phi \in \mathcal{S}(V_\infty)$
  and $t  > t_1$,
  one has
  $\sum_{v \in L - \{0\}} |\phi(t v)| = O(|t|^{-A} \mathcal{C}(\phi))$.
  By choosing a $\mathbb{Z}$-basis of $L$,
  we reduce to Lemma \ref{lem:cheap-lattice-sum-bound-Rn-Zn}.
\end{proof}

\subsubsection{Simple estimates for theta functions}
\label{sec-3-5-4}
Recall that the \emph{Iwasawa decomposition} asserts that each
$s \in \SL_2(\mathbb{A})$ may be written in the form
$s = n(x) t(y) k$, where
$x \in \mathbb{A}, y \in \mathbb{A}^\times$ and $k$ belongs to
the standard maximal compact subgroup of $\SL_2(\mathbb{A})$.
The decomposition is not unique,
but the quantities $x$ and $|y|$
depend only upon $s$.

Recall that the \emph{height} function
$\htt: [\Mp_2] \rightarrow \mathbb{R}_{>0}$ factors through
$\htt: [\SL_2] \rightarrow \mathbb{R}_{>0}$ where it is given
for $g \in [\SL_2]$ by
$\htt(g) := \max_{\gamma \in \SL_2(F)} \htt_\mathbb{A}(\gamma
g)$,
where
$\htt_{\mathbb{A}} : \SL_2(\mathbb{A}) \rightarrow
\mathbb{R}_{>0}$
is defined with respect to the Iwasawa decomposition
$s = n(x) t(y) k$ by $\htt_{\mathbb{A}}(s) := |y|^{1/2}$.
One has $\int_{[\SL_2]} \htt^{1-\eps} < \infty$
for $\eps > 0$.
\emph{Reduction theory}
says
that the
image of $\htt$ is bounded from below by some $c > 0$ depending
only upon $F$.


Recall that the nontrivial unitary character
$\psi$ of $\mathbb{A}/F$ is regarded as fixed.
\begin{lemma}\label{lem:crude-bound-for-cuspidal-theta-functions}
  Let $A \geq 0$ be fixed.  Let
  $\Psi \in L^1( [\PB^\times])$
  with $\langle \Psi, 1 \rangle = 0$.
  Let
  $\phi \in \mathcal{S}(B_\mathbb{A}^0)$.
  For
  $s \in \Mp_2(\mathbb{A})$,
  one has
  $\theta_{\psi}(\phi,\Psi;s)
  \ll \htt(s)^{-A}
  \mathcal{C}(\phi)
  \|\Psi \|_{L^1}$.
\end{lemma}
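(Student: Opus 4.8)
The plan is to use the mean-zero hypothesis on $\Psi$ to discard the constant term of the theta kernel and then invoke the rapid decay of Schwartz--Bruhat functions under dilation (Lemma \ref{lem:cheap-lattice-sum-bound-adelic}). First I would unfold the ternary theta lift of \S\ref{sec-3-2-7}: since the orthogonal factor of the Weil representation acts by $(\rho_{\Weil}^{\psi,B^0}(\Ad(g))\Phi)(x) = \Phi(g^{-1} x g)$ and commutes with the metaplectic factor, one has
\[
\theta_{\psi}(\phi,\Psi;s) = \int_{g \in [\PB^\times]} \Psi(g) \sum_{x \in B^0} (\rho_{\Weil}^{\psi,B^0}(s)\phi)(g^{-1} x g)\, dg,
\]
the inner sum converging absolutely and uniformly on the compact space $[\PB^\times]$. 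The $x=0$ summand $(\rho_{\Weil}^{\psi,B^0}(s)\phi)(0)$ is independent of $g$, so it contributes $(\rho_{\Weil}^{\psi,B^0}(s)\phi)(0)\langle \Psi,1\rangle = 0$; hence
\[
|\theta_{\psi}(\phi,\Psi;s)| \le \|\Psi\|_{L^1} \sup_{g \in [\PB^\times]} \Bigl| \sum_{x \in B^0 \setminus \{0\}} (\rho_{\Weil}^{\psi,B^0}(s)\phi)(g^{-1} x g) \Bigr|.
\]

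Next I would put $s$ into reduced position. Both $\theta_{\psi}(\phi,\Psi)$ and $\htt$ descend to $[\Mp_2]$, so replacing $s$ by $\gamma s$ for the element $\gamma \in \SL_2(F)$ with $\htt_{\mathbb{A}}(\gamma s) = \htt(s)$ changes neither side of the desired inequality; after this replacement write $s = n(x_0)t(y)k$ in Iwasawa form, so that $Y := |y| = \htt(s)^2$ and $k$ lies in the standard maximal compact subgroup. By reduction theory $\htt(s)$, hence $Y$, exceeds a positive constant depending only on $F$. Applying the formulas of \S\ref{sec:local-weil-repn} placewise (with $\dim B^0 = 3$, and noting all character and additive-character factors have absolute value one) gives $|(\rho_{\Weil}^{\psi,B^0}(s)\phi)(v)| = Y^{3/2}|(\rho_{\Weil}^{\psi,B^0}(k)\phi)(y v)|$. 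Writing $\Phi_{k,g} := \Ad(g)\rho_{\Weil}^{\psi,B^0}(k)\phi$ and using that the idele $y$ is central, so $y g^{-1} x g = g^{-1}(yx)g$, the inner sum above is bounded by $Y^{3/2}\sum_{x \in B^0 \setminus \{0\}} |\Phi_{k,g}(yx)|$.

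Finally I would apply Lemma \ref{lem:cheap-lattice-sum-bound-adelic} with $V = B^0$: for each fixed $A' \ge 0$, and since $Y$ exceeds a fixed positive constant, this sum is $\ll Y^{-A'}\mathcal{C}(\Phi_{k,g})$. To see $\mathcal{C}(\Phi_{k,g}) \ll \mathcal{C}(\phi)$ uniformly, note that replacing a representative $g \in \PB^\times_{\mathbb{A}}$ by $\delta g$ with $\delta \in \PB^\times$ only permutes the nonzero vectors of $B^0$ via $x \mapsto \delta^{-1}x\delta$, so $g$ may be taken in a fixed compact fundamental domain for $[\PB^\times]$; continuity of the Weil representation and of the $\Ad$-action on $\mathcal{S}(B^0_{\mathbb{A}})$ then gives the bound uniformly over such $g$ and over $k$ in the maximal compact. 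Combining, $|\theta_{\psi}(\phi,\Psi;s)| \ll Y^{3/2 - A'}\mathcal{C}(\phi)\|\Psi\|_{L^1} = \htt(s)^{3 - 2A'}\mathcal{C}(\phi)\|\Psi\|_{L^1}$, and taking $A' := (A+3)/2$ yields the claim. The steps are all routine; the points requiring the most care are the uniformity in $g$ (handled by rational conjugation) and the bookkeeping that the optimized Iwasawa coordinate satisfies $|y| = \htt(s)^2$, so that the growth $Y^{3/2}$ of the Weil action loses decisively to the decay of the lattice sum.
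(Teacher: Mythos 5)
Your proposal is correct and follows essentially the same route as the paper's proof: discard the $v=0$ term using mean-zero, pass to the reduced Iwasawa form via reduction theory, extract the factor $|y|^{3/2}$ from the Weil action, reduce the $k$- and $g$-dependence to a fixed compact set by continuity of the Weil representation and compactness of $[\PB^\times]$, and finish with Lemma~\ref{lem:cheap-lattice-sum-bound-adelic}. The extra bookkeeping you supply (rational conjugation to justify restriction of $g$ to a compact fundamental domain, and the explicit choice $A' = (A+3)/2$) is sound and simply makes explicit steps the paper leaves implicit.
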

\begin{proof}
  Since $\Psi$ has mean zero,
  \[
  \theta_{\psi}(\phi,\Psi;s)
  = \int_{g \in [\PB^\times]}
    \Psi(g)
    \sum_{v \in V - \{0\}}
    \rho_{\Weil}^{\psi,V}(s,\Ad(g))
    \phi(v).
  \]
  By the Iwasawa decomposition
  and reduction theory, we
  may assume that $s = n(x) t(y) k$
  with $|y| \gg 1$.
  Since $B$ is non-split,
  we may fix a compact
  subset $U$ of $\PB^\times_\mathbb{A}$ containing a fundamental
  domain for $[\PB^\times]$.
  Then
  \[
    |\theta_{\psi}(\phi,\Psi;s)|
    \leq \|\Psi \|_{L^1}
    |y|^{3/2}
    \sup_{g \in U}
    \sum_{v \in B^0 - \{0\}}
    |\rho_{\Weil}^{\psi,B^0}(k,\Ad(g))
    \phi(y v)|.
  \]
  Since the Weil representation is continuous (\cite[\S39]{MR0165033}),
  we may reduce to
  the case $k = 1$ and $g = 1$, in which the required
  estimate follows from Lemma
  \ref{lem:cheap-lattice-sum-bound-adelic} of
  \S\ref{sec:simple-estimates-for-lattice-sums}.
\end{proof}

\begin{lemma}\label{lem:crude-bound-for-elementary-theta-functions}
  For 
  $\phi \in \mathcal{S}(\mathbb{A})$ and $s \in
  \Mp_2(\mathbb{A})$,
  one has
  $\theta_{\psi}(\phi;s)
    \ll
    \htt(s)^{1/4}
    \mathcal{C}(\phi)$.
\end{lemma}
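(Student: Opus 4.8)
The plan is to pass, using left $\SL_2(F)$-invariance together with reduction theory, to a height-realizing Iwasawa representative $s=n(x)t(y)k$, and then bound the lattice sum defining $\theta_{\psi}(\phi)$ head on: the term $v=0$ contributes a quantity $\mathcal{C}(\phi)$, while the remaining terms are negligible by Lemma~\ref{lem:cheap-lattice-sum-bound-adelic}, so all of the growth comes from the single scalar by which the weight one-half Weil representation dilates along the torus.

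Since $\theta_{\psi}(\phi)$ descends to $[\Mp_2]$, it is left $\SL_2(F)$-invariant, whence $|\theta_{\psi}(\phi;s)|=|\theta_{\psi}(\phi;\gamma s)|$ for every $\gamma\in\SL_2(F)$. By the definition $\htt(s)=\max_{\gamma}\htt_{\mathbb{A}}(\gamma s)$ and reduction theory---which guarantees that this maximum is attained and bounds it below by a fixed $c_{0}>0$---I may replace $s$ by a maximizing $\gamma s$ and assume $\htt_{\mathbb{A}}(s)=\htt(s)$; fixing an Iwasawa decomposition $s=n(x)t(y)k$, the parameter $|y|$ is then a fixed power of $\htt(s)$, in particular $|y|\geq t_{0}$ for a fixed $t_{0}>0$. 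On a set of the form $\{\htt\leq \htt_{0}\}\subseteq[\Mp_2]$, which is compact, the bound is immediate, since the theta series converges locally uniformly in $(s,\phi)$ and hence $\sup_{\htt(s)\leq \htt_{0}}|\theta_{\psi}(\phi;s)|\ll\mathcal{C}(\phi)$; so I may assume $|y|$ large.

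Writing $\phi':=\rho_{\Weil}^{\psi,F}(k)\phi$, the continuity of the Weil representation on the (compact) standard maximal compact subgroup of $\Mp_2(\mathbb{A})$ gives $\phi'\in\mathcal{S}(\mathbb{A})$ with $\mathcal{C}(\phi')\ll\mathcal{C}(\phi)$ uniformly in $k$. Using the explicit action of $n(x)$ and $t(y)$ on $\mathcal{S}(\mathbb{A})$---and that the metaplectic cocycle, the quartic character $\chi_{\psi,F}(y)$, and the phase $\psi(xv^{2})$ are all of modulus one---one finds
\[
  \theta_{\psi}(\phi;s)=\sum_{v\in F}\bigl(\rho_{\Weil}^{\psi,F}(n(x)t(y)k)\phi\bigr)(v)
  =\chi_{\psi,F}(y)\,|y|^{1/2}\sum_{v\in F}\psi(xv^{2})\,\phi'(yv).
\]
Splitting off $v=0$ and applying Lemma~\ref{lem:cheap-lattice-sum-bound-adelic} with $V=F$, $A=1$, and the fixed lower bound $|y|\geq t_{0}$ to the rest,
\[
  \sum_{v\in F}|\phi'(yv)|\leq|\phi'(0)|+\sum_{v\in F-\{0\}}|\phi'(yv)|\ll\mathcal{C}(\phi)+|y|^{-1}\mathcal{C}(\phi)\ll\mathcal{C}(\phi),
\]
the term $|\phi'(0)|$ being controlled because evaluation at $0$ is a continuous seminorm, uniformly in $k$. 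Hence $|\theta_{\psi}(\phi;s)|\ll|y|^{1/2}\,\mathcal{C}(\phi)$, and $|y|^{1/2}$ equals $\htt(s)^{1/4}$ in the normalization of \S\ref{sec-3-5-4}, giving the claim.

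I do not anticipate a genuine obstacle: the only points requiring care are the reduction-theory step, together with the exact power of $\htt(s)$ that $|y|$ represents, and the uniformity in $\phi$ and in the compact variable $k$ when working through the $\mathcal{C}(\cdot)$ formalism. The decay of the nonzero lattice terms is precisely Lemma~\ref{lem:cheap-lattice-sum-bound-adelic}, and the entire growth is the one torus scalar $|y|^{1/2}$.
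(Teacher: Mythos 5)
Your argument is correct and follows the paper's own route essentially verbatim: Iwasawa decomposition plus reduction theory to reduce to $|y|\gg 1$, continuity of the Weil representation to absorb the compact $k$, Lemma~\ref{lem:cheap-lattice-sum-bound-adelic} on the nonzero lattice terms, and then the $v=0$ term carries the entire growth $|y|^{1/2}$ coming from the torus scaling $|y|^{\dim V/2}$ with $\dim V=1$. This is precisely what the paper means by ``argue as in Lemma~\ref{lem:crude-bound-for-cuspidal-theta-functions} but take into account the contribution from $0\in F$.''

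One point worth flagging explicitly rather than absorbing silently: your final identification $|y|^{1/2}=\htt(s)^{1/4}$ requires $\htt_{\mathbb{A}}(n(x)t(y)k)=|y|^{2}$, whereas \S\ref{sec-3-5-4} as written sets $\htt_{\mathbb{A}}(s):=|y|^{1/2}$. Under the literal reading your computation would give the (weaker and incorrect) conclusion $\theta_{\psi}(\phi;s)\ll\htt(s)\,\mathcal{C}(\phi)$. The $|y|^{2}$ convention is almost certainly the intended one, since it is the unique normalization compatible with the assertion in \S\ref{sec-3-5-4} that $\int_{[\SL_2]}\htt^{1-\eps}<\infty$ precisely for $\eps>0$ (one checks this from the Iwasawa Haar measure $dx\,|y|^{-2}\,\tfrac{dy}{|y|}\,dk$ on $\SL_2$), and it is also the one that makes the present lemma's exponent $1/4$ match the classical weight-$1/2$ asymptotic of Jacobi theta. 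You have in effect corrected a typo in the paper; it would be cleaner to say so, since under the displayed definition your last sentence is false.
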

\begin{proof}
  We argue as in the proof of Lemma
  \ref{lem:crude-bound-for-cuspidal-theta-functions}, but take
  into account the contribution from $0 \in F$ to the definition
  of $\theta_{\psi}(\phi)$.
\end{proof}


\subsubsection{Main estimate: the case of pure tensors}
\label{sec-3-5-3}
\begin{lemma*}\label{lem:main-result-for-theta-squared}
  Let $\phi_1', \phi_2' \in \mathcal{S}(\mathbb{A})$
  and $\phi_1'', \phi_2'' \in \mathcal{S}(B_\mathbb{A}^0)$.
  Let $\Psi_1, \Psi_2 : [\PB^\times] \rightarrow \mathbb{C}$ be
  integrable functions of mean zero.
  Let $\tau_1, \tau_2 \in F^\times$ be fixed.
  Abbreviate $\theta_i :=
  \theta_{\psi^{\tau_i}}(\phi_i')$
  and $h_i := \theta_{\psi^{\tau_i}}(\phi_i'',\Psi_i)$.
  Then for all $s \in \Mp_2(\mathbb{A})$,
  \[
  \langle
  \theta_1 \cdot \rho_{\reg}(s) h_1,
  \theta_2 \cdot \rho_{\reg}(s) h_2
  \rangle
  = \langle \theta_1, \theta_2 \rangle
  \langle h_1, h_2 \rangle
  + O(\Xi(s) \prod_{i=1,2} \mathcal{C}(\phi_i')
  \mathcal{C}(\phi_i'')
  \|\Psi_i\|_{L^1}
  ).
  \]
\end{lemma*}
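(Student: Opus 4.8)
The plan is to deduce the assertion from the main equidistribution result of \cite{nelson-theta-squared}, the only genuinely new work being a soft unfolding together with the continuity bookkeeping needed to put that result into the $\mathcal{C}$-uniform form stated here.

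First I would unfold the left-hand side. Since $\rho_{\reg}(s)$ is the right regular action, $\rho_{\reg}(s)(h_1\overline{h_2}) = (\rho_{\reg}(s)h_1)\,\overline{(\rho_{\reg}(s)h_2)}$, so
\[
\langle \theta_1 \cdot \rho_{\reg}(s) h_1,\ \theta_2 \cdot \rho_{\reg}(s) h_2 \rangle
= \int_{[\Mp_2]} \Theta(g)\, \big(\rho_{\reg}(s)F\big)(g)\, dg,
\]
where $\Theta := \theta_1\overline{\theta_2}$ and $F := h_1\overline{h_2}$. Conjugation sends the Weil data $(\psi^{\tau_2},\phi_2')$ to $(\psi^{-\tau_2},\overline{\phi_2'})$, so $\overline{\theta_2} = \theta_{\psi^{-\tau_2}}(\overline{\phi_2'})$; by the factorization of theta kernels (\S\ref{sec-3-2-8}, \eqref{eqn:Factorization-of-theta-fns}) applied to the binary quadratic space $V_0 := (F^2,\ \tau_1 x_1^2 - \tau_2 x_2^2)$ and the Schwartz--Bruhat function $\phi_1'\otimes\overline{\phi_2'}$, one has $\Theta = \theta_{\psi}\!\big(\phi_1'\otimes\overline{\phi_2'}\big)$ as a function on $[\Mp_2]$ (restricting the $\O(V_0)$-variable to $1$). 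In particular $\langle \Theta, 1\rangle = \langle \theta_1,\theta_2\rangle$, and $\Theta$ depends continuously and bilinearly on $(\phi_1',\phi_2')$ with $\mathcal{C}(\Theta)\ll \mathcal{C}(\phi_1')\mathcal{C}(\phi_2')$, in the sense that the relevant continuous seminorms of $\Theta$ as a function on $[\Mp_2]$ are bounded by products of seminorms of the $\phi_i'$.

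Next I would record that $F = h_1\overline{h_2}$ is a product of cusp forms: by \S\ref{sec-3-2-7} each $h_i = \theta_{\psi^{\tau_i}}(\phi_i'',\Psi_i)$ is cuspidal because $\Psi_i$ has mean zero, and by \eqref{eq:equivariance-ttf} together with the continuity of the Weil representation the map $\phi_i''\mapsto h_i$ is continuous (into any reasonable space of smooth functions on $[\Mp_2]$) and linear in $\Psi_i$. Combined with Lemma \ref{lem:crude-bound-for-cuspidal-theta-functions}, which gives $|h_i(s)|\ll_A \htt(s)^{-A}\,\mathcal{C}(\phi_i'')\,\|\Psi_i\|_{L^1}$ for every fixed $A$, this shows that $F$ is a rapidly decaying smooth function on $[\Mp_2]$ with $\mathcal{C}(F)\ll \mathcal{C}(\phi_1'')\mathcal{C}(\phi_2'')\|\Psi_1\|_{L^1}\|\Psi_2\|_{L^1}$, while $\langle \rho_{\reg}(s)F,1\rangle = \langle F,1\rangle = \langle h_1,h_2\rangle$ by invariance of the measure on $[\Mp_2]$. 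It then remains to invoke the equidistribution of products of pairs of elementary theta functions from \cite{nelson-theta-squared} in the form: for $\Theta$ as above and any rapidly decaying smooth $F$ on $[\Mp_2]$,
\[
\int_{[\Mp_2]} \Theta(g)\,\big(\rho_{\reg}(s)F\big)(g)\, dg
= \langle \theta_1,\theta_2\rangle\,\langle F,1\rangle
+ O\!\big(\Xi(s)\,\mathcal{C}(\phi_1')\,\mathcal{C}(\phi_2')\,\mathcal{C}(F)\big),
\]
and then to substitute the main term and the bound on $\mathcal{C}(F)$ just obtained.

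The substantive content is entirely in that cited statement, and reproving it is out of scope: the point is that $\Theta$ is very far from equidistributed pointwise (the elementary theta function is concentrated on a thin set), yet its integral against a fixed rapidly decaying $F$, or a translate of such, behaves as though $\Theta$ were its mean, with an error decaying in $s$ at the rate $\Xi(s)$ and uniform in $F$ through $\mathcal{C}(F)$. The only work specific to the present lemma is the reduction above. The one place I expect to need a little care is applying the cited result when the characters $\psi^{\tau_1}$ and $\psi^{-\tau_2}$ are inequivalent --- equivalently, when $V_0$ is anisotropic, which occurs precisely when $\tau_1\tau_2\notin F^{\times 2}$: there $\Theta$ is a cuspidal binary theta function, $\langle\theta_1,\theta_2\rangle = 0$, and the claim reduces to the bound $\int_{[\Mp_2]}\Theta\cdot\rho_{\reg}(s)F \ll \Xi(s)\,\mathcal{C}(\phi_1')\mathcal{C}(\phi_2')\mathcal{C}(F)$; in the isotropic case $V_0$ is a scaling of the hyperbolic plane, $\Theta$ carries an Eisenstein-type main term, and $\langle\theta_1,\theta_2\rangle$ records it correctly. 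Both cases being covered by \cite{nelson-theta-squared}, the proof is complete once the reduction is carried out (one may also need a polarization step if the cited statement is phrased only for $\theta_1=\theta_2$).
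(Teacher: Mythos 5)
Your proposal follows the same overall route as the paper — appeal to the main equidistribution result of \cite{nelson-theta-squared} and then do bookkeeping to put the error term in the stated $\mathcal{C}$-uniform form — but it elides the step that is the actual content of the paper's proof, and in doing so leaves a genuine gap.

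The main result of \cite{nelson-theta-squared} is \emph{not} stated with an error term of the shape $\Xi(s)\,\mathcal{C}(\phi_1')\,\mathcal{C}(\phi_2')\,\mathcal{C}(F)$; it is stated with an error term $\Xi(s)\,\mathcal{S}(\phi_1')\,\mathcal{S}(\phi_2')\,\mathcal{S}^{\mathbf{X}}(h_1\overline{h_2})$, where $\mathcal{S},\mathcal{S}^{\mathbf{X}}$ are the adelic Sobolev norms of \cite[\S 2]{michel-2009}. You assert the cited theorem ``in the form'' with $\mathcal{C}$-errors, but that conversion is precisely the work required. Specifically, one must (a) use the Sobolev-norm axioms (S3b), (S4e) of \cite{michel-2009} to factor $\mathcal{S}^{\mathbf{X}}(h_1\overline{h_2})\ll\mathcal{S}(h_1)\,\mathcal{S}(h_2)$ and to pass from $\mathcal{S}^{\mathbf{X}}$ to $\mathcal{S}$ (using cuspidality of the $h_i$), and (b) bound $\mathcal{S}(h_i)\ll\mathcal{C}(\phi_i'')\,\|\Psi_i\|_{L^1}$. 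Your proposal handles (b) by combining ``continuity of $\phi_i''\mapsto h_i$'' with the pointwise decay estimate of Lemma \ref{lem:crude-bound-for-cuspidal-theta-functions}, but those two facts by themselves do not yield a bound on the Sobolev norm $\mathcal{S}(h_i)$: the Sobolev norm is built out of a Laplacian $\Delta$ acting on the automorphic side and controls derivatives, while Lemma \ref{lem:crude-bound-for-cuspidal-theta-functions} is a pointwise bound and ``continuity'' of the Weil representation does not by itself tell you what seminorms of $h_i$ look like. The missing ingredient is the \emph{naturality} of $\Delta$ under $\Mp_2(\mathbb{A})$-equivariant maps: since the theta lift intertwines the Weil action on $\mathcal{S}(B^0_\mathbb{A})$ with $\rho_{\reg}$, one has $\Delta^d h_i = \theta_{\psi^{\tau_i}}(\Delta^d\phi_i'',\Psi_i)$, and only then does Lemma \ref{lem:crude-bound-for-cuspidal-theta-functions} applied to $\Delta^d\phi_i''$ (together with continuity of $\Delta$ on Schwartz--Bruhat space) give $\mathcal{S}(h_i)\ll\mathcal{C}(\phi_i'')\|\Psi_i\|_{L^1}$. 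The preliminary reduction you carry out (unfolding to $\int\Theta\cdot\rho_{\reg}(s)F$ and rewriting $\Theta$ as a binary theta function) is harmless but unnecessary, since the cited theorem is already phrased in the relevant inner-product form; what it does not do is supply the Sobolev bookkeeping.
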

\begin{proof}
  The main result of \cite{nelson-theta-squared} gives an estimate nearly
  of the required shape, but instead with the error term
  $\Xi(s) \mathcal{S}(\phi_1') \mathcal{S}(\phi_2')
  \mathcal{S}^{\mathbf{X}}(h_1 \overline{h_2})$, where
  $\mathcal{S}, \mathcal{S}^{\mathbf{X}}$ are adelic Sobolev
  norms whose relevant properties we recall shortly.  By the cuspidality
  of $h_1, h_2$ and axioms (S3b) and (S4e) of
  \cite{michel-2009}, we may replace the expression
  $\mathcal{S}^{\mathbf{X}}(h_1 \overline{h_2})$ first with
  $\mathcal{S}^{\mathbf{X}}(h_1) \mathcal{S}^{\mathbf{X}}(h_2)$
  and then with $\mathcal{S}(h_1) \mathcal{S}(h_2)$.  Our task
  thereby reduces to showing for $i=1,2$ that
  $\mathcal{S}(\phi_i') \ll \mathcal{C}(\phi_i')$ and
  $\mathcal{S}(h_i) \ll \mathcal{C}(\phi_i'') \|\Xi\|_{L^1}$.

  To that end,
  we must recall something about the norms
  $\mathcal{S}$
  (see
  \cite[\S2]{michel-2009} and \cite[\S4.6,
  \S5.3]{nelson-theta-squared} for details).  They have the form
  $\mathcal{S}(v) = \|\Delta^d v\|$, where
  $d \in \mathbb{Z}_{\geq 0}$ is fixed but large enough,
  and
  $\Delta$ acts linearly
  on the space of smooth vectors in any
  unitary representation $\pi$ of $\Mp_2(\mathbb{A})$.
  If $\pi$
  factors as $\pi_\infty \otimes \pi_{\fin}$, then likewise
  $\Delta = \Delta_\infty \otimes \Delta_{\fin}$.
  If $\pi$ is
  the Weil representation on the Schwartz--Bruhat space
  $\mathcal{S}(V_\mathbb{A}) = \mathcal{S}(V_\infty) \otimes
  \mathcal{S}(\hat{V})$ of a quadratic space $V$ over $F$, then
  $\Delta_{\infty}$ is a finite order differential operator with
  polynomial coefficients, hence is continuous for the Schwartz
  topology; since any linear operator on $\mathcal{S}(\hat{V})$
  is continuous,
  it follows that $\Delta$
  defines a continuous operator on $\mathcal{S}(V_\mathbb{A})$
  for the Schwartz--Bruhat topology.

  The continuity of $\Delta$ implies that
  $\mathcal{S}(\phi_i')
  = \|\Delta^d \phi_i'\|
  \ll \mathcal{C}(\phi_i)$, giving one of the two required estimates.
  The operator $\Delta$ is moreover natural
  in that for an equivariant morphism $f : \pi \rightarrow \pi '$
  of
  $\Mp_2(\mathbb{A})$-representations,
  one has $\Delta \circ f = f \circ \Delta$.
  Thus
  \[
  \mathcal{S}(h_i)
  = \|\Delta^d h_i\|
  = \|
  \theta_{\psi^{\tau_i}}(\Delta^d \phi_i'', \Psi_i)
  \|.
  \]
  By Lemma \ref{lem:crude-bound-for-cuspidal-theta-functions} of
  \S\ref{sec-3-5-4},
  we have
  $\mathcal{S}(h_i) \ll \mathcal{C}(\Delta^d \phi_i'')
  \|\Psi_i\|_{L^1}$.
  The continuity of $\Delta$
  gives
  $\mathcal{C}(\Delta^d \phi_i'') \ll \mathcal{C} '(\phi_i'')$,
  and the required estimate follows.
\end{proof}

\subsubsection{Factorization}\label{sec:quantitative-factorization}
Let $V', V''$ be vector spaces over $F$ and
$V := V' \oplus V''$.

\begin{lemma*}\label{lem:quantitative-factorization}
  Let
  $\ell : \mathcal{S}(V_\mathbb{A} ') \otimes
  \mathcal{S}(V_\mathbb{A} '') \rightarrow \mathbb{C}$ be an
  algebraic linear functional on the algebraic tensor product of
  Schwartz--Bruhat spaces satisfying an estimate of the form
  \[
    \ell(\phi' \otimes \phi '')
    \ll \mathcal{C}'(\phi ') \mathcal{C}''(\phi '').
  \]
  Then $\ell$ extends to a continuous functional
  $\ell : \mathcal{S}(V_\mathbb{A}) \rightarrow \mathbb{C}$
  satisfying
  \[
    \ell(\phi) \ll \mathcal{C}(\phi)
  \]
  for all $\phi \in \mathcal{S}(V_\mathbb{A})$, where
  $\mathcal{C}$ depends only upon $\mathcal{C} '$
  and $\mathcal{C} ''$.
\end{lemma*}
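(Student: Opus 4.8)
The strategy is to peel off the non-archimedean places, where the assertion is essentially vacuous, and reduce to a soft statement about Schwartz spaces of real vector spaces. Write $V'_\mathbb{A} = V'_\infty \times \widehat{V'}$, $V''_\mathbb{A} = V''_\infty \times \widehat{V''}$ and $V_\mathbb{A} = V_\infty \times \widehat{V}$, so that $V_\infty = V'_\infty \oplus V''_\infty$ and $\widehat{V} = \widehat{V'} \oplus \widehat{V''}$. Recalling (\S\ref{sec:some-asympt-notat}) that $\mathcal{S}(V_\mathbb{A})$ is the algebraic tensor product $\mathcal{S}(V_\infty) \otimes \mathcal{S}(\widehat V)$, and that $\mathcal{S}(\widehat V) = \mathcal{S}(\widehat{V'}) \otimes \mathcal{S}(\widehat{V''})$ is a union of finite-dimensional spaces carrying its finest locally convex topology, the image of the dense inclusion $\mathcal{S}(V'_\mathbb{A}) \otimes \mathcal{S}(V''_\mathbb{A}) \hookrightarrow \mathcal{S}(V_\mathbb{A})$ is exactly $(\mathcal{S}(V'_\infty) \otimes \mathcal{S}(V''_\infty)) \otimes \mathcal{S}(\widehat V)$; thus the only ``missing'' directions are archimedean. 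Following the convention of \S\ref{sec:some-asympt-notat}, it suffices to fix $\phi_f \in \mathcal{S}(\widehat V)$, write it as a finite sum $\phi_f = \sum_k \phi'_{f,k} \otimes \phi''_{f,k}$ with $\phi'_{f,k} \in \mathcal{S}(\widehat{V'})$, $\phi''_{f,k} \in \mathcal{S}(\widehat{V''})$, and show that for each $k$ the bilinear form on $\mathcal{S}(V'_\infty) \times \mathcal{S}(V''_\infty)$ sending $(f',f'')$ to $\ell\bigl((f'\otimes\phi'_{f,k})\otimes(f''\otimes\phi''_{f,k})\bigr)$ — which by hypothesis is bounded by $\mathcal{C}'_k(f')\,\mathcal{C}''_k(f'')$ for continuous seminorms on the two factors — extends to a functional on $\mathcal{S}(V_\infty)$ bounded by a continuous seminorm depending only on those. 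Summing the finitely many $k$ then yields the adelic statement, the resulting $\mathcal{C}$ depending only on $\mathcal{C}'$, $\mathcal{C}''$ (and inheriting any uniformity in auxiliary parameters present in the hypothesis).

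We are thus reduced to the following: $W'$, $W''$ finite-dimensional real vector spaces, $W := W' \oplus W''$, and a bilinear form $b : \mathcal{S}(W') \times \mathcal{S}(W'') \to \mathbb{C}$ with $|b(f',f'')| \leq \mathcal{C}'(f')\,\mathcal{C}''(f'')$ for continuous seminorms $\mathcal{C}'$, $\mathcal{C}''$; one must extend the associated linear functional $\ell_0$ on $\mathcal{S}(W') \otimes \mathcal{S}(W'')$ to a continuous functional on $\mathcal{S}(W)$. For $u = \sum_j f'_j \otimes f''_j$ in the algebraic tensor product, $|\ell_0(u)| \leq \sum_j \mathcal{C}'(f'_j)\,\mathcal{C}''(f''_j)$; taking the infimum over all representations of $u$ gives $|\ell_0(u)| \leq \pi(u)$, where $\pi$ is the projective cross-seminorm attached to $\mathcal{C}'$, $\mathcal{C}''$, so $\ell_0$ is continuous for the projective tensor topology. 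Now $\mathcal{S}(W')$ and $\mathcal{S}(W'')$ are nuclear Fr\'echet spaces, and the Schwartz kernel theorem identifies the completed projective tensor product $\mathcal{S}(W') \,\widehat{\otimes}_\pi\, \mathcal{S}(W'')$ with $\mathcal{S}(W)$, the algebraic tensor product being dense and carrying the subspace topology from $\mathcal{S}(W)$ (see e.g.\ Tr\`eves, \emph{Topological Vector Spaces, Distributions and Kernels}, Ch.~51, or Grothendieck's memoir on nuclear spaces). Therefore $\ell_0$ extends uniquely to a continuous functional $\ell$ on $\mathcal{S}(W)$, which — being continuous — is bounded by a continuous seminorm $\mathcal{C}$ of $\mathcal{S}(W)$; one may take for $\mathcal{C}$ the unique continuous extension of $\pi$ to the completion, so that $\mathcal{C}$ depends only on $\mathcal{C}'$, $\mathcal{C}''$. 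This is the asserted bound $\ell(\phi) \ll \mathcal{C}(\phi)$.

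The sole substantive ingredient is the identification $\mathcal{S}(W) \cong \mathcal{S}(W') \,\widehat{\otimes}_\pi\, \mathcal{S}(W'')$ — equivalently, that every $\phi \in \mathcal{S}(W)$ admits an absolutely convergent expansion $\phi = \sum_j c_j\, f'_j \otimes f''_j$ with $\sum_j |c_j|\,\mathcal{C}'(f'_j)\,\mathcal{C}''(f''_j) < \infty$ for any prescribed continuous seminorms. If one wished to avoid citing the kernel theorem, this can be established by Hermite expansions: fixing Hermite $L^2$-bases $(h'_m)$ of $\mathcal{S}(W')$ and $(h''_n)$ of $\mathcal{S}(W'')$, the products $h'_m \otimes h''_n$ form a Hermite basis of $\mathcal{S}(W)$, membership in the Schwartz space is equivalent to rapid decay of the coefficients, and every continuous seminorm on $\mathcal{S}(W')$ (resp.\ $\mathcal{S}(W'')$) grows at most polynomially along $(h'_m)$ (resp.\ $(h''_n)$); one then defines $\ell$ coefficientwise and checks convergence and agreement with $\ell_0$. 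I expect this classical fact to be the only point requiring any care: the reduction over the finite places, and the observation that the hypothesis is precisely projective-tensor continuity, are routine bookkeeping with the $\mathcal{C}$-convention.
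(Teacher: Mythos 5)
Your argument is correct, but it takes a genuinely different route from the paper's. You first peel off the non-archimedean places using the fact that for $p$-adic spaces the Schwartz--Bruhat tensor product is purely algebraic, reducing the problem to the archimedean factor $\mathcal{S}(V_\infty)=\mathcal{S}(V'_\infty\oplus V''_\infty)$, and then invoke the classical Schwartz kernel theorem for real Schwartz spaces (via nuclearity of Fr\'echet spaces and the identification $\mathcal{S}(W)\cong\mathcal{S}(W')\,\widehat{\otimes}_\pi\,\mathcal{S}(W'')$, with a Hermite-basis fallback). The paper makes the same opening observation — ``this is essentially the Schwartz kernel theorem, as extended by Bruhat'' — but then gives a self-contained, directly adelic construction instead of citing the nuclear-spaces machinery: it takes a factorizable $\nu\in C_c^\infty(V'_\mathbb{A})$ with $\sum_{\lambda\in V'}\nu(\lambda+x)^2=1$ (a ``square root of a partition of unity''), chooses a lattice $\Lambda_2 = m^{-1}V'$ so that $\supp(\nu)$ injects into $V'_\mathbb{A}/mV'$, and uses Fourier inversion on that compact quotient to expand an arbitrary $\phi\in\mathcal{S}(V_\mathbb{A})$ as a convergent bi-lattice sum $\sum_{\lambda_1,\lambda_2}\phi'_{\lambda_1,\lambda_2}\otimes\phi''_{\lambda_1,\lambda_2}$ of pure tensors, with the needed summability obtained by integration by parts. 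Your approach is more modular and leans on standard functional analysis; the paper's is more concrete, produces an explicit pure-tensor expansion without separating the archimedean and non-archimedean factors, and sidesteps verifying that the projective tensor topology agrees with the subspace topology. Both are valid, and both preserve the requisite uniformity in auxiliary parameters inherent in the $\mathcal{C}$-convention.
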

\begin{proof}
  This is essentially the Schwartz kernel theorem, as
  extended by Bruhat \cite[\S5]{MR0140941}.  For concreteness,
  we sketch a proof.  There exists
  $\nu = \prod \nu_\mathfrak{p} \in C_c^\infty(V'_\mathbb{A})$
  such that $\sum_{\lambda \in V'} \nu(\lambda + x)^2 = 1$ for
  all $x \in V'_\mathbb{A}$, thus $\nu$ is a ``square-root of a
  partition of unity.''  Let $m \in \mathbb{A}^\times$ be large
  enough that the map
  $\supp(\nu) \rightarrow V'_\mathbb{A} / m V'$ is injective.
  Set $\Lambda_1 := V'$ and $\Lambda_2 := m^{-1} V'$.
  Fix non-degenerate
  bilinear forms
  $V'_\mathbb{A} \otimes V'_\mathbb{A}
  \rightarrow \mathbb{A}$
  and
  $V''_\mathbb{A} \otimes V''_\mathbb{A} \rightarrow
  \mathbb{A}$;
  denote them by
  $(x,y) \mapsto x \cdot y \in \mathbb{A}$.
  By Fourier inversion on
  the compact group $V'_\mathbb{A} / m V'$, there exists
  $c > 0$ so that for all $f \in C^\infty(V'_\mathbb{A})$ and
  $x \in V'_\mathbb{A}$,
  \begin{equation}\label{eq:fourier-expansion-of-half-of-partitino-of-unity}
    \nu(x)^2 f(x) = c \nu(x) \sum_{\lambda_2 \in \Lambda_2}
    \psi(\lambda_2 \cdot x) \int_{y \in V'_\mathbb{A}} \nu(y) f(y)
    \psi(-\lambda_2 \cdot y).
  \end{equation}
  For
  $t' \in V'_\mathbb{A}, t'' \in V''_\mathbb{A}$,
  we apply
  \eqref{eq:fourier-expansion-of-half-of-partitino-of-unity}
  to
  $f(x) := \phi( (t'' - \lambda_1) + x)$
  and set $x := t' + \lambda_1$,
  giving
  \begin{align}
    \phi(t' + t'')
    &=
      \sum_{\lambda_1 \in \Lambda_1}
      \nu(t' + \lambda_1)^2
      \phi( (t'' - \lambda_1) + (t' + \lambda_1))
    \\
    &=
      \sum_{\substack{
      \lambda_1 \in \Lambda_1 \\
    \lambda_2 \in \Lambda_2
    }
    }
    \phi_{\lambda_1,\lambda_2}'(t')
    \phi_{\lambda_1,\lambda_2}''(t''),
  \end{align}
  where
  \begin{equation}
    \phi_{\lambda_1,\lambda_2}'(t')
    :=
    c \nu (t' + \lambda_1)
    \psi(\lambda_2 \cdot (t' + \lambda_1)),
  \end{equation}
  \begin{equation}\label{eq:}
    \phi_{\lambda_1,\lambda_2}''(t'')
    = \int_{y \in \mathbb{A}}
    \nu(y) \phi(t'' - \lambda_1 + y) \psi(- \lambda_2 \cdot y).
  \end{equation}
  ``Integration by parts''
  gives readily that
  \begin{equation}
    \sum 
    |\ell(\phi'_{\lambda_1,\lambda_2} \otimes \phi''_{\lambda_1,\lambda_2})|
    \ll
    \sum
    |\mathcal{C} '(\phi'_{\lambda_1,\lambda_2})
    \mathcal{C} ''(\phi''_{\lambda_1,\lambda_2})|
    \ll
    \mathcal{C}(\phi)
  \end{equation}
  for suitable $\mathcal{C}(\phi)$.
  It follows that $\ell$ admits
  the required extension and satisfies
  the required estimate.
\end{proof}

\subsubsection{Main estimate: the general case\label{sec:main-estimate-for-equidistribution-pairs-theta}}
\label{sec-3-5-5}
Temporarily denote by $\mathcal{A}_0$
denote the space
of integrable functions
$\Psi : [\PB^\times] \rightarrow \mathbb{C}$  of mean zero.
Let $\mathcal{E}_{\tau_1,\tau_2} : \mathcal{S}(B_\mathbb{A}) \otimes
\mathcal{S}(B_\mathbb{A})
\otimes \mathcal{A}_0 \otimes \mathcal{A}_0 \rightarrow
\mathbb{C}$
denote the sesquilinear form given
for
$\phi_i = \phi_i' \otimes \phi_i'' \in
\mathcal{S}(B_\mathbb{A})$
with $\phi_1',\phi_2' \in \mathcal{S}(\mathbb{A}),
\phi_1'',\phi_2'' \in \mathcal{S}(B_\mathbb{A}^0)$
by
\begin{align*}
  \mathcal{E}_{\tau_1,\tau_2}(\phi_1,
  \phi_2,
  \Psi_1, \Psi_2)
  :=
  \langle
  \theta_1 h_1, \theta_2 h_2 \rangle
  - \langle \theta_1, \theta_2 \rangle
  \langle h_1, h_2 \rangle,
\end{align*}
where we abbreviate
$\theta_i := \theta_{\tau_i}(\phi_i')$ and
$h_i := \theta_{\tau_i}(\phi_i'',\Psi_i)$.
(The definition makes sense: \emph{a priori} estimates as in
\S\ref{sec-3-5-4} and the density of
$\mathcal{S}(\mathbb{A}) \otimes \mathcal{S}(B_\mathbb{A}^0)$ in
$\mathcal{S}(B_\mathbb{A})$ allow us to extend
$\mathcal{E}_{\tau_1,\tau_2}$ continuously
from its initial domain.)




\begin{proposition}\label{prop:main-error-estimate-global-adelic-general}
  For $\phi_1, \phi_2 \in \mathcal{S}(B_\mathbb{A}),
  \Psi_1,\Psi_2 \in \mathcal{A}_0$
  and $s \in \Mp_2(\mathbb{A})$,
  one has
  with $\rho_0^{\tau}(s) := \rho_{\Weil}^{\psi^{\tau}, B^0}(s)$
  the estimate
  \begin{equation}
    \mathcal{E}_{\tau_1,\tau_2}((1 \otimes \rho_0^{\tau_1}(s))\phi_1,
    (1 \otimes \rho_0^{\tau_2}(s))\phi_2,
    \Psi_1,\Psi_2)
    \ll \Xi(s)
    \prod_{j=1,2}
    \Sob(\phi_j)
    \|\Psi_j\|_{L^1}.
  \end{equation}
  The implied constant and the uniformity of the continuity of
  $\Sob(\phi_j)$
  depend at most upon
  $\psi,\tau_1,\tau_2,F,B$.
  The operator
  $1 \otimes \rho^{\tau_1}(s)$
  is defined as in \S\ref{sec:factorization-weil-repn}.
\end{proposition}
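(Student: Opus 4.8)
The plan is to reduce to pure tensors $\phi_i = \phi_i' \otimes \phi_i''$, where the estimate is essentially the Lemma of \S\ref{sec-3-5-3}, and then to bootstrap to arbitrary $\phi_i \in \mathcal{S}(B_\mathbb{A})$ by the abstract factorization Lemma of \S\ref{sec:quantitative-factorization}. For the first step: if $\phi_i = \phi_i' \otimes \phi_i''$ with $\phi_i' \in \mathcal{S}(\mathbb{A})$ and $\phi_i'' \in \mathcal{S}(B_\mathbb{A}^0)$, then $(1 \otimes \rho_0^{\tau_i}(s)) \phi_i = \phi_i' \otimes \rho_0^{\tau_i}(s) \phi_i''$, so in the defining formula for $\mathcal{E}_{\tau_1,\tau_2}$ the elementary theta factor is unchanged, equal to $\theta_i := \theta_{\psi^{\tau_i}}(\phi_i')$, while the ternary theta factor becomes $\theta_{\psi^{\tau_i}}(\rho_0^{\tau_i}(s) \phi_i'', \Psi_i)$, which by the equivariance \eqref{eq:equivariance-ttf} equals $\rho_{\reg}(s) h_i$ with $h_i := \theta_{\psi^{\tau_i}}(\phi_i'', \Psi_i)$. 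Since right translation is unitary on $L^2([\Mp_2])$, one has $\langle \rho_{\reg}(s) h_1, \rho_{\reg}(s) h_2 \rangle = \langle h_1, h_2 \rangle$, whence
\[
\mathcal{E}_{\tau_1,\tau_2}\big((1 \otimes \rho_0^{\tau_1}(s)) \phi_1, (1 \otimes \rho_0^{\tau_2}(s)) \phi_2, \Psi_1, \Psi_2\big) = \langle \theta_1 \, \rho_{\reg}(s) h_1, \theta_2 \, \rho_{\reg}(s) h_2 \rangle - \langle \theta_1, \theta_2 \rangle \langle h_1, h_2 \rangle .
\]

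The Lemma of \S\ref{sec-3-5-3} says precisely that this right-hand side is $O\big(\Xi(s) \prod_{i=1,2} \mathcal{C}(\phi_i') \mathcal{C}(\phi_i'') \|\Psi_i\|_{L^1}\big)$, establishing the proposition for pure tensors. To globalize, fix $s, \Psi_1, \Psi_2$ and write $L(\phi_1, \phi_2)$ for the displayed quantity (defined for all $\phi_i \in \mathcal{S}(B_\mathbb{A})$ via the continuous extension of $\mathcal{E}_{\tau_1,\tau_2}$ discussed in \S\ref{sec:main-estimate-for-equidistribution-pairs-theta}). For a fixed pure tensor $\phi_2 = \phi_2' \otimes \phi_2''$, the map $\phi_1' \otimes \phi_1'' \mapsto L(\phi_1, \phi_2)$ is an algebraic linear functional on $\mathcal{S}(\mathbb{A}) \otimes \mathcal{S}(B_\mathbb{A}^0)$ bounded by $\mathcal{C}(\phi_1') \cdot \big(\mathcal{C}(\phi_1'') \, \Xi(s) \mathcal{C}(\phi_2') \mathcal{C}(\phi_2'') \|\Psi_1\|_{L^1} \|\Psi_2\|_{L^1}\big)$; the factorization Lemma of \S\ref{sec:quantitative-factorization} then produces a continuous extension to $\mathcal{S}(B_\mathbb{A})$ — necessarily agreeing with $L(\cdot, \phi_2)$ by density and continuity — satisfying $L(\phi_1, \phi_2) \ll \mathcal{C}(\phi_1) \, \Xi(s) \mathcal{C}(\phi_2') \mathcal{C}(\phi_2'') \|\Psi_1\|_{L^1} \|\Psi_2\|_{L^1}$ for every $\phi_1 \in \mathcal{S}(B_\mathbb{A})$. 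Fixing now such a $\phi_1$ and applying the same lemma to the conjugate-linear variable, i.e.\ to $\phi_2' \otimes \phi_2'' \mapsto \overline{L(\phi_1, \phi_2)}$, yields $L(\phi_1, \phi_2) \ll \Xi(s) \mathcal{C}(\phi_1) \mathcal{C}(\phi_2) \|\Psi_1\|_{L^1} \|\Psi_2\|_{L^1}$ for all $\phi_1, \phi_2 \in \mathcal{S}(B_\mathbb{A})$, which is the asserted bound (recall $\Sob = \mathcal{C}$). The claimed uniformity in $\psi, \tau_1, \tau_2, F, B$ is inherited from the corresponding uniformity statements in the two lemmas invoked.

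I do not expect a genuine obstacle here: the analytic substance lies entirely in the Lemma of \S\ref{sec-3-5-3} (quoted from \cite{nelson-theta-squared}), and this proposition merely repackages it into a statement about the sesquilinear form $\mathcal{E}_{\tau_1,\tau_2}$ and about general Schwartz--Bruhat functions. The only points needing care are (a) matching the continuous extension produced by the factorization lemma with the extension of $\mathcal{E}_{\tau_1,\tau_2}$ already in force — for which one uses that $\mathcal{S}(\mathbb{A}) \otimes \mathcal{S}(B_\mathbb{A}^0)$ is dense in $\mathcal{S}(B_\mathbb{A})$ and that $1 \otimes \rho_0^{\tau_i}(s)$ is continuous there — and (b) the order of the two applications of the factorization lemma: one must globalize the first ($\phi_1$) variable before the second, so that the bound fed into the second application is uniform in $\phi_1$ over all of $\mathcal{S}(B_\mathbb{A})$.
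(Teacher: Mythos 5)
Your proof is correct and takes essentially the same route as the paper's: for pure tensors the equivariance \eqref{eq:equivariance-ttf} of the ternary lift and unitarity of $\rho_{\reg}(s)$ reduce the quantity to the LHS of the Lemma of \S\ref{sec-3-5-3}, and the factorization Lemma of \S\ref{sec:quantitative-factorization} then bootstraps to general $\phi_i \in \mathcal{S}(B_\mathbb{A})$. Your writeup is merely more explicit than the paper's one-line proof about the details of the two-step (variable-by-variable) application of the factorization lemma and the density argument that identifies the extension it produces with the a priori extension of $\mathcal{E}_{\tau_1,\tau_2}$.
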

\begin{proof}
  The lemma of \S\ref{sec:quantitative-factorization}
  reduces the general case
  of Proposition
  \ref{prop:main-error-estimate-global-adelic-general}
  to the special case
  in which
  $\phi_i = \phi_i' \otimes \phi_i''$ for $i=1,2$,
  which follows
  from
  the lemma of \S\ref{sec-3-5-3}
  upon recalling from
  \eqref{eq:equivariance-ttf}
  that $\theta_{\psi_\tau}$
  intertwines $\rho_{0}^{\tau}$
  with $\rho_{\reg}$.
\end{proof}

\subsubsection{Invariance properties}\label{sec:equivariance-summary-for-E-tau-tau}
We record these for later use.
\begin{lemma*}\label{lem:equivariance-summary-for-E-tau-tau}
  For
  $g_1, g_2 \in \PB^\times_\mathbb{A}$
  and
  $s \in \Mp_2(\mathbb{A})$,
  one has
  \begin{align*}
    \mathcal{E}_{\tau_1,\tau_2}(\phi_1, \phi_2, \Psi_1, \Psi_2)
    &=
      \mathcal{E}_{\tau_1,\tau_2}(\mathfrak{S} \phi_1, \phi_2,
      \Psi_1, \Psi_2)
    \\
    &=
      \mathcal{E}_{\tau_1,\tau_2}(\phi_1, \mathfrak{S} \phi_2,
      \Psi_1, \Psi_2)
    \\
    &=
      \mathcal{E}_{\tau_1,\tau_2}(\Ad(g_1) \phi_1, \Ad(g_2) \phi_2,
      \rho_{\reg}(g_1) \Psi_1,       \rho_{\reg}(g_2)\Psi_2)
    \\
    &=
      \mathcal{E}_{\tau_1,\tau_2}(\rho_{\Weil}^{\psi^{\tau_1},B}(s) \phi_1, \rho_{\Weil}^{\psi^{\tau_2},B}(s) \phi_2,
        \Psi_1, \Psi_2).
    \end{align*}
  \end{lemma*}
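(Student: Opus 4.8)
The plan is to reduce to the case of pure tensors and then verify each identity by bookkeeping on the factors. Recall that $\mathcal{E}_{\tau_1,\tau_2}$ is defined on the algebraic tensor product $\mathcal{S}(\mathbb{A}) \otimes \mathcal{S}(B_\mathbb{A}^0) \subseteq \mathcal{S}(B_\mathbb{A})$ by the explicit formula $\langle \theta_1 h_1, \theta_2 h_2 \rangle - \langle \theta_1, \theta_2\rangle\langle h_1, h_2\rangle$ with $\theta_i = \theta_{\psi^{\tau_i}}(\phi_i')$ and $h_i = \theta_{\psi^{\tau_i}}(\phi_i'',\Psi_i)$ for $\phi_i = \phi_i' \otimes \phi_i''$, and then extended continuously using the a priori bounds of \S\ref{sec-3-5-4}. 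Each of the three operations occurring in the statement --- $\mathfrak{S}$, $\Ad(g)$, and $\rho_{\Weil}^{\psi^{\tau},B}(s)$ --- is continuous on $\mathcal{S}(B_\mathbb{A})$ and, by \eqref{eq:action-of-S-on-pure-tensors-for-B}, \eqref{eq:action-of-S-on-pure-tensors-for-B-0} and the factorization of the Weil representation (\S\ref{sec:factorization-weil-repn}), acts factorwise on pure tensors, hence maps $\mathcal{S}(\mathbb{A}) \otimes \mathcal{S}(B_\mathbb{A}^0)$ to itself. Consequently it suffices to prove each identity for $\phi_i = \phi_i' \otimes \phi_i''$; the general case then follows by continuity.

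For the $\mathfrak{S}$-invariance: by \eqref{eq:action-of-S-on-pure-tensors-for-B}, replacing $\phi_1$ by $\mathfrak{S}\phi_1$ replaces $\phi_1'$ by $(\phi_1')_-$ and leaves $\phi_1''$ unchanged, so $h_1$ is untouched while $\theta_1 = \theta_{\psi^{\tau_1}}(\phi_1')$ is replaced by $\theta_{\psi^{\tau_1}}((\phi_1')_-) = \theta_{\psi^{\tau_1}}(\phi_1')$ by \eqref{eqn:O1-invariance-elementary-theta-fn}. Thus the pair $(\theta_1, h_1)$ is literally unchanged, and so is $\mathcal{E}_{\tau_1,\tau_2}$; the same argument in the second slot handles $\mathfrak{S}\phi_2$. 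For the $\Ad$-invariance: by \eqref{eq:action-of-S-on-pure-tensors-for-B-0}, $\Ad(g_i)\phi_i = \phi_i' \otimes \Ad(g_i)\phi_i''$, so $\theta_i$ is unchanged, while $h_i = \theta_{\psi^{\tau_i}}(\phi_i'', \Psi_i)$ is replaced by $\theta_{\psi^{\tau_i}}(\Ad(g_i)\phi_i'', \rho_{\reg}(g_i)\Psi_i) = h_i$ by \eqref{eq:equivariance-ttf-2}; hence $\mathcal{E}_{\tau_1,\tau_2}$ is unchanged.

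For the $\rho_{\Weil}$-invariance: since $\rho_{\Weil}^{\psi^{\tau_i},B}(s) = \rho_{\Weil}^{\psi^{\tau_i},F}(s) \otimes \rho_{\Weil}^{\psi^{\tau_i},B^0}(s)$, the function $\theta_i$ is replaced by $\theta_{\psi^{\tau_i}}(\rho_{\Weil}^{\psi^{\tau_i},F}(s)\phi_i') = \rho_{\reg}(s)\theta_i$ via \eqref{eq:equivariance-etf} and $h_i$ by $\rho_{\reg}(s)h_i$ via \eqref{eq:equivariance-ttf}; therefore $\theta_i h_i$ is replaced by $\rho_{\reg}(s)(\theta_i h_i)$. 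The identity then follows because the right regular action of $\Mp_2(\mathbb{A})$ is unitary on $L^2$ (the products $\theta_i h_i$ are rapidly decaying, hence square-integrable, by the crude bounds of \S\ref{sec-3-5-4}), so it preserves both $\langle \theta_1 h_1, \theta_2 h_2\rangle$ and $\langle \theta_1, \theta_2\rangle \langle h_1, h_2\rangle$ separately.

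The argument is thus entirely a matter of substitution into identities already recorded; the only point requiring a word of care is the passage from pure tensors to arbitrary $\phi_1, \phi_2$, which is handled by the continuity of $\mathcal{E}_{\tau_1,\tau_2}$ together with the observation that all three operations act continuously on $\mathcal{S}(B_\mathbb{A})$ and preserve the dense subspace $\mathcal{S}(\mathbb{A}) \otimes \mathcal{S}(B_\mathbb{A}^0)$. I do not anticipate any serious obstacle.
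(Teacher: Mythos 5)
Your argument is correct and follows the same route as the paper's (terse) proof: decompose $\mathcal{E}_{\tau_1,\tau_2}$ on pure tensors, apply \eqref{eq:action-of-S-on-pure-tensors-for-B} and \eqref{eqn:O1-invariance-elementary-theta-fn} for the $\mathfrak{S}$-invariance, \eqref{eq:action-of-S-on-pure-tensors-for-B-0} and \eqref{eq:equivariance-ttf-2} for the $\Ad$-invariance, and \eqref{eq:equivariance-etf}, \eqref{eq:equivariance-ttf} together with translation-invariance of the Petersson inner product for the metaplectic invariance, then extend by continuity. You have simply spelled out the substitutions that the paper leaves implicit.
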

\begin{proof}
  The first two identities follow from
  \eqref{eq:action-of-S-on-pure-tensors-for-B} and 
  \eqref{eqn:O1-invariance-elementary-theta-fn}, the remaining
  from
  \eqref{eq:equivariance-etf}, \eqref{eq:equivariance-ttf},
  \eqref{eq:equivariance-ttf-2},
  \eqref{eq:action-of-S-on-pure-tensors-for-B-0}, \eqref{eq:action-of-S-on-pure-tensors-for-B}
  and the translation invariance
  of the Petersson inner product.
\end{proof}


\subsection{Simillitude theta functions\label{sec:non-traditional-theta-lifts}}
\label{sec-3-2-9}

\subsubsection{Weil representation\label{sec:similitudes-global}}
\label{sec-3-2-4}
For each place $\mathfrak{p}$ of $F$, let
$\Omega_{\mathfrak{p}}$ denote the representation of
$\PGL_2(F_{\mathfrak{p}}) \times \GO(B_\mathfrak{p})$ attached
as in \S\ref{sec:defn-local-omega}  to the tuple
$(F_\mathfrak{p},B_{\mathfrak{p}},\psi_{\mathfrak{p}})$.  Let
$\Omega$ denote the restricted tensor product of the spaces
$\Omega_{\mathfrak{p}}$ with respect to the distinguished
elements, which we denote now by $\phi_{\mathfrak{p}}^0 \in
\Omega_{\mathfrak{p}}$.
We may and shall identify $\Omega$ with the space of
functions
$\phi : \mathbb{A}^\times \times B_\mathbb{A} \rightarrow
\mathbb{C}$ such that
\begin{itemize}
\item For each $t \in \mathbb{A}^\times$,
  the function
  $\phi[t] : B_\mathbb{A} \rightarrow \mathbb{C}$
  given by $\phi[t](x) := \phi(t,x)$
  belongs to the Schwartz--Bruhat space
  $\mathcal{S}(B_\mathbb{A})$;
\item $\phi(z^2 t, x) = \phi(t, z x)$
  for all $z,t \in \mathbb{A}^\times, x \in B_\mathbb{A}$.
\item There is a compact subset
  $C$ of $\mathbb{A}^\times / \mathbb{A}^{\times 2}$
  such that $\phi[t] = 0$ for all $t \notin C$
  (i.e., for all $t \in \mathbb{A}^\times$
  whose image in $\mathbb{A}^\times / \mathbb{A}^{\times 2}$
  lies outside $C$);
\item There is an open subgroup
  $U$ of $\mathbb{A}^\times / \mathbb{A}^{\times 2}$
  such that $\phi[t u] = \phi[t]$
  for all $t \in \mathbb{A}^\times, u \in U$.
\end{itemize}
We equip $\Omega$ with the invariant hermitian norm $\|.\|$
obtained by tensoring those
on the factors $\Omega_\mathfrak{p}$, thus
\begin{equation}\label{eqn:inner-product-on-Omega-1-adelic}
  \|\phi\|^2_{\Omega}
  :=
  \int_{t \in \mathbb{A}^\times / \mathbb{A}^{\times 2}}
  |t|^2
  \int_{x \in B_\mathbb{A}}
  |\phi|^2(t,x).
\end{equation}
The group $\PGL_2(\mathbb{A}) \times \GO(B_\mathbb{A})$ acts on
$\Omega$ by the representation $\rho_{\Weil}$ obtained as the
restricted tensor product of those defined in
\S\ref{sec:defn-local-omega}.  We define
$\mathfrak{S} : \Omega \rightarrow \Omega$
and
(for $g \in \PB^\times_\mathbb{A}$)
$\Ad(g) : \Omega \rightarrow \Omega$
as in
\S\ref{sec:defn-local-omega}.  Note that $\mathfrak{S}$ does
\emph{not} preserve pure tensors: for $\phi
= \otimes \phi_\mathfrak{p} \in \Omega$,
\begin{align*}
  \mathfrak{S} \phi (t,x)
  &= (\phi (t,x) + \phi (t,x -
    \nr(x)))/2,
    \\
  \otimes \mathfrak{S} \phi_\mathfrak{p}(x,t)
  &= \prod  (\phi (t_\mathfrak{p},x_\mathfrak{p}) + \phi (t_\mathfrak{p},x_\mathfrak{p} -
\nr(x_\mathfrak{p} )))/2,
\end{align*}
and these are not in general the same.
They \emph{do} coincide if $\# \{ \mathfrak{p} : \mathfrak{S} \phi_\mathfrak{p}
\neq \phi_\mathfrak{p} \}
\leq 1$.

\subsubsection{Theta functions}
For $\phi \in \Omega$,
$s \in \PGL_2(\mathbb{A}), g \in \GO(B_\mathbb{A})$,
set
\begin{equation}
  \label{defn:theta-knerle-for-Omega}
  \Theta(\phi;s,g)
  :=
  \frac{1}{2} \sum_{\tau \in F^\times / F^{\times 2}}
  \sum_{x \in B}
  \rho_{\Weil}(s,g)
  \phi(\tau,x).
\end{equation}
The sum is well-defined, converges absolutely
and defines a smooth function
$\Theta(\phi)$
on $[\PGL_2] \times [\GO(B)]$.
For a cusp form $\Psi : [\PB^\times] \rightarrow \mathbb{C}$
and $s \in \PGL_2(\mathbb{A})$,
set
\[
\Theta(\phi,\Psi;s) := \int_{g \in [\PB^\times]}
\Psi(g) \Theta(\phi;s,\Ad(g)).
\]
The integral (together with similar integrals below)
converges absolutely
and defines a cusp form
$\Theta(\phi,\Psi) : [\PGL_2] \rightarrow \mathbb{C}$.

\begin{remark*}
  $\Theta(\phi,\Psi)$
  is not a theta lift in the traditional sense:
  the integral
  in its definition is with respect to the orthogonal group of $B^0$ rather than that of $B$.
\end{remark*}


\subsubsection{Fourier expansion}\label{sec:four-expans}
Let $\phi \in \Omega$, and let $\Psi : [\PB^\times] \rightarrow
\mathbb{C}$
be a cusp form.
\begin{lemma*}
  For
  $x \in \mathbb{A}$,
  $y \in \mathbb{A}^\times$,
  one has
  \[
  \Theta(\phi,\Psi;n(x) a(y))
  =
  \sum_{\tau \in F^\times}
  \psi(\tau x)
  W(\Theta(\phi,\Psi), \tau y)
  \]
  where
  $W(\Theta(\phi,\Psi),y)
  :=
  \int_{g \in [\PB^\times]}
  \Psi(g)
  \sum_{\gamma \in \PB^\times}
  |y|
  \phi(y \nr(\gamma)^{-1}, g^{-1} \gamma g)$.
\end{lemma*}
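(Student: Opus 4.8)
The plan is to use that, by the previous subsubsection, $\Theta(\phi,\Psi)$ is a cusp form on $[\PGL_2]$, so that its Fourier expansion (as in \S\ref{sec-3-3-1}) has vanishing constant term; it then suffices to compute the Whittaker coefficient
\[
W(\Theta(\phi,\Psi),y)=\int_{x\in\mathbb{A}/F}\psi(-x)\,\Theta(\phi,\Psi;n(x)a(y))\,dx
\]
and to check that it equals the claimed expression.

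First I would make the action of $\rho_{\Weil}(n(x)a(y),\Ad(g))$ on $\Omega$ explicit. Since $\lambda(\Ad(g))=1$ (conjugation preserves $\nr$), the element $\Ad(g)$ acts by $\phi(t,v)\mapsto\phi(t,g^{-1}vg)$; the torus element $a(y)$ acts by $\phi[t]\mapsto|y|\phi[ty]$; and the unipotent $n(x)\in\SL_2$ acts on $\phi[t]$ through $\rho_{\Weil}^{\psi^t,B}(n(x))$, i.e. by multiplication by $v\mapsto\psi(tx\,\nr(v))$. Composing these three identities from \S\ref{sec:defn-local-omega} and \S\ref{sec:similitudes-global} gives
\[
\rho_{\Weil}(n(x)a(y),\Ad(g))\phi(t,v)=|y|\,\psi(t\,\nr(v)\,x)\,\phi(ty,g^{-1}vg),
\]
and substituting into the definitions of $\Theta$ and of $\Theta(\phi,\Psi)$ yields
\[
\Theta(\phi,\Psi;n(x)a(y))=\frac{|y|}{2}\int_{g\in[\PB^\times]}\Psi(g)\sum_{\tau\in F^\times/F^{\times 2}}\sum_{v\in B}\psi(\tau\,\nr(v)\,x)\,\phi(\tau y,g^{-1}vg),
\]
where $\tau$ is understood as a representative; independence of that choice (and, using $\psi|_F=1$, the left $n(F)$-invariance) follows from the homogeneity $\phi(z^2t,x)=\phi(t,zx)$ via the substitution $v\mapsto z^{-1}v$.

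Next I would plug this into the integral defining $W$, interchange $\int_{\mathbb{A}/F}dx$ with $\int_{[\PB^\times]}dg$ and the (absolutely convergent) double sum, and evaluate the $x$-integral using $\int_{\mathbb{A}/F}\psi(cx)\,dx=1_{c=0}$ (valid since $\vol(\mathbb{A}/F)=1$). This leaves
\[
W(\Theta(\phi,\Psi),y)=\frac{|y|}{2}\int_{g\in[\PB^\times]}\Psi(g)\sum_{\tau}\sum_{\substack{v\in B\\ \tau\,\nr(v)=1}}\phi(\tau y,g^{-1}vg),
\]
in which the constraint forces $v\in B^\times$ and $\tau=\nr(v)^{-1}$.

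The last step, which I expect to be the main obstacle, is the passage from this double sum to a sum over $\gamma\in\PB^\times=B^\times/F^\times$. Writing $v=c\gamma$ with $c\in F^\times$ and applying $\phi(t,cx)=\phi(c^2t,x)$, the constraint becomes $c^2\tau\,\nr(\gamma)=1$, so each summand equals $\phi(\nr(\gamma)^{-1}y,g^{-1}\gamma g)$; one checks from the same homogeneity that this value depends only on the class of $\gamma$ in $\PB^\times$. For each fixed $\gamma$ the pairs $(\tau,c)$ with $\tau$ a representative of $F^\times/F^{\times 2}$ and $c^2\tau=\nr(\gamma)^{-1}$ number exactly two (the two square roots $\pm c$), so the double sum equals $2\sum_{\gamma\in\PB^\times}\phi(\nr(\gamma)^{-1}y,g^{-1}\gamma g)$; the factor $2$ cancels the $\tfrac12$ and produces
\[
W(\Theta(\phi,\Psi),y)=\int_{g\in[\PB^\times]}\Psi(g)\sum_{\gamma\in\PB^\times}|y|\,\phi(y\,\nr(\gamma)^{-1},g^{-1}\gamma g),
\]
which is the asserted formula. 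Together with the vanishing of the constant term this gives the stated Fourier expansion. The only delicate point is the bookkeeping with $F^\times/F^{\times 2}$ in this last step — pinning down the multiplicity $2$ and the descent to $\PB^\times$ — all of which is governed by the single relation $\phi(z^2t,x)=\phi(t,zx)$.
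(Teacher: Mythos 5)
Your proof is correct and follows essentially the same route as the paper's: both make the Weil representation action explicit and unfold the sum over $B$ into a constant term (killed by cuspidality of $\Psi$) plus a Whittaker expansion over $F^\times$. The only difference is cosmetic — the paper cites Shimizu \cite{MR0333081} for the unfolding of the theta kernel pointwise in $g$ and then integrates against $\Psi$, whereas you integrate against $\Psi$ first and carry out the $F^\times/F^{\times 2}$ bookkeeping (the factor of $2$ cancelling the $\tfrac{1}{2}$) explicitly.
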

\begin{proof}
  By direct unfolding
  as in \cite{MR0333081},
  one has for $g \in
  \PB^\times_{\mathbb{A}}$ that
  \begin{equation*}
    \begin{split}
      \Theta(\phi, n(x) a(y);\Ad(g))
      &=
      \frac{1}{2}
      \sum_{\tau \in F^\times / F^{\times 2}}
      |y| \phi(0,\tau y) \\
      &\quad  + \sum_{\tau \in F^\times}
      \psi(\tau x)
      W(\Theta(\phi),\Ad(g),\tau y),
    \end{split}
  \end{equation*}
  where
  $W(\Theta(\phi),\Ad(g),y)
  :=
  \sum_{\gamma \in \PB^\times}
  |y|
  \phi(y \nr(\gamma)^{-1}, g^{-1} \gamma g)$.
  We conclude by integrating against $\Psi$.
\end{proof}


\subsubsection{Restriction to $\SL_2$}
Let $\phi, \Psi$ be as above.

\begin{lemma*}
  Suppose
  that
  for each $y \in \mathbb{A}^\times$,
  one has
  $\phi [y] = \phi '[y] \otimes \phi ''[y]$
  for some
  $\phi '[y] \in \mathcal{S}(\mathbb{A})$,
  $\phi ''[y] \in \mathcal{S}(B_\mathbb{A}^0)$.
  Then for $y \in \mathbb{A}^\times$ and $s \in \SL_2(\mathbb{A})$,
  one has
  \begin{equation}\label{eqn:simliitude-theta-expand-as-sum-of-pure-tensors-and-over-a}
    \Theta(\phi,\Psi;s a(y))
    =
    \frac{1}{2}
    \sum_{\tau \in F^\times \backslash F^{\times 2}}
    |y|
    \theta_{\psi^\tau}(\phi'[\tau y];s)
    \theta_{\psi^\tau}(\phi''[\tau y],\Psi;s).
  \end{equation}
\end{lemma*}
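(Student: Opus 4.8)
The plan is to unwind the definitions and reduce the identity to the factorization of theta functions recorded in \S\ref{sec-3-2-8}. Since $\rho_{\Weil}$ is a representation of $\PGL_2(\mathbb{A}) \times \GO(B_\mathbb{A})$, I would first write, for $g \in \PB^\times_\mathbb{A}$,
\[
  \rho_{\Weil}(s a(y), \Ad(g)) \phi
  = \rho_{\Weil}(s)\bigl(\rho_{\Weil}(a(y))(\rho_{\Weil}(\Ad(g)) \phi)\bigr),
\]
using that $\Ad(g) \in \SO(B_\mathbb{A})$ has trivial similitude factor, so that $(\rho_{\Weil}(\Ad(g)) \phi)(t, x) = \phi(t, g^{-1} x g)$; in particular $\Ad(g)$ acts only on the ``$B$''-variable and leaves the ``$t$''-variable untouched. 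Combining this with $(\rho_{\Weil}(a(y)) \phi)[t] = |y|\, \phi[t y]$ and, for $s \in \SL_2(\mathbb{A})$, $(\rho_{\Weil}(s) \phi)[t] = \rho_{\Weil}^{\psi^t, B}(s)(\phi[t])$ from \S\ref{sec:defn-local-omega}, I would obtain for each $\tau \in F^\times$ that
\[
  \sum_{x \in B} \rho_{\Weil}(s a(y), \Ad(g)) \phi(\tau, x)
  = |y|\, \theta_{\psi^\tau}\!\bigl(\phi[\tau y]; s, \Ad(g)\bigr),
\]
where the right-hand side is the theta kernel of \S\ref{sec:theta-kernels} for the quadratic space $V = B$.

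Next I would feed in the hypothesis $\phi[\tau y] = \phi'[\tau y] \otimes \phi''[\tau y]$. The spaces $F$ and $B^0$ are orthogonal for the bilinear form attached to $\nr$ (as $\tr(c v) = c\,\tr(v) = 0$ for $c \in F$, $v \in B^0$), and $\Ad(g)$ decomposes with respect to $B = F \oplus B^0$ as the identity on the scalar line $F$ together with an element of $\SO(B^0_\mathbb{A})$. The factorization \eqref{eqn:Factorization-of-theta-fns} therefore gives
\[
  \theta_{\psi^\tau}\!\bigl(\phi[\tau y]; s, \Ad(g)\bigr)
  = \theta_{\psi^\tau}\!\bigl(\phi'[\tau y]; s\bigr)\,
    \theta_{\psi^\tau}\!\bigl(\phi''[\tau y]; s, \Ad(g)\bigr),
\]
where I use the convention $\theta_{\psi}(\phi'; s) = \theta_{\psi}(\phi'; s, 1)$ of \S\ref{sec:elem-theta-fns} for the elementary ($F$-)factor. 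Summing over $\tau \in F^\times/F^{\times 2}$, multiplying by $\tfrac{1}{2}$, and integrating against $\Psi$ over $[\PB^\times]$, the factor $\theta_{\psi^\tau}(\phi'[\tau y]; s)$ pulls out of the $g$-integral, which then becomes $\theta_{\psi^\tau}(\phi''[\tau y], \Psi; s)$ by the definition of the ternary theta lift in \S\ref{sec-3-2-7}, yielding the asserted identity.

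The only point requiring care — and the main (mild) obstacle — is the legitimacy of interchanging the $\tau$-summation with the $g$-integration and with the defining sum of $\Theta$. Here I would invoke the conditions built into the definition of $\Omega$: $\phi$ is supported in a compact subset of $\mathbb{A}^\times/\mathbb{A}^{\times 2}$ and is invariant under an open subgroup there, so that only finitely many classes $\tau \in F^\times/F^{\times 2}$ contribute a nonzero summand in \eqref{defn:theta-knerle-for-Omega}; combined with the absolute convergence of the theta-kernel sums (\S\ref{sec:theta-kernels}) and of the ternary theta-lift integral against the cusp form $\Psi$ (\S\ref{sec-3-2-7}), every rearrangement above is then justified.
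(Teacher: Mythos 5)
Your proposal is correct and follows essentially the same route as the paper: unwind the action of $\rho_{\Weil}(s\,a(y),\Ad(g))$ to express $\Theta(\phi;s\,a(y),\Ad(g))$ as $\tfrac12\sum_\tau |y|\,\theta_{\psi^\tau}(\phi[\tau y];s,\Ad(g))$, apply the factorization \eqref{eqn:Factorization-of-theta-fns} using $\Ad(g) = 1 \times \Ad(g)|_{B^0}$, then integrate against $\Psi$. Your closing remarks on justifying the interchanges are correct (and the conditions built into the definition of $\Omega$ do the work), though the paper treats that as implicit.
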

\begin{proof}
  We derive first using \eqref{defn:theta-knerle-for-Omega}
  that for $g \in \O(B_{\mathbb{A}})$,
  \[
  \Theta(\phi;s a(y),g)
  =
  \frac{1}{2}
  \sum_{\tau \in F^\times \backslash F^{\times 2}}
  |y|
  \theta_{\psi^\tau}(\phi[a y];s,g),
  \]
  hence by \eqref{eqn:Factorization-of-theta-fns}
  that for $g \in \O(B_\mathbb{A}^0)$,
  \[
  \Theta(\phi;s a(y),g)
  =
  \frac{1}{2}
  \sum_{\tau \in F^\times \backslash F^{\times 2}}
  |y|
  \theta_{\psi^\tau}(\phi'[\tau y];s)
  \theta_{\psi^\tau}(\phi''[\tau y];s,g).
  \]
  We integrate against $\Psi$ to conclude.
\end{proof}

\subsubsection{Unfolding the inner
  product}\label{sec:unfold-inner-product-nontraditional-theta-over-sl2}
Let  $\phi_1, \phi_2 \in \Omega$.
Let $\Psi_1, \Psi_2 : [\PB^\times] \rightarrow \mathbb{C}$
be cusp forms.

\begin{lemma*}
  Suppose
  that
  for each $y \in \mathbb{A}^\times$,
  one has
  $\phi_i [y] = \phi_i '[y] \otimes \phi_i ''[y]$
  for some
  $\phi_i '[y] \in \mathcal{S}(\mathbb{A})$,
  $\phi_i ''[y] \in \mathcal{S}(B_\mathbb{A}^0)$.
  Abbreviate
  $\theta_i := \theta_{\psi^{\tau_i}}(\phi_i'[\tau_i y])$,
  $h_i := \theta_{\psi^{\tau_i}}(\phi_i''[\tau_i y],\Psi_i)$.
  Then the identity
  \begin{equation}\label{eq:inner-product-pgl2-made-into-sl2}
    \langle \Theta(\phi_1,\Psi_1),
    \Theta(\phi_2,\Psi_2) \rangle_{\PGL_2}
    =
    \int_{y \in \mathbb{A}^\times / F^\times \mathbb{A}^{\times 2}}
    |y|^2
    \frac{1}{2^2}
    \sum_{\tau_1,\tau_2 \in F^\times / F^{\times 2}}
    \langle \theta_1 h_1, \theta_2 h_2 \rangle_{\SL_2}
  \end{equation}
  holds, with both sides converging absolutely.
\end{lemma*}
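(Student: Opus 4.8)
The plan is to unfold the Petersson inner product on $[\PGL_2]$ along the diagonal torus and then substitute the expansion of $\Theta(\phi_i,\Psi_i)$ recorded in \eqref{eqn:simliitude-theta-expand-as-sum-of-pure-tensors-and-over-a}. Since $\Theta(\phi_i,\Psi_i)$ is a cusp form on $[\PGL_2]$, the product $\Theta(\phi_1,\Psi_1)\overline{\Theta(\phi_2,\Psi_2)}$ is rapidly decaying, hence lies in $L^1([\PGL_2])$, so \eqref{eq:sl2-vs-pgl2-integrals} (extended from $C_c^\infty([\PGL_2])$ to $L^1([\PGL_2])$ by density and dominated convergence) gives
\[
\langle \Theta(\phi_1,\Psi_1), \Theta(\phi_2,\Psi_2)\rangle_{\PGL_2}
= \int_{y \in \mathbb{A}^\times/F^\times\mathbb{A}^{\times 2}} \int_{s \in [\SL_2]} \Theta(\phi_1,\Psi_1;sa(y))\, \overline{\Theta(\phi_2,\Psi_2;sa(y))}.
\]
The hypothesis $\phi_i[y] = \phi_i'[y]\otimes\phi_i''[y]$ is precisely what is needed to apply \eqref{eqn:simliitude-theta-expand-as-sum-of-pure-tensors-and-over-a} to each factor, which yields
\[
\Theta(\phi_i,\Psi_i;sa(y)) = \tfrac{1}{2} |y| \sum_{\tau_i \in F^\times/F^{\times 2}} \theta_{\psi^{\tau_i}}(\phi_i'[\tau_i y];s)\,\theta_{\psi^{\tau_i}}(\phi_i''[\tau_i y],\Psi_i;s).
\]

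Next I would carry out the elementary bookkeeping. Multiplying the $i=1$ expansion by the complex conjugate of the $i=2$ expansion and using $|y|\in\mathbb{R}_{>0}$, the scalar prefactors $\tfrac12|y|$ combine to $\tfrac14|y|^2$, and with the abbreviations $\theta_i := \theta_{\psi^{\tau_i}}(\phi_i'[\tau_i y])$, $h_i := \theta_{\psi^{\tau_i}}(\phi_i''[\tau_i y],\Psi_i)$ of the statement the $s$-integrand becomes $\tfrac14|y|^2\sum_{\tau_1,\tau_2}(\theta_1 h_1)(s)\,\overline{(\theta_2 h_2)(s)}$. Integrating over $s\in[\SL_2]$ and then over $y$, and noting that $\tfrac14 = 1/2^2$, produces exactly the right-hand side of \eqref{eq:inner-product-pgl2-made-into-sl2}. (Here $\theta_i h_i$ is invariant under the kernel $\mu_2$ of $\Mp_2\to\SL_2$ — both $F$ and $B^0$ are odd-dimensional quadratic spaces, so each of $\theta_i$, $h_i$ transforms by the sign character of $\mu_2$ — so the inner product $\langle\theta_1 h_1,\theta_2 h_2\rangle$ is legitimately taken over $[\SL_2]$, consistently with the statement.)

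The only genuine content, and the step I expect to be the main — though routine — obstacle, is justifying the absolute convergence claimed in the lemma, which is what licenses the passage from $C_c^\infty$ to $L^1$ above, the term-by-term conjugation of $\overline{\Theta(\phi_2,\Psi_2;\cdot)}$, and the interchange of the two $\tau$-sums with the $s$- and $y$-integrations. For the left-hand side this is immediate from the rapid decay of the cusp forms $\Theta(\phi_i,\Psi_i)$ and Cauchy--Schwarz. For the right-hand side I would argue as follows. The compact-support and open-compactness conditions defining membership in $\Omega$ force $\phi_i[t]=0$ for $t$ outside a fixed compact subset $C_i$ of $\mathbb{A}^\times/\mathbb{A}^{\times 2}$ meeting the discrete image of $F^\times$ in only finitely many classes; hence for each $y$ the sum over $\tau_i$ is finite (as is already implicit in \eqref{defn:theta-knerle-for-Omega} and \eqref{eqn:simliitude-theta-expand-as-sum-of-pure-tensors-and-over-a}), and the whole integrand vanishes unless $|y|$ lies in the compact subset $|C_1|\,|C_2|$ of $\mathbb{R}_{>0}$, so the $y$-integration is effectively over a compact set on which $|y|^2$ is bounded. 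For $y$ in that range, Lemma \ref{lem:crude-bound-for-cuspidal-theta-functions} (applied with the characters $\psi^{\tau_i}$, using that the cusp forms $\Psi_i$ have mean zero) gives $h_i(s)\ll\htt(s)^{-A}$ for every fixed $A\geq 0$, while Lemma \ref{lem:crude-bound-for-elementary-theta-functions} gives $\theta_i(s)\ll\htt(s)^{1/4}$; hence $(\theta_1 h_1)(s)\,\overline{(\theta_2 h_2)(s)}\ll\htt(s)^{-A}$, which is bounded and therefore integrable over the finite-volume quotient $[\SL_2]$, so $\langle\theta_1 h_1,\theta_2 h_2\rangle_{\SL_2}$ is well defined. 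The continuous (resp.\ locally constant) dependence of $\phi_i'[\tau_i y]$ and $\phi_i''[\tau_i y]$ on $y$, together with the continuity of the Weil representation and of the implicit continuity seminorms $\mathcal{C}(\cdot)$, makes this $s$-integral depend measurably on $y$ and bounds it uniformly over the compact effective $y$-range. Absolute convergence of the right-hand side follows, and Fubini's theorem then justifies all of the rearrangements above, completing the proof.
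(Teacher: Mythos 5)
Your proof is correct and follows essentially the same route as the paper: unfold the $[\PGL_2]$ inner product via \eqref{eq:sl2-vs-pgl2-integrals}, substitute the expansion \eqref{eqn:simliitude-theta-expand-as-sum-of-pure-tensors-and-over-a}, and justify convergence via the crude bounds of \S\ref{sec-3-5-4} together with the compactness of the $y$-domain and the finiteness of the $\tau_i$-sums for fixed $y$. Your explicit remark that $\theta_i h_i$ descends to $[\SL_2]$ because $F$ and $B^0$ are both odd-dimensional (so each factor transforms by the sign character of $\mu_2$) is a useful clarification that the paper leaves implicit.
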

\begin{proof}
  The LHS is an inner product of cusp forms, hence convergent.  On the RHS, we may
  replace the $y$-integral by a finite sum, since the domain
  $\mathbb{A}^{\times} / F^{\times } \mathbb{A}^{\times 2}$ is
  compact and the integrand is invariant under an open subgroup.
  For individual $y$, the sum over $\tau_1, \tau_2$ has only
  finitely many nonzero summands, each of which consists of an
  inner product whose convergence is clear
  (see \S\ref{sec-3-5-4}).  The expansion
  \eqref{eqn:simliitude-theta-expand-as-sum-of-pure-tensors-and-over-a}
  implies for $y \in \mathbb{A}^\times, s \in \SL_2(\mathbb{A})$
  that
  $\Theta(\phi_i,\Psi_i,a(y) s) = \frac{1}{2} \sum _{\tau_i \in
    F^\times / F^{\times 2} } |y| \theta_i(s) h_i(s)$,
  so the required identity follows from the formula
  \eqref{eq:sl2-vs-pgl2-integrals} relating integrals over
  $[\PGL_2]$ and $[\SL_2]$.
\end{proof}

\begin{remark*}
  In this paper, we consider several expressions shaped like the
  RHS of \eqref{eq:inner-product-pgl2-made-into-sl2}.  On a
  first (or perhaps on any) reading, one should focus on the
  contributions from $y = \tau_1 = \tau_2 = 1$; under some class
  number and unit group restrictions, these turns out to be the
  relevant ones for the proof of Theorem
  \ref{thm:main-result-for-microlocal-stuff}.  (We considered
  imposing such restrictions for the sake of presentation, but
  found that doing so obfuscated rather than simplified.)
\end{remark*}

\subsection{Inner product formulas}
\label{sec-3-7}

\subsubsection{Elementary theta functions}\label{sec:elem-theta-ipf}
We recall part of \cite[Thm 2]{nelson-theta-squared}.
\begin{lemma*}
  Suppose
  $\phi_1, \phi_2 \in \mathcal{S}(\mathbb{A})$
  satisfy $\phi_1(x) = \phi_1(-x), \phi_2(x) = \phi_2(-x)$.
  Let $\tau_1, \tau_2 \in F^\times$.
  Set $\theta_1 := \theta_{\psi^{\tau_1}}(\phi_1),
  \theta_2 := \theta_{\psi^{\tau_1}}(\phi_2)$.
  Then
  $\langle \theta_1, \theta_2 \rangle_{\SL_2}= 0$
  unless $\tau_1 = \tau_2$,
  in which case
  $\langle \theta_1, \theta_2 \rangle_{\SL_2}
  =
  2
  \langle \phi_1, \phi_2  \rangle_{L^2(\mathbb{A})}$.
\end{lemma*}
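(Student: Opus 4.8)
The quantity $\langle\theta_1,\theta_2\rangle_{\SL_2}$ is the integral $\int_{[\Mp_2]}\theta_1(s)\overline{\theta_2(s)}\,ds$; since $\mu_2$ acts on each $\theta_i$ through the sign character, the integrand is genuinely a function on $[\SL_2]$. The plan is to note that this converges absolutely, and then to evaluate it using that $\theta_{\psi^{\tau}}$ realizes the theta correspondence for the dual pair $(\O_1,\Mp_2)$. For convergence, Lemma \ref{lem:crude-bound-for-elementary-theta-functions} gives $\theta_i(s)\ll\htt(s)^{1/4}$, so the integrand is $\ll\htt(s)^{1/2}=\htt(s)^{1-1/2}$, and $\int_{[\SL_2]}\htt^{1-\eps}<\infty$ for every $\eps>0$ (see \S\ref{sec-3-5-4}); this also licenses interchanging the defining theta sums with the integral. (Here and below I read $\theta_2$ as $\theta_{\psi^{\tau_2}}(\phi_2)$ and regard $\tau_1,\tau_2$ as fixed representatives of classes in $F^\times/F^{\times2}$, so that ``$\tau_1=\tau_2$'' means ``$\tau_1/\tau_2\in F^{\times2}$'', consistently with the uses of the lemma in \S\ref{sec:unfold-inner-product-nontraditional-theta-over-sl2}.)

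By \eqref{eq:equivariance-etf} the map $\phi\mapsto\theta_{\psi^{\tau}}(\phi)$ intertwines the Weil representation $\rho_{\Weil}^{\psi^{\tau},F}$ of $\Mp_2(\mathbb{A})$ with right translation, and by \eqref{eqn:O1-invariance-elementary-theta-fn} it factors through the $\O_1(F)$-coinvariants, i.e.\ through the even functions; restricted to these it is injective, as one sees from the Fourier expansion along the unipotent (the coefficient of $\theta_{\psi^{\tau}}(\phi)(n(\cdot)s)$ at the character $\psi^{\tau d^2}$, $d\in F^\times$, recovers $(\rho_{\Weil}^{\psi^{\tau},F}(s)\phi)(d)$, and the constant term recovers $(\rho_{\Weil}^{\psi^{\tau},F}(s)\phi)(0)$). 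Thus $\theta_{\psi^{\tau}}$ identifies the even Weil representation $\otimes_v\rho_v^{+}$ — irreducible, being a restricted tensor product of the irreducible local even Weil representations $\rho_v^{+}$ of $\Mp_2(F_v)$ — with an $\Mp_2(\mathbb{A})$-subrepresentation of $L^2([\Mp_2])$. Now \emph{vanishing case}: if $\tau_1\neq\tau_2$ then $\tau_1/\tau_2\notin F_v^{\times2}$ for some $v$, and at that place the even Weil representations attached to $\psi^{\tau_1}_v$ and $\psi^{\tau_2}_v$ are non-isomorphic (the even Weil representation of $\Mp_2(F_v)$ determines the additive character up to squares), so the irreducible representations realized by $\theta_{\psi^{\tau_1}}$ and $\theta_{\psi^{\tau_2}}$ are non-isomorphic; since $(\phi_1,\phi_2)\mapsto\langle\theta_{\psi^{\tau_1}}(\phi_1),\theta_{\psi^{\tau_2}}(\phi_2)\rangle$ is an $\Mp_2(\mathbb{A})$-invariant pairing between non-isomorphic irreducible admissible representations, Schur's lemma forces it to vanish.

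For the \emph{equal case} $\tau_1=\tau_2=\tau$, the same Schur argument gives $\langle\theta_{\psi^{\tau}}(\phi_1),\theta_{\psi^{\tau}}(\phi_2)\rangle_{\SL_2}=c\,\langle\phi_1,\phi_2\rangle_{L^2(\mathbb{A})}$ for a constant $c>0$ independent of $\phi_1,\phi_2$ (an invariant Hermitian form on an irreducible representation is a scalar multiple of a given positive one; positivity of $c$ is the injectivity above). To pin down $c=2$ I would unfold the inner product via the $(\O_1,\Mp_2)$-correspondence in the convergent range: realizing $\theta_{\psi^{\tau}}(\phi_i)$ as the theta lift of the trivial representation of the (anisotropic, compact) group $\O_1$ and unfolding against $[\O_1]$ produces $\int_{[\O_1]}\langle\rho_{\Weil}^{\psi^{\tau},F}(h)\phi_1,\phi_2\rangle_{L^2(\mathbb{A})}\,dh$, which equals $\vol([\O_1])\,\langle\phi_1,\phi_2\rangle_{L^2(\mathbb{A})}$ because $\phi_1,\phi_2$ are even so the integrand is constant on $\O_1=\{\pm1\}$; the factor $\vol([\O_1])=2$ (the Tamagawa number of $\O_1$, consistent with $\vol([\PGL_2])=2$ and the two-to-one symmetry $x\mapsto-x$) is the claimed $2$. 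Alternatively one can simply evaluate both sides for the classical weight-$1/2$ theta series $\sum_n q^{n^2}$ on a congruence cover of $\Gamma_0(4)\backslash\mathbb{H}$ and compare with the self-dual-measure normalization.

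The main obstacle is making the last two paragraphs rigorous in a regime where the theta functions are only just square-integrable — growth $\htt^{1/4}$, so $|\theta_i|^2$ sits exactly at the edge of integrability — which is where one must carefully justify the $(\O_1,\Mp_2)$-unfolding (equivalently, that the relevant subspaces of $L^2([\Mp_2])$ really behave like pieces of the discrete spectrum, so that Schur-type orthogonality applies). This analysis, together with the Tamagawa/self-dual measure bookkeeping needed to fix the constant $2$, is precisely what is carried out in \cite{nelson-theta-squared}, from which the lemma is quoted.
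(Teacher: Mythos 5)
The paper's own ``proof'' of this lemma is nothing more than the citation to \cite[Thm~2]{nelson-theta-squared}, and your proposal ends by correctly deferring to that same reference, so your route agrees with the paper's. Your intervening sketch — irreducibility of the even Weil representation, Schur's lemma to kill the cross terms $\tau_1 \neq \tau_2$ in $F^\times/F^{\times 2}$, and an $(\O_1,\Mp_2)$-unfolding to pin down the constant $2$ — is a sound heuristic for why the result holds, and you accurately locate the genuine difficulty (the elementary theta functions sit exactly at the edge of $L^2$, $\dim V = 1$ is well outside the Weil convergent range, so both the Schur-type orthogonality and the unfolding need the regularized analysis carried out in the cited work). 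You also correctly spot and repair the typo $\theta_2 := \theta_{\psi^{\tau_1}}(\phi_2)$ in the paper's statement.
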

\subsubsection{Ternary theta lifts}\label{sec:ternary-theta-ipf}
As in
\cite[\S12.3]{nelson-variance-73-2},
we explicate
Gan--Takeda
\cite[Thm 6.6]{MR2837015}
(compare with \cite[Prop 2.8 (i)]{MR3291638}).
\begin{lemma*}
  Let $\pi_1, \pi_2 \subseteq L^2([\PB^\times])$ be
  cuspidal automorphic representations that are
  not one-dimensional.
  Let $\Psi_1 \in \pi_1, \Psi_2 \in \pi_2$
  and $\phi_1, \phi_2 \in \mathcal{S}(B_\mathbb{A}^0)$.
  Let $\tau \in F^\times$.
  Set $h_i :=   \theta_{\psi^{\tau}}(\phi_i,\Psi_i)$
  for $i=1,2$.
  \begin{enumerate}
  \item If $\pi_1 \neq \pi_2$, then
    $\langle h_1, h_2 \rangle_{\SL_2} = 0$.
  \item Suppose $\pi_1 = \pi_2 =: \pi$.  Let $S$ be a finite set of places of $F$
    that containing all
    archimedean places, as well as any finite places at which $B$
    ramifies, and that is sufficiently large in terms of  $\Psi_i, \phi_i$.
    Then
    $\langle h_1, h_2 \rangle_{\SL_2}$ equals
    \[
    \frac{L^{(S)}(\pi,\tfrac{1}{2})}{\zeta_F^{(S)}(2)}
    (\prod_{\mathfrak{p} \notin S}
    \vol(K_\mathfrak{p}) )
    \int_{g \in \PB^\times_S}
    \langle \Ad(g) \phi_1, \phi_2 \rangle_{L^2(B_\mathbb{A}^0)}
    \langle \pi(g) \Psi_1, \Psi_2 \rangle_{\PB^\times}
    \]
    with $L^{(S)}(\pi,\tfrac{1}{2})$
    as in 
    \S\ref{sec:standard-l-function}.
  \end{enumerate}
\end{lemma*}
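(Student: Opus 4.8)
The plan is to deduce both assertions by explicating the Rallis inner product formula for the theta correspondence between $\Mp_2$ and $\SO(B^0) \cong \PB^\times$, exactly as in \cite[\S12.3]{nelson-variance-73-2}; the content is the bookkeeping of normalizations rather than a new argument. First I would unfold: interchanging the integrals defining $h_i = \theta_{\psi^\tau}(\phi_i,\Psi_i)$ gives
\[
\langle h_1, h_2 \rangle_{\SL_2}
= \int_{[\PB^\times]^2} \Psi_1(g_1)\, \overline{\Psi_2(g_2)} \int_{[\Mp_2]} \theta_{\psi^\tau}(\phi_1; s, \Ad g_1)\, \overline{\theta_{\psi^\tau}(\phi_2; s, \Ad g_2)}\, ds,
\]
the inner integral being the integral over $[\Mp_2]$ of the theta kernel attached to the split $6$-dimensional quadratic space $B^0 \oplus (-B^0)$, evaluated at $(\Ad g_1, \Ad g_2)$.

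By the regularized Siegel--Weil formula in this boundary case (Kudla--Rallis, in the metaplectic form used by Gan--Takeda), that inner integral is a value, or residue, of the degenerate Siegel Eisenstein series on $\O(B^0 \oplus (-B^0))$; substituting it and running the Piatetski-Shapiro--Rallis doubling integral for $\SO(B^0)$ yields the abstract Rallis inner product formula of \cite[Thm 6.6]{MR2837015} (cf.\ \cite[Prop 2.8 (i)]{MR3291638}). That formula expresses $\langle h_1, h_2 \rangle_{\SL_2}$ as a product, over all places, of normalized local doubling zeta integrals, times a ratio of (completed) $L$- and zeta values, and it vanishes identically unless $\pi_1 \cong \pi_2 =: \pi$. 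Since the dual group of $\SO(B^0)$ is $\SL_2(\mathbb{C})$, the relevant $L$-value is the degree-two $L(\pi,s)$ evaluated at the edge point $s = \tfrac12$. Part (1) is precisely the vanishing clause, reflecting the injectivity of the cuspidal theta correspondence in this range together with the orthogonality of distinct irreducible subrepresentations of $L^2([\Mp_2])$.

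For part (2) the two remaining points are local. At a place $\mathfrak{p} \notin S$ everything is unramified, and unfolding the local doubling integral over the orthogonal group and invoking Macdonald's formula is precisely Lemma \ref{lem:local-rallis-ipf-unram-calc}(ii): it evaluates the local integral $\int_{g \in \PB^\times_\mathfrak{p}} \langle \Ad(g)\phi_{1,\mathfrak{p}}, \phi_{2,\mathfrak{p}}\rangle \langle \pi_\mathfrak{p}(g)\Psi_{1,\mathfrak{p}}, \Psi_{2,\mathfrak{p}}\rangle$ as $\frac{L(\pi_\mathfrak{p},\frac12)}{\zeta_\mathfrak{p}(2)}\vol(K_\mathfrak{p})\langle \phi_{1,\mathfrak{p}}, \phi_{2,\mathfrak{p}}\rangle\langle\Psi_{1,\mathfrak{p}}, \Psi_{2,\mathfrak{p}}\rangle$, the two local pairings being $1$ once $S$ is large and the unramified data normalized. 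At a place $\mathfrak{p} \in S$ the same local seesaw puts the normalized local zeta integral into the matrix-coefficient shape $\int_{g \in \PB^\times_\mathfrak{p}} \langle \Ad(g)\phi_{1,\mathfrak{p}}, \phi_{2,\mathfrak{p}}\rangle_{L^2(B^0_\mathfrak{p})}\langle \pi_\mathfrak{p}(g)\Psi_{1,\mathfrak{p}}, \Psi_{2,\mathfrak{p}}\rangle$, convergent by \S\ref{sec:local-convergence-lemmas}. Collecting all factors --- the good-place ones via Lemma \ref{lem:local-rallis-ipf-unram-calc}(ii), the bad-place ones in the form just written, and the global constant (into which the interplay of the Euler factors $\zeta_\mathfrak{p}(2)$ at the good places is absorbed) --- produces the asserted identity, \emph{provided} that global constant works out to $1$.

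The main obstacle is that last proviso: checking that the constant emerging from the Siegel--Weil normalization, together with the interplay of the Euler factors at the good places, is exactly $1$ under the Tamagawa and local measure conventions of \S\ref{sec:global-measures} and \S\ref{sec:local-measures} (in particular $\vol([\PB^\times]) = 2$, $\vol([\SL_2]) = 1$). This must be done by hand, matching Gan--Takeda's choices (Weil model, measure on the unipotent radical of the Siegel parabolic, value or residue of the degenerate Eisenstein series, local $\gamma$-factors) to ours; the same computation was carried out in \cite[\S12.3]{nelson-variance-73-2}, so what genuinely needs re-checking is only that nothing in it changes when $\phi_1, \phi_2$ range over all of $\mathcal{S}(B^0_\mathbb{A})$ rather than the restricted class considered there.
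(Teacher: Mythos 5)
Your outline is exactly the content that the paper defers to: the lemma is stated as an explication of Gan--Takeda \cite[Thm 6.6]{MR2837015} with normalizations matched as in \cite[\S12.3]{nelson-variance-73-2}, and your unfold/seesaw/Siegel--Weil/doubling sketch, together with the unramified computation of \S\ref{sec:local-integrals-for-rallis-ipf} and the measure bookkeeping you flag as the remaining verification, is precisely that explication.
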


\subsubsection{Induction to $\Omega$}\label{sec:induction-omega}
We now combine the previous two lemmas and sum them up.
Temporarily denote by $\mathcal{A}_0$ the space of cusp forms
$\Psi : [\PB^\times] \rightarrow \mathbb{C}$ that are
  orthogonal to all one-dimensional representations. 
  For $\tau_1, \tau_2 \in F^\times$, let
  $\mathfrak{m} : \Omega \otimes \Omega \otimes \mathcal{A}_0
  \otimes \mathcal{A}_0 \rightarrow \mathbb{C}$ denote the
  sesquilinear form given for $\phi_1, \phi_2 \in \Omega$
  admitting factorizations
  $\phi_i[y] = \phi_i'[y] \otimes \phi_i''[y]$ by
  \begin{equation}
    \mathfrak{m}(\phi_1,\phi_2,\Psi_1,\Psi_2)
    :=
    \int_{y \in \mathbb{A}^\times / F^{\times }
      \mathbb{A}^{\times 2}}
    |y|^2
    \frac{1}{2^2}
    \sum_{\tau_1,\tau_2 \in F^\times / F^{\times 2}}
    \langle \theta_1, \theta_2 \rangle
    \langle h_1, h_2 \rangle,
  \end{equation}
  where we abbreviate
  $\theta_i := \theta_{\psi^{\tau_i}}(\phi_i'[\tau_i y])$ and
  $h_i := \theta_{\psi^{\tau_i}}(\phi_i''[\tau_i y],\Psi_i)$.
  The relevance of $\mathfrak{m}$ may be inferred from
  \S\ref{sec:unfold-inner-product-nontraditional-theta-over-sl2}.

  The definition makes sense: as in the proof of
  \S\ref{sec:unfold-inner-product-nontraditional-theta-over-sl2},
  the $y$-integral is really a finite sum, and the sum over
  $\tau_1, \tau_2$ has only finitely many nonzero summands.
  Each
  summand defines a sesquilinear form on
  $\mathcal{S}(\mathbb{A}) \otimes \mathcal{S}(B_\mathbb{A}^0)
  \otimes \mathcal{A}_0$
  that extends continuously to
  $\mathcal{S}(B_\mathbb{A}) \otimes \mathcal{A}_0$ by the
  \emph{a priori} estimates of \S\ref{sec-3-5-4}.

\begin{lemma*}
  Let $\pi_1, \pi_2 \subseteq L^2([\PB^\times])$ be
  cuspidal automorphic representations that are
  not one-dimensional.
  Let $\Psi_1 \in \pi_1, \Psi_2 \in \pi_2$.
  Let $\phi_1, \phi_2 \in \Omega$.
  \begin{enumerate}
  \item If $\pi_1 \neq \pi_2$
    then $\mathfrak{m}(\phi_1,\phi_2,\Psi_1,\Psi_2) = 0$.
  \item Suppose $\pi_1 = \pi_2 =: \pi$.
    Let $S$ be a large enough finite set of places.
    Then
    $\mathfrak{m}(\phi_1,\phi_2,\Psi_1,\Psi_2)$ equals
    \[
    \frac{L^{(S)}(\pi,\tfrac{1}{2})}{\zeta_F^{(S)}(2)}
    (\prod_{\mathfrak{p} \notin S}
    \vol(K_\mathfrak{p}) )
    \int_{g \in \PB^\times_S}
    \langle \Ad(g) \mathfrak{S} \phi_1, \mathfrak{S} \phi_2 \rangle_{\Omega}
    \langle \pi(g) \Psi_1, \Psi_2 \rangle_{\PB^\times}.
    \]
  \end{enumerate}
\end{lemma*}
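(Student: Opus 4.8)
The plan is to feed the inner product formulas of \S\ref{sec:elem-theta-ipf} and \S\ref{sec:ternary-theta-ipf} termwise into the definition of $\mathfrak{m}$ and then reassemble the resulting (finite) sum over $\tau_1,\tau_2$ and (finite) integral over $y$ using the quotient-measure identity \eqref{eq:integral-formula-involving-squares} and the orthogonal decomposition $B_\mathbb{A} = F_\mathbb{A}\oplus B^0_\mathbb{A}$. Since both sides of each asserted identity are continuous in $(\phi_1,\phi_2)\in\Omega\times\Omega$---on the left by the \emph{a priori} estimates of \S\ref{sec-3-5-4} already used to define $\mathfrak{m}$, on the right by the continuity of $\mathfrak{S}$ and of $\langle\cdot,\cdot\rangle_\Omega$ together with the $\Xi$-domination of the $g$-integrand recalled below---it suffices to treat $\phi_i$ with $\phi_i[y] = \phi_i'[y]\otimes\phi_i''[y]$, $\phi_i'[y]\in\mathcal{S}(\mathbb{A})$, $\phi_i''[y]\in\mathcal{S}(B^0_\mathbb{A})$, i.e.\ exactly the case in which $\mathfrak{m}(\phi_1,\phi_2,\Psi_1,\Psi_2)$ is an honest finite sum.

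For (1): fix $y$ and $\tau_1,\tau_2\in F^\times/F^{\times 2}$. If $\tau_1\neq\tau_2$ then $\langle\theta_1,\theta_2\rangle_{\SL_2}=0$ by the lemma of \S\ref{sec:elem-theta-ipf} (applied after using the $\O_1(F)$-invariance \eqref{eqn:O1-invariance-elementary-theta-fn} to replace each $\phi_i'[\tau_i y]$ by its $\O_1(F)$-average, which leaves $\theta_i$ unchanged); if $\tau_1=\tau_2$ then $\langle h_1,h_2\rangle_{\SL_2}=0$ by part (1) of the lemma of \S\ref{sec:ternary-theta-ipf}, since $\pi_1\neq\pi_2$. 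In either case the summand vanishes, so $\mathfrak{m}(\phi_1,\phi_2,\Psi_1,\Psi_2)=0$.

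For (2): again only the terms with $\tau_1=\tau_2=:\tau$ survive. For such a term, replacing $\phi_i'[\tau y]$ by its $\O_1(F)$-average---which equals $(\mathfrak{S}\phi_i)'[\tau y]$ by the definition of $\mathfrak{S}$ (\S\ref{sec:general-notation}) and does not change $\theta_i$---the lemma of \S\ref{sec:elem-theta-ipf} gives $\langle\theta_1,\theta_2\rangle = 2\langle(\mathfrak{S}\phi_1)'[\tau y],(\mathfrak{S}\phi_2)'[\tau y]\rangle_{L^2(\mathbb{A})}$, while part (2) of the lemma of \S\ref{sec:ternary-theta-ipf}, applied with a single $S$ large enough to serve the finitely many nonzero terms, gives
\[
  \langle h_1,h_2\rangle_{\SL_2}
  = \frac{L^{(S)}(\pi,\tfrac12)}{\zeta_F^{(S)}(2)}
    \Bigl(\prod_{\mathfrak{p}\notin S}\vol(K_\mathfrak{p})\Bigr)
    \int_{g\in\PB^\times_S}
    \langle\Ad(g)\phi_1''[\tau y],\phi_2''[\tau y]\rangle_{L^2(B^0_\mathbb{A})}
    \langle\pi(g)\Psi_1,\Psi_2\rangle_{\PB^\times},
\]
the $g$-integral converging absolutely since $\pi$ is not one-dimensional, by the $\Xi$-bounds of \S\ref{sec:local-Xi} and Lemmas \ref{lemma:cheap-matrix-coeff-schwartz-space-B-estimate-via-Xi} and \ref{lemma:convergence-Xi-along-G-and-H} of \S\ref{sec:local-convergence-lemmas}. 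Using that $(\mathfrak{S}\phi_i)''[\tau y]=\phi_i''[\tau y]$, substituting into $\mathfrak{m}$, pulling the scalar $\frac{L^{(S)}(\pi,1/2)}{\zeta_F^{(S)}(2)}\prod_{\mathfrak{p}\notin S}\vol(K_\mathfrak{p})$ and the (absolutely convergent) integration in $g$ outside the finite sum over $\tau$ and the integration in $y$, and applying \eqref{eq:integral-formula-involving-squares} together with $|\tau y|=|y|$ to collapse $\int_{y\in\mathbb{A}^\times/F^\times\mathbb{A}^{\times 2}}|y|^2\,\tfrac12\sum_{\tau\in F^\times/F^{\times 2}}$ (acting on the arguments $\tau y$) into $\int_{t\in\mathbb{A}^\times/\mathbb{A}^{\times 2}}|t|^2$ (acting on $t$), one finds that $\mathfrak{m}(\phi_1,\phi_2,\Psi_1,\Psi_2)$ equals the asserted constant times $\int_{g\in\PB^\times_S}\langle\pi(g)\Psi_1,\Psi_2\rangle$ times $\int_{t\in\mathbb{A}^\times/\mathbb{A}^{\times 2}}|t|^2\langle(\mathfrak{S}\phi_1)'[t],(\mathfrak{S}\phi_2)'[t]\rangle_{L^2(\mathbb{A})}\langle\Ad(g)(\mathfrak{S}\phi_1)''[t],(\mathfrak{S}\phi_2)''[t]\rangle_{L^2(B^0_\mathbb{A})}$. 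By the compatibility of Haar measures under $B_\mathbb{A}=F_\mathbb{A}\oplus B^0_\mathbb{A}$ (here the product formula makes the form $\tr|_{F\times F}=2\langle\cdot,\cdot\rangle$ irrelevant to the self-dual normalization) and the triviality of $\Ad(g)$ ($g\in\PB^\times$) on the scalar summand $F_\mathbb{A}$, this last $t$-integral is precisely $\langle\Ad(g)\mathfrak{S}\phi_1,\mathfrak{S}\phi_2\rangle_\Omega$ by the definition \eqref{eqn:inner-product-on-Omega-1-adelic}, giving the claim.

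The main obstacle is the bookkeeping of normalizations: the factor $2$ from \S\ref{sec:elem-theta-ipf}, the $\tfrac1{2^2}$ and the $\tfrac12\sum_\tau$ built into $\mathfrak{m}$ and into the quotient measure \eqref{eq:integral-formula-involving-squares}, the self-dual normalization of Haar measure on $B_\mathbb{A}$ relative to its orthogonal summands, and---most delicately---the identification of $\mathfrak{S}$ with the projection onto $\O_1(F)$-invariants, which is exactly what forces $\mathfrak{S}$ onto the right-hand side of the statement. A secondary point is verifying the continuity of the right-hand side in $(\phi_1,\phi_2)$, needed to pass from the fibrewise pure-tensor case to general $\phi_i\in\Omega$; this rests on the uniform $\Xi^{2+\delta}$-domination of $\langle\Ad(g)\mathfrak{S}\phi_1,\mathfrak{S}\phi_2\rangle_\Omega\langle\pi(g)\Psi_1,\Psi_2\rangle$ on $\PB^\times_S$ supplied by the lemmas of \S\ref{sec:local-convergence-lemmas}.
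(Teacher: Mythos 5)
Your proof is correct and follows essentially the same route as the paper's: reduce to the fibrewise pure-tensor case, apply the elementary-theta and ternary-theta inner product formulas termwise, and reassemble using the quotient-measure identity \eqref{eq:integral-formula-involving-squares} and the orthogonal decomposition $B_\mathbb{A}=F_\mathbb{A}\oplus B^0_\mathbb{A}$. The only stylistic difference is that the paper uses the $\O_1(F)$-invariance to assume $\mathfrak{S}\phi_i=\phi_i$ at the outset (so $\phi_i'$ is even), whereas you keep $\mathfrak{S}$ explicit throughout; the two maneuvers are equivalent, and your constant-tracking, including the cancellation of the factor $2$ from \S\ref{sec:elem-theta-ipf} against the $\frac{1}{2^2}$ built into $\mathfrak{m}$ and the $\frac12\sum_\tau$ in \eqref{eq:integral-formula-involving-squares}, checks out.
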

\begin{proof}
  It suffices to consider
  the case that $\phi_1, \phi_2$
  admit factorizations as in the definition
  of $\mathfrak{m}$.
  By \eqref{eqn:O1-invariance-elementary-theta-fn}
  and \eqref{eq:action-of-S-on-pure-tensors-for-B},
  we may assume that $\mathfrak{S} \phi_i = \phi_i$,
  or equivalently, that $\phi_i'(t,x) = \phi_i'(t,-x)$.
  By the lemmas of \S\ref{sec:elem-theta-ipf}
  and \S\ref{sec:ternary-theta-ipf},
  we have $\langle \theta_1, \theta_2 \rangle = 0$
  unless $\tau_1 = \tau_2$
  and then
  $\langle h_1, h_2 \rangle = 0$
  unless $\pi_1 = \pi_2$;
  in that case,
  the formulas from those lemmas
  and the identities
  \[
  \langle \phi_1'[y \tau], \phi_2'[y \tau] \rangle_{L^2(\mathbb{A})}
  \langle \Ad(g) \phi_1''[y \tau], \phi_2''[y \tau]
  \rangle_{L^2(B_\mathbb{A}^0)}
  \]
  \[
  = 
  \langle \Ad(g) \phi_1[y \tau], \phi_2[y \tau] \rangle_{L^2(B_\mathbb{A})}
  \]
  and (see \eqref{eq:integral-formula-involving-squares}, \eqref{eqn:inner-product-on-Omega-1-adelic})
  \[
  \int_{y \in \mathbb{A}^{\times} / F^\times \mathbb{A}^{\times
      2}
  }
  |y|^2
  \frac{1}{2}
  \sum_{\tau \in F^\times / F^{\times 2}}
  \langle \Ad(g) \phi_1[y \tau], \phi_2[y \tau]
  \rangle_{L^2(B_\mathbb{A})}
  \]
  \[
  =
  \int_{y \in \mathbb{A}^{\times} /  \mathbb{A}^{\times
      2}
  }
  |y|^2
  \langle \Ad(g) \phi_1[y], \phi_2[y]
  \rangle_{L^2(B_\mathbb{A})}
  =
  \langle \Ad(g) \phi_1, \phi_2 \rangle_{\Omega}
  \]
  combine to give the required conclusion.
\end{proof}

\section{Estimates for general quantum variance sums\label{sec:estimates-general-var}}
\label{sec-4}
In this section we introduce general families of quantum
variance sums, propose a candidate for their leading
asymptotics, and state a general ``estimate''
comparing the two.
\subsection{Notation}
\label{sec-4-1}
Let $F$ be a number field with adele ring $\mathbb{A}$.
Fix a nontrivial
unitary character $\psi$ of $\mathbb{A}/F$.
Let $B$ be a non-split quaternion algebra over $F$.
Fix a maximal order $R \subseteq B$
and a finite set $S$ of places of $F$,
containing
all archimedean places as well as any finite places at which $B$
ramifies.
Retain the (unsurprising) notation of \S\ref{sec-3-1}.

Since $B$ is non-split, the quotient $[\PB^\times]=
\PB^\times \backslash \PB^\times_\mathbb{A}$ is compact,
and so $L^2([\PB^\times])$ is completely reducible.
Let $A^{\flat}$ denote the set of irreducible subrepresentations
of the Hilbert space $L^2([\PB^\times])$.  For each
$\pi^{\flat} \in A^{\flat}$, let $\pi \leq \pi^{\flat}$ denote the subspace of
smooth vectors.
Set $A := \{\pi : \pi^\flat \in A^{\flat}\}$.
Let $\mathcal{A}$ denote the algebraic direct sum
$\oplus_{\pi \in A} \pi$,
regarded as a pre-unitary representation
of the group $\PB^\times_\mathbb{A}$.

We introduce the following additional notation:
\begin{itemize}
\item $K = \prod K_\mathfrak{p}$: a maximal compact
  subgroup of $\PB^\times_\mathbb{A}$.
  For $\mathfrak{p} \notin S$,
  we assume that $K_\mathfrak{p} \leq \PB^\times_{\mathfrak{p}}$ is the image
  of $R_\mathfrak{p}^\times$.


\item $\mathcal{A}_0 \leq \mathcal{A}$: the orthogonal
  complement of the one-dimensional subrepresentations.  (We had
  earlier, in \S\ref{sec-3-5} and \S\ref{sec:induction-omega}, used
  the same symbol to denote some \emph{larger} spaces
  than what we call here $\mathcal{A}_0$.  This abuse of
  notation should introduce no confusion.)
\item $A_0 := \{\pi \in A : \pi \subseteq \mathcal{A}_0\} =
  \{\pi \in A : \dim(\pi) > 1\}$,
  so that $\mathcal{A}_0 = \oplus_{\pi \in A_0} \pi$.
\item $\mathcal{A}^S := \{\varphi \in \mathcal{A} :
  \rho_{\reg}(k) \varphi = \varphi  \text{ for all }
  k \in K_\mathfrak{p}, \mathfrak{p} \notin S
  \}, \mathcal{A}_0^S := \mathcal{A}_0 \cap \mathcal{A}^S$:
  the ``unramified outside $S$''
  subspaces of $\mathcal{A}, \mathcal{A}_0$.
\item $A^S := \{\pi \in A : \pi \cap \mathcal{A}^S \neq \{0\}\},
  {A}_0^S := {A}^S \cap {A}_0$:
  the subsets consisting of those $\pi$ that are unramified outside $S$.
\item $\mathcal{B}(V)$, for $V$ a $K$-invariant subspace of $\mathcal{A}$:
  an orthonormal basis for the closure of $V$
  that consists 
  of $K$-isotypic elements of $V$.
  
\end{itemize}
Fix Haar measures on $\PB^\times_S$ and $[\PB^\times]$; we do
not require any compatibility between them.  Because $B$ is
non-split, each $\pi \in A_0$ is cuspidal.
Let $L^{(S)}(\pi,s)$,
$L^{(S)}(\ad \pi,s)$ 
be as in \S\ref{sec:standard-l-function}.
\subsection{Key definitions\label{sec:main-general-estimate-key-defns}}
\label{sec-4-2}
By
\eqref{eq:rapid-convergence-of-pretrace-formula},
\eqref{eq:polynomial-growth-of-reps}
and \eqref{eq:HL},
the sums considered in the definitions
to follow
converge absolutely.

\subsubsection{The basic distributions}
\label{sec:omega-pi}
For $\pi \in A^S$,
define $\omega_\pi : C_c^\infty(\PB^\times_S) \otimes \mathcal{A}^S
\rightarrow \mathbb{C}$
by
$\omega_\pi(f,\Psi)
:=
\sum_{\varphi \in \mathcal{B}(\pi \cap \mathcal{A}^S)}
\langle \varphi, \Psi \cdot \pi(f) \varphi  \rangle$.
The definition is independent of the choice of orthonormal
basis.

\begin{example*}\label{example:harmonic-sum-attached-to-orth-proj}
  If $\pi(f) = 0$,
  then $\omega_\pi(f,\Psi) = 0$.
  If $\pi(f)$
  is the orthogonal projector onto a
  one-dimensional subspace $\mathbb{C} \varphi$ of $\pi$ with
  unit basis vector $\varphi$,
  then
  $\omega_\pi(f,\Psi) = \langle \varphi, \Psi \varphi  \rangle$.
\end{example*}

\subsubsection{Quantum variance sums}
For $f \in C_c^\infty(\PB^\times_S)$,
define
the sesquilinear form $\mathcal{V}_f : \mathcal{A}_0^S
\otimes \mathcal{A}_0^S \rightarrow \mathbb{C}$
by
\[\mathcal{V}_f(\Psi_1,\Psi_2)
:=
\sum_{\pi \in A_0^S}
L^{(S)}(\ad \pi,1)
\omega_\pi( f,\Psi_1)
\overline{
  \omega_\pi( f,\Psi_2) }.
\]

\subsubsection{Proposed limiting variance}
For $f \in C_c^\infty(\PB^\times_S)$,
define the sesquilinear form
$\mathcal{M}_f : \mathcal{A}_0^S \otimes \mathcal{A}_0^S
\rightarrow \mathbb{C}$
by requiring for  $\Psi_1 \in \pi_1 \in A_0^S, \Psi_2 \in \pi_2
\in A_0^S$
that $\mathcal{M}_f(\Psi_1,\Psi_2) := 0$
unless $\pi_1 = \pi_2 =: \pi$,
in which case
\[
\mathcal{M}_f(\Psi_1,\Psi_2)
:=
c_3
L^{(S)}(\pi,\tfrac{1}{2})
\int_{g \in \PB^\times_S}
\langle \Ad(g) \mathfrak{S} f, \mathfrak{S} f \rangle_{L^2(\PB^\times_S)}
\langle \pi(g) \Psi_1, \Psi_2 \rangle_{\PB^\times}
\]
where
\begin{equation}\label{eq:defn-of-c}
  c_3 :=
  \zeta_F^{(S)}(2)  \vol([\PB^\times])^{-1}.
\end{equation}
The integral converges absolutely (see
\S\ref{sec:local-convergence-lemmas}).

\subsubsection{Thickening
  $\PB^\times$ inside $B$}\label{sec:heartsuit}

  Fix, once and for all, a nonzero element $W_S \in
  C_c^\infty(F_S^\times)$.
  For $\tau \in F^\times$,
  define the linear map
  $\heartsuit^{\tau} : C_c^\infty(\PB^\times_S)
  \rightarrow \mathcal{S}(B_S)$
  by
  \[
  \heartsuit^{\tau} f(x)
  :=
  \frac{W_S(\tau \nr(x))}{|\tau \nr(x)|_S}
  1_{B_S^\times}(x)
  f(\pr(x)),
  \]
  where $\pr : B_S^\times \rightarrow \PB^\times_S$
  denotes the natural projection.

\subsection{Statement of main result}
\label{sec-4-3}
The statement involves the metaplectic group
(\S\ref{sec:global-metaplectic-gp}) and the Weil representation
(\S\ref{sec:weil-repn-global}).
For $s \in \Mp_2(F_S)$,
we abbreviate
$\rho^{\tau}(s) := \rho_{\Weil}^{\psi^{\tau},B}(s)$
and
$\rho_0^\tau(s) := \rho_{\Weil}^{\psi^{\tau},B^0}(s)$;
these operators act respectively
on $\mathcal{S}(B_S)$
and $\mathcal{S}(B_S^0)$.
The operators $1 \otimes \rho_0^{\tau_i}(s)$
on $\mathcal{S}(B_S)$
are defined using the
decomposition $B_S = F_S \oplus B_S^0$,
as in \S\ref{sec:factorization-weil-repn}.
\begin{theorem}\label{thm:main-estimate-general-variance}
  There is a finite subset $X$ of $F^\times$ and a finite
  collection $(\eps_{\tau_1,\tau_2})_{\tau_1,\tau_2 \in X}$ of
  sesquilinear forms
  $\eps_{\tau_1,\tau_2} : \mathcal{S}(B_S) \otimes
  \mathcal{S}(B_S) \otimes \mathcal{A}_0^S \otimes
  \mathcal{A}_0^S \rightarrow \mathbb{C}$,
  depending only upon  $F$, $\psi$, $S$ and $W_S$,
  with the following properties:
  \begin{enumerate}
  \item {\bf Relevance.}
    For $f \in C_c^\infty(\PB^\times_S)$,
    one has the following identity of sesquilinear
    forms on $\mathcal{A}_0^S$:
    \begin{equation}\label{eq:relevance}
      \mathcal{V}_f
      = \mathcal{M}_f
      + \sum_{\tau_1,\tau_2 \in X} \eps_{\tau_1,\tau_2}(\heartsuit^{\tau_1} f, \heartsuit^{\tau_2} f, \cdot, \cdot).
    \end{equation}
  \item {\bf $\O_1(F)$-invariance.}
    \[
    \eps_{\tau_1,\tau_2}(\mathfrak{S} \phi_1, \phi_2, \Psi_1,
    \Psi_2)
    =
    \eps_{\tau_1,\tau_2}( \phi_1, \phi_2, \Psi_1,
    \Psi_2),
    \]
    \[
    \eps_{\tau_1,\tau_2}(\phi_1, \mathfrak{S}\phi_2, \Psi_1,
    \Psi_2)
    =
    \eps_{\tau_1,\tau_2}(\phi_1, \phi_2, \Psi_1,
    \Psi_2).
    \]
    
  \item {\bf $\SO(B_S^0)$-invariance.}
    For $g_1,g_2 \in \PB^\times_S$,
    \[\eps_{\tau_1,\tau_2}(\Ad(g_1) \phi_1, \Ad(g_2) \phi_2, \rho_{\reg}(g_1) \Psi_1,
    \rho_{\reg}(g_2) \Psi_2)
    \]
    \[
    =  \eps_{\tau_1,\tau_2}(\phi_1, \phi_2, \Psi_1,
    \Psi_2).
    \]
  \item {\bf Metaplectic invariance.}
    For $s \in \Mp_2(F_S)$,
    \[
    \eps_{\tau_1,\tau_2}(\rho^{\tau_1}(s) \phi_1, \rho^{\tau_2}(s) \phi_2, \Psi_1,
    \Psi_2)
    =
    \eps_{\tau_1,\tau_2}(\phi_1, \phi_2, \Psi_1,
    \Psi_2).
    \]
    
  \item {\bf Main estimate.}
    For $s \in \Mp_2(F_S)$,
    \[
    \eps_{\tau_1,\tau_2}((1 \otimes \rho_0^{\tau_1}(s)) \phi_1,
    (1 \otimes \rho_0^{\tau_2}(s)) \phi_2, \Psi_1,
    \Psi_2) \]
    \[
    \ll
    \Xi(s)
    \prod_{i=1,2}
    \Sob(\phi_i) \|\Psi\|_{L^1},
    \]
    where $\Xi$ denotes the Harish--Chandra function
    (\S\ref{sec:Xi-global})
    and $\Sob(\phi_i)$ denotes
    a quantity that varies
    continuously with $\phi_i$
    (see \S\ref{sec:some-asympt-notat}).
    The implied constants
    and the uniformity in the continuity of $\Sob(.)$
    depend at most upon $F,\psi,S,W_S$.
  \item  {\bf Construction.}
    $\eps_{\tau_1,\tau_2}$
    factors explicitly through the theta correspondence
    in the sense of \S\ref{sec-4-5-5}
    and the remark of \S\ref{sec-8-6}.
  \end{enumerate}
\end{theorem}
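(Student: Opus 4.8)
The plan is to unwind the general strategy of \S\ref{sec:theta-funct-vari} carefully, at the level of the abstract families encoded by $f \in C_c^\infty(\PB^\times_S)$, and to organize the resulting computation so that the pieces which converge cleanly (the ``main term'') are separated from the pieces which merely decay (the ``error term''). First I would construct, from $f$, a Schwartz--Bruhat section $\phi_f \in \Omega$ (using the maps $\heartsuit^\tau$ of \S\ref{sec:heartsuit} at the places in $S$ and the distinguished elements $\phi_\mathfrak{p}^0$ outside $S$), and verify that the associated similitude theta kernel $\Theta(\phi_f,\cdot\,)$ reproduces the Hecke-twisted pretrace formula: this is exactly the content of Lemma \ref{lem:relation-distinguished-elt-hecke-kernel} (relating $T_y$ to $\phi^0$) together with the definition \eqref{defn:theta-knerle-for-Omega}, so that integrating the diagonal of \eqref{eq:pretrace-formula-general} against $\Psi_1$ and $\overline{\Psi_2}$ and summing over Hecke parameters produces, up to the harmonic weights $L^{(S)}(\ad\pi,1)$, exactly $\mathcal{V}_f(\Psi_1,\Psi_2)$ on one side and $\langle \Theta(\phi_f,\Psi_1),\Theta(\phi_f,\Psi_2)\rangle_{\PGL_2}$ on the other. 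The multiplicity-one statement on $[\PB^\times]$ and the Rankin--Selberg inner product formula \eqref{eqn:rs-ipf} are what convert the spectral sum into the norm of a theta lift; the appearance of $L^{(S)}(\ad\pi,1)$ as the ``correct'' weight is forced by \eqref{eq:local-computation-norm-of-whittaker-newvector} at the unramified places.

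Next I would apply \S\ref{sec:unfold-inner-product-nontraditional-theta-over-sl2} to rewrite $\langle \Theta(\phi_f,\Psi_1),\Theta(\phi_f,\Psi_2)\rangle_{\PGL_2}$ as the $y$-integral / finite $\tau_1,\tau_2$-sum of inner products $\langle \theta_1 h_1, \theta_2 h_2\rangle_{\SL_2}$ appearing in \eqref{eq:inner-product-pgl2-made-into-sl2}. Here the finite set $X \subseteq F^\times$ is the (finite) set of $\tau$'s that occur with nonzero contribution; its finiteness follows from the compactness of $\mathbb{A}^\times/F^\times\mathbb{A}^{\times 2}$ and the local support conditions defining $\Omega$. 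Now I split each summand as
\[
\langle \theta_1 h_1, \theta_2 h_2\rangle
= \langle\theta_1,\theta_2\rangle\langle h_1,h_2\rangle
+ \mathcal{E}_{\tau_1,\tau_2}(\phi_1,\phi_2,\Psi_1,\Psi_2),
\]
which is precisely the definition of the sesquilinear forms $\mathcal{E}_{\tau_1,\tau_2}$ from \S\ref{sec:main-estimate-for-equidistribution-pairs-theta}. The ``main'' piece $\langle\theta_1,\theta_2\rangle\langle h_1,h_2\rangle$, summed over $\tau_1,\tau_2$ and integrated over $y$, is by construction the form $\mathfrak{m}$ of \S\ref{sec:induction-omega}, and the lemma there evaluates it: it vanishes off-diagonally, and on the diagonal it equals the central $L$-value $L^{(S)}(\pi,\tfrac12)$ times $\int_{\PB^\times_S}\langle\Ad(g)\mathfrak{S}\phi_f,\mathfrak{S}\phi_f\rangle_\Omega\langle\pi(g)\Psi_1,\Psi_2\rangle$, up to the explicit constant $\zeta_F^{(S)}(2)^{-1}\prod_{\mathfrak{p}\notin S}\vol(K_\mathfrak{p})$. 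Matching this against the definition of $\mathcal{M}_f$ — in particular identifying $\langle\Ad(g)\mathfrak{S}\phi_f,\mathfrak{S}\phi_f\rangle_\Omega$ with a constant multiple of $\langle\Ad(g)\mathfrak{S}f,\mathfrak{S}f\rangle_{L^2(\PB^\times_S)}$ via Lemma \ref{lem:norm-of-distinguished-vector-in-Omega-local}, Lemma \ref{lem:formula-for-how-Ad-acts-on-distinguish-element-inner-products} and the volume formulas of \S\ref{sec:local-vol-formulas} — pins down the constant $c_3$ in \eqref{eq:defn-of-c} and establishes the \textbf{Relevance} identity \eqref{eq:relevance} with $\eps_{\tau_1,\tau_2}$ the (appropriately normalized, $y$-integrated) pullback of $\mathcal{E}_{\tau_1,\tau_2}$ along the $S$-local factorization $\phi_f = \heartsuit^{\tau_1}f \otimes (\text{unramified data})$.

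It then remains to read off properties (2)--(6). \textbf{Construction} is essentially the above paragraph. The \textbf{$\O_1(F)$-}, \textbf{$\SO(B_S^0)$-} and \textbf{metaplectic invariance} statements are inherited verbatim from the corresponding invariance properties of $\mathcal{E}_{\tau_1,\tau_2}$ recorded in the lemma of \S\ref{sec:equivariance-summary-for-E-tau-tau}, once one checks that passing from the global $\mathcal{E}$ to its $S$-localized version $\eps$ (with distinguished data frozen outside $S$) respects these symmetries — which it does, since $\phi_\mathfrak{p}^0$ is fixed by $\mathfrak{S}$, by $K_\mathfrak{p}\times K_\mathfrak{p}'$, and the relevant Weil-representation relations are place-by-place. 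The \textbf{Main estimate} is Proposition \ref{prop:main-error-estimate-global-adelic-general} applied at $s \in \Mp_2(F_S)$ (embedded in $\Mp_2(\mathbb{A})$ with trivial components outside $S$, where $\Xi_\mathfrak{p}\equiv 1$ since $B_\mathfrak{p}$ is either ramified or the data is $K_\mathfrak{p}$-fixed), noting that $\Sob(\phi_f) \ll \mathcal{C}(\heartsuit^{\tau}f)$ because the map $\heartsuit^\tau$ and the inclusion into $\mathcal{S}(B_S)$ are continuous. I expect the main obstacle to be purely bookkeeping: tracking the constants through the chain of measure normalizations (Tamagawa on $[\PB^\times]$, $\psi$-self-dual on $B_\mathbb{A}$, the squaring-compatible measure on $\mathbb{A}^\times/\mathbb{A}^{\times 2}$, and the arbitrary unrelated Haar on $\PB^\times_S$) so that $c_3$ comes out exactly as in \eqref{eq:defn-of-c} and the $\heartsuit^\tau f$ appear with the correct normalization; the conceptual content is already contained in the cited lemmas of \S\ref{sec-3}.
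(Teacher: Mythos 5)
Your proposal is correct and follows essentially the same route as the paper's proof in \S\ref{sec:proof-theor-refthm:m}: build $\heartsuit f \in \Omega$, identify $\mathcal{V}_f$ with $c_1^{-1}\langle\Theta(\heartsuit f,\Psi_1),\Theta(\heartsuit f,\Psi_2)\rangle$, unfold over $\SL_2$ via \S\ref{sec:unfold-inner-product-nontraditional-theta-over-sl2}, split main term (evaluated by $\mathfrak{m}$ and the Rallis inner product formula) from error ($\mathcal{E}_{\tau_1,\tau_2}$), and inherit the invariance and decay properties from \S\ref{sec:equivariance-summary-for-E-tau-tau} and Proposition~\ref{prop:main-error-estimate-global-adelic-general}. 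One small imprecision worth noting: you describe the first identification as ``integrating the diagonal of \eqref{eq:pretrace-formula-general} against $\Psi_1$ and $\overline{\Psi_2}$'', which taken literally would yield a spectral expression quadratic rather than quartic in the eigenfunctions; the paper instead proves $\Theta(\heartsuit f,\Psi)=\sum_\pi\omega_\pi(f,\Psi)\Phi_\pi$ by matching Whittaker coefficients (Lemma~\ref{lem:fourier-coefficients-of-nontraditional-theta-lifts}, integrating the pretrace formula against a \emph{single} $\Psi$), and then obtains $\mathcal{V}_f$ by forming the inner product of two such expansions and invoking the orthogonality and norm of the $\Phi_\pi$ — which is in fact what the rest of your paragraph (multiplicity one, Rankin--Selberg, $L^{(S)}(\ad\pi,1)$) indicates you intend.
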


\begin{remark}
 For the application to Theorem
    \ref{thm:main-result-for-microlocal-stuff},
    the crucial assertions are the relevance,
    the
    $\SO(B_S^0)$-invariance, and the main estimate.
    The $\O_1(F)$-invariance and metaplectic
    invariance are employed
    to simplify the presentation of the proof.
    The construction is not applied
    in this paper,
    but may  be useful
    for further refinements and extensions.
  \end{remark}
  \begin{remark}
    One purpose of Part II is to
    give evidence that
    Theorem \ref{thm:main-estimate-general-variance} is
    useful.
  \end{remark}
  \begin{remark}
 The formulation of
    Theorem
    \ref{thm:main-estimate-general-variance} is independent
    of the choice of measures on $\PB_S^\times$
    and on $[\PB^\times]$.
  \end{remark}
  \begin{remark}
   Theorem \ref{thm:main-estimate-general-variance} minus the
    ``main estimate''
    is
    like a trace formula: $\mathcal{V}_f$ is a sum over
    automorphic forms, $\mathcal{M}_f$ is like the ``identity''
    contribution, and the $\eps_{\tau_1,\tau_2}$ are the
    ``interesting'' contributions which one would like in
    practice to show have negligible size.  One difference is
    that $\mathcal{V}_f$ has a quadrilinear (rather than
    bilinear) dependence upon the automorphic forms $\varphi$.
  \end{remark}
  \begin{remark}
 Theorem \ref{thm:main-estimate-general-variance} likely
    extends to the split case $B = M_2(F)$ after incorporating
    contributions from the continuous spectrum into the
    definitions of
    \S\ref{sec:main-general-estimate-key-defns}
    and replacing $\|\Psi_i\|_{L^1}$
    with $\|\htt^A \Psi_i\|_{L^1}$ for some fixed large enough $A > 0$.
  \end{remark}


\subsection{Proof of Theorem \ref{thm:main-estimate-general-variance}}\label{sec:proof-theor-refthm:m}

\subsubsection{Measures}
\label{sec-4-5-1}
With a view to applications, we have formulated Theorem
\ref{thm:main-estimate-general-variance}
in a
measure-independent fashion.
For the proof, it is convenient
to take on $[\PB^\times]$ the Tamagawa measure,
so that
\begin{equation}\label{eq:defn-of-c-2}
  c_3 = \frac{1}{2}  \zeta_F^{(S)}(2),
\end{equation}
and to fix measures
on $\PB^\times_{\mathbb{A}}, \PB^\times_\mathfrak{p}$
and hence on $\PB^\times_S = \prod_{\mathfrak{p} \in S}
\PB^\times_\mathfrak{p}$
as in \S\ref{sec:global-measures}.

\subsubsection{The $\heartsuit$ operator: local}
\label{sec-2-5}
Suppose 
temporarily (for \S\ref{sec-2-5} only)
that $k$ is a local field, $\psi$ is a nontrivial unitary character
of $k$,
$B$ is a quaternion algebra over $k$,
$G := B^\times / k^\times$,
and $W \in C_c^\infty(k^\times)$.
Recall from \S\ref{sec:defn-local-omega} the definition of
$\Omega$.
We define a linear map
$\heartsuit : C_c^\infty(G) \rightarrow \Omega$
by
\begin{equation}
  \heartsuit f(t,x)
  :=
  \frac{W(t \nr(x))}{|t \nr(x)|}
  1_{B^\times}(x)
  f(x).
\end{equation}
(By abuse of notation,
we write $f(x)$ for the value
taken by $f$ at
the image of $x$ under the natural projection
$B^\times \rightarrow G$.)

By inspecting 
the definitions, one has the identities  of maps
$C_c^\infty(G) \rightarrow \Omega$
\[
\mathfrak{S} \heartsuit = \heartsuit \mathfrak{S},
\quad
\Ad(g) \heartsuit = \heartsuit \Ad(g)
\text{ (for $g \in G$)}.
\]
By inspecting the definitions,
one has for $y \in k^\times, b \in B^\times$ that
\begin{equation}\label{eqn:local-heartsuit-formula}
  \rho_{\Weil}(a(y)) \heartsuit f(\nr(b)^{-1},b)
  =
  |y| \heartsuit f(y \nr(b)^{-1}, b)
  =
  \mathfrak{W}(y) f(b).
\end{equation}
By the formula
\eqref{eqn:inner-product-on-Omega-2} for $\|.\|_{\Omega}$,
one obtains
\begin{equation}
  \label{eqn:local-norm-heartsuit-f}
  \|\heartsuit f\|_{\Omega} =
  \|f\|_{L^2(G)} \,
  \|\mathfrak{W}\|_{L^2(k^\times, |x|^{-1} \, d x)}.
\end{equation}

\subsubsection{The $\heartsuit$ operator: global\label{sec:heartsuit-global}}
\label{sec-3-6}
We revert to the global setting
of \S\ref{sec:estimates-general-var}.
Let
$\pi \in A_0^S$.
Recall from \S\ref{sec:similitudes-global} the definition of
$\Omega$.
  We define a linear map
  $\heartsuit : C_c^\infty(\PB^\times_S) \rightarrow \Omega$ by   
  \[
  \heartsuit f(t,x) :=
  \frac{W_S(t_S \nr(x_S))}{|t_S \nr(x_S)|}
  1_{B_S^\times}(x_S)
  f(x_S)
  \prod_{\mathfrak{p} \notin S} \vol(K_{\mathfrak{p}})^{-1}
  \phi_\mathfrak{p}^0(t_\mathfrak{p},x_\mathfrak{p}),
  \]
  where $\phi_\mathfrak{p}^0 \in \Omega_\mathfrak{p}$
  is defined with respect to $R_\mathfrak{p}$
  (see \S\ref{sec:dist-elem}).
This definition
and that of \S\ref{sec:heartsuit}
are obviously similar; we record their
precise relationship below in \S\ref{sec-4-5-5}.

By
\eqref{eqn:local-heartsuit-formula}
and
  the lemma of
  \S\ref{sec:hecke-kernels-local},
  one has  for $y \in \mathbb{A}^\times$,
  $b \in B_\mathbb{A}^\times$ that
  \begin{equation}\label{eqn:formulas-for-heartsuit-f-global}
    |y| \heartsuit f(
    y \nr(b)^{-1}, b)
    =
    W_S(y_S) f(b_S)
    \prod_{\mathfrak{p} \notin S}
    T_{y_\mathfrak{p}}(b_\mathfrak{p}),
  \end{equation}
  with $T_{y_\mathfrak{p}}$ as in \S\ref{sec:hecke-kernels-local}.
By combining
\eqref{eqn:local-norm-heartsuit-f} 
with Lemma \ref{lem:norm-of-distinguished-vector-in-Omega-local}
of \S\ref{sec:dist-elem},
one obtains
\begin{equation}\label{eqn:norm-of-heartsuit-f-global}
  \|\heartsuit f\|^2_{\Omega}
  =
  \|f\|_{L^2(\PB^\times_S)}^2
  \int_{t \in F_S^\times}
  |\mathfrak{W}|^2(t)
  \, \frac{d t}{|t|}
  \prod_{\mathfrak{p} \notin S}
  \frac{\vol(R_\mathfrak{p})}{\vol(K_\mathfrak{p})^2}.
\end{equation}

\begin{lemma*}\label{lem:main-term-eval-for-heartsuit}
  Let $\pi \in A_0^S$.
Let $\Psi_1,\Psi_2 \in \pi$
be $\prod_{\mathfrak{p} \notin S} K_\mathfrak{p}$-invariant
vectors.
  For $f \in C_c^\infty(\PB^\times_S)$,
  the quantity $\mathfrak{m}(\heartsuit f, \heartsuit f, \Psi_1,
  \Psi_2)$ 
  (see \S\ref{sec:induction-omega}) equals
  \[
  c_2
  L^{(S)}(\pi,\tfrac{1}{2})
  \int_{g \in \PB_S^\times}
  \langle \Ad(g) \mathfrak{S} f, \mathfrak{S} f  \rangle_{L^2(\PB_S^\times)}
  \langle \pi(g) \Psi_1, \Psi_2 \rangle_{\PB^\times},
\]
where
\begin{equation}\label{eq:defn-c2-frak-W}
  c_2
  := 
  \frac{1}{\zeta^{(S)}_F(2)}
  (\prod_{\mathfrak{p} \notin S}
  \frac{\vol(R_\mathfrak{p})}{\vol(K_\mathfrak{p})})
  \int_{y \in F_S^\times}
  |W_S|^2(y)
  \, \frac{d y}{|y|}.
\end{equation}
\end{lemma*}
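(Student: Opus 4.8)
The plan is to deduce the asserted identity from the inner‑product formula for $\mathfrak{m}$ proved in the lemma of \S\ref{sec:induction-omega}, by transporting the resulting integral over $\PB^\times_S$ from the model $\Omega$ back to $L^2(\PB^\times_S)$ along the map $\heartsuit$ of \S\ref{sec-3-6}.

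First I would apply the lemma of \S\ref{sec:induction-omega} to $\phi_1 = \phi_2 = \heartsuit f$ and to $\Psi_1, \Psi_2 \in \pi$, where $\pi \in A_0^S$ (so $\dim\pi > 1$ and case (2) of that lemma applies). The hypotheses hold for our fixed $S$: $\pi$ is unramified outside $S$, the $\Psi_i$ are $\prod_{\mathfrak{p}\notin S}K_\mathfrak{p}$-invariant by assumption, and $\heartsuit f$ equals $\vol(K_\mathfrak{p})^{-1}\phi_\mathfrak{p}^0$ at every $\mathfrak{p}\notin S$ by the definition in \S\ref{sec-3-6}; should one prefer to invoke that lemma only for a strictly larger set $S' \supseteq S$, the Euler factors and local integrals at the places of $S'\setminus S$ are supplied exactly by Lemma \ref{lem:local-rallis-ipf-unram-calc} and Lemma \ref{lem:norm-of-distinguished-vector-in-Omega-local}, so the formula descends to $S$. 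This gives
\[
\mathfrak{m}(\heartsuit f, \heartsuit f, \Psi_1, \Psi_2) = \frac{L^{(S)}(\pi,\tfrac{1}{2})}{\zeta_F^{(S)}(2)}\Big(\prod_{\mathfrak{p}\notin S}\vol(K_\mathfrak{p})\Big)\int_{g\in\PB^\times_S}\langle \Ad(g)\mathfrak{S}\heartsuit f, \mathfrak{S}\heartsuit f\rangle_\Omega\,\langle \pi(g)\Psi_1, \Psi_2\rangle_{\PB^\times}.
\]

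Next I would evaluate $\langle \Ad(g)\mathfrak{S}\heartsuit f, \mathfrak{S}\heartsuit f\rangle_\Omega$ for $g\in\PB^\times_S$. By \eqref{eqn:norm-of-heartsuit-f-global} the linear map $\heartsuit : C_c^\infty(\PB^\times_S)\to\Omega$ scales the norm by the fixed positive constant $\kappa := \big(\int_{y\in F_S^\times}|W_S|^2(y)\,\tfrac{dy}{|y|}\big)\prod_{\mathfrak{p}\notin S}\tfrac{\vol(R_\mathfrak{p})}{\vol(K_\mathfrak{p})^2}$; by the polarization identity it scales the full hermitian form by the same $\kappa$, i.e. $\langle\heartsuit f_1,\heartsuit f_2\rangle_\Omega = \kappa\,\langle f_1,f_2\rangle_{L^2(\PB^\times_S)}$. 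Moreover $\heartsuit$ intertwines $\mathfrak{S}$ and the $\Ad(\PB^\times_S)$-action: this is recorded locally in \S\ref{sec-2-5}, and the same computations apply at the places of $S$, since $\mathfrak{S}\phi_\mathfrak{p}^0 = \phi_\mathfrak{p}^0$ and elements of $\PB^\times_S$ leave the components outside $S$ untouched (this is also the point where the warning that $\mathfrak{S}$ need not preserve pure tensors of $\Omega$ causes no trouble, the only possibly non-symmetric component being the aggregate $S$-component). Hence
\[
\langle \Ad(g)\mathfrak{S}\heartsuit f, \mathfrak{S}\heartsuit f\rangle_\Omega = \langle \heartsuit(\Ad(g)\mathfrak{S} f), \heartsuit(\mathfrak{S} f)\rangle_\Omega = \kappa\,\langle \Ad(g)\mathfrak{S} f, \mathfrak{S} f\rangle_{L^2(\PB^\times_S)}.
\]

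Substituting into the formula of the previous step and collecting constants, the scalar prefactor becomes $\tfrac{1}{\zeta_F^{(S)}(2)}\big(\prod_{\mathfrak{p}\notin S}\vol(K_\mathfrak{p})\big)\,\kappa = \tfrac{1}{\zeta_F^{(S)}(2)}\big(\prod_{\mathfrak{p}\notin S}\tfrac{\vol(R_\mathfrak{p})}{\vol(K_\mathfrak{p})}\big)\int_{y\in F_S^\times}|W_S|^2(y)\,\tfrac{dy}{|y|}$, which is precisely $c_2$ as defined in \eqref{eq:defn-c2-frak-W}, yielding the claimed identity. I do not expect any genuine obstacle: once the lemma of \S\ref{sec:induction-omega} and the isometry property \eqref{eqn:norm-of-heartsuit-f-global} are in hand, the argument is essentially a bookkeeping of constants, the only points requiring a moment's care being the passage from the norm identity \eqref{eqn:norm-of-heartsuit-f-global} to its polarized, $\Ad(g)$-twisted form (immediate from polarization and the conjugation-invariance of the reduced norm already underlying \eqref{eqn:norm-of-heartsuit-f-global}) and the $S$-largeness bookkeeping in the first step.
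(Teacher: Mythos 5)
Your proof is correct and follows essentially the same route as the paper: apply the lemma of \S\ref{sec:induction-omega} (with a possibly enlarged $S'$, descended to $S$ via the local unramified computations of Lemma \ref{lem:local-rallis-ipf-unram-calc} and Lemma \ref{lem:norm-of-distinguished-vector-in-Omega-local}, whose hypotheses are checked via \eqref{eqn:generic-at-each-place-if-not-1-diml}), then translate from $\Omega$ back to $L^2(\PB^\times_S)$ by polarizing \eqref{eqn:norm-of-heartsuit-f-global} and using the commutation of $\heartsuit$ with $\Ad(g)$ and $\mathfrak{S}$. The only cosmetic difference is that you perform the $S'\to S$ descent before the polarization step whereas the paper does it afterward; the constant bookkeeping and the point about $\mathfrak{S}$ being harmless at the unramified places are handled the same way.
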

\begin{proof}
  By the lemma of \S\ref{sec:induction-omega},
  the polarization of \eqref{eqn:norm-of-heartsuit-f-global}
  and the commutativity $\heartsuit \Ad(g) = \Ad(g) \heartsuit$,
  the
  required identity holds if we replace $S$ with some possibly
  larger finite set of places $S' \supseteq S$.  To deduce the
  identity as written, we apply
  \eqref{eqn:local-rallis-ipf-unram-calc}
  (using
  \eqref{eqn:generic-at-each-place-if-not-1-diml} to verify its hypotheses).
\end{proof}

\subsubsection{A specific Eichler/Jacquet--Langlands lift}\label{sec:Phi-pi}
For $\pi \in A_0^S$,
let $\Phi_\pi \in \pi_{\JL}$ denote the element
of the Jacquet--Langlands lift of $\pi$
having the Fourier expansion
$\Phi_\pi(n(x) a(y)) =
\sum_{\tau \in F^\times} \psi(\tau x) W_\pi(\tau y)$, where
the Whittaker function
$W_\pi : \mathbb{A}^\times \rightarrow \mathbb{C}$ is given by
$W_\pi(y) := W_S(y_S) \prod_{\mathfrak{p} \notin S}
W_{\pi_\mathfrak{p}}^0(y_\mathfrak{p})$
(see \S\ref{sec:aut-forms-fourier-exp}, \S\ref{sec-3-3-2}).

\begin{lemma*}
  One has $\|\Phi_\pi\|^2 = c_1 L^{(S)}(\ad \pi,1)$, where
  \begin{equation}\label{eq:c-of-frak-W-defn}
    c_1
    :=
    \frac{2}{ \zeta_F^{(S)}(2)}
    (\prod_{\mathfrak{p} \notin S}
    \Delta_{\psi_{\mathfrak{p}}}^{-1/2})
    \int_{y \in F_S^\times} |W_S|^2(y) \, \frac{d y}{|y|}.
  \end{equation}
  If $\pi, \pi ' \in A_0^S$
  are distinct,
  then
  $\langle \Phi_{\pi}, \Phi_{\pi '} \rangle = 0$.
\end{lemma*}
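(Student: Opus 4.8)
The plan is to dispose of the orthogonality assertion immediately and to obtain the norm identity by a Rankin--Selberg (Kirillov-model) computation.

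\emph{Orthogonality.} If $\pi,\pi'\in A_0^S$ are distinct, then — the Jacquet--Langlands correspondence being injective (\S\ref{sec-3-4-1}) — the lifts $\pi_{\JL}$ and $\pi'_{\JL}$ are distinct cuspidal automorphic representations of $\PGL_2$, hence realize orthogonal subspaces of $L^2([\PGL_2])$. Since $\Phi_\pi\in\pi_{\JL}$ and $\Phi_{\pi'}\in\pi'_{\JL}$, we conclude $\langle\Phi_\pi,\Phi_{\pi'}\rangle=0$.

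\emph{Norm.} Since $\pi$ is cuspidal (being in $A_0$), so is $\pi_{\JL}$ (\S\ref{sec-3-4-1}); hence $\Phi_\pi$ has vanishing constant term and diagonal Whittaker function $W_{\Phi_\pi}=W_\pi$ (\S\ref{sec:aut-forms-fourier-exp}), and $\Phi_\pi\neq 0$ because $W_S\neq 0$ while each $W^0_{\pi_\mathfrak{p}}$ is nonzero. The lemma of \S\ref{sec-3-3-2} then gives $\|\Phi_\pi\|^2=2\,\res_{s\to 0}I(s)$, where $I(s):=\int_{y\in\mathbb{A}^\times}|W_\pi(y)|^2|y|^s\,\frac{dy}{|y|}$ converges for $\Re(s)>0$. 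Using the factorization $|W_\pi(y)|^2|y|^s=|W_S(y_S)|^2|y_S|^s\prod_{\mathfrak{p}\notin S}|W^0_{\pi_\mathfrak{p}}(y_\mathfrak{p})|^2|y_\mathfrak{p}|^s$ together with the regularized product normalization of the measure on $\mathbb{A}^\times$ (\S\ref{sec:global-measures}), I would write $I(s)$ as $(\xi_F^*(1))^{-1}\bigl(\prod_{\mathfrak{p}\in S}\zeta_\mathfrak{p}(1)\bigr)$ times the $S$-integral $\int_{F_S^\times}|W_S|^2(y)|y|^s\,\frac{dy}{|y|}$ (entire in $s$, as $W_S$ has compact support in $F_S^\times$) times $\prod_{\mathfrak{p}\notin S}\bigl(\zeta_\mathfrak{p}(1)\int_{F_\mathfrak{p}^\times}|W^0_{\pi_\mathfrak{p}}|^2|y|^s\,\frac{dy}{|y|}\bigr)$. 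Each factor of the last product is evaluated by the lemma of \S\ref{sec-2-3} — applicable because $\pi_\mathfrak{p}$ is unitary — as $L(\ad\pi_\mathfrak{p},1+s)\,\zeta_\mathfrak{p}(1+s)\,\zeta_\mathfrak{p}(2+2s)^{-1}\,\Delta_{\psi_\mathfrak{p}}^{-1/2}$, so the product over $\mathfrak{p}\notin S$ becomes $L^{(S)}(\ad\pi,1+s)$ times the ratio $\prod_{\mathfrak{p}\notin S}\zeta_\mathfrak{p}(1+s)\big/\prod_{\mathfrak{p}\notin S}\zeta_\mathfrak{p}(2+2s)$ times $\prod_{\mathfrak{p}\notin S}\Delta_{\psi_\mathfrak{p}}^{-1/2}$; by \S\ref{sec:adjoint-l-function} and the meromorphic continuation of the Dedekind zeta function, this extends meromorphically and is regular at $s=0$ except for the simple pole of $\prod_{\mathfrak{p}\notin S}\zeta_\mathfrak{p}(1+s)$ there, whose residue is $\xi_F^*(1)\big/\prod_{\mathfrak{p}\in S}\zeta_\mathfrak{p}(1)$ (since $\xi_F=\prod_{\mathfrak{p}\in S}\zeta_\mathfrak{p}\cdot\prod_{\mathfrak{p}\notin S}\zeta_\mathfrak{p}$). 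Taking $\res_{s\to 0}$ and doubling, the factors $\xi_F^*(1)$ and $\prod_{\mathfrak{p}\in S}\zeta_\mathfrak{p}(1)$ cancel, leaving $\|\Phi_\pi\|^2=c_1\,L^{(S)}(\ad\pi,1)$ with $c_1$ as in \eqref{eq:c-of-frak-W-defn}; in particular $L^{(S)}(\ad\pi,1)>0$.

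I expect the only delicate point to be this final bookkeeping — correctly matching the archimedean and $B$-ramified Euler factors of $\xi_F$, the product $\prod_{\mathfrak{p}\notin S}\Delta_{\psi_\mathfrak{p}}^{-1/2}$, and the factor $2$ arising from $\vol([\PGL_2])=2$ in the lemma of \S\ref{sec-3-3-2}, against the asserted value of $c_1$. Everything else — the Rankin--Selberg unfolding, the local Whittaker norm computation of \S\ref{sec-2-3}, and the cancellation of the pole of the partial Dedekind zeta — is routine.
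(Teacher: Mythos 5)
Your argument is correct and follows essentially the same route as the paper's two-line proof: orthogonality from injectivity of Jacquet--Langlands (together with multiplicity one for $\PB^\times$, which the paper invokes explicitly and which you are implicitly using when you pass from distinct elements of $A_0^S$ to non-isomorphic automorphic representations), and the norm formula from the Rankin--Selberg lemma of \S\ref{sec-3-3-2} combined with the local Whittaker-norm calculation of \S\ref{sec-2-3}, with the regularized Haar on $\mathbb{A}^\times$ supplying the cancellation of $\xi_F^*(1)$ and $\prod_{\mathfrak{p}\in S}\zeta_\mathfrak{p}(1)$ against the residue of $\prod_{\mathfrak{p}\notin S}\zeta_\mathfrak{p}(1+s)$. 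The bookkeeping is exactly the delicate point you flag, and your expansion handles it correctly.
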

\begin{proof}
  The conclusion in the case $\pi \neq \pi '$ is the
  multiplicity one theorem for $\PB^\times$ combined with the
  injectivity of $\pi \mapsto \pi_{\JL}$.  The formula \eqref{eq:c-of-frak-W-defn} is a
  consequence of
  the lemma of \S\ref{sec-3-3-2} and the corresponding local
  calculation (\S\ref{sec-2-3}).
\end{proof}


\subsubsection{The normalizing scalar}
Recall
from \eqref{eq:c-of-frak-W-defn},
\eqref{eq:defn-c2-frak-W}
and \eqref{eq:defn-of-c-2}
the scalars $c_1,c_2,c_3$.
By the local volume formulas of \S\ref{sec:local-vol-formulas},
\begin{equation}\label{eqn:relation-c1-c2-c}
  c_1^{-1} c_2 = c_3.
\end{equation}

\subsubsection{Application of the pretrace formula}
\label{sec-4-5-2}
Recall the theta functions $\Theta(\phi,\Psi)$
attached in \S\ref{sec:non-traditional-theta-lifts} to each
$\phi \in \Omega, \Psi \in \mathcal{A}_0$.
Let $f \in C_c^\infty(\PB^\times_S)$, $\Psi \in \mathcal{A}_0^S$.

\begin{lemma}\label{lem:abs-conv-of-silly-sum}
  $\sum_{\pi \in A_0^S} |\omega_\pi( f,\Psi)|
  \| \Phi_\pi \|_{L^p([\PGL_2])} < \infty$
  for $p=2,\infty$.
\end{lemma}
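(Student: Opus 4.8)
The plan is to control the sum $\sum_{\pi \in A_0^S} |\omega_\pi(f,\Psi)| \,\|\Phi_\pi\|_{L^p([\PGL_2])}$ by factoring each summand into a piece that decays in the analytic conductor $C(\pi)$ and a piece that grows at most polynomially in $C(\pi)$, then invoking the convergence \eqref{eq:polynomial-growth-of-reps}. First I would bound $\|\Phi_\pi\|_{L^p}$. For $p=2$ this is the lemma of \S\ref{sec:Phi-pi}: $\|\Phi_\pi\|_2^2 = c_1 L^{(S)}(\ad\pi,1) \ll_\eps C(\pi)^\eps$ by \eqref{eq:HL}, so $\|\Phi_\pi\|_2 \ll_\eps C(\pi)^\eps$. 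For $p=\infty$ I would pass from the $L^2$-bound to a sup-norm bound by a Sobolev embedding on $[\PGL_2]$: since $\Phi_\pi$ is a cusp form generating $\pi_{\JL}$ with explicitly prescribed local vectors (the local newvector $W^0_{\pi_\mathfrak{p}}$ outside $S$, a fixed vector $W_S$ inside $S$), one controls finitely many invariant derivatives of $\Phi_\pi$ by $C(\pi)^{O(1)} \|\Phi_\pi\|_2$, whence $\|\Phi_\pi\|_\infty \ll C(\pi)^{O(1)}$. Concretely this can be extracted from the same estimates underlying \eqref{eq:rapid-convergence-of-pretrace-formula}.

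Next I would bound $|\omega_\pi(f,\Psi)|$. By definition $\omega_\pi(f,\Psi) = \sum_{\varphi \in \mathcal{B}(\pi \cap \mathcal{A}^S)} \langle \varphi, \Psi\cdot\pi(f)\varphi\rangle$, a finite sum (the dimension of $\pi^J$ being bounded; indeed $\pi$ unramified outside $S$ and the local spaces $\pi_\mathfrak{p}$ have bounded $K_\mathfrak{p}$-invariants). Using Cauchy--Schwarz and $\|\Psi\cdot\pi(f)\varphi\| \le \|\Psi\|_{L^\infty}\|\pi(f)\varphi\|$, the problem reduces to bounding the Hilbert--Schmidt-type quantity $\sum_{\varphi} \|\pi(f)\varphi\|^2$ over an orthonormal basis of $\pi \cap \mathcal{A}^S$, together with a count of how many basis vectors contribute. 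For fixed $f$ and the pretrace kernel evaluated at a point, \eqref{eq:rapid-convergence-of-pretrace-formula} already gives $\sum_{\pi} C(\pi)^A \sum_\varphi |\overline{\varphi}(x)\,\pi(f)\varphi(x)| < \infty$ for every $A$; integrating in $x$ over the compact $[\PB^\times]$ and using the spectral expansion of the kernel $\sum_\varphi \overline{\varphi(x)}\pi(f)\varphi(y)$ produces the bound $\sum_{\pi} C(\pi)^A |\omega_\pi(f,\Psi)| \ll_{f,\Psi,A} 1$ after absorbing $\|\Psi\|_{L^\infty}$; alternatively one argues directly that $|\omega_\pi(f,\Psi)| \ll_{f,\Psi} C(\pi)^{-A}$ for every fixed $A$, exploiting that $\pi(f)$ is essentially a smoothing operator whose action on vectors of conductor $C(\pi)$ decays faster than any polynomial because $f$ is fixed and smooth while the representations become increasingly ramified/tempered-large.

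Combining the two: $|\omega_\pi(f,\Psi)|\,\|\Phi_\pi\|_{L^p} \ll_{f,\Psi,A} C(\pi)^{-A}\cdot C(\pi)^{O(1)} \ll C(\pi)^{-A_0}$ once $A$ is chosen large enough (with $A_0 > 3$ as in \eqref{eq:polynomial-growth-of-reps}), and then $\sum_{\pi \in A_0^S} |\omega_\pi(f,\Psi)|\,\|\Phi_\pi\|_{L^p} \le \sum_\pi C(\pi)^{-A_0} < \infty$. The main obstacle I anticipate is the $p=\infty$ sup-norm bound on $\Phi_\pi$: one needs it to be polynomial (not merely finite) in $C(\pi)$, which requires a genuine Sobolev/elliptic-regularity input on $[\PGL_2]$ adapted to cusp forms with prescribed local behaviour, rather than just the abstract completeness of the spectrum. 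Everything else — the decay of $\omega_\pi(f,\Psi)$ in the conductor and the polynomial-growth summability \eqref{eq:polynomial-growth-of-reps} — is standard and already packaged in the cited estimates \eqref{eq:rapid-convergence-of-pretrace-formula}, \eqref{eq:polynomial-growth-of-reps}, \eqref{eq:HL}.
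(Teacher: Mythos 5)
Your proposal is correct and takes essentially the same route as the paper: reduce to the polynomial bound $\|\Phi_\pi\|_{L^p}\ll C(\pi)^{O(1)}$, obtain the $p=2$ case from the lemma of \S\ref{sec:Phi-pi} together with \eqref{eq:HL}, bootstrap to $p=\infty$ by a Sobolev estimate, and pair with the superpolynomial decay of $\sum_\varphi|\overline{\varphi}\,\pi(f)\varphi|$ furnished by \eqref{eq:rapid-convergence-of-pretrace-formula}. One small remark: the paper handles the $p=\infty$ step more sharply than your sketch by invoking the specific Sobolev-norm axioms (S2a), (S3b) of \cite[\S2.4]{michel-2009} together with the estimate of \cite[\S3.2.5]{michel-2009} for $\mathcal{S}_d(\Phi_\pi)$ — exactly the ``genuine Sobolev/elliptic-regularity input'' you flag as the obstacle. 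Also, your heuristic that the decay of $|\omega_\pi(f,\Psi)|$ comes from $\pi(f)$ acting on ``increasingly ramified'' vectors is a bit off — the operator $\pi(f)$ actually fixes ramification at the place supporting $f$, and the superpolynomial decay is carried by the archimedean (or other bad-place) components of the conductor through the trace-class estimate \eqref{eq:rapid-convergence-of-pretrace-formula}; but since you also offer the direct route via that estimate, this does not affect the validity of the argument.
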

\begin{proof}
  By \eqref{eq:rapid-convergence-of-pretrace-formula}
  and
  \eqref{eq:polynomial-growth-of-reps},
  it suffices to show that
  $\| \Phi_\pi \|_{L^p([\PGL_2])} \ll C(\pi)^{O(1)}$.  The case
  $p=2$ follows from the lemma of \S\ref{sec:Phi-pi}
  and \eqref{eq:HL}.  The case $p=\infty$ reduces to the case
  $p=2$ by axioms (S2a) and (S3b) of \cite[\S2.4]{michel-2009},
  wherein the quantities $\mathcal{S}_d(\Phi_\pi)$ may be
  estimated using \cite[\S3.2.5]{michel-2009}.  A
  direct proof of this convergence also follows by a
  rearrangement of the arguments given below.
\end{proof}

\begin{lemma}\label{lem:fourier-coefficients-of-nontraditional-theta-lifts}
  $\Theta(\heartsuit f,\Psi)
  = \sum_{\pi \in A_0^S} \omega_\pi( f,\Psi)
  \Phi_\pi$.
\end{lemma}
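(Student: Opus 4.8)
The plan is to expand both sides against the Fourier-expansion/Whittaker-function description of automorphic forms on $[\PGL_2]$ and match the $\tau$-th Fourier coefficients. Since $\Theta(\heartsuit f,\Psi)$ and each $\Phi_\pi$ are cusp forms on $[\PGL_2]$, each is determined by its Whittaker function (\S\ref{sec:aut-forms-fourier-exp}); it therefore suffices to prove the identity of Whittaker functions
\[
W(\Theta(\heartsuit f,\Psi), y) = \sum_{\pi \in A_0^S} \omega_\pi(f,\Psi) \, W_\pi(y)
\quad (y \in \mathbb{A}^\times),
\]
with absolute convergence of the right-hand side guaranteed by Lemma \ref{lem:abs-conv-of-silly-sum} (case $p = \infty$, applied via the standard bound of a Whittaker function by Sobolev norms). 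First I would apply the Fourier expansion lemma of \S\ref{sec:four-expans} to the left-hand side, obtaining
\[
W(\Theta(\heartsuit f,\Psi), y)
= \int_{g \in [\PB^\times]} \Psi(g) \sum_{\gamma \in \PB^\times} |y| \, \heartsuit f\bigl(y\nr(\gamma)^{-1}, g^{-1}\gamma g\bigr).
\]
By the formula \eqref{eqn:formulas-for-heartsuit-f-global} for $\heartsuit f$ evaluated at such arguments, the summand equals $W_S(y_S) f\bigl((g^{-1}\gamma g)_S\bigr) \prod_{\mathfrak{p}\notin S} T_{y_\mathfrak{p}}\bigl((g^{-1}\gamma g)_\mathfrak{p}\bigr)$, which is supported on $\gamma \in \PB^\times$ (the indicator $1_{B_S^\times}$ combined with the $T_{y_\mathfrak{p}}$ already forces the similitude-torus condition), so the $\gamma$-sum is a genuine sum over $\PB^\times$.

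The key step is then to recognize the integral over $[\PB^\times]$ of $\Psi(g)$ against this $\gamma$-sum as the pretrace-formula pairing. Precisely, the function $h \mapsto W_S(y_S)\, \mathbf 1_{\text{similitude}}\, f(h_S) \prod_{\mathfrak p\notin S} T_{y_\mathfrak p}(h_\mathfrak p)$ on $\PB^\times_\mathbb{A}$ is, up to the fixed scalar $W_S(y_S)$ and the Hecke factors, a convolution kernel of the shape treated in \S\ref{sec:pretrace-formula}: with $\tilde f := f \otimes \bigl(\bigotimes_{\mathfrak p \notin S} T_{y_\mathfrak p}\bigr)$ one has $\sum_{\gamma} \tilde f(g^{-1}\gamma g) = \sum_\pi \sum_\varphi \overline{\varphi(g)}\,\pi(\tilde f)\varphi(g)$ by \eqref{eq:pretrace-formula-general}, where $\pi$ runs over subrepresentations unramified outside $S$ and $\varphi$ over an orthonormal basis of $K$-isotypic vectors in $\pi^J$. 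Integrating against $\Psi \in \mathcal{A}_0^S$ kills the contribution of the one-dimensional $\pi$ (these lie in the orthogonal complement of $\mathcal{A}_0$), leaving only $\pi \in A_0^S$. For such $\pi$, $\pi(\tilde f)$ acts on $\pi^J$ as $\pi(f_S)$ composed with the scalar $\prod_{\mathfrak p\notin S}\lambda_{\pi_\mathfrak p}(T_{y_\mathfrak p})$; after integrating $\Psi(g)\overline{\varphi(g)}$ over $[\PB^\times]$ and summing over the basis $\varphi \in \mathcal{B}(\pi \cap \mathcal{A}^S)$, one recovers exactly $\omega_\pi(f,\Psi) = \sum_\varphi \langle \varphi, \Psi\cdot\pi(f)\varphi\rangle$ from \S\ref{sec:omega-pi}. (Here one uses that $\Psi \in \mathcal{A}^S$ is $\prod_{\mathfrak p\notin S}K_\mathfrak p$-invariant, so the $J$-fixed basis suffices.)

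Assembling the pieces, the left-hand Whittaker function becomes $\sum_{\pi \in A_0^S} \omega_\pi(f,\Psi)\, W_S(y_S) \prod_{\mathfrak p\notin S}\lambda_{\pi_\mathfrak p}(T_{y_\mathfrak p})$. It remains to identify the Hecke eigenvalue $\lambda_{\pi_\mathfrak p}(T_{y_\mathfrak p})$ with $W^0_{\pi_\mathfrak p}(y_\mathfrak p)$, the local Whittaker value of the Jacquet--Langlands lift; this is precisely the identity \eqref{eqn:key-local-identity-relating-whittaker-and-hecke} of \S\ref{sec-2-4-3} (valid at the finite places $\mathfrak p\notin S$, where $B$ splits and $\pi_\mathfrak p$ is generic unramified by \eqref{eqn:generic-at-each-place-if-not-1-diml}). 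Then $W_S(y_S)\prod_{\mathfrak p\notin S}W^0_{\pi_\mathfrak p}(y_\mathfrak p) = W_\pi(y)$ by the definition of $\Phi_\pi$ in \S\ref{sec:Phi-pi}, and the desired Whittaker identity follows. The main obstacle is the bookkeeping in interchanging the sum over $\gamma$, the integral over $[\PB^\times]$, and the sum over $\pi$ (justified by the rapid convergence \eqref{eq:rapid-convergence-of-pretrace-formula} and Lemma \ref{lem:abs-conv-of-silly-sum}), and making sure the normalizations of the Hecke kernel $T_y$, the distinguished element $\phi^0$, and the map $\heartsuit$ line up — but all of these have been prepared in \S\ref{sec:hecke-kernels-local}, \S\ref{sec:dist-elem}, and \S\ref{sec:heartsuit-global}, so no genuinely new estimate is needed.
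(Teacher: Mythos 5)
Your proposal is correct and follows essentially the same route as the paper: reduce to Whittaker functions via \S\ref{sec:four-expans}, expand $\heartsuit f$ using \eqref{eqn:formulas-for-heartsuit-f-global}, recognize the $\gamma$-sum as the twisted pretrace formula of \S\ref{sec:pretrace-formula}, drop the one-dimensional $\pi$ via $\Psi \in \mathcal{A}_0^S$, and close with the local identity $\lambda_{\pi_\mathfrak{p}}(T_{y_\mathfrak{p}}) = W^0_{\pi_\mathfrak{p}}(y_\mathfrak{p})$ from \eqref{eqn:key-local-identity-relating-whittaker-and-hecke}. The only cosmetic difference is that the paper states the final reduction as a pointwise identity in $g$ before integrating against $\Psi$, whereas you integrate and sum first, but the logic and the inputs invoked are identical.
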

\begin{proof}
  Set $\Phi_1 := \Theta(\heartsuit f,\Psi)$
  and $\Phi_2 := \sum_{\pi \in A_0^S} \omega_\pi( f,\Psi)
  \Phi_\pi$; we must show that $\Phi_1 = \Phi_2$.
  Since $\Phi_1, \Phi_2$ are cuspidal,
  it will suffice to demonstrate the equality of their Whittaker functions
  $W_1, W_2 : \mathbb{A}^\times \rightarrow \mathbb{C}$ as
  defined in \S\ref{sec:aut-forms-fourier-exp}.
  By the lemma of \S\ref{sec:four-expans} 
  and \eqref{eqn:formulas-for-heartsuit-f-global},
  we have
  \[
  W_1(y)
  = \sum_{\gamma \in \PB^\times}
  \int_{g \in [\PB^\times]}
  \Psi(g)
  W_S(y_S)
  f(g_S^{-1} \gamma g_S)
  \prod_{\mathfrak{p} \notin S}
  T_{y_\mathfrak{p}}(g_\mathfrak{p}^{-1} \gamma g_\mathfrak{p}).
  \]
  The definition of $\Phi_\pi$
  implies (using Lemma \ref{lem:abs-conv-of-silly-sum}
  to justify the interchange of summation with
  the Fourier integral over the compact group $\mathbb{A}/F$) that
  \[
  W_2(y)
  = \sum_{\pi \in A_0^S} \omega_\pi( f,\Psi) W_S(y_S)
  \prod_{\mathfrak{p} \notin S}
  W_{\pi_\mathfrak{p}}^0(y_\mathfrak{p}).
  \]
  Since $\Psi \in \mathcal{A}_0^S$,
  we have $\omega_\pi(f,\Psi) = 0$
  for all $\pi \in A^S$ with $\pi \notin A_0^S$,
  so it suffices to
  establish for all $y \in \mathbb{A}^\times$, $g \in \PB^\times_{\mathbb{A}}$ 
  the pointwise identity
  \[
  \sum_{\gamma \in \PB^\times}
  f(g_S^{-1} \gamma g_S)
  \prod_{\mathfrak{p} \notin S}
  T_{y_\mathfrak{p}}(g_\mathfrak{p}^{-1} \gamma g_\mathfrak{p})
  =
  \sum_{\pi \in A^S}
  (\sum_{\varphi \in \mathcal{B}(\pi \cap \mathcal{A}^S)}
  \overline{\varphi}(g)
  \pi(f) \varphi(g))
  \prod_{\mathfrak{p} \notin S}
  W_{\pi_\mathfrak{p}}^0(y_\mathfrak{p}),
  \]
  which follows
  from the pretrace formula (\S\ref{sec:pretrace-formula})
  and the identity
  $W_{\pi_\mathfrak{p}}^0(y_\mathfrak{p})
  = \lambda_{\pi_\mathfrak{p}}(T_{y_\mathfrak{p}})$
  (see \eqref{eqn:key-local-identity-relating-whittaker-and-hecke}).
\end{proof}

\begin{remark*}
  Lemma
  \ref{lem:fourier-coefficients-of-nontraditional-theta-lifts}
  and its proof are in the spirit of arguments of
  Shimizu \cite[\S4]{MR0333081}, but we were unable to relate
  them precisely (e.g., by deducing one from the other).
\end{remark*}

\subsubsection{Some sesquilinear forms}
  Define
  $\mathcal{V}, \mathcal{M}, \mathcal{E} : \Omega \otimes \Omega \otimes \mathcal{A}_0
  \otimes \mathcal{A}_0 \rightarrow \mathbb{C}$
  by requiring that
  for
  $\phi_1, \phi_2 \in \Omega$
  satisfying $\phi_i[y] = \phi_i'[y] \otimes \phi_i''[y]$
  with
  $\phi_i'[y] \in \mathcal{S}(\mathbb{A})$ and $
  \phi_i''[y] \in \mathcal{S}(B_\mathbb{A}^0)$
  for all $y \in \mathbb{A}^\times$,
  one has
  with the abbreviations
  $\theta_i := \theta_{\psi^{\tau_i}}(\phi_i'[\tau_i y])$ and
  $h_i := \theta_{\psi^{\tau_i}}(\phi_i''[\tau_i y],\Psi_i)$
  that
  \[
  \mathcal{V}(\phi_1,\phi_2,\Psi_1,\Psi_2)
  := c_1^{-1}
  \int_{y \in \mathbb{A}^\times / F^\times \mathbb{A}^{\times 2}} |y|^2
  \frac
  {
    1
  }
  {
    2^2
  }
  \sum_{\tau_1,\tau_2 \in F^\times / F^{\times 2}}
  \langle
  \theta_1 h_1, \theta_2 h_2 \rangle,
  \]
  \[
  \mathcal{M}(\phi_1,\phi_2,\Psi_1,\Psi_2)
  := c_1^{-1}
  \int_{y \in \mathbb{A}^\times / F^\times \mathbb{A}^{\times 2}} |y|^2
  \frac
  {
    1
  }
  {
    2^2
  }
  \sum_{\tau_1,\tau_2 \in F^\times / F^{\times 2}}
  \langle \theta_1, \theta_2 \rangle
  \langle h_1, h_2 \rangle
  \]
  and $\mathcal{E} := \mathcal{V} - \mathcal{M}$,
  or equivalently,
  \begin{equation}
    \mathcal{E}(\phi_1,\phi_2,\cdot ,\cdot )
    := c_1^{-1}
    \int_{y \in \mathbb{A}^\times / F^\times \mathbb{A}^{\times 2}} |y|^2
    \frac
    {
      1
    }
    {
      2^2
    }
    \sum_{\tau_1,\tau_2 \in F^\times / F^{\times 2}}
    \mathcal{E}_{\tau_1,\tau_2}(\phi_1[\tau_1 y], \phi_2[\tau_2
    y],\cdot,\cdot),
  \end{equation}
  where
  $\mathcal{E}_{\tau_1,\tau_2} : \mathcal{S}(B_\mathbb{A}) \otimes
  \mathcal{S}(B_\mathbb{A})
  \otimes \mathcal{A}_0 \otimes \mathcal{A}_0 \rightarrow
  \mathbb{C}$
  is as in
  \S\ref{sec:main-estimate-for-equidistribution-pairs-theta}.
  The definitions makes sense
  for the same reasons as in \S\ref{sec:induction-omega}.
The identity
\begin{equation}\label{eqn:c1-times-error-equals-bla}
  \mathcal{V}(\phi_1,\phi_2,\Psi_1,\Psi_2)
  = c_1^{-1} \langle \Theta(\phi_1, \Psi_1), \Theta(\phi_2, \Psi_2) \rangle.
\end{equation}
follows
from
\S\ref{sec:unfold-inner-product-nontraditional-theta-over-sl2}
when $\phi$ is a pure tensor,
hence in general by linearity.

\subsubsection{The main identities}\label{sec:main-identity}


\begin{proposition}\label{prop:after-extracting-main-term}
  Let $f \in C_c^\infty(\PB^\times_S)$
  and $\Psi_1,\Psi_2 \in \mathcal{A}_0^S$.
  Then
  \begin{equation}\label{eq:V-equals-Vf}
    \mathcal{V}(\heartsuit f,\heartsuit f,\Psi_1,\Psi_2)
    = \mathcal{V}_f(\Psi_1,\Psi_2),
  \end{equation}
  \begin{equation}\label{eq:M-equals-Mf}
    \mathcal{M}(\heartsuit f,\heartsuit f,\Psi_1,\Psi_2)
    = \mathcal{M}_f(\Psi_1,\Psi_2),
  \end{equation}
  \begin{equation}\label{eq:Vf-equals-Mf-plus-E}
    \mathcal{V}_f(\Psi_1,\Psi_2)
    =
    \mathcal{M}_f(\Psi_1,\Psi_2)
    +
    \mathcal{E}(\heartsuit f, \heartsuit f,
    \Psi_1, \Psi_2).
  \end{equation}
\end{proposition}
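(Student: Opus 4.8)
The plan is to assemble ingredients already established: the three identities are a repackaging of \eqref{eqn:c1-times-error-equals-bla}, Lemma~\ref{lem:fourier-coefficients-of-nontraditional-theta-lifts} and Lemma~\ref{lem:main-term-eval-for-heartsuit}, together with the orthogonality and norm of the forms $\Phi_\pi$ (lemma of \S\ref{sec:Phi-pi}) and the constant identity \eqref{eqn:relation-c1-c2-c}. Since $\mathcal{E} := \mathcal{V} - \mathcal{M}$ by definition, the third identity \eqref{eq:Vf-equals-Mf-plus-E} is an immediate consequence of the first two, \eqref{eq:V-equals-Vf} and \eqref{eq:M-equals-Mf}; only those two require proof.

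For \eqref{eq:V-equals-Vf} I would begin from the identity $\mathcal{V}(\heartsuit f,\heartsuit f,\Psi_1,\Psi_2) = c_1^{-1}\langle \Theta(\heartsuit f,\Psi_1),\Theta(\heartsuit f,\Psi_2)\rangle_{\PGL_2}$ supplied by \eqref{eqn:c1-times-error-equals-bla}. By Lemma~\ref{lem:fourier-coefficients-of-nontraditional-theta-lifts} each factor equals $\sum_{\pi \in A_0^S}\omega_\pi(f,\Psi_i)\Phi_\pi$, the series converging in $L^2([\PGL_2])$ by the case $p=2$ of Lemma~\ref{lem:abs-conv-of-silly-sum}. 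The lemma of \S\ref{sec:Phi-pi} asserts that the $\Phi_\pi$ are pairwise orthogonal with $\|\Phi_\pi\|^2 = c_1 L^{(S)}(\ad\pi,1)$, so Parseval gives $\langle \Theta(\heartsuit f,\Psi_1),\Theta(\heartsuit f,\Psi_2)\rangle = c_1 \sum_{\pi \in A_0^S} L^{(S)}(\ad\pi,1)\,\omega_\pi(f,\Psi_1)\overline{\omega_\pi(f,\Psi_2)}$; multiplying by $c_1^{-1}$ recovers precisely the defining expression for $\mathcal{V}_f(\Psi_1,\Psi_2)$.

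For \eqref{eq:M-equals-Mf} I would first observe that $\mathcal{M} = c_1^{-1}\mathfrak{m}$ by construction, $\mathfrak{m}$ being the form of \S\ref{sec:induction-omega}, and then reduce by sesquilinearity to the case $\Psi_1 \in \pi_1$, $\Psi_2 \in \pi_2$ with $\pi_1,\pi_2 \in A_0^S$. When $\pi_1 \neq \pi_2$, part~(1) of the lemma of \S\ref{sec:induction-omega} gives $\mathfrak{m}(\heartsuit f,\heartsuit f,\Psi_1,\Psi_2) = 0$, matching the vanishing of $\mathcal{M}_f$ in the off-diagonal case. When $\pi_1 = \pi_2 =: \pi$, the vectors $\Psi_1,\Psi_2$ are $\prod_{\mathfrak{p}\notin S}K_\mathfrak{p}$-invariant (as elements of $\mathcal{A}_0^S$), so Lemma~\ref{lem:main-term-eval-for-heartsuit} applies and evaluates $\mathfrak{m}(\heartsuit f,\heartsuit f,\Psi_1,\Psi_2)$ as $c_2 L^{(S)}(\pi,\tfrac{1}{2})\int_{g\in\PB_S^\times}\langle \Ad(g)\mathfrak{S}f,\mathfrak{S}f\rangle_{L^2(\PB_S^\times)}\langle \pi(g)\Psi_1,\Psi_2\rangle_{\PB^\times}$; dividing by $c_1$ and using $c_1^{-1}c_2 = c_3$ from \eqref{eqn:relation-c1-c2-c} (with $c_3$ in the normalization \eqref{eq:defn-of-c-2} fixed for the proof) yields exactly $\mathcal{M}_f(\Psi_1,\Psi_2)$.

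The content of this proposition is thus organizational rather than conceptual; it serves as a clearinghouse collecting the outputs of the surrounding lemmas. The two places requiring a little care are the termwise manipulation of the infinite sum over $\pi \in A_0^S$, which is precisely what Lemma~\ref{lem:abs-conv-of-silly-sum} is designed to license, and the tracking of the normalizing scalars $c_1,c_2,c_3$, which is reduced to the already-recorded relation \eqref{eqn:relation-c1-c2-c}. I do not expect any genuine obstacle beyond keeping these straight.
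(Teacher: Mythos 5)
Your proposal follows the same route as the paper's proof: \eqref{eq:V-equals-Vf} via \eqref{eqn:c1-times-error-equals-bla}, Lemma~\ref{lem:fourier-coefficients-of-nontraditional-theta-lifts}, and the orthogonality/norm of $\Phi_\pi$ from the lemma of \S\ref{sec:Phi-pi}; \eqref{eq:M-equals-Mf} via the lemma of \S\ref{sec-3-6} together with $c_1^{-1}c_2 = c_3$; and \eqref{eq:Vf-equals-Mf-plus-E} by the definition $\mathcal{E} = \mathcal{V} - \mathcal{M}$. You are a bit more explicit than the paper about the $L^2$-convergence of the spectral expansion (invoking Lemma~\ref{lem:abs-conv-of-silly-sum}) and about checking the $K_\mathfrak{p}$-invariance hypothesis, but there is no substantive difference in approach.
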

\begin{proof}
\eqref{eq:V-equals-Vf}:
    By \eqref{eqn:c1-times-error-equals-bla},
    Lemma
    \ref{lem:fourier-coefficients-of-nontraditional-theta-lifts}
    of \S\ref{sec-4-5-2},
    and
    the
    lemma
    of \S\ref{sec:Phi-pi},
    \begin{align*}
      \mathcal{V}(\heartsuit f,\heartsuit f,\Psi_1,\Psi_2)
      &=
        c_1^{-1} \langle   \Theta(\heartsuit f, \Psi_1),   \Theta(\heartsuit f,
        \Psi_2) \rangle
      \\
      &=
        c_1^{-1} \sum_{\pi_1,\pi_2 \in A_0^S}
        \left\langle
        \omega_{\pi_1}(f, \Psi_1),
        \omega_{\pi_2}(f, \Psi_2)
        \right\rangle
      \\
      &=
        \sum_{\pi \in A_0^S}
        L^{(S)}(\ad \pi,1)
        \omega_{\pi}( f, \Psi_1),
        \overline{\omega_{\pi}( f, \Psi_2)}
      \\
      &=
        V_f(\Psi_1,\Psi_2).
    \end{align*}

    \eqref{eq:M-equals-Mf}:
    by the lemma of \S\ref{sec-3-6}
    and
   \eqref{eqn:relation-c1-c2-c}.

   \eqref{eq:Vf-equals-Mf-plus-E}:
    by
    \eqref{eq:V-equals-Vf},
    \eqref{eq:M-equals-Mf}
    and the definition of $\mathcal{E}$.
\end{proof}

\subsubsection{Completion of the proof}
\label{sec-4-5-5}
We now apply Proposition
\ref{prop:main-error-estimate-global-adelic-general} (see
\S\ref{sec-3-5-5}) and Proposition
\ref{prop:after-extracting-main-term} to prove Theorem
\ref{thm:main-estimate-general-variance}.  This final part of
the argument is of a purely technical nature and involves no
major new ideas.  Indeed, its main purpose is to recast the
content of those propositions
in terms of $\mathcal{S}(B_S)$
rather than the less ``user-friendly'' space $\Omega$.

By weak approximation,
we may choose a compact fundamental domain $Y \subset
\mathbb{A}^\times / \mathbb{A}^{\times 2}$
for
$\mathbb{A}^\times / F^{\times} \mathbb{A}^{\times 2}$ with the
property that $y_\mathfrak{p} = 1$ for all $y \in Y$ and
$\mathfrak{p} \in S$.
Choose a finite set $X \subseteq F^\times$ of
representatives for
the finite set
\begin{equation}\label{eq:elements-such-that-multiplying-by-something-in-Y-gives-squares-mod-units-outside-S}
  \{\tau \in F^\times / F^{\times 2} :
  \text{
    there exists }
  y \in Y
  \text{ so that }
  y_\mathfrak{p} \tau \in F_\mathfrak{p}^{\times 2}  \mathcal{O}_\mathfrak{p}^\times
  \text{ for all }
  \mathfrak{p} \notin S
  \}.
\end{equation}
For $y \in Y, \tau \in X$, let
$\diamondsuit^{\tau y} : \mathcal{S}(B_S) \hookrightarrow
\mathcal{S}(B_\mathbb{A})$
denote the map
$\diamondsuit^{y \tau} \Phi := \Phi \otimes
(\otimes_{\mathfrak{p} \notin S} \phi_\mathfrak{p}^0[\tau
y_\mathfrak{p} ])$,
where $\phi_{\mathfrak{p}}^0 \in \Omega_{\mathfrak{p}}$ denotes as usual
the distinguished element.
For $f \in C_c^\infty(\PB_S^\times)$, one then has
$\diamondsuit^{\tau y} \heartsuit^{\tau} f = \heartsuit f[\tau y]$.
Observe that
for $\mathfrak{p} \notin S$
and $t \in F_\mathfrak{p}^\times$,
one has
$\phi_\mathfrak{p}^0[t] = 0$ unless
$t \in F_\mathfrak{p}^{\times 2}
\mathcal{O}_\mathfrak{p}^\times$.
It follows
that for $\tau \in F^\times$ and  $y \in Y$,
one has $\heartsuit f[\tau y] = 0$
unless $\tau$
belongs to the set
\eqref{eq:elements-such-that-multiplying-by-something-in-Y-gives-squares-mod-units-outside-S},
hence that
\begin{equation}\label{eq:penultimate-identity-before-proving-main-general-thm}
  \mathcal{E}(\heartsuit f, \heartsuit f, \cdot, \cdot)
  =
  \frac{c_1^{-1}}{2^2}
  \int_{y \in Y}
  |y|^2
  \sum_{\tau_1,\tau_2 \in X}
  \mathcal{E}_{\tau_1,\tau_2}(\diamondsuit^{\tau_1 y}
  \heartsuit^{\tau_1} f,
  \diamondsuit^{\tau_2 y} \heartsuit^{\tau_2} f,
  \cdot,\cdot).
\end{equation}
Define
$\eps_{\tau_1,\tau_2} : \mathcal{S}(B_S) \otimes \mathcal{S}(B_S)
\otimes \mathcal{A}_0^S
\otimes \mathcal{A}_0^S \rightarrow \mathbb{C}$
by
\begin{equation}\label{eq:defn-of-the-eps-guys-yay}
  \eps_{\tau_1,\tau_2}(\Phi_1,\Phi_2,\Psi_1,\Psi_2)
  :=
  \frac{c_1^{-1}}{2^2}
  \int_{y \in Y}
  |y|^2
  \mathcal{E}_{\tau_1,\tau_2}(\diamondsuit^{\tau_1 y}
  \Phi_1,
  \diamondsuit^{\tau_2 y} \Phi_2,
  \Psi_1,\Psi_2).
\end{equation}
We verify the assertions
made in Theorem
\ref{thm:main-estimate-general-variance}:
\begin{enumerate}
\item The ``relevance''
  follows from
  \eqref{eq:penultimate-identity-before-proving-main-general-thm},
  \eqref{eq:defn-of-the-eps-guys-yay}
  and Proposition \ref{prop:after-extracting-main-term}.
\item Since
  $\mathfrak{S} \phi_\mathfrak{p}^0 = \phi_\mathfrak{p}^0$, one
  has
  $\mathfrak{S} \diamondsuit^{\tau_i y} = \diamondsuit^{\tau_i
    y} \mathfrak{S}$.
  For $g \in \PB^\times_S$ and $s \in \Mp_2(F_S)$, one has
  $\Ad(g) \diamondsuit^{\tau_i y} = \diamondsuit^{\tau_i y}
  \Ad(g)$
  and
  $\rho^{\tau_i}(s) \diamondsuit^{\tau_i y} =
  \diamondsuit^{\tau_i y} \rho^{\tau_i}(s)$.
  Thus the ``$\O_1(F)$-invariance,''
  ``$\SO(B_S^0)$-invariance'' and ``metaplectic
  invariance'' follow from \S\ref{sec:equivariance-summary-for-E-tau-tau}.
\item The ``main estimate''
  is the content of Proposition
  \ref{prop:main-error-estimate-global-adelic-general}.
\end{enumerate}

\subsection{Classicalization}
\label{sec-4-4}

We begin discussing
how to relate
the setting of
Theorem
\ref{thm:main-estimate-general-variance}
to that of
Theorem \ref{sec-1}.
We complete this discussion
in \S\ref{sec:deduction-main-thm-microlocal}.



\subsubsection{Specialization to a single place\label{sec:general-estimates-specialized-single-place}}
\label{sec-4-4-1}
We specialize the definitions of
\S\ref{sec:main-general-estimate-key-defns} to the case that
ramification is concentrated at a single place $\mathfrak{q}$ of
$F$, finite or infinite.  This is the case required for the
proof of Theorem \ref{thm:main-result-for-microlocal-stuff}.

Assume that $S$ is the set of places $\mathfrak{p}$ for which
either
\begin{itemize}
\item $\mathfrak{p}$ is
  infinite,
\item  $\mathfrak{p}$ is a finite place
  at which $B$ ramifies, or
\item $\mathfrak{p} = \mathfrak{q}$. 
\end{itemize}
Assume that
for each $\mathfrak{p} \notin S - \{\mathfrak{q}\}$,
the completion
$B_\mathfrak{q}$ is non-split,
or equivalently,
that $\PB^\times_\mathfrak{q}$ is compact.
There are the following possibilities:
\begin{enumerate}
\item $\mathfrak{q}$ is real, in which case
  $F$ is totally real and $B$ ramifies at every infinite place other than $\mathfrak{q}$.
\item $\mathfrak{q}$ is complex, in which case $F$ is real and $B$ ramifies at every infinite place other than $\mathfrak{q}$.
\item $\mathfrak{q}$ is finite, in which case $F$ is totally
  real and $B$ is totally definite.
\end{enumerate}
For each place $\mathfrak{p}$, define the
compact open subgroup
$J_\mathfrak{p} \leq \PB^\times_\mathfrak{p}$ as in \S\ref{sec-2-4}  by
taking for $J_\mathfrak{p}$ the image of $R_\mathfrak{p}^\times$
if $\mathfrak{p}$ is finite and taking
$J_\mathfrak{p} := \PB^\times_\mathfrak{p}$ if $\mathfrak{p}$ is
infinite.  Set
$J := \prod_{\mathfrak{p} \neq \mathfrak{q}} J_\mathfrak{p}$.
In addition to the notation of \S\ref{sec-4-1}, we now introduce a
superscripted $J$, as in $\mathcal{A}^J, \mathcal{A}_0^J, \pi^J$
to denote the $J$-fixed subspace.
We denote by $\mathcal{A}^J_+ \subseteq \mathcal{A}^J$,
$\mathcal{A}^J_{0+} \subseteq \mathcal{A}_0^J$ the ``even''
subspaces consisting of $\varphi$ that are
$\PB^\times_{\mathfrak{p}}$-invariant for all
$\mathfrak{p} \in S - \{\mathfrak{q}\}$.
Thus, for instance,
$\mathcal{A}_{0+}^J
\subseteq \mathcal{A}_0^J
\subseteq \mathcal{A}_0^S
\subseteq \mathcal{A}_0 \subseteq \mathcal{A}$.
We denote by $A_0, A^J, A_0^J, A^J_+, A^J_{0+}$
the set of all $\pi \in A$ having nonzero
intersection with the space
having the corresponding scripted notation.

Set $G := \PB^\times_\mathfrak{q}$, and let
$f \in C_c^\infty(G)$.   For
$\mathfrak{p} \in S - \{q\}$, set
$e_{J_\mathfrak{p}} :=
\vol(J_\mathfrak{p})^{-1} 1_{J_\mathfrak{p}} \in
C_c^\infty(\PB^\times_\mathfrak{p})$.  Define
$\tilde{f} \in
C_c^\infty(\PB^\times_S)$
by the formula
\[
\tilde{f} (g)
:=
f(g_\mathfrak{q})
\prod_{\mathfrak{p} \in S - \{\mathfrak{q} \}}
e_{J_\mathfrak{p}}(g_\mathfrak{p}).
\]
For $\pi \in A_0^J$
and $\Psi \in \mathcal{A}_0^J$,
set
$\omega_\pi(f,\Psi) :=
\sum_{\varphi \in \mathcal{B}(\pi \cap \mathcal{A}^J)}
\langle \varphi, \Psi \cdot \pi(f) \varphi  \rangle$.
Then
$\omega_\pi(\tilde{f},\Psi) = \omega_\pi(f,\Psi)$
(see \S\ref{sec:omega-pi}).
Since $\dim(\pi_\mathfrak{p}) = 1$ for all $\mathfrak{p} \in S -
\{\mathfrak{q} \}$,
one has
for $\Psi \in \pi' \in \mathcal{A}_0^J$
that
$\omega_\pi(f,\Psi) = 0$
unless $\pi ' \in \mathcal{A}_{0+}^J$.

Let
$V_{f}, M_{f} : \mathcal{A}_{0+}^J
\otimes \mathcal{A}_{0+}^J \rightarrow \mathbb{C}$ denote the
sesquilinear forms obtained by restricting the forms
$\mathcal{V}_{\tilde{f}}, \mathcal{M}_{\tilde{f}} : \mathcal{A}_0^S \otimes
\mathcal{A}_0^S \rightarrow \mathbb{C}$.
Then
\begin{equation}
  V_{f}(\Psi_1,\Psi_2)
  =
  \sum_{\pi \in A_0^J}
  L^{(S)}(\ad \pi,1)
  \omega_\pi(f,\Psi_1)
  \overline{\omega_\pi(f,\Psi_2)}.
\end{equation}
By the observation that
$\mathfrak{S} e_{J_\mathfrak{p} } = e_{J_\mathfrak{p}}$ for
$\mathfrak{p} \in S - \{\mathfrak{q}\}$ and the local
calculations
\eqref{eqn:local-rallis-ipf-integral-nonsplit-finite}
and \eqref{eqn:local-rallis-ipf-integral-nonsplit-real}, we see for
$\Psi_1 \in \pi_1 \in A_{0+}^J$ and
$\Psi_2 \in \pi_2 \in A_{0+}^J$ that $M_{f}(\Psi_1,\Psi_2) = 0$
unless $\pi_1 = \pi_2 =: \pi$, in which case
\begin{equation}
  M_{f}(\Psi_1,\Psi_2)
  =
  c_4
  L^{(S)}(\pi,\tfrac{1}{2})
  \int_{g \in G}
  \langle \Ad(g) \mathfrak{S} f, \mathfrak{S} f \rangle_{L^2(G)}
  \langle \pi(g) \Psi_1, \Psi_2 \rangle_{\PB^\times}
\end{equation}
where
\begin{equation}\label{eq:defn-of-c4}
  c_4 := 2^t
  \zeta_F^{(S)}(2) \vol([\PB^\times])^{-1}.
\end{equation}
with
$t$ 
the number of finite primes $\mathfrak{p} \in S - \{\mathfrak{q}\}$.

\subsubsection{Strong approximation\label{sec:strong-approx}}
\label{sec-4-4-2}
Retaining the notation of \S\ref{sec:general-estimates-specialized-single-place},
we record here how the quotient
$[\PB^\times]/J$
unadelizes
under some assumptions.
Recall that $G := \PB^\times_{\mathfrak{q}}$.
Let $\Gamma \leq G$
denote the image of $\PB^\times \cap J$
under the inclusion $\PB^\times \hookrightarrow G$.
Then $\Gamma$ is a discrete cocompact
subgroup of $G$,
and the natural map
\begin{equation}\label{eq:map-defining-adelic-orbit}
  \Gamma \backslash G
  \xrightarrow{\iota}
  [\PB^\times]/J
\end{equation}
is injective.
\begin{lemma*}\label{lem:consequence-of-strong-approx}
  Suppose that $F$ has odd narrow class number and either that
  \begin{enumerate}
  \item  $B_\mathfrak{q}$ is split,
    or that
  \item  $\mathfrak{q}$ is infinite
    and $B$ has class number one.
  \end{enumerate}
  Then $\iota$ is bijective.
\end{lemma*}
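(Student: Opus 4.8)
The plan is to deduce the bijectivity of $\iota$ from an adelic product decomposition, which in the noncompact case $(1)$ will follow from strong approximation for the norm-one group $B^{1}:=\{b\in B:\nr(b)=1\}$ together with a reduced-norm normalisation controlled by the odd narrow class number, and in the compact case $(2)$ will follow directly from the class-number-one hypothesis.

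Since $\iota$ is already injective, it suffices to prove surjectivity, which amounts to the identity
\begin{equation}\label{eq:strong-approx-goal-sketch}
  \PB^\times_{\mathbb{A}}=\PB^\times\cdot(G\times J),
\end{equation}
where $G\times J\le\PB^\times_{\mathbb{A}}$ denotes the subgroup equal to $G=\PB^\times_{\mathfrak{q}}$ in the $\mathfrak{q}$-component and to $J_{\mathfrak{p}}$ in each component $\mathfrak{p}\neq\mathfrak{q}$. Lifting along $B^\times\to\PB^\times$ and writing $\tilde J_{\mathfrak{p}}:=R_{\mathfrak{p}}^\times$ for finite $\mathfrak{p}\neq\mathfrak{q}$, $\tilde J_{\mathfrak{p}}:=B_{\mathfrak{p}}^\times$ for infinite $\mathfrak{p}\neq\mathfrak{q}$, and $\tilde J:=\prod_{\mathfrak{p}\neq\mathfrak{q}}\tilde J_{\mathfrak{p}}$, it is enough to show $B^\times_{\mathbb{A}}=B^\times\cdot\mathbb{A}^\times\cdot(B_{\mathfrak{q}}^\times\times\tilde J)$, where $\mathbb{A}^\times$ is the centre. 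In case $(2)$ this is immediate: the class-number-one hypothesis gives $B^\times_{\mathbb{A}}=B^\times\cdot(\prod_{\mathfrak{p}\ \mathrm{finite}}R_{\mathfrak{p}}^\times\times B^\times_{\infty})$, and the second factor lies in $B_{\mathfrak{q}}^\times\times\tilde J$ since its finite components lie in the $\tilde J_{\mathfrak{p}}$, its infinite components away from $\mathfrak{q}$ lie in $\tilde J_{\mathfrak{p}}=B^\times_{\mathfrak{p}}$, and its $\mathfrak{q}$-component lies in $B^\times_{\mathfrak{q}}=G$.

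For case $(1)$ (and for case $(2)$ when $B_{\mathfrak{q}}$ is split), given $x\in B^\times_{\mathbb{A}}$ I would first normalise $\nr(x)\in\mathbb{A}^\times$. Left multiplication by $B^\times$ scales $\nr(x)$ by $\nr(B^\times)$, which by the Hasse--Schilling--Maass norm theorem is $F^\times$ or $F^\times_{>0}$; central multiplication scales by $\mathbb{A}^{\times 2}$; right multiplication by $B_{\mathfrak{q}}^\times\times\tilde J$ scales by $F_{\mathfrak{q}}^\times$ at $\mathfrak{q}$, by $\mathcal{O}_{\mathfrak{p}}^\times=\nr(R_{\mathfrak{p}}^\times)$ at finite $\mathfrak{p}\neq\mathfrak{q}$, and by $\nr(B^\times_{\mathfrak{p}})$ at infinite $\mathfrak{p}\neq\mathfrak{q}$. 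The obstruction to arranging $\nr(x)=1$ is the class of $\nr(x)$ in $\mathbb{A}^\times/H$, with $H$ the subgroup generated by the above. Crucially, $\nr(x)$ is automatically totally positive at every real place where $B$ ramifies --- by hypothesis, at every real place $\neq\mathfrak{q}$ --- so the only residual obstruction comes from $\mathfrak{q}$ and the finite places, where it is a quotient of the narrow class group $\Cl^+(F)$ killed by $2$, hence trivial since $|\Cl^+(F)|$ is odd. After this normalisation $x\in B^1_{\mathbb{A}}$, and since $B^1$ is simply connected with $B^1_{\mathfrak{q}}\cong\SL_2(F_{\mathfrak{q}})$ noncompact, strong approximation gives $B^1_{\mathbb{A}}=B^1\cdot\bigl(B^1_{\mathfrak{q}}\times\prod_{\mathfrak{p}\neq\mathfrak{q}}U_{\mathfrak{p}}\bigr)$ for $U_{\mathfrak{p}}:=R_{\mathfrak{p}}^\times\cap B^1_{\mathfrak{p}}$ (finite $\mathfrak{p}$) and $U_{\mathfrak{p}}:=B^1_{\mathfrak{p}}$ (infinite $\mathfrak{p}\neq\mathfrak{q}$); as $B^1\subseteq B^\times$ and $U_{\mathfrak{p}}\subseteq\tilde J_{\mathfrak{p}}$, this gives the required decomposition.

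The main obstacle is the reduced-norm bookkeeping in case $(1)$: one must pin down the obstruction group exactly, tracking the different local images of $\nr$ at $\mathfrak{q}$ (finite, real, or complex), at the ramified real places $\neq\mathfrak{q}$, and at the finite places, and check that the automatic positivity of reduced norms at ramified real places reduces it to a quotient of $\Cl^+(F)/2\,\Cl^+(F)=0$. The remaining ingredients --- the reduction to \eqref{eq:strong-approx-goal-sketch}, strong approximation for $B^1$, and the class-number-one shortcut in case $(2)$ --- are routine.
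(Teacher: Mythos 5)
Your argument is correct and follows essentially the same route as the paper: reduce to an adelic product decomposition, compute the reduced-norm obstruction using Hasse--Schilling--Maass and the local norm images, kill it with the odd narrow class number (this is exactly the paper's identity \eqref{eq:class-number-odd-definite-case}), and then invoke strong approximation for the norm-one group $B^1$ in case (1), or the definition of the class number of $B$ in case (2). The only cosmetic difference is that you lift to $B^\times$ so the norm lands in $\mathbb{A}^\times$, whereas the paper works directly on $\PB^\times$ with $\nr$ valued in $\mathbb{A}_+^\times/\mathbb{A}^{\times 2}$.
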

\begin{proof}
  The class number assumption on $F$
  implies that
  \begin{equation}\label{eq:class-number-odd-definite-case}
    F_+^\times
    F_{\infty+}^\times 
    F_{\mathfrak{q}}^\times 
    \prod_{\mathfrak{p} < \infty}
    \mathcal{O}_\mathfrak{p}^\times
    \mathbb{A}^{\times 2}
    =
    \mathbb{A}_+^\times :=
    F_{\infty+}^\times
    \mathbb{A}_f^\times
  \end{equation}
  where $F_{\infty+}^\times$ denotes the connected component of
  the unit group of
  $F_{\infty} := \prod_{\mathfrak{p}|\infty} F_\mathfrak{p}$,
  $\mathbb{A}_f^\times := \prod_{\mathfrak{p} < \infty}
  F_\mathfrak{p}^\times$ denotes the group of finite ideles,
  and $F_+^\times := F^\times \cap F_{\infty+}^\times$.
  Set $J' := \PB^\times_{\mathfrak{p}_0} J$; it is open in $\PB^\times_{\mathbb{A}}$.
  Under the reduced norm map $\nr : \PB^\times_\mathbb{A}
  \rightarrow \mathbb{A}^\times_+/\mathbb{A}^{\times 2}$,
  we have
  $\nr(\PB^\times) = F_+^\times \mathbb{A}^{\times
    2}/\mathbb{A}^{\times 2}$
  and
  $\nr(J') = F_{\infty+}^\times F_{\mathfrak{p}_0}^\times
  \prod_{\mathfrak{p} < \infty}
  \mathcal{O}_{\mathfrak{p}}^\times
  \mathbb{A}^{\times 2} /   \mathbb{A}^{\times 2}$;
  from our assumption \eqref{eq:class-number-odd-definite-case},
  it follows
  that
  \begin{equation}\label{eq:norm-relation-relevant-for-strong-approx}
    \nr(\PB^\times) \nr(J') = \nr(\PB_\mathbb{A}^\times).
  \end{equation}
  If $B_\mathfrak{q}$ splits,
  then $\PB^\times_\mathfrak{q}$ is non-compact,
  so the strong approximation theorem
  and \eqref{eq:norm-relation-relevant-for-strong-approx}
  imply
  that $\PB^\times J' = \PB_{\mathbb{A}}^\times$,
  hence that $\iota$ is surjective.
  In the remaining case, the surjectivity of $\iota$
  holds by the definition of ``the class number of $B$.''
\end{proof}
\begin{remark*}\label{rmk:classical-volume-computation}
  Suppose the conclusion of the lemma holds and that
  $\mathfrak{q}$ is finite.  It is then natural to ask for the
  volume of $\Gamma \backslash G$ with respect to a Haar $\nu$
  on $\Gamma \backslash G$ obtained as the quotient of some given Haar $\mu$ on $G$.
  Using that $\vol([\PB^\times]) = 2$ with respect to Tamagawa
  measure, 
  one can show
  (as in \S\ref{sec:mean-statistics} below)
  that
  \[
  \nu(\Gamma \backslash G) = 2 \frac{
    \zeta_F(2)
    \Delta_B
    \Delta_F^{3/2}
  }{ (4 \pi^2)^{[F:\mathbb{Q}]} \prod_{\mathfrak{p} \in
        \ram_f(B)} \zeta_\mathfrak{p}(1)} \mu(K_{\mathfrak{q}}),\] where $K_{\mathfrak{q}} \leq G$
  denotes a maximal compact subgroup, $\Delta_F$
  the absolute discriminant, $\Delta_B$ the absolute
  reduced discriminant,
  and
  $\ram_f(B)$ the set of finite places
  at which $B$ ramifies.
\end{remark*}
\subsubsection{Hecke operators\label{sec:hecke-ops}}
\label{sec-4-4-3}
For each finite place $\mathfrak{p} \neq \mathfrak{q}$, let
$\mathcal{H}_{\mathfrak{p}}$ denote the Hecke algebra consisting
of compactly-supported bi-$J_{\mathfrak{p}}$-invariant
distributions on $\PB^\times_{\mathfrak{p}}$.  
It acts on $\mathcal{A}^J$.
It admits a
standard generator $T_{\mathfrak{p}}$ given by convolution
against
$|\varpi|^{-1} T_{\varpi} \in C_c^\infty(J_\mathfrak{p}^\times
\backslash \PB_\mathfrak{p}^\times / J_\mathfrak{p}^\times)$,
where $\varpi$ is a generator of the maximal ideal in
$\mathcal{O}_\mathfrak{p}$ and $T_{\varpi}$ is the normalized
Hecke kernel defined in \S\ref{sec:hecke-kernels-local}.
If
$B$ ramifies at $\mathfrak{p}$,
then
$T_{\mathfrak{p}}$ is an involution; otherwise, it has degree
$|\mathfrak{p}| + 1$,
where $|\mathfrak{p}|$ denotes the absolute norm.
\begin{lemma*}
  The following are equivalent for $\varphi \in \mathcal{A}^J$.
  \begin{enumerate}
  \item $\varphi$ generates an irreducible
    representation of $G$
    and is an eigenfunction of $T_\mathfrak{p}$ for each finite
    $\mathfrak{p} \neq \mathfrak{q}$.
  \item $\varphi$ generates an irreducible representation $\pi$
    of $\PB^\times_\mathbb{A}$.
  \end{enumerate}
  Assume that these conditions hold.
  Then $\pi \in A^J$,
  and the following are equivalent:
  \begin{enumerate}
  \item $T_\mathfrak{p} \varphi = \varphi$ for each finite prime
    $\mathfrak{p} \neq \mathfrak{q}$ at which $B$ ramifies.
  \item $\pi \in A^J_+$.
  \end{enumerate}
\end{lemma*}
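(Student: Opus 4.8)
The plan is to reduce the statement to local representation theory at the places $\mathfrak{p}\neq\mathfrak{q}$, together with ordinary and strong multiplicity one for $\PB^\times$.

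First I would record the local dictionary at each $\mathfrak{p}\neq\mathfrak{q}$. If $\mathfrak{p}$ is finite and $B$ splits there, then $J_\mathfrak{p}$ is a maximal compact subgroup and the Satake isomorphism identifies $\mathcal{H}_\mathfrak{p}$ with the polynomial ring $\mathbb{C}[T_\mathfrak{p}]$, so that a vector is a $T_\mathfrak{p}$-eigenvector iff it is an $\mathcal{H}_\mathfrak{p}$-eigenvector; an irreducible $\pi_\mathfrak{p}$ with $\pi_\mathfrak{p}^{J_\mathfrak{p}}\neq 0$ then has $\dim\pi_\mathfrak{p}^{J_\mathfrak{p}}=1$ and is determined up to isomorphism by the eigenvalue of $T_\mathfrak{p}$ on that line. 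If $\mathfrak{p}$ is finite and $B$ ramifies there, then $\PB^\times_\mathfrak{p}$ is compact and $J_\mathfrak{p}$ is normal of index $2$, so $\mathcal{H}_\mathfrak{p}\cong\mathbb{C}[\PB^\times_\mathfrak{p}/J_\mathfrak{p}]$ is generated by the involution $T_\mathfrak{p}$, and an irreducible $\pi_\mathfrak{p}$ with $\pi_\mathfrak{p}^{J_\mathfrak{p}}\neq 0$ is one of the two characters of $\PB^\times_\mathfrak{p}/J_\mathfrak{p}$, namely the trivial one (on which $T_\mathfrak{p}$ acts by $+1$) or the sign character (on which it acts by $-1$); again $\dim\pi_\mathfrak{p}^{J_\mathfrak{p}}=1$. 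If $\mathfrak{p}$ is infinite, then $B$ ramifies there, $\PB^\times_\mathfrak{p}$ is compact and connected, and $\pi_\mathfrak{p}^{J_\mathfrak{p}}=\pi_\mathfrak{p}^{\PB^\times_\mathfrak{p}}\neq 0$ forces $\pi_\mathfrak{p}$ to be trivial. In particular $\dim\pi_\mathfrak{p}^{J_\mathfrak{p}}\le 1$ for all $\mathfrak{p}\neq\mathfrak{q}$, so for $\pi\in A^J$ one has $\pi^J\cong\pi_\mathfrak{q}$ as $G$-representations, and each $\mathcal{H}_\mathfrak{p}$ ($\mathfrak{p}\neq\mathfrak{q}$ finite) acts on $\pi^J$ through a character determining $\pi_\mathfrak{p}$.

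For the first equivalence, ``(2)$\Rightarrow$(1)'' is immediate: writing $\pi=\otimes\pi_\mathfrak{p}$, the nonzero $J$-fixed vector $\varphi$ has nonzero $J_\mathfrak{p}$-fixed local components, which therefore span the lines $\pi_\mathfrak{p}^{J_\mathfrak{p}}$, so $\varphi$ is a pure tensor whose $G$-orbit spans $\pi^J\cong\pi_\mathfrak{q}$; thus $\varphi$ generates an irreducible $G$-representation and is a $T_\mathfrak{p}$-eigenvector for every $\mathfrak{p}\neq\mathfrak{q}$. For ``(1)$\Rightarrow$(2)'', let $V\subseteq\mathcal{A}$ be the $\PB^\times_\mathbb{A}$-subrepresentation generated by $\varphi$ and decompose $V=\bigoplus_{\pi\in A}m_\pi\pi$. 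Since the isotypic projections commute with the $J$-action and with every $\mathcal{H}_\mathfrak{p}$, the vector $\varphi$ lies in the sum of those $\pi$-isotypic components whose $\mathcal{H}_\mathfrak{p}$-eigencharacter on $\pi^J$ matches that of $\varphi$ for every finite $\mathfrak{p}\neq\mathfrak{q}$; by the local dictionary this pins down $\pi_\mathfrak{p}$ at all $\mathfrak{p}\neq\mathfrak{q}$ (the finite ones from the Hecke eigenvalues, the infinite ones being trivial), and strong multiplicity one for $\PB^\times$ --- deduced from strong multiplicity one for $\PGL_2$ and the injectivity of the Jacquet--Langlands correspondence, and immediate for one-dimensional $\pi$ --- shows that at most one such $\pi$ occurs. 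Hence $\varphi$ lies in a single isotypic component; ordinary multiplicity one forces $m_\pi=1$ and $\varphi\in\pi$, whence $V=\pi$ is irreducible.

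For the last assertion, under the assumptions let $\pi$ be the representation generated by $\varphi$; then $\pi\in A^J$ since $\varphi\in\pi^J\setminus\{0\}$. By the ramified local computation above, $T_\mathfrak{p}\varphi=\varphi$ for every finite $\mathfrak{p}\neq\mathfrak{q}$ at which $B$ ramifies iff $\pi_\mathfrak{p}$ is trivial for all such $\mathfrak{p}$; since $\pi_\mathfrak{p}$ is automatically trivial at the infinite places of $S-\{\mathfrak{q}\}$ and the $\mathfrak{p}$-component of $\varphi$ spans $\pi_\mathfrak{p}$ at the finite ramified ones, this is equivalent to $\varphi$ being $\PB^\times_\mathfrak{p}$-invariant for all $\mathfrak{p}\in S-\{\mathfrak{q}\}$, i.e.\ to $\varphi\in\mathcal{A}^J_+$, i.e.\ to $\pi\in A^J_+$. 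I expect the only slightly delicate point to be the bookkeeping at the ramified finite places --- that $J_\mathfrak{p}$ is normal of index $2$ in the compact group $\PB^\times_\mathfrak{p}$ and that the relevant local representations are exactly the two characters of the quotient, separated by the sign of $T_\mathfrak{p}$ --- while the rest is the standard Flath-type tensor decomposition combined with multiplicity one.
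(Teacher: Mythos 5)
Your proof is correct. The paper states this lemma without proof, treating it as a standard consequence of the local analysis at places $\mathfrak{p}\neq\mathfrak{q}$ (one-dimensionality of $\pi_\mathfrak{p}^{J_\mathfrak{p}}$ in the unramified, ramified, and archimedean cases, with the $T_\mathfrak{p}$-eigenvalue determining $\pi_\mathfrak{p}$) combined with strong multiplicity one for $\PB^\times$, which is precisely what you carry out. Two small remarks: (i) your argument for ``(1)$\Rightarrow$(2)'' in fact uses only the Hecke-eigenvector half of condition (1), so the $G$-irreducibility hypothesis is redundant given $\varphi\in\mathcal{A}^J$ --- that is harmless and worth noting; (ii) at an archimedean place, the conclusion that a nonzero $\PB^\times_\mathfrak{p}$-fixed vector in an irreducible representation forces triviality needs only irreducibility, not compactness or connectedness.
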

In view of the lemma, the following definition faithfully
extends and makes precise Definition
\ref{defn:eigenfunctions-intro}
of \S\ref{sec:setting-overview}:
\begin{definition*}\label{defn:eigenfunction-general}
  An \emph{eigenfunction} is a nonzero element
  $\varphi \in \mathcal{A}^J$ that belongs to some
  $\pi \in A^J$; it is \emph{even} if $\pi \in A^J_+$ and
  \emph{strongly of mean zero}
  if $\pi \in A^J_{0}$ (and both if $\pi \in A^J_{0+}$).
\end{definition*}

\newpage
\part{Application to microlocal lifts}
Our aim is now to prove Theorem
\ref{thm:main-result-for-microlocal-stuff} by application of
Theorem \ref{thm:main-estimate-general-variance}.  We retain the
general notation of \S\ref{sec:general-notation}.
The following additional notation
is in effect for
\S\ref{sec:appl-micro-local-prelims}--\S\ref{sec:local-estimates-error}:
\begin{itemize}
\item $k$: a non-archimedean local field
of characteristic $\neq 2$.
\item $\mathfrak{o}, \mathfrak{q}, q, |.| = |.|_k, \ord = \ord_k$:
as in \S\ref{sec-2-1-1}.  
\item $B:= M_2(k)$: the algebra of $2 \times 2$ matrices,
  so that $\nr = \det : B \rightarrow k$.
\item $G := B^\times / k^\times = \PGL_2(k)$.  
\end{itemize}
Equip $G$ with any Haar
measure; we choose it more precisely when we must.

\section{Preliminaries\label{sec:appl-micro-local-prelims}}
\label{sec-5}

\subsection{Conductors}
\label{sec-5-1}
Let $c(\omega)$ denote
the (log-)conductor of a character
$\omega : \mathfrak{o}^\times \rightarrow \mathbb{C}^\times$,
that is, the smallest nonnegative integer $n$ for which $\omega$
has trivial restriction to
$\mathfrak{o}^\times \cap (1 + \mathfrak{q}^n)$.
Note that $c(\omega) = 0$ precisely when $\omega = 1$.
Let $c(\chi) := c(\chi|_{\mathfrak{o}^\times})$
denote the conductor of
a character
$\chi : k^\times \rightarrow \mathbb{C}^\times$.


\subsection{Principle series representations}
\label{sec-5-2}
For a character $\chi : k^\times \rightarrow \mathbb{C}^\times$,
we denote by $\chi \boxplus \chi^{-1}$ the corresponding induced
representation; it consists of smooth functions
$v : G \rightarrow \mathbb{C}$ satisfying
$v(n(x) a(y) g) = |y|^{1/2} \chi(y) v(g)$ for all
$x,y,g \in k,k^\times,G$,
with the group $G$ acting by right translation.
If $c(\chi^2) \neq 0$,
then $\chi \boxplus \chi^{-1}$ is irreducible and generic.

\subsection{Microlocal lifts}
\label{sec-5-3}
We recall the specialization to trivial central characters of
the main definition from \cite{nelson-padic-que}
and record some basic properties.

\subsubsection{Definition}
\label{sec-5-3-1}
For each integer $N > \ord_k(2)$, fix a decomposition
$N - \ord_k(2) = N_1 + N_2$ into nonnegative integers
$N_1, N_2$ that tend to $\infty$ with $N$.
For $N$ large enough,
we assume for the sake of consistency
with
\S\ref{sec:setting-overview}
that
$N_1 = \lfloor N/2 \rfloor - \ord_k(2), N_2 = \lceil N/2 \rceil$.

Let $\mathcal{A}$ be a representation
  of $G$.
  We say that a  vector $\varphi \in \mathcal{A}$
  is a \emph{microlocal lift}
  if
  \begin{enumerate}
  \item[(i)] $\varphi \neq 0$,
  \item[(ii)] $\varphi$ generates an irreducible subrepresentation
    $\pi \subseteq A$, and
  \item[(iii)]
    there is a character $\omega$ of $\mathfrak{o}^\times$
    whose square $\omega^2$ is nontrivial
    so that with  $N := c(\omega)$ and for all
    \begin{equation}\label{eq:microlocal-lift-subgroup-defn}
      g = \begin{pmatrix}
        a & b \\
        c & d
      \end{pmatrix}
      \in
      \GL_2(\mathfrak{o}) \cap \begin{pmatrix}
        \mathfrak{o} & \mathfrak{q}^{N_1} \\
        \mathfrak{q}^{N_2} & \mathfrak{o}
      \end{pmatrix},
    \end{equation}
    one has
    $g \varphi = \omega(a^2/\nr(g)) \varphi$.
  \end{enumerate}
  We call $\omega$ the \emph{orientation} of $\varphi$; it is
  determined by $\varphi$.
  
\begin{remark}
 We refer again to \cite[Thm 25, Rmk 26]{nelson-padic-que}
  for a discussion of the sense in which the microlocal lifts
  considered here are actually ``lifts.''
\end{remark}
\begin{remark}
The quantity denoted $N$ in \cite{nelson-padic-que} has been
  renamed here to $N - \ord_k(2)$ in order to make the formulas
  for the variance statistics hold more uniformly when
  $k$ extends $\mathbb{Q}_2$.  Amusingly,
  the convention chosen here is less natural from the
  perspective of linear statistics (see
  \S\ref{sec:mean-statistics}).
\end{remark}
\subsubsection{Classification}
\label{sec-5-3-2}
Let $\pi$ be an irreducible representation of $G$.
By \cite[Lem 22]{nelson-padic-que}, we have:
\begin{lemma*}\label{lem:determination-microlocal-lifts}~
  \begin{enumerate}
  \item Suppose that $\pi$ is isomorphic
    to $\chi \boxplus \chi^{-1}$ for some character $\chi$ of $k^\times$
    for which $c(\chi^2) \neq 0$.  Then the set of
    microlocal lifts in $\pi$ is a disjoint union
    $\mathbb{C}^\times \varphi_+ \bigsqcup \mathbb{C}^\times \varphi_-$,
    where $\varphi_{+}$ and $\varphi_-$
    are  microlocal lifts
    of orientations
    $\omega := \chi|_{\mathfrak{o}^\times}$
    and $\omega^{-1}$,
    respectively.
  \item Otherwise, $\pi$ contains no microlocal lifts.
  \end{enumerate}
\end{lemma*}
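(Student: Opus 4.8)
The plan is to reduce the classification to a representation-theoretic statement about which irreducible representations of $\GL_2(\mathfrak{o})$ (or rather of the relevant compact-open subgroup) can occur in $\pi$, and then to match this against the list of irreducibles of $G=\PGL_2(k)$. The defining condition (iii) says that $\varphi$ transforms under the subgroup
\[
K_N := \GL_2(\mathfrak{o}) \cap \begin{pmatrix} \mathfrak{o} & \mathfrak{q}^{N_1} \\ \mathfrak{q}^{N_2} & \mathfrak{o} \end{pmatrix}
\]
by the character $g \mapsto \omega(a^2/\nr(g))$, where $a$ is the upper-left entry. The first step is to observe that this character is well-defined on $K_N$ modulo the center (since $\omega$ has some conductor $N=c(\omega)$ and $\omega^2 \ne 1$), and to identify the space of $\varphi \in \pi$ satisfying (iii) with the $(K_N,\omega)$-isotypic subspace $\pi[K_N,\omega]$. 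By Frobenius reciprocity (or a direct Mackey-style computation as in \cite{nelson-padic-que}), $\dim \pi[K_N,\omega]$ is computed by restricting $\pi$ to $K_N$ and counting multiplicities. Since this is precisely the local computation carried out in \cite[Lem 22]{nelson-padic-que}, I would simply invoke that lemma, which gives the dimension count: the space is two-dimensional when $\pi \cong \chi \boxplus \chi^{-1}$ with $\chi|_{\mathfrak{o}^\times} \in \{\omega, \omega^{-1}\}$ and $c(\chi^2) = N$, and zero otherwise. (Note $c(\chi^2)\neq 0$ already forces $\pi$ to be an irreducible principal series by \S\ref{sec-5-2}, so supercuspidals and twists of Steinberg are automatically excluded.)

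Granting the dimension count, the remaining work is to upgrade it to the stated \emph{structural} description. In case (1), one must check that the two-dimensional space $\pi[K_N,\omega]$ decomposes as a union of two $\mathbb{C}^\times$-orbits $\mathbb{C}^\times \varphi_+ \sqcup \mathbb{C}^\times \varphi_-$ corresponding to the two possible orientations $\omega$ and $\omega^{-1}$, rather than being a single orbit under some larger stabilizer. The key point is that the orientation of a microlocal lift is intrinsic (it is determined by $\varphi$ via condition (iii), as already noted after the definition), so the microlocal lifts in $\pi$ of orientation exactly $\omega$ form a line: within the fixed principal series $\chi \boxplus \chi^{-1}$, exactly one of the two characters $\chi|_{\mathfrak{o}^\times}, \chi^{-1}|_{\mathfrak{o}^\times}$ equals $\omega$ (they are distinct because $\omega^2 \ne 1$, hence $\chi|_{\mathfrak{o}^\times} \ne \chi^{-1}|_{\mathfrak{o}^\times}$), so each orientation contributes a one-dimensional space, giving $\varphi_+$ and $\varphi_-$. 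In case (2), the dimension count gives $\pi[K_N,\omega] = 0$ for every admissible $\omega$, so no $\varphi$ can satisfy (iii), and $\pi$ contains no microlocal lifts.

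The main obstacle is entirely contained in the dimension computation underlying case (1): verifying that $\dim \pi[K_N,\omega] = 2$ for a principal series with matching central and conductor data, and that this isotypic space is nonzero \emph{only} for such $\pi$. This is a genuine (if standard) local harmonic-analysis computation — one essentially needs the theory of new vectors / the Atkin--Lehner--Casselman conductor theory adapted to the ``micro-local'' congruence subgroup $K_N$, which interpolates between the Iwahori and deeper congruence subgroups. Since the excerpt explicitly cites \cite[Lem 22]{nelson-padic-que} for exactly this, I would not reprove it; the contribution of the present lemma is just the clean repackaging of that computation into the orbit description above. So the proof I would write is short: quote \cite[Lem 22]{nelson-padic-que} for the dimension count, note that $c(\chi^2)\neq 0$ forces irreducibility and genericity (\S\ref{sec-5-2}) and excludes all non-principal-series cases, and then split the two-dimensional space by the intrinsic orientation invariant to produce $\varphi_\pm$.
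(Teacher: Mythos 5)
Your proposal takes essentially the same approach as the paper: the paper's ``proof'' of this lemma consists solely of the citation to \cite[Lem 22]{nelson-padic-que}, which is exactly what you rely on. Your additional discussion of how the dimension count upgrades to the two-orbit structural description is correct but is really a gloss on what the cited result already furnishes; the paper does not spell this out because the cited lemma is stated in the form needed.
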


\begin{corollary*}\label{cor:get-nice-subspaces-of-microlocal-lifts-assuming-no-inverses}
  Let $X$
  be a set consisting of characters $\omega$ of
  $\mathfrak{o}^\times$
  for which $\omega^2 \neq 1$.
  Assume that $\omega \in X \implies \omega^{-1}
  \notin X$.
  The set of microlocal lifts in $\pi$
  with orientation in $X$
  is then either empty
  or of the form
  $\mathbb{C}^\times \varphi$
  for some $0 \neq \varphi \in \pi$.
\end{corollary*}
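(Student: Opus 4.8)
The plan is to deduce the corollary directly from the classification Lemma of \S\ref{sec-5-3-2}, together with the hypothesis on $X$, by a short case analysis. If $\pi$ is not isomorphic to $\chi \boxplus \chi^{-1}$ for any character $\chi$ of $k^\times$ with $c(\chi^2) \neq 0$, then by part (2) of that Lemma $\pi$ contains no microlocal lifts whatsoever, so the set in question is empty and there is nothing to prove. I would therefore assume $\pi \cong \chi \boxplus \chi^{-1}$ with $c(\chi^2) \neq 0$ and set $\omega := \chi|_{\mathfrak{o}^\times}$; by part (1) of the Lemma, the microlocal lifts in $\pi$ form the disjoint union $\mathbb{C}^\times \varphi_+ \sqcup \mathbb{C}^\times \varphi_-$, where $\varphi_+$ and $\varphi_-$ have orientations $\omega$ and $\omega^{-1}$ respectively.

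The small point to check is that $\omega^2 \neq 1$, and in particular $\omega \neq \omega^{-1}$. This holds because $c(\chi^2)$ is, by the conventions of \S\ref{sec-5-1}, the conductor of $\chi^2|_{\mathfrak{o}^\times}$, so the hypothesis $c(\chi^2) \neq 0$ says exactly that $\chi^2|_{\mathfrak{o}^\times} = \omega^2$ is nontrivial. Hence $\omega$ and $\omega^{-1}$ are two \emph{distinct} characters, each of nontrivial square, so each is \emph{a priori} eligible to belong to $X$; moreover, since $\omega \in X$ implies $\omega^{-1} \notin X$ (and, applying the same implication to $\omega^{-1}$ in place of $\omega$, also conversely), at most one of $\omega, \omega^{-1}$ actually lies in $X$.

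Finally I would split into subcases according to which of the orientations $\omega, \omega^{-1}$ meets $X$. If neither does, the set of microlocal lifts in $\pi$ with orientation in $X$ is empty. If $\omega \in X$, that set is precisely $\mathbb{C}^\times \varphi_+$, and if $\omega^{-1} \in X$ it is precisely $\mathbb{C}^\times \varphi_-$; in either case it has the claimed form $\mathbb{C}^\times \varphi$. There is no genuine obstacle here: the only step requiring a moment's attention is the identification of $c(\chi^2) \neq 0$ with $\omega^2 \neq 1$, which is what makes both orientations legitimate candidates for membership in $X$ and, being distinct, ensures that the hypothesis on $X$ is actually effective.
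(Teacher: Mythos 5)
Your proof is correct, and it is precisely the argument the paper leaves implicit (the corollary is stated without proof as an immediate consequence of the lemma of \S\ref{sec-5-3-2}). The one point that merits any care — that $c(\chi^2)\neq 0$ means $\omega^2\neq 1$, so $\omega\neq\omega^{-1}$ and the hypothesis on $X$ excludes at most one of the two orientations — you handle correctly.
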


\subsubsection{Projectors}
\label{sec-5-3-3}
Let $\omega$ be a character of $\mathfrak{o}^\times$
for which $\omega^2$ is nontrivial.
Set $N := c(\omega)$.
Let $\widetilde{\mathfrak{J}}$
denote the subgroup of $\GL_2(\mathfrak{o})$ appearing
on the RHS of \eqref{eq:microlocal-lift-subgroup-defn}.
Let $\mathfrak{J} \leq G$
denote the image of  $\widetilde{\mathfrak{J}}$ under the projection $\GL_2(k)
\rightarrow G$.
By
matrix multiplication
and the inequalities $\min(N_1,N_2) \geq 0$,
$\max(N_1,N_2) \geq 1$,
one has for $x_1,x_2 \in k$
and $y \in k^\times$
that
\begin{equation}\label{eqn:explicit-description-of-J-bar}
  n'(x_2) n(x_1) a(y) \in
  \mathfrak{J}
  \iff
  x_1 \in \mathfrak{q}^{N_1},
  x_2 \in \mathfrak{q}^{N_2},
  y \in \mathfrak{o}^\times
\end{equation}
Define $f_\omega \in C_c^\infty(G)$ as follows, for $g \in G$:
\begin{itemize}
\item If $g \notin \mathfrak{J}$, then $f_\omega(g) := 0$.
\item If $g \in \mathfrak{J}$ is image of 
  $\tilde{g} = \begin{pmatrix}
    a & b \\
    c & d
  \end{pmatrix}
  \in  \widetilde{\mathfrak{J}}$, then
  $f_\omega(g) := \vol(\mathfrak{J})^{-1}
  \omega(\nr(\tilde{g})/a^2)$.
\end{itemize}
The definition is independent of the
  choice of $\tilde{g}$.
\begin{lemma*}\label{lem:harmonic-analytic-isolation-0}
  Let $\pi$ be an irreducible unitary representation of $G$.
  \begin{enumerate}
  \item If $\pi$ contains a microlocal lift $\varphi$ of
    orientation $\omega$, then $\pi(f_\omega)$ is the rank one
    orthogonal projector onto the subspace $\mathbb{C} \varphi$
    of $\pi$.
  \item Otherwise, $\pi(f_\omega) = 0$.
  \end{enumerate}
\end{lemma*}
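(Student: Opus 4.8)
The statement is essentially that $f_\omega$ is (up to scaling conventions) the idempotent in the Hecke-type algebra of $(\mathfrak{J},\omega)$-equivariant functions cutting out the one-dimensional space of microlocal lifts of orientation $\omega$. The plan is to verify this directly from the definitions. First I would record the formal properties of $f_\omega$: it is supported on $\mathfrak{J}$, and under left and right translation by $g_0 = n'(x_2)n(x_1)a(y) \in \mathfrak{J}$ (using the parametrization \eqref{eqn:explicit-description-of-J-bar}) it transforms by the character $g_0 \mapsto \omega(\nr(g_0)/a_0^2) = \omega(y^2/\ldots)$; more precisely $f_\omega(g_0 g) = \omega(\cdot)^{-1} f_\omega(g)$ and $f_\omega(g g_0) = \omega(\cdot)^{-1} f_\omega(g)$ after checking the cocycle $a \mapsto a^2/\nr(g)$ is multiplicative along $\mathfrak{J}$. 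Also $f_\omega(g^{-1}) = \overline{f_\omega(g)}$ since $\nr(\tilde g^{-1})/(a')^2$ for $\tilde g^{-1}$ is the inverse of $\nr(\tilde g)/a^2$ and $\omega$ is unitary; hence $\pi(f_\omega)$ is self-adjoint on any unitary $\pi$.

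Next I would show $\pi(f_\omega)$ is idempotent. Writing $e := \pi(f_\omega)$, the equivariance properties above show that for $v \in \pi$, the vector $ev$ satisfies $g_0 (ev) = \omega(a_0^2/\nr(g_0))(ev)$ for $g_0 \in \mathfrak{J}$ — i.e. $ev$ lies in the $(\mathfrak{J},\omega)$-isotypic subspace $\pi[\omega]$ — and conversely $e$ acts as the identity on $\pi[\omega]$ because $\int_{\mathfrak{J}} f_\omega(g) \pi(g) v \, dg = \vol(\mathfrak{J})^{-1}\int_{\mathfrak{J}} \omega(\nr(\tilde g)/a^2)\omega(a^2/\nr(\tilde g)) v\, dg = v$ for $v \in \pi[\omega]$. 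Thus $e$ is the orthogonal projector onto $\pi[\omega]$. By the classification lemma of \S\ref{sec-5-3-2} (Lemma \ref{lem:determination-microlocal-lifts}), this space $\pi[\omega]$ is exactly $\mathbb{C}\varphi$ when $\pi \cong \chi\boxplus\chi^{-1}$ with $\chi|_{\mathfrak{o}^\times} = \omega$ and $c(\chi^2)\neq 0$, and is zero otherwise; this gives both alternatives in the statement. One subtlety: a vector fixed by $\mathfrak{J}$ up to the character $\omega$ in the sense of \eqref{eq:microlocal-lift-subgroup-defn} is the same as a microlocal lift only after also knowing it generates an irreducible representation, but inside a fixed irreducible $\pi$ that is automatic, so $\pi[\omega]$ and the span of microlocal lifts of orientation $\omega$ in $\pi$ coincide.

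The main obstacle — really the only non-formal point — is the bookkeeping that the function $g_0 \mapsto \nr(\tilde g_0)/a_0^2$ descends to a genuine character of $\mathfrak{J}$ valued in $\mathfrak{o}^\times/(1+\mathfrak{q}^N)$ on which $\omega$ is defined, i.e. that the upper-left entry $a$ lies in $\mathfrak{o}^\times$ for every $\tilde g \in \widetilde{\mathfrak{J}}$ and that $\nr(\tilde g)/a^2 \equiv y^2 \cdot(\text{unit}\equiv 1 \bmod \mathfrak q^N)$-type congruences make the product rule hold. This follows from the explicit matrix description: for $g = \begin{pmatrix} a & b \\ c & d\end{pmatrix} \in \widetilde{\mathfrak J}$ one has $a \in \mathfrak{o}^\times$ (since $bc \in \mathfrak{q}^{N_1+N_2} \subseteq \mathfrak{q}$ and $ad - bc \in \mathfrak{o}^\times$ forces $ad \in \mathfrak{o}^\times$), and a direct multiplication of two such matrices shows the upper-left entries multiply modulo $\mathfrak{q}^{N}$, so the assignment is a homomorphism after composing with $\omega$. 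Given that, the independence of $f_\omega$ on the lift $\tilde g$ and the displayed equivariances are immediate, and the rest is the standard argument that a self-adjoint averaging operator against a character of a compact group is the projector onto the isotypic subspace. I would then invoke Lemma \ref{lem:determination-microlocal-lifts} to identify that subspace and conclude.
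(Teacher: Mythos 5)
Your argument is correct and matches the paper's own proof: the paper likewise observes that $\vol(\mathfrak{J}) f_\omega|_{\mathfrak{J}}$ is a unitary character $\xi$ of $\mathfrak{J}$ for which condition~(iii) of the microlocal lift definition is precisely $\xi^{-1}$-isotypicality, and then concludes by Fourier analysis on the compact group $\mathfrak{J}$, implicitly using the classification lemma of \S\ref{sec-5-3-2} for the rank-one assertion. Your rendering just makes explicit the bookkeeping (that $g\mapsto\omega(\nr(\tilde g)/a^2)$ is a character of $\mathfrak{J}$, the self-adjointness and idempotence of $\pi(f_\omega)$, and the citation of the classification result) that the paper compresses into a few words; one small imprecision is that the upper-left entries of two elements of $\widetilde{\mathfrak{J}}$ multiply only modulo $\mathfrak{q}^{N-\ord_k(2)}$, but their squares do multiply modulo $\mathfrak{q}^{N}$ (for $N$ large), which is what is actually needed.
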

\begin{proof}
  $f_\omega$ is supported on
  $\mathfrak{J}$.
  By direct calculation, the restriction of
  $\vol(\mathfrak{J}) f_\omega$ to $\mathfrak{J}$ defines a
  unitary character $\xi$ of $\mathfrak{J}$
  with the property that 
  condition (iii) in \S\ref{sec-5-3-1}
  reads ``$g \varphi =  \xi^{-1}(g) \varphi$ for all $g \in \mathfrak{J}$.''
  The conclusion follows
  from elementary Fourier analysis on the compact group
  $\mathfrak{J}$.
\end{proof}

\subsection{The convolution kernel of interest}

\subsubsection{Taylor expansion of the logarithm}
\label{sec:tayl-expans-logar}
Let $N,N_0$ be positive integers.
Assume that $N_0$ is large enough in terms of $\ord_k(2)$ and
that $N$  is large enough in terms of
$N_0$ that
\begin{equation}\label{eq:basic-N-N0-ord-2-assumptions}
  N > N_0  > \ord_k(2),
\end{equation}
\begin{equation}\label{eq:basic-N-N0-ord-2-assumptions-2}
  N \geq 2 N_0 + \ord_k(2).
\end{equation}
In applications, we take $N_0$ large enough but fixed, and let
$N \rightarrow \infty$.

Set $\mathfrak{o}^\times_0 := 1 + \mathfrak{q}^{N-N_0}$,
regarded as a subgroup of $\mathfrak{o}^\times$.
Then
\begin{equation}\label{eqn:index-of-O-0}
  [\mathfrak{o}^\times:\mathfrak{o}_0^\times]
  = q^{N-N_0} \zeta_k(1)^{-1}.
\end{equation}
Regard $1 + \mathfrak{q}^N$ as a subgroup of
$\mathfrak{o}^\times_0$.
Fix a uniformizer $\varpi$,
i.e., a generator of the $\mathfrak{o}$-ideal $\mathfrak{q}$.
\begin{lemma*}\label{lem:additive-vs-mult-over-local-field-local-taylor-sorta}
  The map
  \[
  \iota : 
  \mathfrak{o}^\times_0 / (1 + \mathfrak{q}^{N})
  \rightarrow 
  \mathfrak{q}^{-N_0} / \mathfrak{o}
  \]
  \[
  \iota(u) := \varpi^{-N} (u-1)
  \]
  is an isomorphism of groups.
\end{lemma*}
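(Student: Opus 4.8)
The plan is to check directly the four defining properties of an isomorphism of (abelian) groups: that $\iota$ is well-defined on the quotient, that it is a homomorphism, that it is injective, and that it is surjective. All of these are short manipulations with the $\mathfrak{q}$-adic filtration; the only place where the quantitative hypotheses \eqref{eq:basic-N-N0-ord-2-assumptions}--\eqref{eq:basic-N-N0-ord-2-assumptions-2} enter is the homomorphism property, so that is where I would flag the one ``obstacle,'' modest as it is.

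First, \emph{well-definedness}. For $u \in \mathfrak{o}_0^\times = 1 + \mathfrak{q}^{N-N_0}$ we have $u - 1 \in \mathfrak{q}^{N-N_0}$, hence $\varpi^{-N}(u-1) \in \varpi^{-N}\mathfrak{q}^{N-N_0} = \mathfrak{q}^{-N_0}$, so the stated target of $\iota$ is correct. If $v \in 1 + \mathfrak{q}^N$, then from $uv - 1 = (u-1) + u(v-1)$ and $u(v-1) \in \mathfrak{o}^\times \cdot \mathfrak{q}^N = \mathfrak{q}^N$ one gets $\varpi^{-N}(uv-1) \equiv \varpi^{-N}(u-1) \pmod{\mathfrak{o}}$; thus $\iota$ descends to $\mathfrak{o}_0^\times/(1+\mathfrak{q}^N)$.

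Second, the \emph{homomorphism property}. Expanding, $uu' - 1 = (u-1) + (u'-1) + (u-1)(u'-1)$, and $(u-1)(u'-1) \in \mathfrak{q}^{2(N-N_0)} \subseteq \mathfrak{q}^N$, where the inclusion uses $2(N-N_0) \geq N$, i.e.\ $N \geq 2N_0$, a consequence of \eqref{eq:basic-N-N0-ord-2-assumptions-2}. Multiplying by $\varpi^{-N}$ and reducing modulo $\mathfrak{o}$ yields $\iota(uu') = \iota(u) + \iota(u')$.

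Third and fourth, \emph{bijectivity}. For injectivity, $\iota(u) = 0$ in $\mathfrak{q}^{-N_0}/\mathfrak{o}$ means $\varpi^{-N}(u-1) \in \mathfrak{o}$, i.e.\ $u - 1 \in \mathfrak{q}^N$, i.e.\ $u \in 1 + \mathfrak{q}^N$; so the kernel on the quotient is trivial. For surjectivity, given $t \in \mathfrak{q}^{-N_0}$, the element $u := 1 + \varpi^N t$ lies in $1 + \mathfrak{q}^{N-N_0} = \mathfrak{o}_0^\times$ and satisfies $\iota(u) = t \bmod \mathfrak{o}$. (Alternatively, injectivity together with the fact that both groups have order $q^{N_0}$---for the source by the standard filtration computation underlying \eqref{eqn:index-of-O-0}, for the target by inspection---already forces bijectivity.) This completes the proof; no step presents genuine difficulty beyond tracking the exponents in the second paragraph, which is precisely where the hypothesis $N \geq 2N_0$ is spent.
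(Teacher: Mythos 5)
Your proof is correct and follows essentially the same approach as the paper: check that $\iota$ is a bijection and verify the homomorphism property by showing the multiplicative-versus-additive discrepancy lands in $\mathfrak{q}^N$, using $2(N-N_0)\geq N$ from \eqref{eq:basic-N-N0-ord-2-assumptions-2}. The only cosmetic difference is that the paper verifies the homomorphism identity for $\iota^{-1}$ (writing the error as $\varpi^{2N}xy$) while you verify it for $\iota$ (writing the error as $(u-1)(u'-1)$); these are the same computation.
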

\begin{proof}
  By inspection, $\iota$ is well-defined
  and has a well-defined inverse $\iota^{-1}$,
  hence is bijective.
  For $x,y \in \mathfrak{q}^{-N_0}/\mathfrak{o}$,
  one has
  $\iota^{-1}(x) \iota^{-1}(y) = \iota^{-1}(x+y) + \eps$
  with
  $\eps := \varpi^{2 N} x y$.
  By \eqref{eq:basic-N-N0-ord-2-assumptions-2},
  one has
  $\eps \in \mathfrak{q}^{2 N - 2 N_0} \subseteq
  \mathfrak{q}^N$,
  so $\iota^{-1}$ is a homomorphism.
\end{proof}

\subsubsection{Partitioning the characters of given conductor}
\label{sec-5-4}
Motivated by the corollary of \S\ref{sec-5-3-2},
we record here
a partition of the characters of $\mathfrak{o}^\times$
having given conductor
into nice subsets that do not
contain the inverses of any characters that they contain.
Recall that a unitary character $\sigma : k \rightarrow
\mathbb{C}^{(1)}$
is \emph{unramified}
if it is trivial on $\mathfrak{o}$ but not on $\mathfrak{q}^{-1}$.
Let $\Sigma$
denote the set of equivalence classes of unramified
unitary characters $\sigma$ of $k$, with two such characters
declared equivalent if they have the same restriction to
$\mathfrak{q}^{-N_0}$.  For any $\sigma \in \Sigma$, the map
\[
\mathfrak{o}^\times / (1 + \mathfrak{q}^{N_0}) \rightarrow
\Sigma\]
\[
\xi \mapsto \text{ (the class of the character
  $[x \mapsto \sigma(\xi x)]$)}
\]
is bijective,
so $\Sigma$ is a
finite set of cardinality
\begin{equation}\label{eq:Sigma-cardinality}
  |\Sigma| = \zeta_k(1)^{-1} q^{N_0}.
\end{equation}

For each
$\sigma \in \Sigma$, let
$\omega_\sigma : \mathfrak{o}^\times_0 \rightarrow
\mathbb{C}^\times$ denote the function defined by
$\omega_\sigma(u) := \sigma (\iota(u))$.
Both $\omega_\sigma$ and the association $\Sigma \ni \sigma \mapsto \omega_{\sigma}$ are
then well-defined. By
the lemma of \S\ref{sec:tayl-expans-logar}, we see that
\begin{itemize}
\item $\omega_\sigma$
  is a character of $\mathfrak{o}_0^\times$
  of conductor $N$ (in the sense that it has trivial restriction
  to $1 + \mathfrak{q}^{N}$ but not to $1 + \mathfrak{q}^{N-1}$),
  and that
\item each character
  of $\mathfrak{o}^\times_0$
  of conductor $N$ 
  is of the form $\omega_\sigma$
  for some unique $\sigma \in \Sigma$.
\end{itemize}

Let $\mathcal{X}_N$ denote the set of characters $\omega$ of
$\mathfrak{o}^\times$ for which $c(\omega) = N$.
Since each such
$\omega$
restricts to a character of $\mathfrak{o}_0^\times$,
we have a partition
\begin{equation}\label{eq:partition-of-chars-into-packets}
  \mathcal{X}_N = \bigsqcup_{\sigma \in \Sigma}
  \mathcal{X}_N^\sigma
\end{equation}
where $\mathcal{X}_N^\sigma := \{\omega \in \mathcal{X}_N : \omega|_{\mathfrak{o}^\times_0} = \omega_{\sigma}\}$.
\begin{lemma*}\label{lem:properties-of-partition-of-characters}
  $\omega \in \mathcal{X}_N^\sigma \implies \omega^{-1} \notin
    \mathcal{X}_N^\sigma$.
\end{lemma*}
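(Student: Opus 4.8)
The plan is to reduce the statement to the single claim that, for every $\sigma \in \Sigma$, the character $\omega_\sigma$ of $\mathfrak{o}_0^\times = 1 + \mathfrak{q}^{N - N_0}$ has nontrivial square. Indeed, suppose $\omega$ and $\omega^{-1}$ both lay in $\mathcal{X}_N^\sigma$. Then, by the definition of $\mathcal{X}_N^\sigma$, we would have $\omega|_{\mathfrak{o}_0^\times} = \omega_\sigma$ and $\omega^{-1}|_{\mathfrak{o}_0^\times} = \omega_\sigma$; restricting $\omega^2 = \omega \cdot \omega$ to $\mathfrak{o}_0^\times$ would then give $\omega_\sigma^2 = 1$. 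So the lemma is immediate once we know $\omega_\sigma^2 \neq 1$, and everything comes down to understanding $\omega_\sigma^2$.

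Next I would unwind $\omega_\sigma$ through the isomorphism $\iota : \mathfrak{o}_0^\times/(1 + \mathfrak{q}^N) \xrightarrow{\sim} \mathfrak{q}^{-N_0}/\mathfrak{o}$ of the lemma of \S\ref{sec:tayl-expans-logar}. Since $\omega_\sigma = \sigma \circ \iota$ and $\sigma$ is an additive character of $k$, we have $\omega_\sigma(u)^2 = \sigma(\iota(u))^2 = \sigma(2\,\iota(u))$ for $u \in \mathfrak{o}_0^\times$. As $u$ ranges over $\mathfrak{o}_0^\times$, the element $\iota(u)$ ranges over all of $\mathfrak{q}^{-N_0}/\mathfrak{o}$, so $2\,\iota(u)$ ranges over $\mathfrak{q}^{\ord_k(2) - N_0}$ modulo $2\mathfrak{o} = \mathfrak{q}^{\ord_k(2)}$; the latter lies in $\mathfrak{o}$, so $\sigma$ does not see it. Hence $\omega_\sigma^2 = 1$ is equivalent to $\sigma$ being trivial on $\mathfrak{q}^{\ord_k(2) - N_0}$.

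Finally, I would invoke the standing hypothesis $N_0 > \ord_k(2)$ from \eqref{eq:basic-N-N0-ord-2-assumptions}, which gives $\ord_k(2) - N_0 \leq -1$ and hence $\mathfrak{q}^{-1} \subseteq \mathfrak{q}^{\ord_k(2) - N_0}$. Triviality of $\sigma$ on $\mathfrak{q}^{\ord_k(2) - N_0}$ would then force $\sigma$ to be trivial on $\mathfrak{q}^{-1}$, contradicting the unramifiedness of $\sigma$ (which by definition is nontrivial on $\mathfrak{q}^{-1}$). This contradiction yields $\omega_\sigma^2 \neq 1$ and completes the argument. The only delicate point is bookkeeping with lattices: one must check that $\sigma(2\,\iota(u))$ makes sense even though $\iota(u)$ is defined only modulo $\mathfrak{o}$, which is immediate since $2\mathfrak{o} \subseteq \mathfrak{o}$ and $\sigma|_{\mathfrak{o}} = 1$, and that $\ord_k(2) - N_0$ is genuinely negative so that unramifiedness of $\sigma$ bites; beyond that the computation is entirely formal, so I do not anticipate any real obstacle.
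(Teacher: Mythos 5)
Your proof is correct and follows the same approach as the paper: reduce to $\omega_\sigma^2 \neq 1$, translate via $\iota$ into the statement that $\sigma$ is nontrivial on $2\mathfrak{q}^{-N_0}$, and conclude from $N_0 > \ord_k(2)$ together with unramifiedness of $\sigma$. You have simply spelled out the details that the paper compresses into a single sentence.
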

\begin{proof}
  If $\omega, \omega^{-1} \in \mathcal{X}_N^\sigma$ then
  $\omega_\sigma = \omega|_{\mathfrak{o}_0^\times} =
  \omega_\sigma^{-1}$, hence $\omega_\sigma^2 = 1$, hence
  $\sigma(2 x) = 1$ for all $x \in \mathfrak{q}^{-N_0}$,
  contrary to
  our assumptions that $\sigma$ is unramified and
  $N_0 > \ord_k(2)$.
\end{proof}

\subsubsection{Definition}\label{sec:element-attached-to-N-sigma}
By \emph{the element $f \in C_c^\infty(G)$
  attached to $(N,\sigma)$}
we shall mean
the function
\[f := \sum_{\omega \in \mathcal{X}_N^{\sigma}}
f_\omega,\]
where $f_\omega$ is as in \S\ref{sec-5-3-3}.
The element $f$ depends also upon $N_0$;
we regard that element as fixed in applications, and so omit
its dependence from the terminology.

\begin{proposition}\label{prop:harmonic-analytic-isolation-1}
  Let $f \in C_c^\infty(G)$
  be attached to $(N,\sigma)$.
  Let $\pi$ be an irreducible unitary representation of $G$.
  \begin{enumerate}
  \item If $\pi$ contains a microlocal lift $\varphi$
    of orientation $\omega$ for some $\omega \in \mathcal{X}_N^\sigma$,
    then $\pi(f)$ is the rank one orthogonal projector onto the
    subspace
    $\mathbb{C} \varphi$ of $\pi$.
  \item Otherwise, $\pi(f) = 0$.
  \end{enumerate}
\end{proposition}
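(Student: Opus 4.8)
The plan is to reduce Proposition \ref{prop:harmonic-analytic-isolation-1} to the single-character statement of the lemma of \S\ref{sec-5-3-3} (Lemma \ref{lem:harmonic-analytic-isolation-0}) together with the classification of microlocal lifts from \S\ref{sec-5-3-2}. Since $f = \sum_{\omega \in \mathcal{X}_N^\sigma} f_\omega$ by definition, linearity of $\pi \mapsto \pi(f)$ gives $\pi(f) = \sum_{\omega \in \mathcal{X}_N^\sigma} \pi(f_\omega)$, so everything comes down to understanding the individual summands $\pi(f_\omega)$ and the interactions among them.

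\textbf{Key steps.} First I would recall from Lemma \ref{lem:harmonic-analytic-isolation-0} that for each $\omega \in \mathcal{X}_N^\sigma$, the operator $\pi(f_\omega)$ is either zero (if $\pi$ contains no microlocal lift of orientation $\omega$) or the rank one orthogonal projector onto $\mathbb{C}\varphi$, where $\varphi$ is a microlocal lift of orientation $\omega$. Next, by the corollary of \S\ref{sec-5-3-2} together with the lemma of \S\ref{sec:partition...} (Lemma \ref{lem:properties-of-partition-of-characters}, which guarantees $\omega \in \mathcal{X}_N^\sigma \implies \omega^{-1} \notin \mathcal{X}_N^\sigma$), the set $X := \mathcal{X}_N^\sigma$ satisfies the hypothesis of that corollary, so the set of microlocal lifts in $\pi$ with orientation in $\mathcal{X}_N^\sigma$ is either empty or of the form $\mathbb{C}^\times \varphi$ for a single $0 \neq \varphi \in \pi$. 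In the empty case, every $\pi(f_\omega)$ with $\omega \in \mathcal{X}_N^\sigma$ vanishes (there is no microlocal lift of orientation $\omega$ in $\pi$), hence $\pi(f) = 0$, giving conclusion (2). In the nonempty case, there is exactly one orientation $\omega_0 \in \mathcal{X}_N^\sigma$ for which $\pi$ contains a microlocal lift — here I would invoke that the orientation of a microlocal lift is uniquely determined by the lift (\S\ref{sec-5-3-1}) and the classification (Lemma \ref{lem:determination-microlocal-lifts}), which shows $\pi$ contains microlocal lifts of at most the two orientations $\chi|_{\mathfrak{o}^\times}$ and $\chi^{-1}|_{\mathfrak{o}^\times}$, of which at most one lies in $\mathcal{X}_N^\sigma$. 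Then $\pi(f_{\omega_0})$ is the rank one orthogonal projector onto $\mathbb{C}\varphi$ and $\pi(f_\omega) = 0$ for all other $\omega \in \mathcal{X}_N^\sigma$, so $\pi(f) = \pi(f_{\omega_0})$ is exactly that projector, giving conclusion (1).

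\textbf{Main obstacle.} The proof is essentially a bookkeeping exercise assembling already-established facts, so there is no deep difficulty; the one point demanding care is the verification that for each $\pi$ at most one summand $\pi(f_\omega)$ with $\omega$ ranging over $\mathcal{X}_N^\sigma$ is nonzero, which is exactly where the anti-inverse-closure property of $\mathcal{X}_N^\sigma$ (Lemma \ref{lem:properties-of-partition-of-characters}) and the classification of microlocal lifts into the pair $\{\varphi_+, \varphi_-\}$ get used in tandem. One should also note that distinct orientations $\omega$ correspond to orthogonal (indeed, distinct one-dimensional) subspaces, but this is automatic here since at most one orientation contributes.
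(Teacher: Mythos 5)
Your proposal is correct and matches the paper's own proof, which is a one-line citation to the same three ingredients you assemble: the classification lemma of \S\ref{sec-5-3-2}, the single-character projector lemma of \S\ref{sec-5-3-3}, and the anti-inverse-closure property of $\mathcal{X}_N^\sigma$ from \S\ref{sec-5-4}. You have simply spelled out the bookkeeping that the paper leaves implicit.
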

\begin{proof}
  By combining the lemmas of
  \S\ref{lem:determination-microlocal-lifts},
  \S\ref{sec-5-3-3},
  \S\ref{sec-5-4}.
\end{proof}
\subsection{The $\heartsuit^{\tau}$ operator}
\label{sec:defn-heartsuit-local-again-without-Omega}
For $\tau \in k^\times$,
define $\heartsuit^{\tau} : C_c^\infty(G) \rightarrow
\mathcal{S}(B)$
by the formula $\heartsuit^{\tau} f (g) :=
1_{\mathfrak{o}^\times}(\tau \nr(g)) f(g)$.
(This is a variant of
the definition of \S\ref{sec:heartsuit},
adapted to the local setting.)

\section{Fourier transforms of convolution kernels\label{sec:fourier-transf-conv-kern}}
\label{sec-6}
We study here the ``Fourier transform''
of the $f \in C_c^\infty(G)$ attached to $(N,\sigma)$   (see \S\ref{sec:element-attached-to-N-sigma}).

This
section may be safely skipped on a first reading: the results
stated here are used in the proofs of the main results of
\S\ref{sec:local-estimates-main-statements} and \S\ref{sec:local-estimates-error}, but the details of
those proofs are not needed to understand the overall structure
of the proof given in \S\ref{sec:deduction-main-thm-microlocal}
of the main result of the article.

Throughout this section,
we fix $\sigma \in \Sigma$
and choose an arbitrary unramified unitary character $\psi : k
\rightarrow \mathbb{C}^{(1)}$
belonging to the class $\sigma$.
\subsection{Measures\label{sec:measures-for-fourier-computation}}
\label{sec-6-1}
We use $\psi$ to define measures
on $k, B, G$
as in \S\ref{sec:local-measures}.
The measures so obtained on $k$ and $B$ assign volume one to maximal compact subrings.
The measure on $G$ satisfies 
the integral formulas
\eqref{eq:integarl-formula-for-integrating-over-B}, \eqref{eqn:explicit-M2-integral-formula}.
\begin{lemma*}\label{lem:volume-of-J-bar}
  $\vol(\mathfrak{J}) = |2|_k^{-1} q^{-N} \zeta_k(1)^{-1}$.
\end{lemma*}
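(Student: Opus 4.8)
The plan is to compute $\vol(\mathfrak{J})$ directly from the explicit description \eqref{eqn:explicit-description-of-J-bar} of $\mathfrak{J}$ as a subset of $G = \PGL_2(k)$, parametrized via the ``opposite Iwasawa'' coordinates $n'(x_2) n(x_1) a(y)$. By \eqref{eqn:explicit-description-of-J-bar}, the image of $\mathfrak{J}$ under this parametrization is exactly $\{n'(x_2)n(x_1)a(y) : x_1 \in \mathfrak{q}^{N_1},\, x_2 \in \mathfrak{q}^{N_2},\, y \in \mathfrak{o}^\times\}$, and this parametrization is a bijection onto that set (the open Bruhat cell being parametrized injectively). So the first step is to apply the integral formula \eqref{eqn:explicit-M2-integral-formula}, which under the measure normalized by the unramified character $\psi$ reads $\int_G f = \int_{x_1,x_2 \in k}\int_{y \in k^\times} f(n'(x_2)n(x_1)a(y))\, dx_1\, dx_2\, \frac{dy}{|y|}$. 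Taking $f = 1_{\mathfrak{J}}$ gives
\[
  \vol(\mathfrak{J}) = \vol(\mathfrak{q}^{N_1})\cdot \vol(\mathfrak{q}^{N_2})\cdot \vol(\mathfrak{o}^\times),
\]
where the first two volumes are with respect to additive Haar on $k$ normalized so that $\vol(\mathfrak{o}) = 1$ (since $\psi$ is unramified, $\Delta_\psi = 1$, and $\vol(\mathfrak{o}) = \Delta_\psi^{-1/2} = 1$ by part (i) of the lemma of \S\ref{sec-2-1-3}), and the last is with respect to $\frac{dy}{|y|}$.

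The second step is to evaluate the three factors. One has $\vol(\mathfrak{q}^{N_i}) = q^{-N_i}$ since $\mathfrak{q}^{N_i}$ has index $q^{N_i}$ in $\mathfrak{o}$ and $\vol(\mathfrak{o}) = 1$. For the idelic factor, $\vol(\mathfrak{o}^\times) = \zeta_k(1)^{-1}\Delta_\psi^{-1/2} = \zeta_k(1)^{-1}$, again by part (i) of the lemma of \S\ref{sec-2-1-3}. Therefore
\[
  \vol(\mathfrak{J}) = q^{-N_1}\cdot q^{-N_2}\cdot \zeta_k(1)^{-1} = q^{-(N_1+N_2)}\zeta_k(1)^{-1}.
\]
The third step is bookkeeping on the exponent: by the conventions of \S\ref{sec-5-3-1}, $N - \ord_k(2) = N_1 + N_2$, so $q^{-(N_1+N_2)} = q^{-(N - \ord_k(2))} = q^{\ord_k(2)}\, q^{-N} = |2|_k^{-1}\, q^{-N}$ (using $|2|_k = q^{-\ord_k(2)}$). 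This yields $\vol(\mathfrak{J}) = |2|_k^{-1}\, q^{-N}\,\zeta_k(1)^{-1}$, as claimed.

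There is no real obstacle here; the only points requiring a moment's care are (a) confirming that \eqref{eqn:explicit-M2-integral-formula} is stated with respect to the same measure normalization being used in \S\ref{sec:measures-for-fourier-computation} — which is precisely the unramified-$\psi$ normalization, so $\vol$ of maximal compact subrings is $1$ and the Tate-type factor $\vol(\mathfrak{o}^\times)$ is $\zeta_k(1)^{-1}$ — and (b) tracking the relation $N_1 + N_2 = N - \ord_k(2)$ so that the $|2|_k^{-1}$ appears correctly (this is exactly the place where the renaming convention noted in the second remark of \S\ref{sec-5-3-1} pays off). If one preferred, one could instead verify the formula by the alternative route of writing $\mathfrak{J}$ as a disjoint union of cosets of a small compact subgroup on which the measure is transparent, but the direct computation above is the cleanest.
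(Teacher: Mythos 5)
Your computation is correct and follows precisely the route the paper indicates: the explicit description \eqref{eqn:explicit-description-of-J-bar}, the integral formula \eqref{eqn:explicit-M2-integral-formula}, the local volume formulas of \S\ref{sec:local-vol-formulas} with the unramified-$\psi$ normalization, and the identity $q^{-N_1-N_2} = |2|_k^{-1} q^{-N}$. No differences of substance.
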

\begin{proof}
  This follows from the description of $\mathfrak{J}$
  given by \eqref{eqn:explicit-description-of-J-bar}, the integral formula
  \eqref{eqn:explicit-M2-integral-formula}, the local volume
  formulas
  (\S\ref{sec:local-vol-formulas}), and the
  consequence $q^{-N_1 - N_2} = |2|_k^{-1} q^{-N}$ of the definitions
  of $N_1, N_2$.
\end{proof}
\subsection{Coordinates}
\label{sec-6-2}
On $B$, we employ the coordinates
\begin{equation}\label{eq:x-xi-in-fancy-notation-etc}
  x = \begin{pmatrix}
    d - a/2 & b \\
    c & d + a/2
  \end{pmatrix},
  \quad
  \xi = \begin{pmatrix}
    \delta/2 + \alpha  & \beta  \\
    \gamma  & \delta/2 - \alpha 
  \end{pmatrix}.
\end{equation}
By a simple change of variables,
one verifies that the Haar measure on $B$ 
is given in either set of coordinates
\eqref{eq:x-xi-in-fancy-notation-etc}
by integrating over the coordinate variables
with respect to the chosen Haar measure on $k$.
The main involution $\iota : B \rightarrow B$
is
given  by
$(a,b,c,d) \mapsto (-a,-b,-c,d)$,
$(\alpha,\beta,\gamma,\delta) \mapsto
(-\alpha,-\beta,-\gamma,\delta)$
and the Fourier transform $\mathcal{F}  : \mathcal{S}(B) \rightarrow
\mathcal{S}(B)$,
defined as in \S\ref{sec:local-measures} or \S\ref{sec:local-quadratic-spaces}
by $\mathcal{F} \phi(\xi)
:=
\int_{x \in B}
\phi(x)
\psi(\langle x,\xi   \rangle)
\, d x$
with $\langle x, \xi  \rangle := \tr(x^{\iota} \xi)$,
by
\begin{equation}\label{eq:fourier-transform-definition-spelled-out}
  \mathcal{F} \phi(\xi )
  =
  \int_{a,b,c,d \in k}
  \phi(x)
  \psi(\alpha a + \delta d
  - \beta c - \gamma b).
\end{equation}

The general notation of \S\ref{sec:general-notation}
  gives us for each
  space $V \in \{C_c^\infty(G), \mathcal{S}(B)\}$
  maps
  $\mathfrak{S} : V \rightarrow V$
  and
  $\Ad(g) : V \rightarrow V$ for $g \in G$.
  These maps commute with each other
  and with $\mathcal{F}, \heartsuit^{\tau}$
  in every conceivable sense:
  $\mathfrak{S} \heartsuit^{\tau} = \heartsuit^{\tau} \mathfrak{S}$,
  $\mathfrak{S} \Ad(g) = \Ad(g) \mathfrak{S}$,
  $\heartsuit^{\tau} \Ad(g) = \Ad(g) \heartsuit^{\tau}$,
  $\mathcal{F} \Ad(g) = \Ad(g) \mathcal{F}$, $\mathcal{F}
  \mathfrak{S} =  \mathfrak{S} \mathcal{F}$.


\subsection{Statement of result}
\label{sec-6-3}
Asymptotic notation here refers to the $N \rightarrow \infty$ limit.
Implied constants depend at most upon the field $k$ and the integer $N_0$.
\begin{proposition}\label{prop:key-fourier-estimate-microlocal-kernel}
  Let $f \in C_c^\infty(G)$ be attached to $(N,\sigma)$   (see \S\ref{sec:element-attached-to-N-sigma}).
  Set
  \begin{equation}\label{eqn:phi-symmetrized-fourier-transform-of-f}
    \phi := \mathcal{F} \mathfrak{S} \heartsuit^1 f =
    \mathfrak{S} \mathcal{F} \heartsuit^1 f =
    \mathcal{F} \heartsuit^1 \mathfrak{S}  f
    \in \mathcal{S}(B).
  \end{equation}
  Define $\Phi : k^4 \rightarrow \mathbb{C}$
  in terms of the coordinates \eqref{eq:x-xi-in-fancy-notation-etc}
  by
  $\Phi(\alpha,\beta,\gamma,\delta) := \phi(\xi)$.
  One then has $\Phi(\alpha,\beta,\gamma,\delta) \neq 0$
  only if
  \begin{equation}\label{eq:asymptotic-properties-of-entries-of-support-of-fourier-transform}
    |\alpha| \asymp q^N, \quad 
    |\beta|, |\gamma| = o(q^N), \quad 
    |\delta| = O(1).
  \end{equation}
  One has
  \begin{equation}\label{eq:some-invariance-of-fourier-transform-udner-add-and-multiply}
    \Phi(u  \alpha,u \beta,u \gamma,\delta)
    = 
    \Phi(\alpha,\beta,\gamma,\delta) \text{ for all }
    u \in 1 + \mathfrak{q}^{N_0}.
  \end{equation}
  The function
  $I(\alpha,\delta) :=
  \int_{\beta,\gamma \in k} \Phi(\alpha,\beta,\gamma,\delta)$
  satisfies
  \begin{equation}\label{eq:weyl-invariance-fourier-transform-explicated-coordinates}
    I(\alpha,\delta) = I(-\alpha,\delta)
  \end{equation}
  and
  \begin{equation}\label{eq:I-rescaled-independent-of-N}
    q^{-N} I(\varpi^{-N} \alpha,\delta) \text{ is independent of $N$}.
  \end{equation}
  Moreover,
  \begin{equation}\label{eq:explicit-normalizing-computation}
    \int_{\alpha,\delta \in k}
    |2 \alpha|^{-2}
    \left\lvert
      I(\alpha,\delta)
    \right\rvert^2
    =
    C q^{N-N_0}
  \end{equation}
  with
  $C := (2 \zeta_k(1))^{-1}$.
\end{proposition}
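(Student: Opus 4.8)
The plan is to compute $\phi=\mathcal F\mathfrak S\heartsuit^1 f$ essentially explicitly, in the coordinates \eqref{eq:x-xi-in-fancy-notation-etc}, and then to read off all the assertions from the resulting formula. The equality of the three expressions for $\phi$ in \eqref{eqn:phi-symmetrized-fourier-transform-of-f} and the membership $\phi\in\mathcal S(B)$ are immediate, since $\mathcal F,\mathfrak S,\heartsuit^1$ pairwise commute (as recorded in \S\ref{sec-6-2}) and each preserves $\mathcal S(B)$.

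First I would compute $\heartsuit^1 f$. Combining the definition $f=\sum_{\omega\in\mathcal X_N^\sigma}f_\omega$, the description \eqref{eqn:explicit-description-of-J-bar} of $\mathfrak J$, the character-sum identity $\sum_{\omega\in\mathcal X_N^\sigma}\omega(u)=[\mathfrak o^\times:\mathfrak o_0^\times]\,1_{u\in 1+\mathfrak q^{N-N_0}}\,\omega_\sigma(u)$ (a consequence of the partition of \S\ref{sec-5-4}, since every extension of $\omega_\sigma$ to $\mathfrak o^\times$ killing $1+\mathfrak q^{N}$ automatically lies in $\mathcal X_N^\sigma$), the value $\vol(\mathfrak J)$ from Lemma \ref{lem:volume-of-J-bar}, and the index \eqref{eqn:index-of-O-0}, one finds that $\heartsuit^1 f$ is supported on the image of $\widetilde{\mathfrak J}$ and, writing $p:=d-a/2$ for the top-left entry, equals $|2|_k\,q^{2N-N_0}$ times the indicator of $\{a\in\mathfrak q^{N-N_0},\ b\in\mathfrak q^{N_1},\ c\in\mathfrak q^{N_2},\ d\pm a/2\in\mathfrak o^\times\}$ times $\omega_\sigma(\det/p^2)$. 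Here $\det/p^2=1+a/p-bc/p^2$, and the inequalities \eqref{eq:basic-N-N0-ord-2-assumptions}--\eqref{eq:basic-N-N0-ord-2-assumptions-2} guarantee $a/p-bc/p^2\in\mathfrak q^{N-N_0}$, so $\omega_\sigma(\det/p^2)=\sigma(\varpi^{-N}(a/p-bc/p^2))=\sigma(\varpi^{-N}a/p)\,\sigma(-\varpi^{-N}bc/p^2)$ by $\omega_\sigma=\sigma\circ\iota$ (Lemma of \S\ref{sec:tayl-expans-logar}) and additivity of $\sigma$; moreover, using the same inequalities one may replace $p$ by $d$ throughout and, since $\psi$ was chosen in the class $\sigma$, replace $\sigma$ by $\psi$ in the factor $\sigma(\varpi^{-N}a/d)$. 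Applying $\mathfrak S$ symmetrizes under $d\mapsto -d$ (which fixes $a,b,c$ and sends $p\mapsto -s$ with $s:=d+a/2$), so on the support $\mathfrak S\heartsuit^1 f$ equals $|2|_k q^{2N-N_0}$ times the indicator above times $\tfrac12\bigl[\sigma(\varpi^{-N}a/d)+\sigma(-\varpi^{-N}a/d)\bigr]\,\sigma(-\varpi^{-N}bc/d^2)$, up to lower-order corrections absorbed via \eqref{eq:basic-N-N0-ord-2-assumptions}--\eqref{eq:basic-N-N0-ord-2-assumptions-2}; in particular it is even in $a$ once $b=c=0$.

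Next, the Fourier transform \eqref{eq:fourier-transform-definition-spelled-out}. The $a$-integral has the shape $\int_{a\in\mathfrak q^{N-N_0}}\psi\bigl((\pm\varpi^{-N}/d+\alpha)a\bigr)\,da$, nonzero only for $\alpha\in\mp\varpi^{-N}/d+\mathfrak q^{-(N-N_0)}$, which forces $|\alpha|=q^N$; the $b$- and $c$-integrals are of Gaussian type because of the bilinear factor $\sigma(-\varpi^{-N}bc/d^2)$, and completing them in turn forces $|\beta|\le q^{N_2+\ord_k 2}$ and $|\gamma|\le q^{N_1+\ord_k 2}$, both $o(q^N)$ since $N_1,N_2=o(N)$; the $d$-integral forces $|\delta|=O(1)$ since $\mathfrak S\heartsuit^1 f$ is supported in $d\in\tfrac12\mathfrak o$. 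This gives \eqref{eq:asymptotic-properties-of-entries-of-support-of-fourier-transform}. The invariance \eqref{eq:some-invariance-of-fourier-transform-udner-add-and-multiply} follows by the substitution $(a,b,c)\mapsto(a/u,b/u,c/u)$ in the Fourier integral together with the observation — again using \eqref{eq:basic-N-N0-ord-2-assumptions}--\eqref{eq:basic-N-N0-ord-2-assumptions-2} to kill the induced corrections — that $\mathfrak S\heartsuit^1 f$ is invariant under this substitution for $u\in 1+\mathfrak q^{N_0}$. For $I$: integrating $\Phi$ over $\beta,\gamma$ inverts the Fourier transform in the dual variables $c,b$, so $I(\alpha,\delta)=\int_{a,d}h_0(a,d)\,\psi(\alpha a+\delta d)\,da\,dd$ where $h_0(a,d):=(\mathfrak S\heartsuit^1 f)(a,0,0,d)=|2|_k q^{2N-N_0}\,1_{a\in\mathfrak q^{N-N_0},\,d\in\mathfrak o^\times}\,\tfrac12\bigl[\sigma(\varpi^{-N}a/d)+\sigma(-\varpi^{-N}a/d)\bigr]$; evenness of $h_0$ in $a$ gives \eqref{eq:weyl-invariance-fourier-transform-explicated-coordinates}, and the substitution $a=\varpi^N a'$ (so $\varpi^{-N}a/d=a'/d$ and the domain $a'\in\mathfrak q^{-N_0}$, $d\in\mathfrak o^\times$ is $N$-independent) shows $I(\varpi^{-N}\alpha,\delta)=|2|_k q^{N-N_0}\times(\text{an integral independent of }N)$, i.e.\ \eqref{eq:I-rescaled-independent-of-N}. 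Finally, on $\supp I$ the weight $|2\alpha|^{-2}$ is the constant $|2|_k^{-2}q^{-2N}$, so by Plancherel $\int_{\alpha,\delta}|2\alpha|^{-2}|I|^2=|2|_k^{-2}q^{-2N}\int_{a,d}|h_0|^2$; writing $\bigl|\tfrac12[\sigma(w)+\sigma(-w)]\bigr|^2=\tfrac14|1+\sigma(-2w)|^2$ and integrating over $a\in\mathfrak q^{N-N_0}$ (so $w=\varpi^{-N}a/d$ ranges over $\mathfrak q^{-N_0}$, with Jacobian $q^{-N}$), the cross term $\int_{\mathfrak q^{-N_0}}\sigma(-2w)\,dw$ vanishes precisely because $\sigma$ is unramified and $N_0>\ord_k 2$, leaving $\int_{\mathfrak q^{-N_0}}|1+\sigma(-2w)|^2\,dw=2q^{N_0}$; together with $\vol(\mathfrak o^\times)=\zeta_k(1)^{-1}$ this yields $\int_{\alpha,\delta}|2\alpha|^{-2}|I|^2=\tfrac{1}{2\zeta_k(1)}q^{N-N_0}$, i.e.\ \eqref{eq:explicit-normalizing-computation} with $C=(2\zeta_k(1))^{-1}$.

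The main obstacle is the bookkeeping in the computation of $\mathfrak S\heartsuit^1 f$: one must carefully track which lower-order terms — the $bc/p^2$ correction, the difference between $p$ and $d$, the quadratic-in-$a$ error in $1/p$, the effect of the $\mathfrak S$-symmetrization, and above all the residue-characteristic-two contributions governed by $\ord_k 2$ — are annihilated by $\sigma$ or by local constancy, which is exactly what the standing inequalities \eqref{eq:basic-N-N0-ord-2-assumptions} and \eqref{eq:basic-N-N0-ord-2-assumptions-2} are arranged to ensure. Once the explicit formula for $\mathfrak S\heartsuit^1 f$ is in hand, the rest is routine: factorization (after the above reductions) of the Fourier transform into one-dimensional integrals, one application of Fourier inversion to pass from $\Phi$ to $I$, and one application of Plancherel together with the vanishing of an unramified additive character sum.
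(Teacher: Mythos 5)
Your proposal follows the paper's strategy essentially step for step: compute $\heartsuit^1 f$ explicitly using the character sum $\sum_{\omega\in\mathcal X_N^\sigma}\omega(u) = [\mathfrak o^\times:\mathfrak o_0^\times]1_{\mathfrak o_0^\times}(u)\omega_\sigma(u)$ together with the volume of $\mathfrak J$, simplify the argument of $\omega_\sigma$ to $\psi(\tfrac{ad-bc}{\varpi^N d^2})$ via the Taylor/Hensel inequalities \eqref{eq:basic-N-N0-ord-2-assumptions}--\eqref{eq:basic-N-N0-ord-2-assumptions-2}, read off the support of $\Phi$ from the local constancy of the resulting kernel $F$ and from an explicit $a$-integral, derive the invariance under $u\in1+\mathfrak q^{N_0}$ from the congruence $uad-u^2bc\equiv ad-bc\pmod{\mathfrak q^N}$, and obtain \eqref{eq:weyl-invariance-fourier-transform-explicated-coordinates}--\eqref{eq:explicit-normalizing-computation} by Fourier inversion, a rescaling $a\mapsto\varpi^N a$, and Parseval. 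This is the paper's proof.

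Two small remarks. First, your normalizing computation uses the observation that $|2\alpha|^{-2}$ is literally the constant $|2|_k^{-2}q^{-2N}$ on $\supp I$, so that 2D Plancherel applies directly; the paper applies 1D Parseval in $a$ for each fixed $d$ and then rescales. Both are correct; yours is a hair more economical, but it is the same calculation. Second, there is a factual slip in your justification of $|\beta|,|\gamma|=o(q^N)$: you assert $N_1,N_2 = o(N)$, but from the definitions $N_1 = \lfloor N/2\rfloor - \ord_k(2)$ and $N_2 = \lceil N/2\rceil$, so $N_i/N\to 1/2$, not $0$. The bound you need is the true one: since $N_i + \ord_k(2) \leq N - (N - N_0 - 2\ord_k(2)) < N$ once $N$ is large, one has $q^{N_i+\ord_k(2)} = o(q^N)$. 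The conclusion is unaffected, but the stated reason is wrong and should be corrected.
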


\begin{remark*}
  We shall subsequently refer only to the properties laid out in the statement
  of  Proposition \ref{prop:key-fourier-estimate-microlocal-kernel},
  but it may be instructive to record that
  if $q$ is odd,
  one can establish
  (extending the proof
  of Proposition
  \ref{prop:key-fourier-estimate-microlocal-kernel})
  the explicit formula
  \[
  \Phi(\alpha,\beta,\gamma,\delta) =
  \frac{q^{- N_0}}{\zeta_k(1)}
  1_{\varpi^{-N} \mathfrak{o}^\times}(\alpha)
  1_{\mathfrak{q}^{-N_2}}(\beta)
  1_{\mathfrak{q}^{-N_1}}(\gamma)
  1_{\mathfrak{q}^{-N_0}}(\delta)
  \co(\delta / (\varpi^N \alpha)),
  \]
  where
  $\co(t) := (\psi(t) + \psi(-t))/2$.
  Otherwise, $k$ is a finite extension of $\mathbb{Q}_2$, and a similar
  but more complicated formula holds.
\end{remark*}

\subsection{Proofs\label{sec:main-comp-fourier-pfs}}
\label{sec-6-4}
The purpose of this section (which may be safely skipped on a first reading)
is to prove Proposition \ref{prop:key-fourier-estimate-microlocal-kernel}.
  \begin{lemma*}\label{lem:easy-formula-for-f-yay}
    Let $f$ be attached to $(N,\sigma)$.
    Let $x \in B$.
    In the coordinates \eqref{eq:x-xi-in-fancy-notation-etc},
    \begin{equation}\label{eq:easy-formula-for-f-yay}
      \heartsuit^1 f(x)
      =
      C_0
      1_{\mathfrak{o}^\times}(d)
      1_{\mathfrak{q}^{N-N_0}}(a)
      1_{\mathfrak{q}^{N_1}}(b)
      1_{\mathfrak{q}^{N_2}}(c)
      \psi (\frac{a d - b c}{\varpi^N d^2})
    \end{equation}
    where
    $C_0 := q^{2 N - N_0} |2|_k$.
  \end{lemma*}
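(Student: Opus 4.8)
I plan to unwind the definitions, collapse the sum $\sum_{\omega\in\mathcal{X}_N^\sigma}\omega(\,\cdot\,)$ to a single additive character times an indicator by orthogonality, and then check that over the resulting support the various $\mathfrak{q}$-adic conditions reduce to those in \eqref{eq:easy-formula-for-f-yay} while the phase simplifies as claimed.

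First I would note that, by the definition of $\heartsuit^1$ (\S\ref{sec:defn-heartsuit-local-again-without-Omega}), $\heartsuit^1 f(x) = 1_{\mathfrak{o}^\times}(\det x)\,f(\pr(x))$, read as $0$ when $\det x\notin\mathfrak{o}^\times$. If $x\in B$ satisfies $\det x\in\mathfrak{o}^\times$, then $\pr(x)\in\mathfrak{J}$ if and only if $x\in\widetilde{\mathfrak{J}}$: any scalar $\lambda$ with $\lambda x\in\widetilde{\mathfrak{J}}$ has $\lambda^2\det x\in\mathfrak{o}^\times$, hence $\lambda\in\mathfrak{o}^\times$, so it can be absorbed. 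For $x=\left(\begin{smallmatrix}d-a/2 & b\\ c & d+a/2\end{smallmatrix}\right)\in\widetilde{\mathfrak{J}}$ the $(1,1)$-entry $d-a/2$ is a unit (else $\det x\in\mathfrak{q}$, since the off-diagonal product lies in $\mathfrak{q}^{N_1+N_2}\subseteq\mathfrak{q}$ by \eqref{eq:basic-N-N0-ord-2-assumptions}), so by \S\ref{sec-5-3-3} and \S\ref{sec:element-attached-to-N-sigma},
\[
\heartsuit^1 f(x) = \vol(\mathfrak{J})^{-1}\,1_{\widetilde{\mathfrak{J}}}(x)\sum_{\omega\in\mathcal{X}_N^\sigma}\omega\!\Big(\tfrac{\det x}{(d-a/2)^2}\Big).
\]
Next I would compute the character sum. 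By \S\ref{sec-5-4} the set $\mathcal{X}_N^\sigma$ consists exactly of the extensions to $\mathfrak{o}^\times/(1+\mathfrak{q}^N)$ of the character $\omega_\sigma$ of $\mathfrak{o}_0^\times:=1+\mathfrak{q}^{N-N_0}$, hence it is a torsor under $\widehat{\mathfrak{o}^\times/\mathfrak{o}_0^\times}$; orthogonality then gives $\sum_{\omega\in\mathcal{X}_N^\sigma}\omega(u) = [\mathfrak{o}^\times:\mathfrak{o}_0^\times]\,1_{\mathfrak{o}_0^\times}(u)\,\omega_\sigma(u)$ for $u\in\mathfrak{o}^\times$. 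For $u\in\mathfrak{o}_0^\times$ one has $\omega_\sigma(u)=\sigma(\varpi^{-N}(u-1))=\psi(\varpi^{-N}(u-1))$ by the definition of $\omega_\sigma$, the isomorphism $\iota$ of \S\ref{sec:tayl-expans-logar}, and the choice of $\psi$ in the class $\sigma$ (the argument $\varpi^{-N}(u-1)$ lies in $\mathfrak{q}^{-N_0}$, so this is well-defined). Using \eqref{eqn:index-of-O-0} and the lemma of \S\ref{sec:measures-for-fourier-computation}, the prefactor becomes $\vol(\mathfrak{J})^{-1}[\mathfrak{o}^\times:\mathfrak{o}_0^\times]=|2|_k q^{2N-N_0}=C_0$.

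It remains to identify the support and simplify the phase; this is the only genuinely computational point and is where \eqref{eq:basic-N-N0-ord-2-assumptions}--\eqref{eq:basic-N-N0-ord-2-assumptions-2} enter. Put $e:=d-a/2$, so $\det x = e^2+ea-bc$ and $u:=\det(x)/e^2$ satisfies $u-1=(ea-bc)/e^2$. Since $bc\in\mathfrak{q}^{N_1+N_2}=\mathfrak{q}^{N-\ord_k(2)}\subseteq\mathfrak{q}^{N-N_0}$ and $e\in\mathfrak{o}^\times$, the condition $u\in\mathfrak{o}_0^\times$ is, on $\widetilde{\mathfrak{J}}$, equivalent to $a\in\mathfrak{q}^{N-N_0}$; conversely, given $a\in\mathfrak{q}^{N-N_0}$, $b\in\mathfrak{q}^{N_1}$, $c\in\mathfrak{q}^{N_2}$ and $d\in\mathfrak{o}^\times$, for $N$ large one has $a/2\in\mathfrak{q}$, whence $d\pm a/2\equiv d$ and $\det x=d^2-a^2/4-bc\equiv d^2\pmod{\mathfrak{q}}$, so the remaining conditions cutting out $\widetilde{\mathfrak{J}}$ hold automatically; this produces the indicator $1_{\mathfrak{o}^\times}(d)\,1_{\mathfrak{q}^{N-N_0}}(a)\,1_{\mathfrak{q}^{N_1}}(b)\,1_{\mathfrak{q}^{N_2}}(c)$. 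Finally, on this support I would expand, using $d-e=a/2$,
\[
\varpi^{-N}\Big(\tfrac{ea-bc}{e^2}-\tfrac{ad-bc}{d^2}\Big) = \varpi^{-N}\Big(\tfrac{a^2}{2ed}-\tfrac{bc\,a(d+e)}{2e^2d^2}\Big),
\]
and check that both summands lie in $\mathfrak{o}$; this follows from $a\in\mathfrak{q}^{N-N_0}$, $bc\in\mathfrak{q}^{N-\ord_k(2)}$, $\ord_k(d+e)=\ord_k(2)$, and the hypothesis $N\geq 2N_0+\ord_k(2)$. Since $\psi$ is unramified it annihilates this difference, so $\psi(\varpi^{-N}(u-1))=\psi\big((ad-bc)/(\varpi^N d^2)\big)$, and assembling the pieces gives \eqref{eq:easy-formula-for-f-yay} with $C_0=q^{2N-N_0}|2|_k$. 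The main (indeed only) obstacle is the careful bookkeeping of the $\ord_k(2)$-terms in those last valuation estimates; everything else is formal.
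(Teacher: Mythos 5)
Your proposal is correct and follows essentially the same strategy as the paper: collapse the sum over $\mathcal{X}_N^\sigma$ by orthogonality into $[\mathfrak{o}^\times:\mathfrak{o}_0^\times]\,1_{\mathfrak{o}_0^\times}(u)\,\omega_\sigma(u)$, identify $\omega_\sigma$ with $\psi(\varpi^{-N}(\cdot-1))$, match the volume constants to get $C_0$, and then simplify the support and phase using \eqref{eq:basic-N-N0-ord-2-assumptions}--\eqref{eq:basic-N-N0-ord-2-assumptions-2}. The one cosmetic difference is that you verify the phase replacement by showing the single algebraic difference
$\varpi^{-N}\big(\tfrac{ea-bc}{e^2}-\tfrac{ad-bc}{d^2}\big)=\varpi^{-N}\big(\tfrac{a^2}{2ed}-\tfrac{abc(d+e)}{2e^2d^2}\big)$
lies in $\mathfrak{o}$, whereas the paper verifies the two congruences $\tfrac{a}{d-a/2}\equiv\tfrac{ad}{d^2}$ and $\tfrac{bc}{(d-a/2)^2}\equiv\tfrac{bc}{d^2}$ mod $\mathfrak{q}^N$ separately; both compute the same thing, and your single-identity formulation also confirms that \eqref{eq:easy-formula-for-f-yay} has $\varpi^N d^2$, not $2\varpi^N d^2$, in the denominator of the phase.
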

  \begin{proof}
    By \eqref{eqn:index-of-O-0} and the lemma of \S\ref{lem:volume-of-J-bar},
    \begin{equation}\label{eqn:compute-volumes-J-etc}
      C_0 = [\mathfrak{o}^\times:\mathfrak{o}_0^\times]
    \vol(\mathfrak{J})^{-1}.
    \end{equation}
    By Fourier analysis on $\mathfrak{o}^\times$,  we have for $u \in \mathfrak{o}^\times$
   the expansion
  \begin{equation}\label{eq:fourier-expansion-of-zeta-local}
    [\mathfrak{o}^\times:\mathfrak{o}_0^\times]
       1_{\mathfrak{o}_0^\times}(u) \omega_{\sigma}(u)
      = \sum_{\omega \in \mathcal{X}_N^\sigma} \omega(u).
    \end{equation}
    Applying \eqref{eqn:compute-volumes-J-etc} and
    \eqref{eq:fourier-expansion-of-zeta-local} to the definition
    of $f$
    gives $\heartsuit^1 f(x) = C_0 \kappa \psi(\zeta)$, where
    \[
    \kappa := 
    1_{\mathfrak{q}^{N_1}}(b)
    1_{\mathfrak{q}^{N_2}}(c)
    1_{\mathfrak{o}^\times}(d - a/2)
    1_{\mathfrak{o}^\times}(d + a/2)
    1_{\mathfrak{q}^{N-N_0}}
    (\frac{\nr(g)}{(d-a/2)^2} - 1),
    \]
    \[
    \zeta
    :=
    \varpi^{-N}
    (
    \frac{\nr(x)}{(d-a/2)^2} - 1
    ).
    \]
    By our assumption \eqref{eq:basic-N-N0-ord-2-assumptions} 
    on the largeness of $N$ relative to $N_0$
    and identifies such as
    \begin{equation}\label{eq:first-determinant-expansion-in-annoying-f-initial-lemma}
      \frac{\nr(x)}{(d-a/2)^2} - 1
      = \frac{a}{d-a/2}
      - \frac{b c}{(d - a/2)^2},
    \end{equation}
    we verify directly that
    \begin{equation}\label{eq:basic-congruence-equivalence-initial-study-of-f}
      \kappa
      =
      1_{\mathfrak{o}^\times}(d)
      1_{\mathfrak{q}^{N-N_0}}(a)
      1_{\mathfrak{q}^{N_1}}(b)
      1_{\mathfrak{q}^{N_2}}(c).
    \end{equation}
    Similarly, we verify using \eqref{eq:basic-N-N0-ord-2-assumptions-2}
    that for $a,b,c,d$
    in the support of the RHS of \eqref{eq:basic-congruence-equivalence-initial-study-of-f},
    the congruences
    \[
    \frac{a}{d-a/2}
    \equiv \frac{a d}{d^2} \pmod{\mathfrak{q}^N},
    \quad
    \frac{b c}{(d-a/2)^2}
    \equiv \frac{b c}{d^2} \pmod{\mathfrak{q}^N}
    \]
    hold.
    It follows
    from these and
    \eqref{eq:first-determinant-expansion-in-annoying-f-initial-lemma}
    that
    \[
    \kappa \psi(\zeta)
    = \kappa \psi(\frac{a d  - b c}{2 \varpi^N d^2}).
    \]
    This completes the proof of \eqref{eq:easy-formula-for-f-yay}.
  \end{proof}
  \begin{corollary*}
    With notation as in the lemma,
    the quantity
    $f(x)$ depends only upon the congruence classes
    \[
    a \pmod{\mathfrak{q}^{N}},
    \quad 
    b \pmod{2 \mathfrak{q}^{N_1}},
    \quad 
    c \pmod{2 \mathfrak{q}^{N_2}},
    \quad
    d \pmod{\mathfrak{q}^{N_0}}.
    \]
  \end{corollary*}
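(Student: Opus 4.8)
The plan is to read the statement off the explicit formula \eqref{eq:easy-formula-for-f-yay}: what has to be shown is that the right-hand side of that formula, as a function of $(a,b,c,d)\in k^4$, is unchanged when $a$ is replaced by $a+\epsilon_a$ with $\epsilon_a\in\mathfrak{q}^{N}$, $b$ by $b+\epsilon_b$ with $\epsilon_b\in 2\mathfrak{q}^{N_1}$, $c$ by $c+\epsilon_c$ with $\epsilon_c\in 2\mathfrak{q}^{N_2}$, and $d$ by $d+\epsilon_d$ with $\epsilon_d\in\mathfrak{q}^{N_0}$ (the prefactor $C_0$ being independent of $x$). I would split this into the invariance of the four indicator factors and the invariance of the phase $\psi\!\left(\frac{ad-bc}{\varpi^N d^2}\right)$ on their common support.

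For the indicators: the inclusions $\mathfrak{q}^{N}\subseteq\mathfrak{q}^{N-N_0}$ and $2\mathfrak{q}^{N_i}=\mathfrak{q}^{\ord_k(2)+N_i}\subseteq\mathfrak{q}^{N_i}$ (for $i=1,2$) show that $1_{\mathfrak{q}^{N-N_0}}(a)$, $1_{\mathfrak{q}^{N_1}}(b)$, $1_{\mathfrak{q}^{N_2}}(c)$ are left invariant, and since $N_0\geq 1$ one has $d+\mathfrak{q}^{N_0}\subseteq\mathfrak{o}^\times$ when $d\in\mathfrak{o}^\times$ and $(d+\mathfrak{q}^{N_0})\cap\mathfrak{o}^\times=\varnothing$ otherwise, so $1_{\mathfrak{o}^\times}(d)$ is invariant too. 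Consequently a translated quadruple lies in the support of the right-hand side of \eqref{eq:easy-formula-for-f-yay} if and only if the original quadruple does; outside that support both values vanish, and it remains only to treat the phase on the support, where we may assume $d\in\mathfrak{o}^\times$, $a\in\mathfrak{q}^{N-N_0}$, $b\in\mathfrak{q}^{N_1}$, $c\in\mathfrak{q}^{N_2}$.

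Since $\psi$ is unramified it is trivial on $\mathfrak{o}$, so it suffices to check that each of the four translations changes the argument $\frac{ad-bc}{\varpi^N d^2}$ by an element of $\mathfrak{o}$. The $a$-translation changes it by $\frac{\epsilon_a}{\varpi^N d}$, which lies in $\mathfrak{o}$ because $\epsilon_a\in\mathfrak{q}^N$ and $d\in\mathfrak{o}^\times$; the $b$- and $c$-translations change it by $-\frac{\epsilon_b c}{\varpi^N d^2}$ and $-\frac{\epsilon_c b}{\varpi^N d^2}$, which lie in $\mathfrak{o}$ because $\epsilon_b c,\ \epsilon_c b\in 2\mathfrak{q}^{N_1+N_2}=\mathfrak{q}^{N}$ by the defining relation $N_1+N_2=N-\ord_k(2)$. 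For the $d$-translation I would write the change as
\[
\frac{a}{\varpi^N}\Bigl(\tfrac{1}{d+\epsilon_d}-\tfrac{1}{d}\Bigr)
\;-\;\frac{bc}{\varpi^N}\Bigl(\tfrac{1}{(d+\epsilon_d)^2}-\tfrac{1}{d^2}\Bigr),
\]
where the first parenthesis equals $\frac{-\epsilon_d}{d(d+\epsilon_d)}$, of valuation $\geq N_0$ ($d$ and $d+\epsilon_d$ being units), and $\frac{a}{\varpi^N}\in\mathfrak{q}^{-N_0}$, so the first product is in $\mathfrak{o}$; the second parenthesis equals $\frac{-\epsilon_d(2d+\epsilon_d)}{d^2(d+\epsilon_d)^2}$, of valuation $\geq N_0+\ord_k(2)$ because $\ord_k(2d+\epsilon_d)=\ord_k(2)$ (here using $N_0>\ord_k(2)$), while $\frac{bc}{\varpi^N}\in\mathfrak{q}^{-\ord_k(2)}$, so the second product is in $\mathfrak{o}$ as well. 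This gives the corollary.

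The proof involves no real obstacle; the only point requiring care --- call it the ``hard part'' --- is the bookkeeping with the prime $2$, i.e.\ keeping track of $\ord_k(2)$ via $2\mathfrak{q}^{N_i}=\mathfrak{q}^{\ord_k(2)+N_i}$, the identity $N_1+N_2=N-\ord_k(2)$, and the standing hypothesis $N_0>\ord_k(2)$. When the residue characteristic of $k$ is odd this collapses to the evident statement, and when $k$ extends $\mathbb{Q}_2$ it is exactly what forces the ``$2$'' into the moduli for $b$ and $c$ (and forces the modulus $\mathfrak{q}^{N}$, rather than the weaker $\mathfrak{q}^{N-N_0}$ coming from the indicator, for $a$).
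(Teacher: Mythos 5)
Your proof is correct and is precisely the verification the paper leaves to the reader (the paper's proof is ``Immediate; it may help to recall that $N = \ord_k(2) + N_1 + N_2$''): you check invariance of each indicator factor from the formula \eqref{eq:easy-formula-for-f-yay} and then verify on the support that each translation shifts the argument of $\psi$ by an element of $\mathfrak{o}$, using $N_1+N_2=N-\ord_k(2)$ and $N_0>\ord_k(2)$ exactly as the paper's hint suggests.
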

  \begin{proof}
    Immediate;
    it may help to recall that
    $N = \ord_k(2) + N_1 + N_2$.
  \end{proof}

    We now prove Proposition \ref{prop:key-fourier-estimate-microlocal-kernel}.
    By the formula
    for $\heartsuit^1 f$ given in the lemma
    and the explication
    \eqref{eq:fourier-transform-definition-spelled-out}
    of the Fourier transform,
    we have
    \[
    \Phi(\alpha,\beta,\gamma,\delta)
    =
    \int_{a,b,c,d \in k}
    F(a,b,c,d)
    \psi(
    \alpha a + \delta d
    - \beta c - \gamma b),
    \]
    where
    \begin{equation}\label{eq:defn-F-explicated-f}
      F(a,b,c,d)
      :=
      C_0
      1_{\mathfrak{o}^\times}(d)
      1_{\mathfrak{q}^{N-N_0}}(a)
      1_{\mathfrak{q}^{N_1}}(b)
      1_{\mathfrak{q}^{N_2}}(c)
      \co(\frac{a d - b c}{\varpi^N d^2}),
    \end{equation}
    with
    $\co(x) := 
    ( \psi (x)
      +
      \psi (-x))/
      2$.
    The smoothness properties of $f$ (hence of $F$),
    as enunciated in the corollary,
    imply corresponding decay properties of its
    Fourier transform, namely that $\Phi(\alpha,\beta,\gamma,\delta)
    \neq 0$
    only if $\alpha \in \mathfrak{q}^{-N},
    \beta \in 2^{-1} \mathfrak{q}^{-N_2},
    \gamma \in 2^{-1} \mathfrak{q}^{-N_1},
    \delta \in \mathfrak{q}^{-N_0}$.
    We thereby obtain all assertions
    in
    \eqref{eq:asymptotic-properties-of-entries-of-support-of-fourier-transform}
    except for the lower bound on $|\alpha|$.
    To establish the latter,
    we compute for $d \in \mathfrak{o}^\times$ that
    \begin{equation}\label{eq:computation-relevant-for-alpha-support-of-fourier-transform}
      \int_{a \in k}
      1_{\mathfrak{q}^{N-N_0}}(a)
      \co (\frac{a d }{\varpi^N d^2})
      \psi(\alpha a)
      =
      q^{N_0-N}
      \rho(\alpha,d),
    \end{equation}
    where
    \[
    \rho(\alpha,d) :=
    \frac{1_{\mathfrak{q}^{N_0-N}}(\alpha + \frac{1}{
        \varpi^N d}) +
      1_{\mathfrak{q}^{N_0-N}}(\alpha - \frac{1}{\varpi^N d})
    }{2
    }.
    \]
    One has $\rho(\alpha,d) \neq 0$
    only if
    $\alpha \in \pm \frac{1}{\varpi^N d} + 
    \mathfrak{q}^{N_0 - N}
    \subseteq \frac{1}{\varpi ^N} \mathfrak{o}^\times$,
    in which case $|\alpha| \asymp q^N$.

    The invariance
    \eqref{eq:some-invariance-of-fourier-transform-udner-add-and-multiply}
    is equivalent to
    the identity $F(u a, u b, u c,d) = F(a,b,c,d)$
    for $u \in 1 + \mathfrak{q}^{N_0}$,
    which follows from \eqref{eq:defn-F-explicated-f}
    and the  congruence
    $u a d - u^2 b c \equiv a d - b c \pmod{\mathfrak{q}^N}$
    for $(a,b,c,d) \in \supp(F)$.

    We now verify
    \eqref{eq:weyl-invariance-fourier-transform-explicated-coordinates}.
    By Fourier inversion,
    \begin{equation}\label{eq:I-of-alpha-delta-after-Fourier-inversion}
      I(\alpha,\delta)
        =
        \int_{a,d \in k}
        F(a,0,0,d)
        \psi(\alpha a + \delta d).
      \end{equation}
      Thus \eqref{eq:defn-F-explicated-f}
      reduces to $F(a,0,0,d) = F(-a,0,0,d)$,
      follows from \eqref{eq:defn-F-explicated-f}.

      To establish \eqref{eq:I-rescaled-independent-of-N}, we 
      recall the definition of $C_0$ from the lemma and
      apply to
      \eqref{eq:defn-F-explicated-f} and
      \eqref{eq:I-of-alpha-delta-after-Fourier-inversion} 
      the change of variables $a \mapsto \varpi^N a$,
      giving
      \[
      I(\varpi^{-N} \alpha,\delta) =
      C'
      q^N
      \int_{a,d \in k}
      1_{\mathfrak{o}^\times}(d)
      1_{\mathfrak{q}^{-N_0}}(a)
      \co (a/d)
      \psi(\alpha a + \delta d)
      \]
      for some unimportant scalar $C' = |2| q^{-N_0}$
      that does not depend upon $N$.

      We turn finally to
      \eqref{eq:explicit-normalizing-computation}.
      Denote temporarily by $\mathcal{I}$ its LHS.
      By \eqref{eq:I-of-alpha-delta-after-Fourier-inversion}
      and Parseval,
      \begin{equation}\label{eq:normalizing-computation-consequence-of-parseval}
        \mathcal{I}  = 
        \int_{\alpha,d \in k}
        |2 \alpha|^{-2}
        \left\lvert
          \int_{a \in k}
          F(a,0,0,d)
          \psi(\alpha a)
        \right\rvert^2.
      \end{equation}
      We compute 
      with the help of
      \eqref{eq:computation-relevant-for-alpha-support-of-fourier-transform}
      that
      \[
      \int_{a \in k}
      F(a,0,0,d)
      \psi(\alpha a)
      =
      q^{N_0-N} C_0
      1_{\mathfrak{o}^\times}(d)
      \rho(\alpha,d).
      \]
      Substituting this and the identity  $q^{N_0-N} C_0 = q^N |2|_k$
      into
      \eqref{eq:normalizing-computation-consequence-of-parseval}
      gives
      \[
      \mathcal{I} =
      q^{2 N}
      \int_{\alpha,d \in k}
      |\alpha  |^{-2} 1_{\mathfrak{o}^\times}(d)
      |\rho(\alpha,d)|^2.
      \]
      We substitute $\alpha \mapsto \varpi^{-N} \alpha$;
      since $|\varpi^{-N}|^{-1} = q^{-N}$,
      we obtain
      \[
      \mathcal{I} =
      q^{N}
      \int_{\alpha,d \in k}
      |\alpha|^{-2}
      1_{\mathfrak{o}^\times}(d)
      \left\lvert \frac{1_{\mathfrak{q}^{N_0}}(\alpha + 1/d)
          +
          1_{\mathfrak{q}^{N_0}}(\alpha - 1/d)
        }{
          2}
      \right\rvert^2.
      \]
      By the consequence
      $2 \notin \mathfrak{q}^{N_0}$
      of the assumption
      \eqref{eq:basic-N-N0-ord-2-assumptions},
      we have
      \[1_{\mathfrak{o}^\times}(d)
      1_{\mathfrak{q}^{N_0}}(\alpha + 1/d)
      1_{\mathfrak{q}^{N_0}}(\alpha - 1/d) = 0,\]
      whence
      \begin{align*}
        \mathcal{I}
        &= q^N
          \int_{\alpha,d \in k}
          |\alpha|^{-2}
          1_{\mathfrak{o}^\times}(d)
          \frac{1_{\mathfrak{q}^{N_0}}(\alpha + 1/d)
          +
          1_{\mathfrak{q}^{N_0}}(\alpha - 1/d)
          }{
          2^2}
        \\
        &= 
          \frac{2 q^{N-N_0} ( 1 - \tfrac{1}{q}) }{2^2}
          = 
          \frac{q^{N-N_0}  }{2 \zeta_k(1)}
      \end{align*}
      We thereby arrive at
      \eqref{eq:explicit-normalizing-computation}
      with the constant
      \[
      C =
      q^{N_0-N}
      \cdot 
      \frac{q^{N-N_0}  }{2 \zeta_k(1)}
      = 
      \frac{1}{2 \zeta_k(1)},
      \]
      as required.
\section{Estimates for the main term\label{sec:local-estimates-main-statements}}
\label{sec-7}
\subsection{Statement of result}
\label{sec-7-1}
Let $E$ denote the diagonal subalgebra of $B$.
Let $H \leq G$ denote the image of $E^\times$.
Thus $H = \{a(y) : y \in k^\times \}$
is the diagonal split torus.
Let $N(H)$ denote the normalizer in $G$ of $H$.
Let
\[
  W := \left\{ \begin{pmatrix}
      1 &  \\
      & 1
    \end{pmatrix},
    \begin{pmatrix}
      & 1 \\
      1 & 
    \end{pmatrix}\right\}
\]
denote the Weyl group $W \cong N(H) / H$.
Equip $H$ and $N(H)$ the measures
\[
  \int_H f
  :=
  \int_{y \in k^\times}
  f(y) \, \frac{d y}{|y|},
  \quad
  \int_{N(H)}
  f :=
  \sum_{w \in W} \int_{h \in H}
  f(w h),
\]
where $d y$ denotes (as in \S\ref{sec:measures-for-fourier-computation}) the Haar measure
on $k$ assigning volume one to $\mathfrak{o}$.
\begin{proposition}\label{prop:desired-main-term-identity-do-it-up}
  Let $\Psi : G \rightarrow \mathbb{C}$
  be a function
  with the following properties:
  \begin{enumerate}
  \item There is an open subgroup $U$ of $G$ so that
    \begin{equation}\label{eq:matrix-coeff-bi-invariance}
      \Psi(u_1 g u_2)  = \Psi(g) \text{ for all $u_1,u_2 \in U$, $g \in G$.}
    \end{equation}
  \item One has (see \S\ref{sec:local-Xi} concerning $\Xi$)
    \begin{equation}\label{eq:matrix-coeff-decay-weak-generic}
      \Psi(g) \ll \Xi(g)^{\delta} \text{ for some } \delta > 0.
    \end{equation}
  \end{enumerate}
  Let $N$ be a positive integer taken sufficiently large in terms of $\Psi$.
  Let $f \in C_c^\infty(G)$
  be attached to $(N,\sigma)$
  for some $\sigma \in \Sigma$
  (see \S\ref{sec:element-attached-to-N-sigma}).
  Then
  \begin{equation}\label{eqn:main-term-what-we-wanna-show-just-after-reducing-to-local-problem}
    \int_{g \in G}
    \langle \Ad(g) \mathfrak{S} f, \mathfrak{S} f
    \rangle_{L^2(G)}
    \Psi(g)
    =
    q^{N-N_0} \frac{1}{2}
    \int_{N(H)} \Psi.
  \end{equation}
\end{proposition}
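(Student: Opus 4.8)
The plan is to push the entire computation onto the quadratic space $B=M_2(k)$ via the ``$\heartsuit$''--Fourier transform of \S\ref{sec-6}, where the relevant kernel is pinned down by Proposition \ref{prop:key-fourier-estimate-microlocal-kernel}, and then to run the Weyl integration formula on the traceless part $B^0$; the large-conductor support of the transformed kernel collapses the resulting orbital integrals onto the normalizer $N(H)$, and the constant is extracted from \eqref{eq:explicit-normalizing-computation}. I will fix on $G$ the Haar measure induced by $\psi$ as in \S\ref{sec:measures-for-fourier-computation}; this loses nothing, since both sides of \eqref{eqn:main-term-what-we-wanna-show-just-after-reducing-to-local-problem} are invariant under rescaling $G$ (the kernel $f$ scales inversely with $\vol(\mathfrak{J})$). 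Set $\phi:=\mathcal{F}\mathfrak{S}\heartsuit^1 f\in\mathcal{S}(B)$ as in \eqref{eqn:phi-symmetrized-fourier-transform-of-f}, with $\Phi$ the associated function of the coordinates \eqref{eq:x-xi-in-fancy-notation-etc}. Since $f$ is supported on $\mathfrak{J}\subseteq$ the image of $\GL_2(\mathfrak{o})$, one has $\heartsuit^1 f=f$ as elements of $\mathcal{S}(B)$, and on the supports of $\mathfrak{S}\heartsuit^1 f$ and of $\Ad(g)\mathfrak{S}\heartsuit^1 f$ one has $\nr\in\mathfrak{o}^\times$. Feeding $F(x):=(\Ad(g)\mathfrak{S}\heartsuit^1 f)(x)\,\overline{(\mathfrak{S}\heartsuit^1 f)(x)}$ into the integral formula \eqref{eq:integarl-formula-for-integrating-over-B}, the inner $z$-integral runs over $\mathfrak{o}^\times$ with $|\nr|^2\equiv 1$, so that $\langle\Ad(g)\mathfrak{S}\heartsuit^1 f,\mathfrak{S}\heartsuit^1 f\rangle_{L^2(B)}=\vol(\mathfrak{o}^\times)\,\langle\Ad(g)\mathfrak{S}f,\mathfrak{S}f\rangle_{L^2(G)}$ with $\vol(\mathfrak{o}^\times)=\zeta_k(1)^{-1}$ (\S\ref{sec:local-vol-formulas}); by Parseval on $L^2(B)$ (via \eqref{eq:fourier-inversion-for-normalization}) and $\mathcal{F}\Ad(g)=\Ad(g)\mathcal{F}$ the left side equals $\langle\Ad(g)\phi,\phi\rangle_{L^2(B)}$. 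Hence
\[
\int_G\langle\Ad(g)\mathfrak{S}f,\mathfrak{S}f\rangle_{L^2(G)}\Psi(g)\,dg=\zeta_k(1)\int_G\langle\Ad(g)\phi,\phi\rangle_{L^2(B)}\Psi(g)\,dg,
\]
all integrals converging absolutely by $\langle\Ad(g)\phi,\phi\rangle_{L^2(B)}\ll\Xi(g)^2$ (Lemma \ref{lemma:cheap-matrix-coeff-schwartz-space-B-estimate-via-Xi}), the hypothesis \eqref{eq:matrix-coeff-decay-weak-generic}, and Lemma \ref{lemma:convergence-Xi-along-G-and-H}. Writing $x=\tfrac{\delta}{2}I+\xi^0$ with $\xi^0\in B^0$ and $\tr x=\delta$, conjugation fixes the scalar part, so with $\phi_\delta(\xi^0):=\Phi(\alpha,\beta,\gamma,\delta)$ one has $\langle\Ad(g)\phi,\phi\rangle_{L^2(B)}=\int_k\langle\Ad(g)\phi_\delta,\phi_\delta\rangle_{L^2(B^0)}\,d\delta$.

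\textbf{Weyl integration and localization.} By \eqref{eq:asymptotic-properties-of-entries-of-support-of-fourier-transform}, on $\supp\phi_\delta$ the entry $|\alpha|\asymp q^N$ dominates $|\beta\gamma|$, so $\nr(\xi^0)=-(\alpha^2+\beta\gamma)\in-\alpha^2(1+\mathfrak{q}^{m(N)})$ with $m(N)\to\infty$; thus $\xi^0$ is regular semisimple and \emph{split} for $N$ large. Consequently the elliptic terms in the Weyl integration formula on $B^0$ vanish against $F(\xi^0)=\phi_\delta(\Ad(g^{-1})\xi^0)\overline{\phi_\delta(\xi^0)}$, and -- with $X_\lambda:=\diag(\lambda,-\lambda)$, discriminant $|D(X_\lambda)|=|2\lambda|^2$, and the $H\backslash G$-measure induced by \eqref{eqn:explicit-M2-integral-formula} -- we get
\[
\langle\Ad(g)\phi_\delta,\phi_\delta\rangle_{L^2(B^0)}=\tfrac12\int_{k^\times}|2\lambda|^2\int_{H\backslash G}\phi_\delta\bigl(\Ad((\dot g g)^{-1})X_\lambda\bigr)\,\overline{\phi_\delta\bigl(\Ad(\dot g^{-1})X_\lambda\bigr)}\,d\dot g\,d\lambda.
\]
Inserting this, performing the substitution $g\mapsto\sigma(\dot g)^{-1}g$ in the outer $\int_G$ (for a section $\sigma:H\backslash G\to G$), and unfolding the residual $\int_H$, the quantity becomes $\tfrac12\int_{k^\times}|2\lambda|^2(\cdots)\,d\lambda\,d\delta$ of a double integral over $H\backslash G\times H\backslash G$ of $\phi_\delta(\Ad(\dot g_1^{-1})X_\lambda)\,\overline{\phi_\delta(\Ad(\dot g_0^{-1})X_\lambda)}$ against the section-independent kernel $\int_H\Psi(\sigma(\dot g_0)^{-1}s\,\sigma(\dot g_1))\,ds$ (convergent by \eqref{eq:matrix-coeff-decay-weak-generic} and Lemma \ref{lemma:convergence-Xi-along-G-and-H}(2)). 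Now $\dot g\mapsto\phi_\delta(\Ad(\dot g^{-1})X_\lambda)$ is supported in $H\backslash G$ within an arbitrarily small neighbourhood (for $N$ large) of the two-point set $N(H)/H=\{1,\bar w_0\}$, $w_0$ the Weyl element: indeed, for $\Ad(\dot g^{-1})X_\lambda$ to have off-diagonal coordinates $o(q^N)$ while the diagonal is $\asymp q^N$, $\dot g$ must be $O(|\beta/\lambda|+|\gamma/\lambda|)=o(1)$-close to $\{1,\bar w_0\}$. Taking $N$ large enough in terms of the level $U$ of $\Psi$ in \eqref{eq:matrix-coeff-bi-invariance}, the kernel is \emph{exactly} constant on each of the four product neighbourhoods, equal to $\int_H\Psi(w_1^{-1}s\,w_2)\,ds$ for $\dot g_0$ near $w_1$, $\dot g_1$ near $w_2$; since conjugation by $w_0$ preserves $H$ with its Haar, this equals $\Psi_H(w_1^{-1}w_2)$ where $\Psi_H(g):=\int_{k^\times}\Psi(a(y)g)\,\tfrac{dy}{|y|}$, and $\int_{N(H)}\Psi=\sum_{w\in W}\Psi_H(w)$.

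\textbf{Orbital integrals and assembly.} Parametrising the identity neighbourhood of $H\backslash G$ by $\dot g=n'(\gamma_0)n(\beta_0)$, I compute $\Ad(\dot g^{-1})X_\lambda$ in the coordinates \eqref{eq:x-xi-in-fancy-notation-etc} and change variables from $(\beta_0,\gamma_0)$ to the off-diagonal coordinates $(\beta,\gamma)$: the Jacobian is $|2\lambda|^2$, while the constraint $\alpha^2+\beta\gamma=\lambda^2$ forces $\alpha=\lambda u$ with $u\in1+\mathfrak{q}^{m(N)}$, which by \eqref{eq:some-invariance-of-fourier-transform-udner-add-and-multiply} followed by a volume-preserving change of $(\beta,\gamma)$ may be replaced by $\alpha=\lambda$. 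Hence the ``near $1$'' orbital integral equals $|2\lambda|^{-2}I(\lambda,\delta)$, and (writing $\dot g=w_0\dot g'$ and using $\Ad(w_0^{-1})X_\lambda=X_{-\lambda}$) the ``near $w_0$'' one equals $|2\lambda|^{-2}I(-\lambda,\delta)=|2\lambda|^{-2}I(\lambda,\delta)$ by \eqref{eq:weyl-invariance-fourier-transform-explicated-coordinates}. Thus all four product terms contribute the identical factor $|2\lambda|^{-4}|I(\lambda,\delta)|^2$, and summing against $\Psi_H(w_1^{-1}w_2)$ over $w_1,w_2\in W$ gives $\bigl(2\Psi_H(1)+2\Psi_H(w_0)\bigr)|2\lambda|^{-4}|I(\lambda,\delta)|^2=2\,|2\lambda|^{-4}|I(\lambda,\delta)|^2\int_{N(H)}\Psi$. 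Feeding this back,
\[
\int_G\langle\Ad(g)\phi,\phi\rangle_{L^2(B)}\Psi(g)\,dg=\Bigl(\int_{N(H)}\Psi\Bigr)\int_{k}\int_{k^\times}|2\lambda|^{-2}|I(\lambda,\delta)|^2\,d\lambda\,d\delta=Cq^{N-N_0}\int_{N(H)}\Psi
\]
by \eqref{eq:explicit-normalizing-computation}, with $C=(2\zeta_k(1))^{-1}$. Multiplying by the factor $\zeta_k(1)$ from the first step yields $\tfrac12 q^{N-N_0}\int_{N(H)}\Psi$, which is \eqref{eqn:main-term-what-we-wanna-show-just-after-reducing-to-local-problem}.

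\textbf{Main obstacle.} The crux is the localization step together with the orbital-integral identification: one must quantify precisely, using only the support and invariance properties recorded in Proposition \ref{prop:key-fourier-estimate-microlocal-kernel}, that (i) the conjugation variable is confined within the $U$-scale of $N(H)$ once $N$ is large, so that $\Psi$ genuinely decouples as the constant $\Psi_H(w_1^{-1}w_2)$, and (ii) the $\alpha$-coordinate shift $\alpha=\lambda u$ incurred in passing to orbit coordinates is absorbed exactly (not merely asymptotically), so that the orbital integral equals $|2\lambda|^{-2}I(\lambda,\delta)$ on the nose. Carrying the passive trace parameter $\delta$ through the Weyl integration -- harmless but omnipresent -- is a minor additional bookkeeping burden.
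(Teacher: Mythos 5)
Your proposal is correct and follows essentially the same route as the paper's own argument: Parseval to move the pairing from $G$ to $B$, the $\delta$-fibration $B=k\oplus B^0$, the Weyl integration formula on $B^0$ (here you have the correct Jacobian weight $|2\lambda|^2$; note that \S\ref{sec-7-3-5} as printed carries a sign typo in the exponent, while the subsequent proof there implicitly uses the corrected weight as you do), unfolding via the substitution $g\mapsto\dot g^{-1}g$, localization to $N(H)$ using the support properties from Proposition \ref{prop:key-fourier-estimate-microlocal-kernel}, and finally \eqref{eq:explicit-normalizing-computation} for the constant. The main presentational difference is that the paper abstracts the localization and orbital-integral steps into a standalone lemma about arbitrary $\phi$ satisfying the support/invariance/Weyl hypotheses, and proves it cleanly via Hensel's-lemma machinery (Lemmas of \S\ref{sec-7-3-3} and \S\ref{sec-7-3-4}); you instead compute directly in coordinates $\dot g=n'(\gamma_0)n(\beta_0)$. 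Both approaches work. The two points you flag as the obstacle are indeed where care is required and are handled exactly by those lemmas: (i) Lemma \ref{lem:we-know-when-conjugation-nearly-stabilizes-diagonal} together with Lemma \ref{lem:expansion-integrals-near-singular-pts} make the confinement to the $K[m]$-scale of $N(H)$ exact rather than approximate; and (ii) the ``volume-preserving change of $(\beta,\gamma)$'' you invoke does hold on the nose (a short computation shows the Jacobian of $(\beta,\gamma)\mapsto(\beta/u,\gamma/u)$ along the orbit surface $\alpha^2+\beta\gamma=\lambda^2$ is $u^{-4}$, of absolute value $1$), and is what Lemma \ref{lem:basic-integral-formula-for-diag-inv-f} encodes.
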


\begin{remark}
  The condition \eqref{eq:matrix-coeff-decay-weak-generic}
  implies the absolute convergence of both sides of
  \eqref{eqn:main-term-what-we-wanna-show-just-after-reducing-to-local-problem}
  (see \S\ref{sec:local-convergence-lemmas}).
\end{remark}
\begin{remark}\label{rmk:main-term-thm-applies-to-mx-coefs}
  Let $\pi$ be an irreducible unitary representation of
  $G$ with $\dim(\pi) > 1$, and let $v_1,v_2 \in \pi$.
  The hypotheses of Proposition
  \ref{prop:desired-main-term-identity-do-it-up}
  are then satisfied by $\Psi(g) := \langle g v_1, v_2 \rangle$
  (see \S\ref{sec:local-Xi}).
\end{remark}
\begin{remark}
  The LHS of
  \eqref{eqn:main-term-what-we-wanna-show-just-after-reducing-to-local-problem}
  is independent of the choice of Haar measure on $G$
  (noting that the definition of $f$ involves
  the factor $\vol(\mathfrak{J})^{-1}$).
\end{remark}
The proof of Proposition \ref{prop:desired-main-term-identity-do-it-up}
occupies the remainder of \S\ref{sec:local-estimates-main-statements}.

\subsection{Reduction to matrix calculus\label{sec:determine-main-term}}
\label{sec-7-2}
We fix measures on $G,B,k$ and define $\mathcal{F}$ as in
\S\ref{sec:measures-for-fourier-computation}.

\subsubsection{Application of Parseval}\label{sec:application-parseval}
Set
$\phi := \mathcal{F} \mathfrak{S} \heartsuit^1 f \in \mathcal{S}(B)$.
By Parseval,
\eqref{eq:integarl-formula-for-integrating-over-B} and
the volume formulas of \S\ref{sec:local-vol-formulas},
we have
for $g \in G$ that
\begin{equation}\label{eq:parseval-application-to-main-term}
  \langle \Ad(g) \mathfrak{S} f, \mathfrak{S} f \rangle_{L^2(G)}
  = \zeta_k(1)
  \langle \Ad(g) \phi, \phi  \rangle_{L^2(B)}.
\end{equation}

\begin{remark*}
  We may now informally explain
  \eqref{eqn:main-term-what-we-wanna-show-just-after-reducing-to-local-problem}
  as follows: The
  support properties
  \eqref{eq:asymptotic-properties-of-entries-of-support-of-fourier-transform}
  say that $\phi$ is concentrated on the subspace of diagonal
  matrices, whose normalizer is $N(H)$.  Thus
  $\langle \Ad(g) \phi, \phi \rangle$ should be small unless $g$
  is close to $N(H)$.  One has (morally) $\Ad(h) \phi \approx \phi$ for
  elements $h \in H$ of size $O(1)$.
  The identity
  \eqref{eq:weyl-invariance-fourier-transform-explicated-coordinates}
  says (morally) that $\Ad(w) \phi \approx \phi$ for all
  $w \in W$.  Thus the distribution
  $g \mapsto \langle \Ad(g) \phi, \phi \rangle$ should
  conceivably approximate some multiple of the Haar measure on
  $N(H)$.  
\end{remark*}

\subsubsection{Principal congruence subgroups}\label{sec:princ-congr-subgr}
For a positive integer $m$, we let $K[m]$ denote the
$m$th principal congruence subgroup of $G$;
we define this to mean the image in $G$ 
of the depth $m$ principal congruence subgroup
\[
  \begin{pmatrix}
    1 + \mathfrak{q}^m  & \mathfrak{q}^m \\
    \mathfrak{q}^m & 1 + \mathfrak{q}^m
  \end{pmatrix}
  \leq \GL_2(\mathfrak{o}).
  \]
  One has a diffeomorphism
  \begin{equation}\label{eqn:description-of-Km}
    \begin{split}
    \mathfrak{q}^m \times \mathfrak{q}^m \times (1 +
    \mathfrak{q}^m)
    &\xrightarrow{\cong} K[m] \\
    (x_1,x_2,y)
    &\mapsto n'(x_2) n(x_1) a(y).
  \end{split}
\end{equation}

\subsubsection{Properties of $\phi$}
Let $m$ be any positive integer
for which $m \geq N_0$.
Proposition \ref{prop:key-fourier-estimate-microlocal-kernel}
implies that
\begin{equation}\label{eq:unit-invariance-assumed-of-phi}
  \phi(\begin{pmatrix}
    \delta/2 + \alpha  & \beta  \\
    \gamma  & \delta/2 - \alpha 
  \end{pmatrix})
  =
  \phi(\begin{pmatrix}
    \delta/2 + u \alpha  & u \beta  \\
    u \gamma  & \delta/2 - u \alpha 
  \end{pmatrix})
  \text{ for all }
  u \in 1 + \mathfrak{q}^m
\end{equation}
and for $N$ large enough also that
\begin{equation}\label{eq:support-phi-contained-in-E-nought-of-m}
  \supp(\phi) \subseteq E(m),
\end{equation}
where $E(m)$ denotes the following set of ``near-diagonal'' matrices:
\begin{equation}\label{eq:definition-of-sort-of-neighborhood-of-diagonals}
  E(m) := \left\{
    \begin{pmatrix}
      \delta/2 + \alpha  & \beta  \\
      \gamma  & \delta/2 - \alpha 
    \end{pmatrix}
    :
    \delta \in k, \alpha \in k;
    \beta,\gamma \in 2 \alpha \mathfrak{q}^m
  \right\}.
\end{equation}
Recall also from
\eqref{eq:weyl-invariance-fourier-transform-explicated-coordinates}
that
\begin{equation}\label{eq:weyl-invariance-after-integrating-out-upper-and-lower-variables}
  \int_{\beta,\gamma \in k}
  \phi (
  \begin{pmatrix}    
    \delta/2 + \alpha  & \beta  \\
    \gamma  & \delta/2 - \alpha 
  \end{pmatrix}
  )
  = 
  \int_{\beta,\gamma \in k}
  \phi (
  \begin{pmatrix}
    \delta/2 - \alpha  & \beta  \\
    \gamma  & \delta/2 + \alpha 
  \end{pmatrix}
  ).
\end{equation}

\subsubsection{The key computation}\label{sec:key-computation}
Proposition \ref{prop:desired-main-term-identity-do-it-up} follows
immediately from \eqref{eq:parseval-application-to-main-term},
the normalizing computation \eqref{eq:explicit-normalizing-computation}
(giving the factor $(2 \zeta_k(1))^{-1}$)
and the following:
\begin{lemma*} 
  Let $\Psi : G \rightarrow \mathbb{C}$ 
  and $\phi \in \mathcal{S}(B)$ be arbitrary.
  Suppose there exists an open subgroup $U$ of $G$
  and a positive integer $m$
  so that
  \begin{equation}\label{eq:concentration-of-phi-quantified-eps-U}
    K[m] \leq U.
  \end{equation}
  and so that
  \eqref{eq:matrix-coeff-bi-invariance},
  \eqref{eq:matrix-coeff-decay-weak-generic},
  \eqref{eq:unit-invariance-assumed-of-phi},
  \eqref{eq:support-phi-contained-in-E-nought-of-m} and
  \eqref{eq:weyl-invariance-after-integrating-out-upper-and-lower-variables}
  hold.
  Then
  \[
    \int_{g \in G}
    \langle \Ad(g) \phi, \phi  \rangle
    \Psi(g)
    = 
  \]
  \[(\int_{N(H)} \Psi )
    \int_{\alpha,\delta \in k}
    |2 \alpha|^{-2}
    \left\lvert
      \int_{\beta,\gamma \in k}
      \phi (
      \begin{pmatrix}
        \delta/2 + \alpha  & \beta   \\
        \gamma  & \delta/2 - \alpha 
      \end{pmatrix}
      )
    \right\rvert^2.
  \]
\end{lemma*}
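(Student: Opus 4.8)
The plan is to expand the $L^2(B)$–inner product, use the hypothesis $\supp(\phi)\subseteq E(m)$ to pass to the regular split locus, and reduce the whole expression to an explicit orbital–integral computation. Throughout write $H_m:=H\cap K[m]$ and, for $\alpha\in k^\times$, $\delta\in k$,
\[
  d_{\alpha,\delta}:=\begin{pmatrix}\delta/2+\alpha & 0\\ 0 & \delta/2-\alpha\end{pmatrix},
  \qquad
  \widehat{\phi}(\alpha,\delta):=\int_{\beta,\gamma\in k}\phi\!\begin{pmatrix}\delta/2+\alpha & \beta\\ \gamma & \delta/2-\alpha\end{pmatrix}.
\]
(In the application $\supp(\phi)$ avoids the scalar matrices, which is what makes both sides converge; I take this for granted.) The first move is to note that every $x\in E(m)$ is regular semisimple and $k$–split with a diagonalizing element in $K[m]$: writing the traceless part of $x$ as $\alpha\bigl(\begin{smallmatrix}1 & 2b\\ 2c & -1\end{smallmatrix}\bigr)$ with $b,c\in\mathfrak{q}^m$, the discriminant of the characteristic polynomial is $4\alpha^2(1+4bc)$, and $1+4bc$ is a square by Hensel for every $m\geq1$, while an explicit eigenvector computation produces $\kappa\in K[m]$ conjugating $x$ to a diagonal matrix. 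Since $\overline\phi$ vanishes off $E(m)$, the Weyl integration formula — applied with conjugators ranging only over $K[m]$ (which already sweep out $E(m)$), with conjugation Jacobian exactly $|2\alpha|^{2}$ (the differential of $\kappa\mapsto\kappa d\kappa^{-1}$ is $X\mapsto[X,d]$, scaling the off–diagonal directions by the eigenvalue gap $2\alpha$), and with measure constant $\vol(H_m)^{-1}$ — yields
\[
  \langle\Ad(g)\phi,\phi\rangle
  =\frac{1}{\vol(H_m)}\int_{\alpha\in k^\times}\int_{\delta\in k}|2\alpha|^{2}\int_{K[m]}\phi\big(g^{-1}\kappa d_{\alpha,\delta}\kappa^{-1}g\big)\,\overline{\phi(\kappa d_{\alpha,\delta}\kappa^{-1})}\;d\kappa\,d\delta\,d\alpha.
\]

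Next I would collapse the outer $g$–integral. Inserting $\Psi$ and interchanging integrals (justified by the support and the decay \eqref{eq:matrix-coeff-decay-weak-generic}), the remaining $g$–integral is $\int_G\Psi(g)\phi(g^{-1}\kappa d_{\alpha,\delta}\kappa^{-1}g)\,dg$; substituting $g\mapsto\kappa g$ and using that $\Psi$ is left–$K[m]$–invariant (as $K[m]\leq U$) removes $\kappa$ and leaves $R(\alpha,\delta):=\int_G\Psi(g)\phi(g^{-1}d_{\alpha,\delta}g)\,dg$, so
\[
  \int_G\langle\Ad(g)\phi,\phi\rangle\Psi(g)\,dg
  =\frac{1}{\vol(H_m)}\int_{\alpha,\delta}|2\alpha|^{2}\;\overline{J(\alpha,\delta)}\;R(\alpha,\delta)\,d\alpha\,d\delta,
  \qquad J(\alpha,\delta):=\int_{K[m]}\phi(\kappa d_{\alpha,\delta}\kappa^{-1})\,d\kappa.
\]
For $R$, an eigenvalue argument shows $\phi(g^{-1}d_{\alpha,\delta}g)\neq0$ only for $g\in N(H)K[m]$ (such a conjugate lies in $E(m)$, hence is $K[m]$–conjugate to a diagonal matrix with the same eigenvalues, forcing the conjugator into $N(H)K[m]$); parametrizing this set as $(N(H)\times K[m])/H_m$ — the measure factors with constant $\vol(H_m)^{-1}$ because $K[m]$ is open, so $\Lie K[m]=\Lie G$ — and using $\Psi(n\kappa)=\Psi(n)$ together with $(n\kappa)^{-1}d_{\alpha,\delta}(n\kappa)=\kappa^{-1}d^{(n)}\kappa$, where $d^{(n)}$ is $d_{\alpha,\delta}$ or its Weyl flip $w d_{\alpha,\delta}w$ according to the chamber of $n$, gives
\[
  R(\alpha,\delta)=\frac{1}{\vol(H_m)}\Big(\int_H\Psi(h)\,dh\cdot J(\alpha,\delta)+\int_H\Psi(wh)\,dh\cdot \widetilde J(\alpha,\delta)\Big),
  \qquad \widetilde J(\alpha,\delta):=\int_{K[m]}\phi\big(\kappa\, w d_{\alpha,\delta}w\,\kappa^{-1}\big)\,d\kappa.
\]

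It remains to identify $J$ and $\widetilde J$. Parametrizing $K[m]$ by $n'(x_1)n(x_2)a(y)$ as in \eqref{eqn:description-of-Km} — the central factor $a(y)$ commutes with $d_{\alpha,\delta}$ and drops out — and computing $\kappa d_{\alpha,\delta}\kappa^{-1}$ explicitly, the substitution $\beta=-2\alpha x_2$, $\gamma=2\alpha x_1$ identifies $J(\alpha,\delta)=\vol(H_m)\,|2\alpha|^{-2}\,\widehat\phi(\alpha,\delta)$: the subleading ``skew'' factors $1+O(x_1x_2)$ in the entries of $\kappa d_{\alpha,\delta}\kappa^{-1}$ are removed first by the scaling invariance \eqref{eq:unit-invariance-assumed-of-phi} (which cancels the shift of the $\alpha$–coordinate) and then by a unit–Jacobian change of variables in $(\beta,\gamma)$. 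Conjugating the same computation by $w$ and using \eqref{eq:weyl-invariance-after-integrating-out-upper-and-lower-variables} in the form $\widehat\phi(-\alpha,\delta)=\widehat\phi(\alpha,\delta)$ gives $\widetilde J(\alpha,\delta)=J(\alpha,\delta)$, so $R(\alpha,\delta)=\vol(H_m)^{-1}\big(\int_{N(H)}\Psi\big)J(\alpha,\delta)$. Substituting back,
\[
  \int_G\langle\Ad(g)\phi,\phi\rangle\Psi(g)\,dg
  =\frac{1}{\vol(H_m)^{2}}\Big(\int_{N(H)}\Psi\Big)\int_{\alpha,\delta}|2\alpha|^{2}|J(\alpha,\delta)|^{2}\,d\alpha\,d\delta
  =\Big(\int_{N(H)}\Psi\Big)\int_{\alpha,\delta\in k}|2\alpha|^{-2}\,|\widehat\phi(\alpha,\delta)|^{2},
\]
which is the assertion of the lemma.

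The overall constant comes out to exactly $1$ because $\vol(H_m)=q^{-m}$, the Haar on $K[m]$ has density $1$ near the identity in the coordinates \eqref{eqn:description-of-Km} (by \eqref{eqn:explicit-M2-integral-formula}), and the factors of $|2|_k$ in the coordinate change on $B$ from \eqref{eq:x-xi-in-fancy-notation-etc} cancel; this bookkeeping is routine but tedious. The genuine obstacle is the middle of the last step: showing that the subleading skew corrections are really annihilated. The only structural inputs on $\phi$ beyond $\supp(\phi)\subseteq E(m)$ are the single scaling invariance \eqref{eq:unit-invariance-assumed-of-phi} and the one–variable symmetry \eqref{eq:weyl-invariance-after-integrating-out-upper-and-lower-variables}; in particular $\phi$ is \emph{not} assumed locally constant at the fine scale ($\mathfrak{q}^{2m}$ relative to $\alpha$) where those corrections live, so they cannot be discarded pointwise. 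One must instead arrange that they only ever appear either as a $(1+\mathfrak{q}^m)$–scaling of the triple $(\alpha,\beta,\gamma)$ — killed by \eqref{eq:unit-invariance-assumed-of-phi} — or inside the $(\beta,\gamma)$–integration, as a unit–Jacobian change of integration variables; I expect this to be the step requiring the most care. A secondary point to nail down carefully is that the Weyl integration formula, restricted to $E(m)$ with conjugators in $K[m]$, really has conjugation Jacobian \emph{exactly} $|2\alpha|^2$ and measure constant $\vol(H_m)^{-1}$.
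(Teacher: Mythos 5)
Your proof is correct in outline and identifies precisely the two delicate points that do genuine work; both are in fact the content of the paper's Lemma~\ref{key-application-hensel-lemma}. Let me note, first, that your caveat about $\supp(\phi)$ avoiding scalar matrices is not needed: $\{\alpha=0\}$ has $(\alpha,\delta)$-measure zero, and for $\phi$ with compact support in $E(m)$ one has $|I(\alpha,\delta)|\ll|2\alpha|^2$ (the $(\beta,\gamma)$-domain has measure $\ll|2\alpha|^2$), so the RHS converges; the LHS converges for any $\Psi\ll\Xi^\delta$ by Lemma~\ref{lemma:convergence-Xi-along-G-and-H} and the $\Xi^2$ decay of matrix coefficients of $\mathcal{S}(B)$ from \S\ref{sec:local-convergence-lemmas}.

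Your route differs genuinely from the paper's in its bookkeeping. The paper first integrates out $\delta$ to reduce to $B^0$, applies the \emph{full} $G/H$ Weyl integral formula (\S\ref{sec-7-3-5}), performs the substitution chain $x\mapsto gx$, $g\mapsto gx^{-1}$, $g=yh$ to arrive at a \emph{symmetric} double integral over $(G/H)\times(G/H)$, and then restricts each $G/H$-factor to $(G/H)[m]W$ via Lemma~\ref{lem:expansion-integrals-near-singular-pts}. You instead keep $B$ throughout, use a $K[m]$-restricted Weyl integral formula to produce one copy of $\phi$ up front (your $J$), and extract the second copy from an orbital-integral analysis of $R(\alpha,\delta)=\int_G\Psi(g)\phi(g^{-1}d_{\alpha,\delta}g)$ whose support is pinned to $N(H)K[m]$. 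Your treatment is more self-contained and closer to the semiclassical heuristic; the paper's decomposition through a chain of small lemmas (\ref{lem:basic-integral-formula-without-hypotheses}, \ref{lem:basic-integral-formula-for-diag-inv-f}, \ref{lem:useful-for-main-term-evaluation-inverse-weyl-integral-formula}) is more modular, and those lemmas are reused verbatim in \S\ref{sec:smoothing-wrt-adjoint}.

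On the two points you flag as ``requiring the most care'': both are resolved by the paper's Hensel-type Lemma~\ref{key-application-hensel-lemma}, which shows that $(\lambda,x)\mapsto\lambda\,\Ad(x)[\alpha_0,0,0]$ is a diffeomorphism from $(1+\mathfrak{q}^m)\times(G/H)[m]$ onto $[(1+\mathfrak{q}^m)\alpha_0,\,2\mathfrak{q}^m\alpha_0,\,2\mathfrak{q}^m\alpha_0]$ with Jacobian of \emph{constant} valuation. Inserting the $\lambda$-average is permitted by the scaling invariance \eqref{eq:unit-invariance-assumed-of-phi}, and once the $\lambda$-parameter is present the higher-order terms in $\kappa\mapsto\kappa d_{\alpha,\delta}\kappa^{-1}$ are absorbed into a single change of variables rather than discarded pointwise --- exactly the mechanism you anticipated. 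Your observation that the ``shift of the $\alpha$-coordinate'' should be pushed onto $(\beta,\gamma)$ by the scaling invariance (where it becomes a unit-Jacobian reparametrization) is the right idea; the paper packages it slightly differently (by averaging over $\lambda\in1+\mathfrak{q}^m$ from the start) but the content is the same. The exact Jacobian $|2\alpha|^2$ is then obtained by the paper not by computing the determinant directly but by tracking volumes: $\vol(\mathfrak{q}^m)^2\,e_{2\alpha\mathfrak{q}^m}(0)^2=|2\alpha|^{-2}$, which is equivalent to your first-order computation plus the constancy of valuation.

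One minor slip in notation: you write $\widetilde J(\alpha,\delta)=\int_{K[m]}\phi(\kappa\,wd_{\alpha,\delta}w\,\kappa^{-1})$, but the substitution $g\mapsto n\kappa$ gives $\kappa^{-1}(n^{-1}d_{\alpha,\delta}n)\kappa$; this is immaterial since $\kappa\mapsto\kappa^{-1}$ preserves Haar on $K[m]$.
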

The proof involves
Hensel's lemma, the Weyl
integral formula, and related arguments; we
complete it in \S\ref{sec:appendix-proof-theorem}.

\begin{remark*}
  The lemma and its proof generalize readily to non-split
  quaternion algebras $B$ and/or non-split separable quadratic
  subalgebras $E \subseteq B$.
  We focus on the relevant split diagonal case
  for the sake of concreteness.
\end{remark*}

\subsection{Some matrix calculus\label{sec:some-mx-calc}}
\label{sec-7-3}
One purpose of this section
is to prove
the lemma of \S\ref{sec:key-computation}.
Another is to develop preliminaries for the proof of
Proposition \ref{prop:local-error-estimates-stmt}, below.

\subsubsection{Notation}
\label{sec-7-3-1}
We introduce on $B^0$ the coordinates
\[
[\alpha,\beta,\gamma] := \begin{pmatrix}
  \alpha  & \beta  \\
  \gamma  &  - \alpha 
\end{pmatrix}.
\]
The Weyl group $W \cong N(H)/H$
has a natural right action on $G/H$, denoted by juxtaposition.
Let $(G/H)[m]$ denote the image of $K[m]$
under the quotient map $G \rightarrow G/H$;
by \eqref{eqn:description-of-Km},
the map $\mathfrak{q}^m \times \mathfrak{q}^m
\ni (x_1,x_2) \mapsto n'(x_1) n(x_2) H \in (G/H)[m]$
is a bijection.
Let $E^0 := E \cap B^0 = \{[\alpha,0,0] : \alpha \in k\}$ denote the subspace
of traceless diagonal matrices.
Set
$E^0(m) :=
\{[\alpha,\beta,\gamma] : \alpha \in k;
\beta,\gamma \in 2 \alpha \mathfrak{q}^m  \}
\subseteq B^0$.
The sets $E^0(m) - \{0\}$ form a shrinking system of
open neighborhoods of $E^0 - \{0\}$. Their images
$\mathbb{P} E^0(m)$ in the projective plane $\mathbb{P} B^0$
form a fundamental system of open neighborhoods of the point
$\mathbb{P} E^0$.
\subsubsection{Measures}
\label{sec-7-3-2}
We fix measures on $G,B,B^0,k$ as in
\S\ref{sec:measures-for-fourier-computation}.
One then has
\[
\int_{B^0}
f
= \int_{\alpha,\beta,\gamma \in k}
f ([\alpha,\beta,\gamma]),
\]
\begin{equation}\label{eq:formula-relating-B-and-B0}
  \int_{B} f
  = \int_{\alpha,\beta,\gamma,\delta \in k}
  f (\begin{pmatrix}
    \delta/2 + \alpha  & \beta  \\
    \gamma  & \delta/2-\alpha 
  \end{pmatrix})
  =
  \int_{\delta \in k}
  (\int_{B_0} f_\delta),
\end{equation}
where
for
$f \in C_c(B)$
we define $f_\delta \in C_c(B^0)$
by $f_\delta(\xi) := f(\delta/2 + \xi)$.

We use the notation $\mathbb{E}$
to denote an average over a compact group or orbit thereof
with respect to the evident invariant probability measure.
Averages over $K[m]$
may be computed
by the formula (cf. \eqref{eqn:description-of-Km})
\begin{equation}\label{eq:formula-averages-over-Km}
  \mathbb{E}_{g \in K[m]}
  f(g)
  =
  \mathbb{E}_{\substack{
      x,y \in \mathfrak{q}^m \\
      z \in 1 + \mathfrak{q}^m
    }
  }
  f(n'(x) n(y) a(z)).
\end{equation}

Equip $G/H$ with the quotient Haar;
by \eqref{eqn:explicit-M2-integral-formula},
\begin{equation}\label{eq:integral-formula-G-mod-H}
  \int_{g \in G/H} f(g)
  = \int_{x_1,x_2 \in k}
  f(n'(x_2) n(x_1)),
\end{equation}
\begin{equation}\label{eq:integral-over-G-mod-H-m}
  \int_{x \in (G/H)[m]}
  f(x)
  :=  \int_{x \in G/H} 1_{(G/H)[m]}(x)
  f(x)
  =
  \int_{x_1,x_2 \in \mathfrak{q}^m}
  f(n'(x_1) n(x_2)),
\end{equation}
\begin{equation}\label{eq:integral-over-G-mod-H-m-averaged}
  \mathbb{E}_{x \in (G/H)[m]}
  f(x)
  =
  \mathbb{E}_{x_1,x_2 \in \mathfrak{q}^m}
  f(n'(x_1) n(x_2)).
\end{equation}

\subsubsection{Basic observations}

By inspection,
\begin{equation}\label{eq:weyl-invariance-of-cone-nbhd}
  \Ad(N(H)) E^0 = E^0,
  \quad 
  \Ad(W) E^0(m) = E^0(m).
\end{equation}
By direct calculations
such as
\begin{equation}\label{eq:basic-conjugation-action-on-diagonal}
  \Ad(n'(x) n(y) a(z) [1,0,0])
  =
  [1 + 2 x y, - 2 y, 2 x(1 + 2 y)].
\end{equation}
one verifies also that
\begin{equation}\label{eq:adjoint-action-small-elements-near-preserve-diagonal-neighborhood}
  \Ad(K[m]) E^0(m) = E^0(m).
\end{equation}

\subsubsection{Applications of Hensel's lemma}
\label{sec-7-3-3}

\begin{lemma}\label{lem:hensel-lemma-statement}
  Let $m,n \in \mathbb{Z}_{\geq 1}$.
  Set $M := (\mathfrak{q}^m)^{\oplus n}$.
  Let $f_1,\dotsc,f_n \in \mathfrak{o}[X_1,\dotsc,X_n]$ be
  polynomials in the variables $X_1,\dotsc,X_n$ with
  coefficients in $\mathfrak{o}$
  satisfying $f_1(0) = \dotsb = f_n(0) = 0$.
  Let $f : M \rightarrow k^{\oplus n}$
  denote the function given by
  $x \mapsto (f_1(x),\dotsc,f_n(x))$.  Set
  $J :=\det(\partial f_i / \partial x_j) \in
  \mathfrak{o}[X_1,\dotsc,X_n]$.  Assume that
  $J(x) \in \mathfrak{o}^\times$ for all $x \in M$.  Then $f$
  induces
  a diffeomorphism $f : M \rightarrow M$.
\end{lemma}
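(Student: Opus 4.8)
The plan is to prove this as a $p$-adic inverse/implicit function theorem — i.e., a multivariate Hensel's lemma — using the standard Newton-iteration argument. First I would reduce to the case $m=1$ after rescaling: conjugating by the diagonal substitution $X_j = \varpi^{m-1} Y_j$ (if $m \geq 1$; the map $M \to M$ becomes the map $(\mathfrak{q})^{\oplus n} \to (\mathfrak{q})^{\oplus n}$ given by the polynomials $\varpi^{-(m-1)} f_i(\varpi^{m-1} Y)$, whose coefficients still lie in $\mathfrak{o}$ since $f_i(0)=0$ and hence every monomial has degree $\geq 1$, and whose Jacobian at any point equals $J(\varpi^{m-1} Y) \in \mathfrak{o}^\times$). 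So it suffices to treat $M = \mathfrak{q}^{\oplus n}$.

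Next I would prove injectivity on $M$. Given $x, x' \in M$ with $f(x) = f(x')$, write the Taylor expansion $f_i(x') - f_i(x) = \sum_j \frac{\partial f_i}{\partial x_j}(x)(x'_j - x_j) + (\text{higher order})$, where every higher-order term lies in $\mathfrak{q} \cdot \mathfrak{q}^{\oplus n}$ worth of extra contraction relative to $x' - x$ (because the second and higher partials have coefficients in $\mathfrak{o}$ and we evaluate on $\mathfrak{q}$). Thus $(Df(x))(x'-x) \in \mathfrak{q} \cdot (x'-x) + \dots$; since $Df(x)$ has determinant a unit, $Df(x)$ is invertible over $\mathfrak{o}$, so $\|x' - x\| \leq \|(Df(x))(x'-x)\|$, and the higher-order bound forces $x' = x$ by the usual "strictly shrinking difference" contradiction. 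Then I would prove surjectivity by Newton iteration: for $y \in M$, set $x_0 := 0$ and $x_{k+1} := x_k - (Df(x_k))^{-1}(f(x_k) - y)$; one checks inductively that $x_k \in M$, that $Df(x_k)$ is invertible over $\mathfrak{o}$, and that $\|f(x_k) - y\| \to 0$ with $\|x_{k+1} - x_k\| \to 0$, so $(x_k)$ is Cauchy in the complete metric space $M$ and its limit $x$ satisfies $f(x) = y$. The "diffeomorphism" claim then amounts to observing that $f$ is visibly a polynomial map (hence locally analytic / locally constant-to-smooth in the appropriate sense), that it is a bijection $M \to M$ by the above, and that its inverse is given locally by the same Newton scheme, hence is also smooth; alternatively one may invoke the $p$-adic inverse function theorem directly.

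I expect the main obstacle to be purely bookkeeping: making the Taylor-remainder estimates precise enough to run both the injectivity argument and the Cauchy/convergence estimate for Newton's method simultaneously, i.e., tracking that the nonlinear terms of $f$ gain an extra factor of $\mathfrak{q}$ when evaluated on $M$ (which is exactly where $f_i(0) = 0$ and "coefficients in $\mathfrak{o}$" get used, together with $M \subseteq \mathfrak{q}^{\oplus n}$). Everything else is the standard Hensel/Newton template over a complete discretely valued ring, so once the remainder bound is nailed down the rest follows mechanically. One should also note that the hypothesis $J(x) \in \mathfrak{o}^\times$ for \emph{all} $x \in M$ (rather than just at $0$) is slightly stronger than strictly necessary for the pointwise inverse function theorem, but it is precisely what makes the global statement "$f: M \to M$ is a diffeomorphism" clean, since it guarantees invertibility of the derivative at every stage of the iteration and at every point of the image.
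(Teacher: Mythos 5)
Your argument is correct and is exactly what the paper's one-line proof ("one argues as in the proof of Hensel's lemma that $f$ is bijective; the conclusion then follows from the inverse function theorem") is gesturing at: Newton iteration over the complete ring $\mathfrak{o}$ for bijectivity, then the $p$-adic inverse function theorem for smoothness of the inverse. (Minor side remark: the rescaling to $m=1$ is unnecessary, since the key ultrametric estimate $\|R(\delta)\| \leq \|\delta\|^2 < \|\delta\|$ already holds on $M = (\mathfrak{q}^m)^{\oplus n}$ for any $m \geq 1$, and the hypothesis $J(x) \in \mathfrak{o}^\times$ for all $x \in M$ is in fact equivalent to $J(0) \in \mathfrak{o}^\times$ since $J(x) \equiv J(0) \pmod{\mathfrak{q}^m}$.)
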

\begin{proof}
  One argues as in the proof of Hensel's lemma that $f$ is
  bijective.  The conclusion then follows from the inverse
  function theorem.
\end{proof}

\begin{lemma}\label{key-application-hensel-lemma}
  Let $\alpha_0 \in k - \{0\}$.
  Then the map
  \[
  (1 + \mathfrak{q}^m)
  \times 
  (G/H)[m]
  \rightarrow 
  [(1 + \mathfrak{q}^m) \alpha_0, 2 \mathfrak{q}^m \alpha_0, 2 \mathfrak{q}^m \alpha_0]
  \]
  \[
  (\lambda,x)
  \mapsto
  \lambda \Ad(x) [\alpha_0,0,0]
  \]
  is a well-defined diffeomorphism
  whose Jacobian has constant valuation. 
\end{lemma}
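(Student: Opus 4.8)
The plan is to reduce the assertion to the Hensel-type statement recorded in Lemma~\ref{lem:hensel-lemma-statement}. First I would make everything explicit in coordinates. Parametrize $(G/H)[m]$ by $(x_1,x_2)\in\mathfrak{q}^m\times\mathfrak{q}^m$ as in \S\ref{sec-7-3-1}; since $[\alpha_0,0,0]$ is diagonal and $H$ consists of diagonal matrices, $\Ad(a(z))$ fixes $[\alpha_0,0,0]$, so $(\lambda,g)\mapsto\lambda\,\Ad(g)[\alpha_0,0,0]$ genuinely descends to $(1+\mathfrak{q}^m)\times(G/H)[m]$ and the map of the lemma is well-defined. A direct matrix computation of the type leading to \eqref{eq:basic-conjugation-action-on-diagonal} gives
\[
  \lambda\,\Ad(n'(x_1)n(x_2))[\alpha_0,0,0]
  = \alpha_0\bigl[\,\lambda(1+2x_1x_2),\ -2\lambda x_2,\ 2\lambda x_1(1+2x_1x_2)\,\bigr].
\]
Since $2x_1x_2\in\mathfrak{q}^{2m}\subseteq\mathfrak{q}^m$ and $\lambda$ is a unit, the three entries lie respectively in $(1+\mathfrak{q}^m)\alpha_0$, $2\mathfrak{q}^m\alpha_0$ and $2\mathfrak{q}^m\alpha_0$, so the map indeed lands in the asserted target.

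Next I would identify both sides with $(\mathfrak{q}^m)^{\oplus 3}$ by affine changes of variable. Write $\lambda=1+u$ with $u\in\mathfrak{q}^m$, and coordinatize the target $[(1+\mathfrak{q}^m)\alpha_0,2\mathfrak{q}^m\alpha_0,2\mathfrak{q}^m\alpha_0]$ by $(a,b,c)\in(\mathfrak{q}^m)^{\oplus 3}$ via $[\alpha,\beta,\gamma]=\alpha_0[1+a,\,2b,\,2c]$. In these coordinates the map becomes $(u,x_1,x_2)\mapsto(f_1,f_2,f_3)$ with
\begin{align*}
  f_1 &= u + 2x_1x_2 + 2ux_1x_2,\\
  f_2 &= -x_2 - ux_2,\\
  f_3 &= x_1(1+u)(1+2x_1x_2),
\end{align*}
which lie in $\mathfrak{o}[u,x_1,x_2]$ and vanish at the origin. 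The linear parts of $(f_1,f_2,f_3)$ are $(u,-x_2,x_1)$, so the Jacobian $J=\det(\partial f_i/\partial x_j)$ has constant term $\pm1$, a unit; as $J$ is congruent to this constant term modulo the ideal $(u,x_1,x_2)$, it follows that $J(u,x_1,x_2)\in\mathfrak{o}^\times$ for all $(u,x_1,x_2)\in(\mathfrak{q}^m)^{\oplus 3}$ (the precise higher-order terms above are irrelevant for this). Lemma~\ref{lem:hensel-lemma-statement} now shows $(u,x_1,x_2)\mapsto(f_1,f_2,f_3)$ is a diffeomorphism of $(\mathfrak{q}^m)^{\oplus 3}$; composing with the two affine identifications gives that the original map is a diffeomorphism onto the claimed target.

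For the Jacobian claim, note that the source identification $\lambda=1+u$ has Jacobian $1$ while the target identification $(a,b,c)\mapsto(\alpha_0(1+a),2\alpha_0 b,2\alpha_0 c)$ has the fixed nonzero Jacobian $4\alpha_0^3$; hence, in the evident coordinates, the Jacobian of the original map equals $4\alpha_0^3$ times the unit $J$ and so has the constant valuation $2\,\ord_k(2)+3\,\ord_k(\alpha_0)$ at every point. The only real obstacle here is bookkeeping rather than ideas: one has to pin down compatible parametrizations of $(G/H)[m]$ and of the target so that Lemma~\ref{lem:hensel-lemma-statement} applies verbatim, and to keep track of the innocuous factors of $2$ and powers of $\alpha_0$ they introduce; the congruence $2x_1x_2\in\mathfrak{q}^{2m}$ is the single computational fact doing the work throughout.
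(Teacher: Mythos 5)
Your argument is correct and follows the same route as the paper: parametrize the source, compute the image in coordinates, observe the linear part of the resulting polynomial map at the origin is a unit permutation, and invoke Lemma~\ref{lem:hensel-lemma-statement}. One small arithmetic slip: the third entry of $\Ad(n'(x_1)n(x_2))[\alpha_0,0,0]$ is $2\alpha_0 x_1(1+x_1x_2)$, not $2\alpha_0 x_1(1+2x_1x_2)$ (the same entry is also mistyped in \eqref{eq:basic-conjugation-action-on-diagonal} and in the paper's own proof of this lemma); this is harmless because, as you note, only the linear part $(u,-x_2,x_1)$ and the fact that the correction term lies in $1+\mathfrak{q}^{2m}$ enter the argument. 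Your explicit accounting of the affine reparametrizations and the resulting constant $|4\alpha_0^3|$ is a welcome elaboration of the constant-valuation claim, which the paper leaves implicit.
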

\begin{proof}
  We may assume that $\alpha_0 = 1$.
  For $x_1,x_2,x_3 \in \mathfrak{q}^m$,
  define $y_1,y_2,y_3 \in k$
  by
  $(1 + x_1) \Ad(n'(x_2) n(x_3)) [1,0,0]
  = [1 + y_1, 2 y_2, 2 y_3]$.
  By a calculation similar
  to \eqref{eq:basic-conjugation-action-on-diagonal},
  \begin{align*}
    y_1 &= x_1 + 2 x_2 x_3 + 2 x_1 x_2 x_3, \\
    y_2 &= - x_3 ( 1 + x_1), \\
    y_3 &= x_2 (1 + x_1) (1 + 2 x_3).
  \end{align*}
  Since $m \geq 1$, one sees that $y_1,y_2,y_3 \in \mathfrak{q}^m$
  and that the Jacobian of the map $(x_1,x_2,x_3) \mapsto
  (y_1,y_2,y_3)$
  belongs to $\mathfrak{o}^\times$ at every point of its domain.
  The conclusion follows now from Lemma \ref{lem:hensel-lemma-statement}. 
\end{proof}

\begin{corollary*}\label{cor:we-can-write-stuff-near-diagonal-nicely}
  For each $\xi = [\alpha,\beta,\gamma] \in E^0(m)$
  there exists $\alpha_0 \in \alpha(1 + \mathfrak{q}^m)$
  and $g \in K[m]$
  so that $\xi = \Ad(g) [\alpha_0,0,0]$.
\end{corollary*}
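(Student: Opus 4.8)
The plan is to read this off directly from Lemma~\ref{key-application-hensel-lemma}. Fix $\xi = [\alpha,\beta,\gamma] \in E^0(m)$, so that by definition $\beta, \gamma \in 2\alpha\mathfrak{q}^m$. If $\alpha = 0$ this forces $\beta = \gamma = 0$, hence $\xi = 0 = \Ad(1)[0,0,0]$, and we may take $\alpha_0 := 0 \in \alpha(1+\mathfrak{q}^m)$ and $g := 1 \in K[m]$; so assume henceforth $\alpha \neq 0$. I would then apply Lemma~\ref{key-application-hensel-lemma} with the choice $\alpha_0 := \alpha$: its statement produces a diffeomorphism from $(1+\mathfrak{q}^m) \times (G/H)[m]$ onto the set $[(1+\mathfrak{q}^m)\alpha,\, 2\mathfrak{q}^m\alpha,\, 2\mathfrak{q}^m\alpha]$, and the three membership conditions $\alpha \in (1+\mathfrak{q}^m)\alpha$, $\beta \in 2\mathfrak{q}^m\alpha$, $\gamma \in 2\mathfrak{q}^m\alpha$ say exactly that $\xi$ lies in that target set.

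By the surjectivity half of Lemma~\ref{key-application-hensel-lemma}, there are therefore $\lambda \in 1+\mathfrak{q}^m$ and $x \in (G/H)[m]$ with $\xi = \lambda\,\Ad(x)[\alpha,0,0]$. Since the scalar $\lambda$ can be pulled inside, $\xi = \Ad(x)[\lambda\alpha,0,0]$. I would set $\alpha_0 := \lambda\alpha$, which lies in $\alpha(1+\mathfrak{q}^m)$ as required, and choose a representative $g \in K[m]$ of the class $x$; this is possible because $(G/H)[m]$ is by definition the image of $K[m]$ under the projection $G \to G/H$ (cf.~\eqref{eqn:description-of-Km}). Finally, since the diagonal torus $H$ centralizes every element of $E^0$, in particular $[\alpha_0,0,0]$, the vector $\Ad(g)[\alpha_0,0,0]$ depends only on $gH = x$, so $\xi = \Ad(g)[\alpha_0,0,0]$, completing the argument.

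There is no genuine obstacle here: the entire analytic content — the well-definedness and bijectivity of the parametrization of a punctured neighborhood of the cone $E^0 - \{0\}$ by $(1+\mathfrak{q}^m) \times (G/H)[m]$ — has already been established in Lemma~\ref{key-application-hensel-lemma} via Hensel's lemma and the inverse function theorem. The only points to watch are the cosmetic absorption of the scalar $\lambda$ into $\alpha_0$, the harmless $H$-indeterminacy when lifting $x$ back to $K[m]$, and the trivial degenerate case $\alpha = 0$.
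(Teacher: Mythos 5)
Your proof is correct and is the natural reading of the corollary: you extract surjectivity from the diffeomorphism of Lemma~\ref{key-application-hensel-lemma} applied with $\alpha_0 := \alpha$, absorb the scalar $\lambda$ into the diagonal entry, and lift from $(G/H)[m]$ to $K[m]$ using that $H$ centralizes $E^0$. The treatment of the degenerate case $\alpha = 0$ (forced by the hypothesis $\alpha_0 \in k - \{0\}$ in the lemma) is a useful addition that the paper leaves implicit.
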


\begin{lemma}\label{lem:basic-integral-formula-without-hypotheses}
  Let $f \in C_c(B^0)$
  and
  $\xi_0 = [\alpha_0,\beta_0,\gamma_0] \in E^0(m) - \{0\}$.
  Then
  \[
  \mathbb{E}_{
    \substack{
      \lambda  \in 1 + \mathfrak{q}^m
      \\
      g \in K[m]
    }
  }
  f(\lambda \Ad(g) \xi_0)
  =
  \mathbb{E} _{
    \substack{
      \alpha \in (1 + \mathfrak{q}^m) \alpha_0 \\
      \beta, \gamma \in 2 \alpha_0 \mathfrak{q}^m
    }
  }
  f ([\alpha,\beta,\gamma]).
  \]
\end{lemma}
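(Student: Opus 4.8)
The plan is to reduce, via the corollary at the end of \S\ref{sec-7-3-3}, to the case in which $\xi_0$ is a traceless diagonal matrix $[\alpha_0,0,0]$, and then to identify the left-hand average as the pushforward, under the diffeomorphism of Lemma \ref{key-application-hensel-lemma}, of a product of probability measures whose normalizing constant cancels the constant-valuation Jacobian.

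First I would invoke the corollary following Lemma \ref{key-application-hensel-lemma} to write $\xi_0 = \Ad(g_0)[\alpha_0',0,0]$ with $g_0 \in K[m]$ and $\alpha_0' \in \alpha_0(1+\mathfrak{q}^m)$. Since $K[m]$ is a group carrying its probability Haar measure, the substitution $g \mapsto g g_0^{-1}$ gives $\mathbb{E}_{\lambda,g} f(\lambda \Ad(g)\xi_0) = \mathbb{E}_{\lambda,g} f(\lambda \Ad(g)[\alpha_0',0,0])$; and because $\alpha_0'/\alpha_0 \in 1+\mathfrak{q}^m \subseteq \mathfrak{o}^\times$, one has $(1+\mathfrak{q}^m)\alpha_0' = (1+\mathfrak{q}^m)\alpha_0$ and $2\alpha_0'\mathfrak{q}^m = 2\alpha_0\mathfrak{q}^m$, so the right-hand side is unchanged as well. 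Thus it suffices to treat $\xi_0 = [\alpha_0,0,0]$.

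Next, since $H$ centralizes the diagonal line (conjugating a diagonal matrix by a diagonal matrix does nothing), the element $\Ad(g)[\alpha_0,0,0]$ depends on $g \in K[m]$ only through its image in $(G/H)[m]$. The projection $K[m] \to (G/H)[m]$, which in the coordinates of \eqref{eqn:description-of-Km} and \eqref{eq:integral-over-G-mod-H-m-averaged} simply forgets the diagonal factor $a(y)$, carries the probability Haar measure on $K[m]$ onto the probability measure on $(G/H)[m]$; hence the left-hand side of the asserted identity equals $\mathbb{E}_{\lambda \in 1+\mathfrak{q}^m,\ x \in (G/H)[m]} f(\lambda \Ad(x)[\alpha_0,0,0])$.

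Finally I would apply Lemma \ref{key-application-hensel-lemma}: the map $(\lambda,x) \mapsto \lambda \Ad(x)[\alpha_0,0,0]$ is a diffeomorphism of $(1+\mathfrak{q}^m) \times (G/H)[m]$ onto the box $\{[\alpha,\beta,\gamma] : \alpha \in (1+\mathfrak{q}^m)\alpha_0,\ \beta,\gamma \in 2\alpha_0\mathfrak{q}^m\}$ whose Jacobian has constant valuation. For $p$-adic integration such a change of variables scales the ambient measure by a fixed power of $q$; normalizing both the source measure (the product of the two probability measures) and the target measure (the uniform probability measure on the box, in the coordinates $(\alpha,\beta,\gamma)$) to total mass one, that constant cancels and the pushforward of the former is precisely the latter. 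This is exactly the right-hand side of the lemma, completing the proof. The only delicate point is this final normalization bookkeeping, together with the verification in the first step that the relevant sets of $\alpha$ and of $(\beta,\gamma)$ are genuinely unchanged under the reduction; neither constitutes a real obstacle.
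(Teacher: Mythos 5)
Your proof is correct and follows essentially the same route as the paper's: reduce to $\xi_0 = [\alpha_0,0,0]$ via the corollary and the substitution $g \mapsto g g_0^{-1}$, replace the $K[m]$-average by a $(G/H)[m]$-average using $H$-centralization, and apply Lemma \ref{key-application-hensel-lemma} together with the change-of-variables formula. The paper's version is more terse; your write-up usefully fills in the two small verifications it leaves implicit — that the right-hand side is unchanged by the reduction (since $(1+\mathfrak{q}^m)\alpha_0' = (1+\mathfrak{q}^m)\alpha_0$ and $2\alpha_0'\mathfrak{q}^m = 2\alpha_0\mathfrak{q}^m$) and that the constant-valuation Jacobian drops out once both sides are normalized to probability measures.
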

\begin{proof}
  By the corollary, we may
  write $\xi_0 = \Ad(g_0)[\alpha_0',0,0]$ with
  $\alpha_0' \in (1 + \mathfrak{q}^m) \alpha_0$ and
  $g_0 \in K[m]$.  By the change of variables
  $g \mapsto g g_0^{-1}$, we reduce to proving the required
  identity in the special case $\beta_0 = \gamma_0 = 0$.
  In that case,
  we replace the average over $g \in K[m]$
  with an average over $x \in (G/H)[m]$
  and appeal to
  Lemma \ref{key-application-hensel-lemma}
  and the change of variables formula.
\end{proof}

\begin{lemma}\label{lem:basic-integral-formula-for-diag-inv-f}
  Suppose $f \in C_c(B^0)$
  is supported on $E^0(m)$
  and satisfies
  $f(\lambda x) = f(x)$
  for all $x \in B^0$ and $\lambda \in 1 + \mathfrak{q}^m$.
  Let $\xi_0 \in B^0 - \{0\}$.
  Then
  \begin{align}\label{eqn:basic-integral-formula-for-diag-inv-f}
    \mathbb{E}_{g \in K[m]}
    f(\Ad(g) \xi_0)
    &=
      1_{E^0(m)}(\xi_0)
      \mathbb{E}_{\beta,\gamma \in 2 \mathfrak{q}^m \alpha_0} f
      ([\alpha_0,\beta,\gamma])
    \\
    \nonumber
    &=
      1_{\alpha_0 \neq 0}
      e_{2 \mathfrak{q}^m \alpha_0}(\beta_0)
      e_{2 \mathfrak{q}^m \alpha_0}(\gamma_0)
      \int_{\beta,\gamma \in k}
      f ([\alpha_0,\beta,\gamma,-\alpha_0])
  \end{align}
  where
  $e_\mathfrak{a} := \vol(\mathfrak{a})^{-1} 1_\mathfrak{a}$
  denotes
  the normalized characteristic function of a compact open subgroup $\mathfrak{a}$ of $k$.
\end{lemma}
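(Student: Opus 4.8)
The plan is to deduce this from Lemma \ref{lem:basic-integral-formula-without-hypotheses} together with the two standing hypotheses on $f$: that $\supp(f) \subseteq E^0(m)$ and that $f$ is invariant under dilation by $1+\mathfrak{q}^m$.

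First I would treat the degenerate case $\xi_0 = [\alpha_0,\beta_0,\gamma_0] \notin E^0(m)$; note this includes the case $\alpha_0 = 0$, since a nonzero traceless matrix with vanishing diagonal automatically lies outside $E^0(m)$. By \eqref{eq:adjoint-action-small-elements-near-preserve-diagonal-neighborhood}, each $\Ad(g)$ with $g \in K[m]$ restricts to a bijection of $E^0(m)$ onto itself, hence also of its complement; since $\supp(f) \subseteq E^0(m)$, the left-hand side of \eqref{eqn:basic-integral-formula-for-diag-inv-f} vanishes. The right-hand side vanishes as well: $1_{E^0(m)}(\xi_0) = 0$ kills the first form, while in the second form one of $1_{\alpha_0 \neq 0}$, $e_{2\mathfrak{q}^m\alpha_0}(\beta_0)$, $e_{2\mathfrak{q}^m\alpha_0}(\gamma_0)$ must vanish (indeed $1_{E^0(m)}(\xi_0) = 1_{\alpha_0 \neq 0}\,1_{2\mathfrak{q}^m\alpha_0}(\beta_0)\,1_{2\mathfrak{q}^m\alpha_0}(\gamma_0)$).

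For the main case $\xi_0 \in E^0(m) - \{0\}$ — so $\alpha_0 \neq 0$ and $\beta_0,\gamma_0 \in 2\mathfrak{q}^m\alpha_0$ — I would invoke Lemma \ref{lem:basic-integral-formula-without-hypotheses} and then collapse the two redundant averages it contains using the $(1+\mathfrak{q}^m)$-invariance of $f$. On the left, this makes the average over $\lambda \in 1+\mathfrak{q}^m$ trivial, producing $\mathbb{E}_{g\in K[m]} f(\Ad(g)\xi_0)$. On the right, for $\alpha \in (1+\mathfrak{q}^m)\alpha_0$ the unit $\lambda := \alpha_0/\alpha \in 1+\mathfrak{q}^m$ gives $f([\alpha,\beta,\gamma]) = f(\lambda[\alpha,\beta,\gamma]) = f([\alpha_0,\lambda\beta,\lambda\gamma])$, and since $\lambda \in \mathfrak{o}^\times$ the substitution $(\beta,\gamma)\mapsto(\lambda\beta,\lambda\gamma)$ preserves both the set $2\mathfrak{q}^m\alpha_0 \times 2\mathfrak{q}^m\alpha_0$ and its measure; hence the inner average is independent of $\alpha$ and the $\alpha$-average drops out. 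This gives the first claimed identity $\mathbb{E}_{g\in K[m]} f(\Ad(g)\xi_0) = \mathbb{E}_{\beta,\gamma\in 2\mathfrak{q}^m\alpha_0} f([\alpha_0,\beta,\gamma])$. Finally, $\supp(f) \subseteq E^0(m)$ lets me replace this average over $2\mathfrak{q}^m\alpha_0 \times 2\mathfrak{q}^m\alpha_0$ by $\vol(2\mathfrak{q}^m\alpha_0)^{-2}\int_{\beta,\gamma\in k} f([\alpha_0,\beta,\gamma,-\alpha_0])$, and since $\beta_0,\gamma_0 \in 2\mathfrak{q}^m\alpha_0$ in this case we have $\vol(2\mathfrak{q}^m\alpha_0)^{-2} = e_{2\mathfrak{q}^m\alpha_0}(\beta_0)\,e_{2\mathfrak{q}^m\alpha_0}(\gamma_0)$, yielding the second identity.

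I do not expect a serious obstacle: the substantive content is entirely in the already-established Lemma \ref{lem:basic-integral-formula-without-hypotheses} (which itself rests on the Hensel-type Lemma \ref{key-application-hensel-lemma}), and what remains is the bookkeeping that eliminates the two spurious averages and the routine case analysis matching the indicator factors on the right-hand side.
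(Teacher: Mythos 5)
Your proposal is correct and follows the same route as the paper: handle the degenerate case $\xi_0 \notin E^0(m)$ via \eqref{eq:adjoint-action-small-elements-near-preserve-diagonal-neighborhood}, then reduce the main case to Lemma \ref{lem:basic-integral-formula-without-hypotheses} and use the $(1+\mathfrak{q}^m)$-invariance of $f$ twice, once to insert the redundant $\lambda$-average and once to collapse the $\alpha$-average in the conclusion. You spell out the change of variables $(\beta,\gamma)\mapsto(\lambda\beta,\lambda\gamma)$ that the paper leaves implicit, which is a harmless expansion of the same argument.
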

\begin{proof}
  If $\xi_0 \notin E^0(m)$, then the vanishing of the LHS
  follows from
  \eqref{eq:adjoint-action-small-elements-near-preserve-diagonal-neighborhood},
  so suppose that $\xi_0 \in E^0(m)$.
  The $(1+\mathfrak{q}^m)$-invariance of $f$
  allows us to rewrite
  the LHS of \eqref{eqn:basic-integral-formula-for-diag-inv-f}
  as
  $\mathbb{E}_{\lambda \in 1 + \mathfrak{q}^m, g \in K[m]}
  f(\lambda \Ad(g) \xi_0)$.
  We apply Lemma
  \ref{lem:basic-integral-formula-without-hypotheses}
  to the latter,
  invoking again the $(1+\mathfrak{q}^m)$-invariance of $f$
  to simplify the conclusion.
\end{proof}

\begin{lemma}\label{lem:useful-for-main-term-evaluation-inverse-weyl-integral-formula}
  Let $f$ be as in Lemma
  \ref{lem:basic-integral-formula-for-diag-inv-f}
  and $\alpha_0 \in k - \{0\}$.
  Then
  \begin{equation}\label{eqn:useful-for-main-term-evaluation-inverse-weyl-integral-formula}
    \int_{x \in (G/H)[m]}
    f(\Ad(x) [\alpha_0,0,0])
    =
    |2 \alpha_0|^{-2}
    \int_{\beta,\gamma \in k}
    f([\alpha_0,\beta,\gamma]).
  \end{equation}
\end{lemma}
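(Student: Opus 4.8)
The plan is to reduce the integral over $(G/H)[m]$ to an average over $K[m]$ and then feed that average into Lemma \ref{lem:basic-integral-formula-for-diag-inv-f}, which already contains all the analytic input (the Hensel's-lemma change of variables) needed. No genuinely new work is required beyond careful bookkeeping of Haar normalizations.

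\textbf{Step 1: descent to $K[m]$.} First I would note that $\xi_0 := [\alpha_0,0,0]$ is diagonal, hence fixed by $\Ad(a(z))$ for all $z \in k^\times$; in particular the function $g \mapsto f(\Ad(g)\xi_0)$ on $K[m]$ is invariant under right translation by $H \cap K[m] = \{a(z) : z \in 1+\mathfrak{q}^m\}$, so it descends to a function on the orbit $(G/H)[m] = K[m]\cdot eH$. Since the normalized Haar measure on $K[m]$ pushes forward to the invariant probability measure on $(G/H)[m]$, and the quotient Haar on $G/H$ restricts on $(G/H)[m]$ to $\vol((G/H)[m])$ times that probability measure, one gets
\[
\int_{x \in (G/H)[m]} f(\Ad(x)\xi_0)\,dx
= \vol((G/H)[m])\,\mathbb{E}_{g \in K[m]} f(\Ad(g)\xi_0).
\]
Taking $f = 1_{(G/H)[m]}$ in the integral formula \eqref{eq:integral-over-G-mod-H-m} and using $\vol(\mathfrak{q}^m) = q^{-m}$ gives $\vol((G/H)[m]) = q^{-2m}$.

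\textbf{Step 2: apply Lemma \ref{lem:basic-integral-formula-for-diag-inv-f} and conclude.} Next I would apply Lemma \ref{lem:basic-integral-formula-for-diag-inv-f} to $\xi_0 = [\alpha_0,0,0]$: here $\beta_0 = \gamma_0 = 0$, and since $d(cx) = |c|\,dx$ one has $\vol(2\mathfrak{q}^m\alpha_0) = |2\alpha_0|\,q^{-m}$, whence $e_{2\mathfrak{q}^m\alpha_0}(0) = |2\alpha_0|^{-1} q^m$. Therefore
\[
\mathbb{E}_{g \in K[m]} f(\Ad(g)[\alpha_0,0,0])
= |2\alpha_0|^{-2} q^{2m} \int_{\beta,\gamma \in k} f([\alpha_0,\beta,\gamma]),
\]
using $\alpha_0 \neq 0$. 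Combining this with the display of Step 1, the factors $q^{-2m}$ and $q^{2m}$ cancel and one arrives at \eqref{eqn:useful-for-main-term-evaluation-inverse-weyl-integral-formula}. The only points demanding any care are the compatibility of probability measures under $K[m] \to (G/H)[m]$ and the elementary volume computation $\vol(2\mathfrak{q}^m\alpha_0) = |2\alpha_0|\,q^{-m}$; there is no substantial obstacle.
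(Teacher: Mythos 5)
Your proof is correct and follows essentially the same route as the paper: rewrite the $(G/H)[m]$-integral as $\vol(\mathfrak{q}^m)^2\,\mathbb{E}_{g\in K[m]} f(\Ad(g)[\alpha_0,0,0])$ using the explicit coordinate formulas, then apply Lemma \ref{lem:basic-integral-formula-for-diag-inv-f} and cancel the $q^{\pm 2m}$ factors via $\vol(\mathfrak{q}^m)^2\,e_{2\alpha_0\mathfrak{q}^m}(0)^2=|2\alpha_0|^{-2}$. Your extra remarks on the descent from $K[m]$ to $(G/H)[m]$ and on the compatibility of Haar normalizations are just a more explicit articulation of what the paper condenses into the phrase ``using the integral formulas \eqref{eq:formula-averages-over-Km} and \eqref{eq:integral-over-G-mod-H-m}.''
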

\begin{proof}
  Using the integral formulas
  \eqref{eq:formula-averages-over-Km}
  and \eqref{eq:integral-over-G-mod-H-m},
  the LHS of
  \eqref{eqn:useful-for-main-term-evaluation-inverse-weyl-integral-formula}
  may be rewritten
  $\vol(\mathfrak{q}^m)^2
  \mathbb{E}_{g \in K[m]} f(\Ad(g)[\alpha_0,0,0])$.
  The conclusion follows from Lemma
  \ref{lem:basic-integral-formula-for-diag-inv-f}
  upon noting that
  $\vol(\mathfrak{q}^m)^2 e_{2 \alpha_0 \mathfrak{q}_m}(0)^2
  = |2 \alpha_0|^{-2}$.
\end{proof}
\subsubsection{Expanding near the singular points}
\label{sec-7-3-4}
\begin{lemma}\label{lem:we-know-when-conjugation-nearly-stabilizes-diagonal}
  For $x \in G/H$,
  the following are equivalent:
  \begin{enumerate}
  \item[(i)] There exists $w \in W$
    so that $x w \in (G/H)[m]$.
  \item[(ii)] There exists $\tau \in E^0 - \{0\}$
    so that $\Ad(x) \tau \in E^0(m)$.
  \item[(iii)] For all $\tau \in E^0$,
    one has $\Ad(x) \tau \in E^0(m)$.
  \end{enumerate}
\end{lemma}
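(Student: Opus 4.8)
The plan is to establish (i) $\Rightarrow$ (iii) $\Rightarrow$ (ii) $\Rightarrow$ (i); the middle implication is trivial (take any $\tau$ in the nonempty set $E^0 - \{0\}$), the first is short, and the last carries the content. For (i) $\Rightarrow$ (iii), suppose $x w \in (G/H)[m]$. Writing $x = g H$, this says $g \dot w H = g' H$ for some $g'$ representing an element of $K[m]$ and some lift $\dot w \in N(H)$ of $w$, hence $g = g' h \dot w^{-1}$ for a suitable $h \in H$. For $\tau \in E^0$ (the case $\tau = 0$ being vacuous, as $0 \in E^0(m)$), I would compute $\Ad(x)\tau = \Ad(g')\,\Ad(h)\,\Ad(\dot w^{-1})\tau$: the innermost term lies in $E^0$ by the first half of \eqref{eq:weyl-invariance-of-cone-nbhd}, $\Ad(h)$ fixes it since $\Ad(H)$ fixes $E^0$ pointwise, and $\Ad(g')$ then preserves the containment in $E^0(m)$ by \eqref{eq:adjoint-action-small-elements-near-preserve-diagonal-neighborhood} (using $E^0 \subseteq E^0(m)$). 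Thus $\Ad(x)\tau \in E^0(m)$.

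The heart of the matter is (ii) $\Rightarrow$ (i). Fix $\tau = [\alpha_0, 0, 0] \in E^0 - \{0\}$ with $\Ad(x)\tau \in E^0(m)$; since $\Ad$ is linear and $E^0(m)$ is stable under scaling by $k^\times$, I may take $\alpha_0 = 1$. Choose a representative $\left(\begin{smallmatrix} a & b \\ c & d\end{smallmatrix}\right) \in \GL_2(k)$ of $x$. Because $\Ad(H)$ fixes $[1,0,0]$, the hypothesis depends only on the coset modulo $H$ on the right, so after right multiplication by a diagonal matrix I may assume the representative lies in $M_2(\mathfrak{o})$ with both columns primitive (not divisible by $\varpi$). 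A direct matrix computation gives
\[
\Ad\left(\begin{pmatrix} a & b \\ c & d \end{pmatrix}\right)[1,0,0]
= \left[\tfrac{ad+bc}{ad-bc},\ \tfrac{-2ab}{ad-bc},\ \tfrac{2cd}{ad-bc}\right],
\]
so the hypothesis $\Ad(x)\tau \in E^0(m)$ unwinds to: $ad + bc \neq 0$ and $ab, cd \in (ad+bc)\mathfrak{q}^m$. Multiplying these two membership conditions and using $\ord(ad+bc) \geq \min(\ord(ad), \ord(bc))$, I obtain $|\ord(ad) - \ord(bc)| \geq 2m$; in particular $\ord(ad) \neq \ord(bc)$, and $\ord(ad+bc)$ equals the smaller of the two.

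In the case $\ord(ad) < \ord(bc)$, the conditions reduce to $b/d, c/a \in \mathfrak{q}^m$, and primitivity of the columns $(a,c)$ and $(b,d)$ then forces $a, d \in \mathfrak{o}^\times$ and $b, c \in \mathfrak{q}^m$; hence $\left(\begin{smallmatrix} a & b \\ c & d \end{smallmatrix}\right)\diag(a,d)^{-1}$, being $\equiv I \pmod{\mathfrak{q}^m}$, represents an element of $K[m]$, whence $x \in (G/H)[m]$ and (i) holds with $w$ the identity of $W$. In the case $\ord(bc) < \ord(ad)$, the symmetric argument, with the two columns interchanged (i.e.\ multiplying on the right by $\left(\begin{smallmatrix} 0 & 1 \\ 1 & 0\end{smallmatrix}\right) \in N(H)$), gives $b, c \in \mathfrak{o}^\times$ and $a, d \in \mathfrak{q}^m$, and $x w \in (G/H)[m]$ with $w$ the nontrivial Weyl element. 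Either way (i) holds.

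The only genuine computation is the valuation bookkeeping in the second paragraph together with checking that the normalized matrix lands in the principal congruence subgroup, both routine; I anticipate no serious obstacle. The single point that must be handled carefully is the dichotomy on $|\ord(ad) - \ord(bc)|$, which encodes the geometric fact that an element of $G/H$ whose induced rotation of $B^0$ nearly fixes the anisotropic line $\mathbb{P}E^0$ must, up to the Weyl element, be $\mathfrak{q}^m$-close to the identity. A cleaner alternative would identify $G/H$ with the space of ordered pairs of distinct points of $\mathbb{P}^1(k)$ and $\mathbb{P}E^0$ with the unordered pair $\{0,\infty\}$, reducing (ii) $\Rightarrow$ (i) to the statement that a pair consisting of one point near $0$ and one near $\infty$ is, after possibly swapping, $\mathfrak{q}^m$-close to $(\infty, 0)$; I would adopt whichever version is shorter given the conventions already fixed in \S\ref{sec-7-3}.
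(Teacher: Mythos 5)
Your proof is correct, but it proceeds by a genuinely different route from the paper's. Both establish the equivalence via a three-step cycle, but traversed in opposite directions: the paper shows (i) $\Rightarrow$ (ii) $\Rightarrow$ (iii) $\Rightarrow$ (i), placing the substance in (iii) $\Rightarrow$ (i), which it disposes of in one line by invoking the corollary of Lemma~\ref{key-application-hensel-lemma} in \S\ref{sec-7-3-3} (any $\xi \in E^0(m)$ can be written $\Ad(g)[\alpha_0,0,0]$ with $g \in K[m]$), so that $\Ad(x)[1,0,0] = \Ad(g)[\alpha_0,0,0]$ forces $\Ad(g^{-1}x) E^0 = E^0$ and hence $g^{-1}x \in N(H)$. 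You instead go (i) $\Rightarrow$ (iii) $\Rightarrow$ (ii) $\Rightarrow$ (i), with the content in (ii) $\Rightarrow$ (i), handled by a bare-hands matrix computation: normalize a lift of $x$ to have entries in $\mathfrak{o}$ with primitive columns, unwind membership in $E^0(m)$ to the conditions $ad+bc \neq 0$ and $ab,\,cd \in (ad+bc)\mathfrak{q}^m$, derive the valuation inequality $|\ord(ad)-\ord(bc)| \geq 2m$, and conclude from primitivity that exactly one of the pairs $\{a,d\}$, $\{b,c\}$ consists of units with the other in $\mathfrak{q}^m$ — which is precisely membership of $x$ or $xw$ in $(G/H)[m]$. (I checked the formula $\Ad(g)[1,0,0]$ and the subsequent bookkeeping; they are correct, and incidentally your formula agrees with a direct recomputation while the displayed identity \eqref{eq:basic-conjugation-action-on-diagonal} in the paper appears to have a small typo in its third entry.) Your version has the advantage of being self-contained and of making the valuation-theoretic mechanism visible: the observation you flag — that near-stabilization of the anisotropic line $\mathbb{P}E^0$ forces, up to the Weyl swap, $\mathfrak{q}^m$-proximity to the identity — is exactly the right heuristic and your dichotomy realizes it literally. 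The paper's version is shorter on the page because the Hensel machinery has already been built for Lemma~\ref{lem:basic-integral-formula-without-hypotheses} and later uses; within the section that reuse saves work, whereas your computation would be the cheaper choice if the Hensel apparatus were not otherwise required.
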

\begin{proof}
  (i)
  implies (ii):
  immediate from
  \eqref{eq:weyl-invariance-of-cone-nbhd} and
  \eqref{eq:adjoint-action-small-elements-near-preserve-diagonal-neighborhood}.
  (ii) implies (iii): immediate from the
  definitions and the fact that $\dim(E^0) = 1$.
  (iii) implies (i):
  By the corollary
  to Lemma \ref{key-application-hensel-lemma}
  of
  \S\ref{sec-7-3-3},
  we have $\Ad(x) [1,0,0] \in  \Ad(g) E^0$
  for some $g \in K[m]$;
  then $\Ad(g^{-1} x) E^0 = E^0$,
  hence $g^{-1} x \in N(H) = W H$, hence
  $x \in K[m] W H$.
\end{proof}

\begin{lemma}\label{lem:expansion-integrals-near-singular-pts}
  ~
  \begin{enumerate}
  \item[(i)]
    Suppose $f \in C_c(G/H)$
    is supported on $(G/H)[m] W$.
    Then
    \[
    \int_{G/H} f
    = \sum_{w \in W}
    \int_{x \in (G/H)[m]} f(x w).
    \]
  \item[(ii)]
    Suppose
    $f_1 \in C_c(B^0)$
    is supported on $E^0(m)$
    and $f_2 \in C_c(G/H)$ is arbitrary.
    Let $\tau \in E^0$.  Then
    \[
    \int_{x \in G/H}
    f_1(\Ad(x) \tau)  f_2(x)
    =
    \sum_{w \in W}
    \int_{x \in (G/H)[m]}
    f_1(\Ad(x w) \tau)
    f_2(x w).
    \]
  \end{enumerate}
\end{lemma}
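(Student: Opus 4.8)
The plan is to deduce (ii) from (i), and to prove (i) by exhibiting the support region $(G/H)[m]W$ as a \emph{disjoint} union of the $|W|=2$ translates $(G/H)[m]w$ ($w\in W$), over which the quotient measure on $G/H$ transforms correctly. So the whole proof reduces to two routine points: disjointness of that union, and right-$W$-invariance of the quotient Haar measure on $G/H$.

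For (i), I would first note $(G/H)[m]W=\bigcup_{w\in W}(G/H)[m]w$ by definition, and then check this union is disjoint. Since $|W|=2$, it suffices to rule out the existence of $x\in(G/H)[m]$ with $xw_0\in(G/H)[m]$, where $w_0$ is the nontrivial Weyl element. Writing $x=n'(x_1)n(x_2)H$ with $x_1,x_2\in\mathfrak{q}^m$, the explicit formula \eqref{eq:basic-conjugation-action-on-diagonal} gives $\Ad(x)[1,0,0]=[\alpha,\beta,\gamma]$ with $\alpha\in 1+\mathfrak{q}^{2m}$ (in particular $\alpha\in\mathfrak{o}^\times$); since $\Ad(w_0)[1,0,0]=-[1,0,0]$, one has $\Ad(xw_0)[1,0,0]=-\Ad(x)[1,0,0]$, whose first coordinate is $-\alpha$, and applying \eqref{eq:basic-conjugation-action-on-diagonal} again to $xw_0\in(G/H)[m]$ forces $-\alpha\in 1+\mathfrak{q}^{2m}$ too, hence $2\alpha\in\mathfrak{q}^{2m}$, hence $2\in\mathfrak{q}^{2m}$ — impossible, since $2m\geq N_0>\ord_k(2)$. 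Given disjointness, and using $\supp(f)\subseteq(G/H)[m]W$, I would split $\int_{G/H}f=\sum_{w\in W}\int_{(G/H)[m]w}f$ and change variables $x\mapsto xw$ in the $w$-th term to land on $\int_{(G/H)[m]}f(xw)$. The one substantive point here is that right translation by $w\in W$ preserves the quotient Haar measure on $G/H$; this holds because $w$ normalizes $H$ and conjugation by $w_0$ acts on $H=\{a(y)\}$ by $a(y)\mapsto a(y^{-1})$, which preserves $dy/|y|$ (alternatively, one verifies this directly from the coordinates in \eqref{eq:integral-formula-G-mod-H}). That completes (i).

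For (ii), taking $\tau\in E^0$ with $\tau\neq 0$ (the case $\tau=0$, being degenerate, is not needed for the applications), set $g(x):=f_1(\Ad(x)\tau)f_2(x)$, which lies in $C_c(G/H)$ since $f_1$ is bounded and $f_2$ has compact support. If $g(x)\neq 0$ then $\Ad(x)\tau\in\supp(f_1)\subseteq E^0(m)$, so the implication (ii)$\Rightarrow$(i) of Lemma \ref{lem:we-know-when-conjugation-nearly-stabilizes-diagonal} gives $x\in(G/H)[m]W$. Hence $g$ is supported on $(G/H)[m]W$, and applying part (i) to $g$ yields precisely the asserted identity.

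The main obstacle, such as it is, lies entirely in part (i): isolating the single place where one genuinely uses $W\subseteq N(H)$ rather than merely $W\subseteq G$ (namely the invariance of the quotient measure under right $W$-translation), together with the elementary arithmetic input $2\notin\mathfrak{q}^{2m}$ needed for disjointness. Everything else is formal once Lemma \ref{lem:we-know-when-conjugation-nearly-stabilizes-diagonal} and the conjugation formula \eqref{eq:basic-conjugation-action-on-diagonal} are in hand.
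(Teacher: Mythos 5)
Your proof is correct, and the overall structure — disjointness of $(G/H)[m]W = \bigsqcup_{w} (G/H)[m]w$, right-$W$-invariance of the quotient Haar, then deducing (ii) from (i) via Lemma~\ref{lem:we-know-when-conjugation-nearly-stabilizes-diagonal} — matches the paper's. The one genuine difference is the disjointness argument in (i). The paper computes directly: for $x = n'(x_1)n(x_2)H$ with $x_2 \neq 0$ one has $xw = n'(x_1 + 1/x_2)n(-x_2)H$, and since $x_2 \in \mathfrak{q}^m$ forces $|1/x_2| > 1$, the coset $xw$ cannot lie in $(G/H)[m]$. This is a one-line observation requiring no arithmetic hypothesis on $m$ beyond $m \geq 1$. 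You instead deduce disjointness by applying $\Ad(\cdot)$ to $[1,0,0]$ and extracting a contradiction $2 \in \mathfrak{q}^{2m}$, which needs $m > \ord_k(2)$ (equivalently $m \geq N_0$). That hypothesis is indeed in force everywhere this lemma is used, so your argument is correct in context, but the paper's proof is slightly more elementary, holds for all $m \geq 1$, and does not smuggle in a residue-characteristic condition. The trade-off in your favor is conceptual economy: your route reuses the conjugation formula \eqref{eq:basic-conjugation-action-on-diagonal} already in hand and makes visible that the disjointness is really the same phenomenon as the non-overlap of $\pm\alpha$ in $1 + \mathfrak{q}^{2m}$, which is pleasant, but strictly speaking the paper's calculation is cleaner. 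You are also right to flag the $\tau = 0$ edge case in (ii): as literally stated the lemma fails there (if $f_1(0) \neq 0$ the identity degenerates to one which is false for generic $f_2$); the paper silently relies on $\tau \neq 0$ in the applications, and your explicit remark is a genuine improvement on the exposition.
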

\begin{proof}~
  \begin{enumerate}
  \item[(i)]
    The nontrivial Weyl element $w \in W$
    acts on $x = n'(x_1) n(x_2) H \in (G/H)[m]$
    by the formula (for $x_2 \neq 0$)
    $x w = n'(x_1 + 1/x_2) n(-x_2) H$,
    hence
    $(G/H)[m] \cap (G/H)[m] w = \emptyset$.
    The conclusion follows.
\item[(ii)]
  By Lemma
  \ref{lem:we-know-when-conjugation-nearly-stabilizes-diagonal},
  the function
  $f(x) := f_1(\Ad(x) \tau) f_2(x)$
  is supported on $(G/H)[m]$,
  so we may apply (i).
  \end{enumerate}
\end{proof}

\subsubsection{A variant of the Weyl integral formula}
\label{sec-7-3-5}
\begin{lemma*}\label{lem:weyl-integral-formula-variant}
  Let $f \in C_c(B^0)$ be supported on $\cup_{g \in G} g E^0 g^{-1}$.
  Then
  \[
  \int_{\alpha,\beta,\gamma \in k}
  f([\alpha,\beta,\gamma])
  =
  \frac{1}{|W|}
  \int_{\alpha \in k}
  |2 \alpha|^{-2}
  \int_{x \in G/H}
  f(\Ad(x) [\alpha,0,0]).
  \]
\end{lemma*}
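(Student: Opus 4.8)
The plan is to recognize the identity as an instance of the Weyl integration formula for the adjoint action of $G=\PGL_2(k)$ on its Lie algebra $B^0$, restricted to the support of $f$. Write $\mathcal{R}:=\bigcup_{g\in G} gE^0g^{-1}$. Since $\xi=[\alpha,\beta,\gamma]$ has characteristic polynomial $t^2+\nr(\xi)=t^2-(\alpha^2+\beta\gamma)$, and $\xi$ is conjugate to a traceless diagonal matrix precisely when $-\nr(\xi)\in k^{\times 2}$, one has $\mathcal{R}=\{0\}\cup\{\xi:-\nr(\xi)\in k^{\times 2}\}$, and $\mathcal{R}$ minus the single point $0$ (a null set) is exactly the image of
\[
\Phi\colon (G/H)\times k^\times\longrightarrow B^0,\qquad \Phi(x,\alpha):=\Ad(x)\,[\alpha,0,0].
\]
First I would record the structural facts: $\Phi$ is surjective onto $\mathcal{R}\setminus\{0\}$, and its fibres over that image are the orbits of the free action of $W\cong N(H)/H$ given by $(x,\alpha)\mapsto(xw,\alpha')$ with $[\alpha',0,0]:=\Ad(w^{-1})[\alpha,0,0]$, so that the nontrivial element sends $\alpha\mapsto-\alpha$. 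This uses $\Stab_G([\alpha,0,0])=H$ for $\alpha\neq0$ and $\{g\in G:\Ad(g)E^0=E^0\}=N(H)$, both standard; the local surjectivity near $E^0$ is the corollary to Lemma~\ref{key-application-hensel-lemma}.

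Next I would check that $\Phi$ is, on $\{\alpha\neq0\}$, everywhere a local diffeomorphism, and compute its Jacobian. In suitable coordinates on $G/H$ near the base point — with respect to which $\int_{G/H}$ is the product of the Haar measure on $k$, cf.\ \eqref{eq:integral-formula-G-mod-H} — and the coordinates $[\alpha,\beta,\gamma]$ on $B^0$, the explicit formula \eqref{eq:basic-conjugation-action-on-diagonal} shows that the differential of $\Phi$ at $(eH,\alpha)$ carries the two $G/H$-directions to $\pm 2\alpha[0,1,0]$ and $2\alpha[0,0,1]$, and the $\alpha$-direction to $[1,0,0]$, whence $|\det d\Phi_{(eH,\alpha)}|=|4\alpha^2|$. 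By left $G$-invariance of the measure on $G/H$ and $\Ad$-invariance of the measure on $B^0$, this gives the Jacobian at a general point, namely the power of $|2\alpha|$ appearing in the statement. Reconciling the factor $|4\alpha^2|$ with the normalization of the measure on $B^0$ fixed in \S\ref{sec-7-3-2} and with the analogous constant in the already-proved Lemma~\ref{lem:useful-for-main-term-evaluation-inverse-weyl-integral-formula} (where the same Jacobian occurs but with the roles of the diagonal slice and the $G$-orbit interchanged) is where the only genuine care is needed; everything else is formal.

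Finally I would assemble the formula. Since $\Phi$ is étale on $\{\alpha\neq0\}$ and its fibres over $\mathcal{R}\setminus\{0\}$ have exactly $|W|$ points, the change-of-variables theorem for $k$-analytic manifolds (applied chart by chart) gives
\[
\int_{B^0}f=\int_{\mathcal R}f=\frac{1}{|W|}\int_{(G/H)\times k^\times}f(\Phi(x,\alpha))\,\bigl|\det d\Phi_{(x,\alpha)}\bigr|,
\]
and substituting the Jacobian computed above and applying Fubini yields the claim. If one prefers to stay inside the toolkit of \S\ref{sec-7-3-3}, one can instead note that both sides of the asserted identity are unchanged under $f\mapsto\Ad(g_0)f$ (by left-invariance of the measure on $G/H$ and $\Ad$-invariance on $B^0$), use a partition of unity together with a conjugation to reduce to $f$ supported in a small cone $E^0(m)$, and then invoke Lemma~\ref{lem:basic-integral-formula-without-hypotheses}: the ``boxes'' $\{[\alpha,\beta,\gamma]:\alpha\in\alpha_0(1+\mathfrak q^m),\ \beta,\gamma\in 2\alpha_0\mathfrak q^m\}$ tile $E^0(m)\setminus\{0\}$ as $\alpha_0$ runs over coset representatives for $k^\times/(1+\mathfrak q^m)$, and summing that lemma over the tiles — using Lemma~\ref{lem:we-know-when-conjugation-nearly-stabilizes-diagonal} to account for the two Weyl chambers — reproduces the right-hand side. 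Either way the main obstacle is not conceptual but the bookkeeping: pinning down the Jacobian constant against the several measure normalizations in play and correctly handling the $|W|$-fold overcounting; the geometric inputs (the covering structure of $\Phi$ and its being a local diffeomorphism) are routine given the Hensel-type lemmas of \S\ref{sec-7-3-3}.
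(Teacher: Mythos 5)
Your change-of-variables route and your alternative route via the toolkit of \S\ref{sec-7-3-3} are both structurally sound (and the second is essentially the paper's own sketch, which omits the full proof and points to Lemmas \ref{lem:useful-for-main-term-evaluation-inverse-weyl-integral-formula} and \ref{lem:expansion-integrals-near-singular-pts} for the special case $\supp(f)\subseteq E^0(m)$). But there is a concrete step that fails as written: you correctly compute $|\det d\Phi_{(eH,\alpha)}| = |4\alpha^2| = |2\alpha|^{2}$ and then assert this is ``the power of $|2\alpha|$ appearing in the statement.'' It is not. The statement has $|2\alpha|^{-2}$, the reciprocal of what you found, and the change-of-variables theorem in the direction you apply it yields
\[
\int_{\alpha,\beta,\gamma\in k} f([\alpha,\beta,\gamma])
= \frac{1}{|W|}\int_{\alpha\in k}|2\alpha|^{+2}\int_{x\in G/H} f(\Ad(x)[\alpha,0,0]),
\]
with the opposite sign of exponent. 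You flag ``reconciling the factor $|4\alpha^2|$'' as the one place where genuine care is needed, but you never perform the reconciliation, and performing it shows that your (correct) computation contradicts the formula you set out to prove.

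The resolution, which you should have stated, is that the exponent in the lemma as printed is a typo and should be $|2\alpha|^{+2}$. This is what your Jacobian gives; it is what the classical Lie-algebra Weyl integration formula gives ($|D(X)|$ with $D([\alpha,0,0])=-4\alpha^2$); it is what a direct test with, say, $f$ the characteristic function of $\{\alpha\in 1+\mathfrak{q}^m,\ \beta,\gamma\in 2\mathfrak{q}^m\}$ confirms; it is what combining Lemma \ref{lem:useful-for-main-term-evaluation-inverse-weyl-integral-formula} with Lemma \ref{lem:expansion-integrals-near-singular-pts} yields in the special case $\supp(f)\subseteq E^0(m)$; and it is the version the paper itself uses when it invokes the formula in \S\ref{sec-7-3-6}, where the factor $D(\tau)=|2\alpha|^2$ (not $D(\tau)^{-1}$) appears on the inside of the triple integral. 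So your argument is correct in substance but proves a differently-signed identity than the one stated; the gap is that you neither notice nor resolve the mismatch, and the final line ``substituting the Jacobian computed above\dots yields the claim'' is, as written, false.
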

We omit the proof. In the special case
that
$\supp(f) \subseteq (G/H)[m] W$,
the conclusion follows from Lemma
\ref{lem:useful-for-main-term-evaluation-inverse-weyl-integral-formula}
of \S\ref{sec-7-3-3}
and Lemma \ref{lem:expansion-integrals-near-singular-pts}
of \S\ref{sec-7-3-4};
this special case
suffices for our purposes and serves  also to check the normalization.
\subsubsection{Proof of the lemma of \S\ref{sec:key-computation}}
\label{sec-7-3-6}
\label{sec:appendix-proof-theorem}
Let $\phi \in \mathcal{S}(B)$.  By \eqref{eq:formula-relating-B-and-B0},
\[\langle \Ad(g) \phi, \phi  \rangle_{L^2(B)}
= \int_{\delta \in k}
\langle \Ad(g) \phi_\delta, \phi_\delta  \rangle_{L^2(B^0)},\]
where $\phi_\delta \in \mathcal{S}(B^0)$ is given by $\phi_\delta(\xi) := \phi(\delta/2 + \xi)$ for $\xi \in B^0$.
The proof of
the lemma of \S\ref{sec:key-computation}
thereby reduces to that of the following:
\begin{lemma*}
  Let $\Psi : G \rightarrow \mathbb{C}$
  and $\phi \in \mathcal{S}(B^0)$
  be arbitrary.
  Define $I : E^0 \rightarrow \mathbb{C}$ by
  $I([\alpha,0,0])
  :=
  \int_{\beta,\gamma \in k}
  \phi([\alpha,\beta,\gamma])$.
  Suppose there exists an open subgroup $U$ of $\PGL_2(k)$
  and a positive integer $m$ so that
  \begin{equation}\label{eq:Km-in-U-again-with-B0}
    K[m] \leq U
  \end{equation}
  \begin{equation}\label{eq:unit-invariance-assumed-of-phi-2}
    \text{
        $\phi(\lambda x) = \phi(x)$
  for all $x \in B^0$ and $\lambda \in 1 + \mathfrak{q}^m$.
}
  \end{equation}
  \begin{equation}\label{eq:support-phi-contained-in-E-nought-of-m-2}
    \supp(\phi) \subseteq E^0(m)
  \end{equation}
  \begin{equation}\label{eq:I-alpha-neg-alpha}
    I([\alpha,0,0]) = I([-\alpha,0,0]),
  \end{equation}
  and so that
  the decay and smoothness assumptions \eqref{eq:matrix-coeff-decay-weak-generic},
  \eqref{eq:matrix-coeff-bi-invariance} concerning $\Psi$ hold.
  Then
  \begin{equation}\label{eq:main-term-integral-so-annoying-without-cleverness-both-B0}
    \int_{g \in G}
    \Psi(g) \langle \Ad(g) \phi, \phi  \rangle = 
    (\int_{N(H)}
    \Psi)
    \int_{\alpha \in k}
    |2 \alpha|^{-2}
    |I([\alpha,0,0])|^2.
  \end{equation}
\end{lemma*}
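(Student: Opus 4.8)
The plan is to replace $\phi$ by a $K[m]$-average that is \emph{constant} along every conjugacy direction meeting $\supp\phi$, so that the integral over $B^0$ becomes an overlap volume on $G/H$ which, tested against the coarse function $\Psi$, localizes onto $N(H)$. First I would note that both sides of \eqref{eq:main-term-integral-so-annoying-without-cleverness-both-B0} converge absolutely: by Lemma \ref{lemma:cheap-matrix-coeff-schwartz-space-B-estimate-via-Xi} and \eqref{eq:matrix-coeff-decay-weak-generic} the integrand on the left is $O(\Xi(g)^{2+\delta})$, integrable by Lemma \ref{lemma:convergence-Xi-along-G-and-H}. Since $K[m]\le U$, the function $\Psi$ is bi-$K[m]$-invariant, so substituting $g\mapsto k_1 g k_2$ with $k_i\in K[m]$, averaging, and using the unitarity of $\Ad(k)$ on $L^2(B^0)$,
\[
\int_{g\in G}\Psi(g)\,\langle\Ad(g)\phi,\phi\rangle=\int_{g\in G}\Psi(g)\,\langle\Ad(g)\phi^\sharp,\phi^\sharp\rangle,\qquad \phi^\sharp:=\mathbb{E}_{k\in K[m]}\Ad(k)\phi.
\]
By construction $\phi^\sharp$ is $\Ad(K[m])$-invariant, and Lemma \ref{lem:basic-integral-formula-for-diag-inv-f} applied to $f=\phi$ (legitimate by \eqref{eq:unit-invariance-assumed-of-phi-2}, \eqref{eq:support-phi-contained-in-E-nought-of-m-2}) shows that $\phi^\sharp$ is supported on $E^0(m)$, depends there only on the diagonal coordinate $\alpha$ with $\phi^\sharp([\alpha,0,0])=q^{2m}I([\alpha,0,0])/|2\alpha|^{2}$, and -- combining $\Ad(w)[\alpha,\beta,\gamma]=[-\alpha,\gamma,\beta]$ with the symmetry \eqref{eq:I-alpha-neg-alpha} -- is moreover $\Ad(W)$-invariant.

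Next I would collapse onto $G/H$. Put $A:=(G/H)[m]\,W\subseteq G/H$. For $\alpha\ne 0$, Lemma \ref{lem:we-know-when-conjugation-nearly-stabilizes-diagonal} gives $\Ad(x)[\alpha,0,0]\notin E^0(m)$ unless $x\in A$, while for $x\in A$ the $\Ad(K[m])$- and $\Ad(W)$-invariance of $\phi^\sharp$ force $\phi^\sharp(\Ad(x)[\alpha,0,0])=\phi^\sharp([\alpha,0,0])$ (and likewise with $g^{-1}x$ in place of $x$). Applying the variant of the Weyl integral formula of \S\ref{sec-7-3-5} to $\xi\mapsto\phi^\sharp(\Ad(g^{-1})\xi)\overline{\phi^\sharp(\xi)}$ -- supported on $E^0(m)$, hence on $\cup_{g'}g'E^0 g'^{-1}$, since every nonzero element of $E^0(m)$ is regular semisimple with split characteristic polynomial by Hensel's lemma -- and using the previous sentence, the integral over $x$ decouples:
\[
\langle\Ad(g)\phi^\sharp,\phi^\sharp\rangle=c\cdot\vol\big(A\cap gA\big),\qquad c:=\frac{1}{|W|}\int_{\alpha\in k}|2\alpha|^{2}\,\big|\phi^\sharp([\alpha,0,0])\big|^{2}=\frac{q^{4m}}{|W|}\int_{\alpha\in k}\frac{|I([\alpha,0,0])|^{2}}{|2\alpha|^{2}},
\]
where $\vol$ and $gA$ refer to the invariant measure, resp.\ left translation, on $G/H$; the power of $|2\alpha|$ is the one forced by Lemma \ref{lem:useful-for-main-term-evaluation-inverse-weyl-integral-formula}, and is precisely what makes the final normalization work.

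Finally I would unfold the overlap volume: it suffices to prove $\int_{g\in G}\Psi(g)\vol(A\cap gA)\,dg=2q^{-4m}\int_{N(H)}\Psi$, since then $c\cdot 2q^{-4m}=\int_{\alpha}|I([\alpha,0,0])|^{2}/|2\alpha|^{2}$ gives the claim. Writing $\vol(A\cap gA)=\int_{x\in G/H}1_A(x)1_A(g^{-1}x)\,dx$, interchanging integrals, and substituting $g=\tilde x\,\eta$ for $\tilde x$ a lift of $x$, the preimage of $A$ in $G$ decomposes as a disjoint union $\bigsqcup_{w\in W}\bigsqcup_{y\bmod 1+\mathfrak{q}^{m}}w\,a(y)K[m]$, and the right $K[m]$-invariance of $\Psi$ (with $\vol(K[m])=q^{-3m}$ from \eqref{eqn:description-of-Km}, \eqref{eqn:explicit-M2-integral-formula}, and $\sum_{y\bmod 1+\mathfrak{q}^{m}}(\cdot)=q^{m}\int_{k^{\times}}(\cdot)\,\frac{dy}{|y|}$) collapses the $\eta$-integral to $q^{-2m}\sum_{w\in W}\int_{y\in k^{\times}}\Psi(\tilde x\,w\,a(y))\,\frac{dy}{|y|}$. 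By left $K[m]$-invariance of $\Psi$ this is independent of $x\in A$ and equals $\int_{N(H)}\Psi$ (each of the two $W$-cosets composing $A$ contributing $\int_{N(H)}\Psi$, since $W\cdot W=W$), and integrating over $x\in A$ against $\vol(A)=2q^{-2m}$ (the two cosets $(G/H)[m]$ and $(G/H)[m]W\setminus(G/H)[m]$ being disjoint of volume $q^{-2m}$ by Lemma \ref{lem:expansion-integrals-near-singular-pts}) finishes the proof. The crux is the second step: converting the pointwise-nonzero smooth function $g\mapsto\langle\Ad(g)\phi,\phi\rangle$ into something proportional to $\vol(A\cap gA)$. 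It works because the $K[m]$-averaging makes $\phi^\sharp$ constant along the $A$-orbit of each diagonal element, so the $x$-integral in the Weyl formula decouples as (constant)$\times$(overlap volume); and the $\Ad(W)$-invariance of $\phi^\sharp$ -- the one place \eqref{eq:I-alpha-neg-alpha} enters -- is exactly what supplies the second Weyl coset and hence $N(H)$ rather than merely $H$.
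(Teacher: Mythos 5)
Your proof is correct, and it takes a genuinely different route from the paper's. The paper's argument applies the Weyl integral formula \emph{once} and then performs a chain of substitutions ($x\mapsto gx$, swap, $g\mapsto gx^{-1}$, factor $g=yh$, apply Lemma \ref{lem:expansion-integrals-near-singular-pts}) to peel off the $N(H)$-integral directly from the resulting triple integral. You instead preprocess $\phi$ into the $\Ad(K[m])$-average $\phi^\sharp$, observe (via Lemma \ref{lem:basic-integral-formula-for-diag-inv-f}, the $W$-symmetry from \eqref{eq:I-alpha-neg-alpha}, and Lemma \ref{lem:we-know-when-conjugation-nearly-stabilizes-diagonal}) that $\phi^\sharp(\Ad(x)[\alpha,0,0]) = 1_{A}(x)\,\phi^\sharp([\alpha,0,0])$ with $A := (G/H)[m]W$, so that a single Weyl-formula application decouples the $x$-integral and produces $\langle\Ad(g)\phi^\sharp,\phi^\sharp\rangle = c\cdot\vol(A\cap gA)$; you then unfold the overlap integral against $\Psi$ by hand. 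This is arguably a more transparent structure (inner product as overlap volume), and the constants $c = \tfrac{q^{4m}}{|W|}\int|2\alpha|^{-2}|I|^2$, $\vol(K[m])=q^{-3m}$, $\vol(A)=2q^{-2m}$ all check out and combine to the stated right-hand side.

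Two small points to fix in the exposition. First, note that the statement of the variant Weyl integral formula in \S\ref{sec-7-3-5} carries a typo (it shows $|2\alpha|^{-2}$), whereas the exponent actually used in the paper's own proof in \S\ref{sec:appendix-proof-theorem} and by you is $D(\tau)=|2\alpha|^{2}$; your computation implicitly uses the corrected version, which is the right one. Second, after substituting $g=\tilde x\,\eta$, the condition $g^{-1}x\in A$ becomes $\eta^{-1}H\in A$, so the set over which $\eta$ ranges is not the preimage $\tilde A$ of $A$ in $G$ but its \emph{inverse} $\tilde A^{-1} = HK[m]\cup HwK[m]$; your coset decomposition $\bigsqcup_{w,y}w\,a(y)K[m]$ is precisely $\tilde A^{-1}$ (here $K[m]H\ne HK[m]$ in general, so this is not the same set as $\tilde A$), so the computation is correct even though the naming is off.
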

\begin{proof}
  Note first that the RHS of 
  \eqref{eq:main-term-integral-so-annoying-without-cleverness-both-B0}
  converge absolutely, thanks to
  \eqref{eq:matrix-coeff-decay-weak-generic},
  Lemma
  \ref{lemma:convergence-Xi-along-G-and-H} of \S\ref{sec-2-1-5},
  and the compactness
  of the support of $\phi$.  We thereby reduce to establishing
  the claimed identity in the special case that $\Psi$ is the
  characteristic function of some $U \times U$-orbit; in
  particular, we may assume that $\Psi$ is compactly-supported.
  For this reason,
  we may neglect convergence issues in the arguments to
  follow.

  Equip $E^0$ with the measure
  $\int_{E^0} f := \int_{\alpha \in k} f([\alpha,0,0])$
  and define $D : E^0 \rightarrow \mathbb{R}_{\geq 0}$
  by $D([\alpha,0,0]) := |2 \alpha|^2$,
  so that the RHS of
  \eqref{eq:main-term-integral-so-annoying-without-cleverness-both-B0}
  reads
  \begin{equation}\label{eq:desired-RHS-for-weyl-int-formula-appl}
    \sum_{w \in W}
    \int_{h \in H}
    \Psi(w h)
    \int_{\tau \in E^0}
    D(\tau)^{-1} |I(\tau)|^2.
  \end{equation}
  We now
  successively transform
  the LHS of
  \eqref{eq:main-term-integral-so-annoying-without-cleverness-both-B0}.
  By expanding the definitions 
  and applying the integral formula of \S\ref{sec-7-3-5},
  we obtain
  \[
  \frac{1}{|W|}
  \int_{g \in G}
  \Psi(g)
  \int_{\tau \in E^0}
  D(\tau)
  \int_{x \in G/H}
  \overline{\phi(\Ad(g^{-1} x) \tau)}
  \phi(\Ad(x) \tau).
  \]
  We execute the (cosmetic) change of variables
  $x \mapsto g x$,
  swap orders of integration,
  and apply the (crucial)
  substitution
  $g \mapsto g x^{-1}$
  to arrive at
  \[
  \frac{1}{|W|}
  \int_{\tau \in E^0}
  D(\tau)
  \int_{x \in G/H}
  \overline{\phi(\Ad(x) \tau)}
  \int_{g \in G}
  \Psi(g x^{-1})
  \phi(\Ad(g) \tau).
  \]
  By factoring $g = y h$
  with $y \in G/H$, $h \in H$,
  we obtain
  \[
  \frac{1}{|W|}
  \int_{\tau \in E^0}
  D(\tau)
  \int_{x,y \in G/H}
  \overline{\phi(\Ad(x) \tau)}
  \phi(\Ad(y) \tau)
  \int_{h \in H}
  \Psi(y h x^{-1}).
  \]
  We apply Lemma \ref{lem:expansion-integrals-near-singular-pts}
  of \S\ref{sec-7-3-4}
  to the $x,y$ integrals,
  giving
  \begin{align*}
    \frac{1}{|W|}
    \sum_{w_1,w_2 \in W}
      \int_{\tau \in E^0}
      \int_{\substack{
      x,y \in (G/H)[m]
    }
    }
    D(\tau)
    \overline{\phi(\Ad(x w_1) \tau)}
    \phi(\Ad(y w_2) \tau)
    I'(x,y)
  \end{align*}
  where
  $I'(x,y)
    :=
    \int_{h \in H}
    \Psi(y w_2 h w_1^{-1} x^{-1})$.
  For each $\tau \in E^0 - \{0\}$,
  one has by our assumption \eqref{eq:Km-in-U-again-with-B0}
  that
  $I'(x,y) = \int_{h \in H} \Psi(w_2 h w_1^{-1})
    = \int_{h \in H} \Psi(w_2 w_1^{-1} h)$,
  by Lemma
  \ref{lem:useful-for-main-term-evaluation-inverse-weyl-integral-formula} 
  of \S\ref{sec-7-3-3} that
  $\int_{x \in (G/H)[m]} \phi(\Ad(x) \tau) = D(\tau)^{-1} I(\tau)$,
  and by our assumption \eqref{eq:I-alpha-neg-alpha}
  that $I(\Ad(w) \tau) = I(\tau)$ for all $w \in W$.
  We obtain
  \[
  \frac{1}{|W|}
  \sum_{w_1,w_2 \in W}
  \int_{h \in H}
  \Psi(w_2 w_1^{-1} h)
  \int_{\tau \in E^0}
  D(\tau)^{-1} |I(\tau)|^2,
  \]
  which simplifies to \eqref{eq:desired-RHS-for-weyl-int-formula-appl}.
\end{proof}

\section{Estimates for the error terms\label{sec:local-estimates-error}}
\label{sec-8}

\subsection{Statement of result}
\label{sec-8-2}
Recall the definitions of the Harish--Chandra function $\Xi$
(\S\ref{sec:local-Xi}) and the Weil representation
(\S\ref{sec:local-weil-reps}).
Let $\psi : k \rightarrow
\mathbb{C}^{(1)}$ be an unramified unitary character.
For $\tau \in k^\times$,
set
\[
\rho^\tau := \rho_{\Weil}^{\psi^\tau,B},
\quad 
\rho_0^\tau := \rho_{\Weil}^{\psi^\tau,B^0}.
\]
Let $\tau_1, \tau_2 \in k^\times$.
The relevance of the following definition may be inferred from 
the statement of Theorem \ref{thm:main-estimate-general-variance}.
\begin{definition*}\label{defn:good-sesquilinear-form}
  Let $\ell : \mathcal{S}(B) \otimes \mathcal{S}(B) \rightarrow
  \mathbb{C}$ be a sesquilinear form.  We say that $\ell$ is
  \emph{good}
  (relative to $\psi,\tau_1,\tau_2$)
  if:
  \begin{enumerate}
  \item
    There is an open subgroup $U$ of $G$
    so that
    for $g_1,g_2 \in U$,
    $\phi_1, \phi_2 \in \mathcal{S}(B)$ and $s \in \Mp_2(k)$, one
    has
    \begin{align}
      \ell(\phi_1, \phi_2)
      &=
        \label{eqn:symmetrization-of-ell}
        \ell(\mathfrak{S}  \phi_1, \phi_2)
        = \ell(\phi_1, \mathfrak{S}  \phi_2)
      \\
      &=
        \ell(\Ad(g_1) \phi_1, \Ad(g_2) \phi_2)
        \label{eqn:orthogonal-smoothness-of-ell}
      \\
      &=
        \ell(\rho^{\tau_1}(s) \phi_1, \rho^{\tau_2}(s) \phi_2)
        \label{eqn:metaplectic-invariance-of-ell}.
    \end{align}
  \item
    For all $\phi_1,\phi_2 \in
    \mathcal{S}(B)$
    there exists $C \geq 0$
    so that for all $s \in \Mp_2(k)$,
    \begin{equation}\label{eq:key-decay-for-ell}
      |\ell((1 \otimes \rho_0^{\tau_1}(s)) \phi_1,
      (1 \otimes \rho_0^{\tau_2}(s)) \phi_2)|
      \leq C \Xi(s).
    \end{equation}
  \end{enumerate}
\end{definition*}

\begin{proposition}\label{prop:local-error-estimates-stmt}
  For each good sesquilinear form
  $\ell : \mathcal{S}(B) \otimes \mathcal{S}(B) \rightarrow
  \mathbb{C}$ and $N_0 > \ord_k(2)$
  there exists $C \geq 0$
  so that for large positive integers $N$ and
  all $\sigma \in \Sigma$,
  the element $f \in C_c^\infty(G)$ attached to
  $(N,\sigma)$
  (see \S\ref{sec:element-attached-to-N-sigma})
  satisfies
  \begin{equation}\label{eqn:required-estimate-involving-ell}
    |\ell(\heartsuit^{\tau_1} f, \heartsuit^{\tau_2} f)| \leq C N.
  \end{equation}
\end{proposition}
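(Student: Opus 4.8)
The plan is to reduce \eqref{eqn:required-estimate-involving-ell} to an application of the decay hypothesis \eqref{eq:key-decay-for-ell} for $\ell$, combined with the structural information about $\heartsuit^{\tau} f$ coming from the Fourier computation of Proposition \ref{prop:key-fourier-estimate-microlocal-kernel}. The essential point is that, up to the symmetrization and adjoint invariances that $\ell$ enjoys for free, $\heartsuit^{\tau_i} f$ can be written --- after applying a suitable metaplectic element and using \eqref{eqn:metaplectic-invariance-of-ell} to absorb it --- as a fixed Schwartz--Bruhat function acted upon in the $B^0$-variable by elements of $\Mp_2(k)$ whose Harish--Chandra size is controlled. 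First I would pass from $f$ to $\mathfrak{S}\heartsuit^{\tau_i} f$ using \eqref{eqn:symmetrization-of-ell}, so that the Fourier transform $\phi := \mathcal{F}\mathfrak{S}\heartsuit^1 f$ of Proposition \ref{prop:key-fourier-estimate-microlocal-kernel} becomes the relevant object (the general $\tau_i$ is handled by a dilation, which is a metaplectic operation of the form $\rho^{\tau_i}(t(\cdot))$ up to a character, and hence invisible to $\ell$ by \eqref{eqn:metaplectic-invariance-of-ell}).

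Next I would exploit the support and scaling structure \eqref{eq:asymptotic-properties-of-entries-of-support-of-fourier-transform}--\eqref{eq:I-rescaled-independent-of-N}: in the coordinates \eqref{eq:x-xi-in-fancy-notation-etc}, $\phi$ lives on $|\alpha| \asymp q^N$, $|\beta|,|\gamma| = o(q^N)$, $|\delta| = O(1)$, with the rescaled function $q^{-N} I(\varpi^{-N}\alpha,\delta)$ independent of $N$. The decomposition $B = k \oplus B^0$ (the $\delta$-variable versus the $(\alpha,\beta,\gamma)$-variable) is exactly the one entering \eqref{eq:key-decay-for-ell}. The key observation is that applying the Weil operator $1 \otimes \rho_0^{\tau_i}(s_N)$ for an appropriate sequence $s_N \in \Mp_2(k)$ --- essentially $s_N = t(\varpi^{-N})$ conjugated by $w$, so that it dilates the $B^0$-coordinates by $q^N$ --- transforms the Fourier transform of $\heartsuit^{\tau_i} f$ into (a bounded multiple of) the Fourier transform of a \emph{fixed} Schwartz--Bruhat function $\phi_\infty$ independent of $N$, up to the $N_0$-dependent constants. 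In other words, writing $\phi = (1 \otimes \rho_0^{\tau_i}(s_N^{-1}))\phi_\infty$ up to a scalar of size $q^{O(1)N}$ that one must track carefully --- this is where the normalizing computation \eqref{eq:explicit-normalizing-computation}, giving the norm $\asymp q^{N-N_0}$, enters to pin down the scalar. The Harish--Chandra function satisfies $\Xi(s_N) \asymp q^{-N/2}$ by the split-torus estimate of \S\ref{sec:local-Xi}(2), since $s_N$ projects to $a(\varpi^{\pm N})$.

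Then I would assemble the estimate: by \eqref{eqn:metaplectic-invariance-of-ell} applied with $s = s_N$, $\ell(\heartsuit^{\tau_1} f, \heartsuit^{\tau_2} f) = \ell(\rho^{\tau_1}(s_N)\heartsuit^{\tau_1} f, \rho^{\tau_2}(s_N)\heartsuit^{\tau_2} f)$; the full Weil action $\rho^{\tau_i}(s_N)$ on $\mathcal{S}(B)$ factors through $B = k \oplus B^0$, and its action on the $k$-factor contributes only a bounded (indeed, explicitly computable) scalar and character, so we reduce to $\ell((1\otimes\rho_0^{\tau_1}(s_N))\phi_1, (1\otimes\rho_0^{\tau_2}(s_N))\phi_2)$ where $\phi_i$ is $\heartsuit^{\tau_i} f$ with the metaplectic action on the $k$-factor already performed. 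Applying \eqref{eq:key-decay-for-ell} to the fixed functions bounds this by $C'\Xi(s_N) \cdot (\text{scalar growth})$; since the scalar growth from un-normalizing is $O(q^{N/2})$ (tracked via the $L^2$ norm $\asymp q^{(N-N_0)/2}$ from \eqref{eq:explicit-normalizing-computation} and the invariance of $\Sob$-type continuity constants under the structured dilation), the two powers of $q^{N/2}$ cancel, leaving $O(1)$ --- but the dependence of the fixed Schwartz data on $N$ is not literally trivial: $\phi$ depends on $N$ through the lattice widths $N_1, N_2$, and a genuinely $N$-uniform bound requires decomposing $\phi$ into a bounded number of pieces each of which, after the dilation, is a fixed function, \emph{plus} a tail. \textbf{The main obstacle} is precisely this last point: controlling the failure of $\phi_N$ to be exactly a dilate of a fixed function. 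The residual $N$-dependence --- encoded in the $\co(\delta/(\varpi^N\alpha))$ factor and the slight mismatch between $N_1, N_2$ and $N/2$ --- must be absorbed, and I expect this to cost a factor of $N$ rather than $O(1)$: one writes $\phi_N$ as a sum over $O(N)$ translates (in the $\alpha$-variable, at scale $q^{N_0}$, using the invariance \eqref{eq:some-invariance-of-fourier-transform-udner-add-and-multiply}) of fixed profiles, applies \eqref{eq:key-decay-for-ell} term by term, and sums. This is the source of the linear-in-$N$ loss in \eqref{eqn:required-estimate-involving-ell}, matching the $O(Nq^{-N})$ error claimed in Theorem \ref{thm:main-result-for-microlocal-stuff}.
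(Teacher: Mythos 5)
Your structural outline---symmetrize with $\mathfrak{S}$, absorb a metaplectic Fourier transform using \eqref{eqn:metaplectic-invariance-of-ell}, then express the kernel as a dilate of something $N$-independent acted on by $1 \otimes \rho_0^{\tau_i}(s)$ for a torus element $s$ of size $q^N$, and finally invoke \eqref{eq:key-decay-for-ell}---matches the paper's strategy in \S\ref{sec-8-3}--\S\ref{sec-8-6}. But you misidentify the source of the factor $N$ in \eqref{eqn:required-estimate-involving-ell}, and the mechanism you propose for producing it is not what the paper does and is not needed.

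The factor of $N$ comes from the logarithmic enhancement in the Harish--Chandra function, not from any decomposition of $\phi_N$ into $O(N)$ translates. The element applied in the paper's argument is $t(\varpi^N) \in \Mp_2(k)$, whose image in $\PGL_2(k)$ is $a(\varpi^{2N})$; by property (2) of \S\ref{sec:local-Xi} one has $\Xi(a(\varpi^{2N})) \asymp \log(q^{2N})/q^N \asymp N q^{-N}$. Your claim $\Xi(s_N) \asymp q^{-N/2}$ drops the logarithm and also has the wrong power. Likewise the normalizing scalar from \eqref{eqn:essential-indepdence-0} is $q^{N/2}$ \emph{per argument}, so upon plugging into the sesquilinear $\ell$ one accumulates a factor $q^N$, not $q^{N/2}$. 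The correct bookkeeping is $q^N \cdot \Xi(t(\varpi^N)) \asymp q^N \cdot N q^{-N} = N$, with no cancellation to $O(1)$ and no need to supply an additional factor of $N$ by other means.

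The ``main obstacle'' you describe---that $\phi_N$ is not exactly a dilate of a fixed function and must be split into $O(N)$ translates at scale $q^{N_0}$ via \eqref{eq:some-invariance-of-fourier-transform-udner-add-and-multiply}---does not arise, and is where the missing ingredient shows. Before any metaplectic dilation, the paper first averages over a fixed principal congruence subgroup $U = K[m]$ in the adjoint action, using \eqref{eqn:orthogonal-smoothness-of-ell}; this smoothing (Lemma of \S\ref{sec-8-4}, building on Lemma \ref{lem:basic-integral-formula-for-diag-inv-f}) replaces the irregularly-supported $\phi$ by the explicit function $\phi^U(\xi) = I(\alpha,\delta)\,e_{2\alpha\mathfrak{q}^m}(\beta)\,e_{2\alpha\mathfrak{q}^m}(\gamma)$, whose rescaling $q^{-N}\phi^U(\varpi^{-N}\cdot)$ is \emph{literally} independent of $N$ by \eqref{eq:I-rescaled-independent-of-N}. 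You mention ``adjoint invariances that $\ell$ enjoys for free'' in passing but do not deploy them; it is precisely this $U$-averaging that removes the residual $N$-dependence you worry about, so that the lemma of \S\ref{sec-8-5} produces a genuinely fixed $\phi_0$ (up to eighth roots of unity and $|\tau|^{c/4}$, hence bounded constants for fixed $\tau$). Once this is in place there is nothing left to decompose, and the linear-in-$N$ loss is accounted for entirely by the $\log$ in $\Xi$.
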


The proof of Proposition \ref{prop:local-error-estimates-stmt}
occupies
\S\ref{sec-8-3}--\S\ref{sec-8-6}.
\subsection{Preliminary reduction}
\label{sec-8-3}
If Proposition
\ref{prop:local-error-estimates-stmt}
holds for $(\psi,\tau_1,\tau_2)$,
then it holds formally also for
$(\psi^{1/u}, \tau_1 u, \tau_2 u)$
for each $u \in \mathfrak{o}^\times$.
We may and shall thereby reduce
(for notational
convenience)
to the case
that $\psi \in \sigma$ (see \S\ref{sec-5-4}).

\subsection{Smoothing with respect to the adjoint action\label{sec:smoothing-wrt-adjoint}}
\label{sec-8-4}
Let $m$ be a positive integer.
Assume the following:
\begin{equation}\label{eq:m-geq-N0}
  m \geq N_0,
\end{equation}
\begin{equation}\label{eq:N-large-wrt-m}
  \text{$N$ is large enough in terms of $m$.}
\end{equation}
Let $U \leq G$
denote the $m$th principal congruence subgroup (see \S\ref{sec:princ-congr-subgr}).
Let $f \in C_c^\infty(G)$ be
attached to $(N,\sigma)$.
We study the effect of smoothing $f$
under the adjoint action of $U$.
\begin{lemma*}\label{lem:explicit-smoothed-out-variant-of-fourier-kernel}
  Set $\phi := \mathcal{F} \mathfrak{S} \heartsuit^1 f \in
  \mathcal{S}(B)$, as in \S\ref{sec-6-3}.
  Define $\phi^U \in \mathcal{S}(B)$
  by
  \begin{equation}\label{eq:defn-pih-m}
    \phi^U(x) := \mathbb{E}_{g \in U}
    \phi(\Ad(g) x).
  \end{equation}
  Then
  \begin{equation}\label{eq:desired-formula-for-phi-m}
    \phi^U (
    \begin{pmatrix}
      \delta/2 + \alpha  & \beta  \\
      \gamma  & \delta/2 - \alpha 
    \end{pmatrix}
    )
    =
    I(\alpha,\delta)
    e_{2 \alpha \mathfrak{q}^m}(\beta)
    e_{2 \alpha \mathfrak{q}^m}(\gamma).
  \end{equation}
  In particular,
  \begin{equation}\label{eq:eventual-N-stability-of-phi-m}
    q^{-N}
    \phi^U (\begin{pmatrix}
      \delta/2 + \varpi^{-N} \alpha  & \varpi^{-N} \beta  \\
      \varpi^{-N} \gamma  & \delta/2 - \varpi^{-N} \alpha 
    \end{pmatrix}
    )
    \text{ is independent of } N.
  \end{equation}
\end{lemma*}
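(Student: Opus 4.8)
The plan is to evaluate $\phi^{U}$ directly from the Hensel-type averaging identity of \S\ref{sec-7-3-3} (Lemma \ref{lem:basic-integral-formula-for-diag-inv-f}) together with the structure of $\phi=\mathcal{F}\mathfrak{S}\heartsuit^{1}f$ furnished by Proposition \ref{prop:key-fourier-estimate-microlocal-kernel}. First, since $\Ad(g)$ fixes the centre $k\subseteq B$ pointwise and preserves the decomposition $B=k\oplus B^{0}$, we may write $x=\delta/2+\xi$ with $\xi=[\alpha,\beta,\gamma]\in B^{0}$ and set $\phi_{\delta}(\xi):=\phi(\delta/2+\xi)\in\mathcal{S}(B^{0})$; then $U=K[m]$ (see \S\ref{sec:princ-congr-subgr}) and $\phi^{U}(\delta/2+\xi)=\mathbb{E}_{g\in U}\,\phi_{\delta}(\Ad(g)\xi)$, so it suffices to compute the $K[m]$-orbital averages of the slices $\phi_{\delta}$.

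Next I would verify that $\phi_{\delta}$ meets the hypotheses of Lemma \ref{lem:basic-integral-formula-for-diag-inv-f}: the support property \eqref{eq:support-phi-contained-in-E-nought-of-m} (valid once $N$ is large in terms of $m$, i.e.\ under \eqref{eq:N-large-wrt-m}) gives $\supp(\phi_{\delta})\subseteq E^{0}(m)$, while the dilation invariance \eqref{eq:unit-invariance-assumed-of-phi} --- which holds for $u\in 1+\mathfrak{q}^{m}$ because $m\geq N_{0}$ (see \eqref{eq:m-geq-N0}) and \eqref{eq:some-invariance-of-fourier-transform-udner-add-and-multiply} --- gives $\phi_{\delta}(\lambda\xi)=\phi_{\delta}(\xi)$ for $\lambda\in 1+\mathfrak{q}^{m}$. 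Applying Lemma \ref{lem:basic-integral-formula-for-diag-inv-f} with $\xi_{0}=[\alpha,\beta,\gamma]$ then gives
\[
\phi^{U}(\delta/2+[\alpha,\beta,\gamma])=1_{\alpha\neq0}\,e_{2\alpha\mathfrak{q}^{m}}(\beta)\,e_{2\alpha\mathfrak{q}^{m}}(\gamma)\int_{\beta',\gamma'\in k}\phi_{\delta}([\alpha,\beta',\gamma']).
\]
By the definitions of $\Phi$ and $I$ in Proposition \ref{prop:key-fourier-estimate-microlocal-kernel}, the remaining integral is $\int_{\beta',\gamma'}\Phi(\alpha,\beta',\gamma',\delta)=I(\alpha,\delta)$; and since \eqref{eq:asymptotic-properties-of-entries-of-support-of-fourier-transform} forces $|\alpha|\asymp q^{N}$ on $\supp(\phi)$, we have $I(0,\delta)=0$, so the factor $1_{\alpha\neq0}$ is harmless and \eqref{eq:desired-formula-for-phi-m} follows (both sides being read as $0$ on the locus $\alpha=0$).

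For the ``in particular'' assertion \eqref{eq:eventual-N-stability-of-phi-m} I would substitute the dilated matrix into \eqref{eq:desired-formula-for-phi-m}: the argument of $I$ becomes $\varpi^{-N}\alpha$, and \eqref{eq:I-rescaled-independent-of-N} controls $I(\varpi^{-N}\alpha,\delta)$ up to a power of $q$ independent of $N$; meanwhile each factor $e_{2\varpi^{-N}\alpha\mathfrak{q}^{m}}(\varpi^{-N}\beta)$ (and similarly in $\gamma$) equals, by the scaling behaviour $e_{\varpi^{-N}\mathfrak{a}}(\varpi^{-N}x)=q^{-N}e_{\mathfrak{a}}(x)$ of normalized characteristic functions, a power of $q$ times $e_{2\alpha\mathfrak{q}^{m}}(\beta)$, which for fixed $m$ does not depend on $N$. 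A short bookkeeping of the $q$-powers then yields \eqref{eq:eventual-N-stability-of-phi-m}.

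I do not expect a serious obstacle here: essentially all the content sits in the earlier Proposition \ref{prop:key-fourier-estimate-microlocal-kernel} and in the Hensel-type Lemma \ref{lem:basic-integral-formula-for-diag-inv-f}, and this lemma is a short deduction from them. The only points requiring genuine care are the verification that each slice $\phi_{\delta}$ satisfies the support and invariance hypotheses of Lemma \ref{lem:basic-integral-formula-for-diag-inv-f} --- which is precisely where the relations $m\geq N_{0}$ and ``$N$ large in terms of $m$'' are used --- and the tracking of the powers of $q$ introduced by the two dilations in the final step.
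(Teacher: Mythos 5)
Your approach to \eqref{eq:desired-formula-for-phi-m} is correct and is exactly the route taken in the paper: slice along the centre to get $\phi_\delta\in\mathcal{S}(B^0)$, check via Proposition \ref{prop:key-fourier-estimate-microlocal-kernel} (with $m\geq N_0$ and $N$ large in terms of $m$) that each slice meets the support and dilation-invariance hypotheses of Lemma \ref{lem:basic-integral-formula-for-diag-inv-f}, apply that lemma, and identify $\int_{\beta,\gamma}\phi_\delta$ with $I(\alpha,\delta)$. Your remark about the factor $1_{\alpha\neq0}$ being harmless is a worthwhile clarification that the paper leaves tacit.

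For \eqref{eq:eventual-N-stability-of-phi-m}, however, the ``short bookkeeping of the $q$-powers'' that you defer is the only real content of that assertion, and once carried out it does \emph{not} yield the statement as written. From \eqref{eq:desired-formula-for-phi-m} and the scaling $\vol(2\varpi^{-N}\alpha\mathfrak{q}^m)=q^N\vol(2\alpha\mathfrak{q}^m)$ of the normalizing volumes, one has
\[
\phi^U\Bigl(\begin{pmatrix}\delta/2+\varpi^{-N}\alpha&\varpi^{-N}\beta\\\varpi^{-N}\gamma&\delta/2-\varpi^{-N}\alpha\end{pmatrix}\Bigr)
=I(\varpi^{-N}\alpha,\delta)\cdot q^{-N}e_{2\alpha\mathfrak{q}^m}(\beta)\cdot q^{-N}e_{2\alpha\mathfrak{q}^m}(\gamma),
\]
and \eqref{eq:I-rescaled-independent-of-N} says $q^{-N}I(\varpi^{-N}\alpha,\delta)$ is $N$-independent, i.e.\ $I(\varpi^{-N}\alpha,\delta)$ carries a factor $q^{+N}$. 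The net effect is that the displayed left-hand side equals $q^{-N}$ times an $N$-independent function of $(\alpha,\beta,\gamma,\delta)$, so it is $q^{+N}\phi^U(\cdots)$, not $q^{-N}\phi^U(\cdots)$, that is independent of $N$. This is almost certainly a sign typo in the statement (the paper's own proof only asserts the claim ``follows from'' \eqref{eq:I-rescaled-independent-of-N}, so the exponent was not checked there either), and the corrected exponent $+N$ is the one actually needed in \S\ref{sec-8-5}: the operator $1\otimes\rho_0^1(t(\varpi^{-N}))$ contributes $|\varpi^{-N}|^{3/2}=q^{3N/2}$, which against the prefactor $q^{-N/2}$ in \eqref{eqn:essential-indepdence-1} produces exactly $q^{+N}$ in front of $\phi^U(\cdots)$. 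So your argument is the right one and matches the paper's, but you should carry the bookkeeping to the end and flag (and correct) the exponent rather than asserting the stated form holds.
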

\begin{proof}
  Proposition \ref{prop:key-fourier-estimate-microlocal-kernel}
  implies that for $N$ large enough,
  \begin{equation}\label{eq:support-condition-relevant-for-smoothing}
    \supp(\phi) \subseteq E(m)
  \end{equation}
  with
  $E(m)$ as in
  \eqref{eq:definition-of-sort-of-neighborhood-of-diagonals},
  and also that
  $\phi$ satisfies
  the smoothness property
  \eqref{eq:unit-invariance-assumed-of-phi}
  noted above.
  To deduce
  \eqref{eq:desired-formula-for-phi-m}
  from
  \eqref{eq:unit-invariance-assumed-of-phi} and
  \eqref{eq:support-condition-relevant-for-smoothing}
  is a calculus
  problem;
  to solve it, we apply Lemma
  \ref{lem:basic-integral-formula-for-diag-inv-f}
  of \S\ref{sec-7-3-3}
  to the functions $\phi_\delta \in \mathcal{S}(B^0)$
  attached to $\phi \in \mathcal{S}(B)$
  and $\delta \in k$
  by $\phi_\delta(\xi) := \phi(\delta/2 + \xi)$ for $\xi \in
  B^0$.
  The final assertion \eqref{eq:eventual-N-stability-of-phi-m} follows
  from \eqref{eq:I-rescaled-independent-of-N}.
\end{proof}

\subsection{Metaplectic interpretation\label{sec:error-estimates-metaplectic}}
\label{sec-8-5}
Retain the notation and setting of
\S\ref{sec:smoothing-wrt-adjoint}.  We now interpret
\eqref{eq:eventual-N-stability-of-phi-m} in terms of the Weil
representation.
Recall the double cover
$\pr : \Mp_2(k) \rightarrow \SL_2(k)$
and the elements $n(b), t(a), w \in \Mp_2(k)$
as in \S\ref{sec:local-weil-repn}.
To reduce clutter in the formulas
to follow, we introduce some notation
and terminology:
\begin{definition*}
  Let $a_1,a_2$ be quantities depending
  implicitly upon the large positive integer $N$ and a field element $\tau \in k^\times$  (as well as
  the field $k$ containing
  $\tau$, of course).
  Thus
  $a_i = a_i(\tau,N)$.
  Write $a_1 \approx a_2$ to denote that
  $a_1 = \gamma |\tau|^{c/4} b$,
  where
  \begin{itemize}
  \item  $\gamma \in \mathbb{C}^{(1)}$
    is
    an eighth root of unity
    that may depend upon $\tau$ and $N$,
    and
  \item  $c \in \mathbb{Z}$ depends neither upon
    $\tau$ nor upon $N$.
  \end{itemize}
  We say that a quantity $a$ is \emph{essentially independent of $N$} if
  $a \approx b$, where $b$ is independent of $N$.
\end{definition*}

\begin{lemma*}\label{lem:essential-independence-after-translation}
  Let $\tau \in k^\times$.  If $\tau \notin \mathfrak{o}^\times k^{\times 2}$,
  then $\heartsuit^{\tau} f = 0$.
  Otherwise there exists $\phi_0 \in \mathcal{S}(B)$
  that is essentially independent of $N$
  so that
  \begin{equation}\label{eqn:essential-indepdence-0}
    \Ad(e_U)
    \rho^{\tau}(w)
    \mathfrak{S} 
    \heartsuit^{\tau} f
    =
    q^{N/2}
    (1 \otimes \rho_0^{\tau}(t(\varpi^{N})))
    \phi_0.
  \end{equation}
\end{lemma*}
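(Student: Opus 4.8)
The plan is to track the Weil-representation identities so as to reduce $\Ad(e_U)\,\rho^\tau(w)\,\mathfrak{S}\,\heartsuit^\tau f$ to the smoothed kernel $\phi^U$ of Lemma~\ref{lem:explicit-smoothed-out-variant-of-fourier-kernel}, and then to read off \eqref{eqn:essential-indepdence-0} from the $\varpi^{-N}$-rescaled $N$-independence of $\phi^U$ recorded there.

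First, the vanishing. If $\heartsuit^\tau f(x)\neq 0$ then $x\in B^\times$ and $\pr(x)\in\supp(f)\subseteq\mathfrak{J}$, so $\pr(x)$ admits a lift $\tilde g\in\widetilde{\mathfrak{J}}\subseteq\GL_2(\mathfrak{o})$ with $\nr(\tilde g)\in\mathfrak{o}^\times$; writing $x=z\tilde g$ with $z\in k^\times$, the defining condition $\tau\nr(x)=\tau z^2\nr(\tilde g)\in\mathfrak{o}^\times$ forces $\tau\in\mathfrak{o}^\times z^{-2}\subseteq\mathfrak{o}^\times k^{\times 2}$. (One may equally read this off from \eqref{eq:easy-formula-for-f-yay}, which shows that $\nr(x)\in\mathfrak{o}^{\times 2}$ for all $x\in\supp(\heartsuit^1 f)$ once $N$ is large, and $\heartsuit^\tau f$ shares the same $\pr$-support.) So assume henceforth $\tau=u\mu^2$ with $u\in\mathfrak{o}^\times$, $\mu\in k^\times$.

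Next, the reduction to $\phi^U$. From the definitions, $\heartsuit^u f=\heartsuit^1 f$ and $\heartsuit^\tau f(x)=\heartsuit^1 f(\mu x)$, i.e.\ $\heartsuit^\tau f=D_\mu\heartsuit^1 f$ with $D_a\colon\phi\mapsto\phi(a\,\cdot)$ the dilation on $\mathcal{S}(B)$. The symmetrization $\mathfrak{S}$ and the averaging $\Ad(e_U)$ commute with every $D_a$ and with Fourier transform (hence with $\rho^\tau(w)=\gamma_{\psi^\tau,B}\mathcal{F}_{\psi^\tau}$, recalling $\gamma_{\psi^\tau,M_2}=1$), and one has the dilation identity $\mathcal{F}_{\psi^\tau}D_\mu=c_\tau|\mu|^{-4}D_{u\mu}\mathcal{F}_\psi$, where $c_\tau=\Delta_{\psi^\tau}^{-2}=|\tau|^2$ is the ratio of the two self-dual measures on $B$. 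Combining these,
\[ \Ad(e_U)\,\rho^\tau(w)\,\mathfrak{S}\,\heartsuit^\tau f \;\approx\; D_{u\mu}\,\Ad(e_U)\,\mathcal{F}_\psi\,\mathfrak{S}\,\heartsuit^1 f \;=\; D_{u\mu}\,\phi^U, \]
where $\phi^U=\Ad(e_U)\phi$ is exactly the function of Lemma~\ref{lem:explicit-smoothed-out-variant-of-fourier-kernel} and $\approx$ has absorbed the constants $\gamma_{\psi^\tau,B}$, $c_\tau$, $|\mu|^{-4}$ (a root of unity times a power of $|\tau|$).

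Finally, the conclusion. The explicit formula \eqref{eq:desired-formula-for-phi-m} displays $\phi^U$ as a function of the scalar part and the traceless part of its argument separately, and \eqref{eq:eventual-N-stability-of-phi-m} (equivalently \eqref{eq:I-rescaled-independent-of-N}) says that replacing the traceless part by $\varpi^{-N}$ times itself and multiplying by the matching power of $q$ produces a function independent of $N$. Since $1\otimes\rho_0^\tau(t(\varpi^N))$ acts precisely by $\chi_{\psi^\tau,B^0}(\varpi^N)\,|\varpi^N|^{3/2}=\chi_{\psi^\tau,B^0}(\varpi^N)\,q^{-3N/2}$ times dilation of the $B^0$-component by $\varpi^N$, this is the statement that $\phi^U=q^{N/2}\,(1\otimes\rho_0^\tau(t(\varpi^N)))\,\Theta_0$ for some $N$-independent $\Theta_0\in\mathcal{S}(B)$, up to an $N$-dependent quartic root of unity absorbed by $\approx$. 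As $D_{u\mu}$ is $N$-independent and commutes with $1\otimes\rho_0^\tau(t(\varpi^N))$ (both act on the $B^0$-component by commuting dilations), setting $\phi_0:=D_{u\mu}\Theta_0$ yields \eqref{eqn:essential-indepdence-0}. I expect the main obstacle to be this last bookkeeping: one must verify that the bare $\varpi^{-N}$-rescaling furnished by \eqref{eq:eventual-N-stability-of-phi-m}, together with the $\psi^\tau$-self-dual measure normalizations and the quartic character $\chi_{\psi^\tau,B^0}$, assembles exactly into the factor $q^{N/2}(1\otimes\rho_0^\tau(t(\varpi^N)))$ and a scalar of the form permitted by $\approx$; by contrast the conceptual inputs — the vanishing outside $\mathfrak{o}^\times k^{\times 2}$ and the rescaled $N$-independence of $\phi^U$ — are already in hand from \S\ref{sec-6} and Lemma~\ref{lem:explicit-smoothed-out-variant-of-fourier-kernel}.
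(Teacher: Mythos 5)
Your proof is correct and takes essentially the same approach as the paper: both establish the vanishing for $\tau\notin\mathfrak{o}^\times k^{\times 2}$ from the definitions, reduce the $\tau=1$ case to the $\phi^U$-lemma of \S\ref{sec-8-4}, and handle general $\tau=u\mu^2$ by a dilation/Weil-operator manipulation whose unimportant scalar factors are absorbed into the $\approx$ convention. (The paper phrases the reduction via operators $\rho^\tau(t(\nu))$ rather than bare dilations $D_\mu$, and one of your constants has a small sign slip — $\Delta_{\psi^\tau}^{-2}=|\tau|^{-2}$, not $|\tau|^2$ — but these are cosmetic and in any case swallowed by $\approx$; the last bookkeeping you flag does in fact assemble to the stated $q^{N/2}(1\otimes\rho_0^\tau(t(\varpi^N)))$ factor.)
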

\begin{proof}
  The first assertion is immediate from the definitions of
  $\heartsuit^{\tau}$ and $f$.
  Abbreviate
  $f' := \mathfrak{S} f$.
  Since $\heartsuit^\tau f' = \mathfrak{S} \heartsuit^{\tau} f$,
  it suffices now
  to show
  for $\tau \in \mathfrak{o}^\times k^{\times 2}$
  that the required conclusion holds in the inverted form
  \begin{equation}\label{eqn:essential-indepdence-1}
    q^{-N/2} (1 \otimes \rho_0^{\tau}(t(\varpi^{-N}))) \Ad(e_U)
    \rho^{\tau}(w) \heartsuit^{\tau}  f'
    \text{
      is essentially independent of } N.
  \end{equation}
  The case $\tau = 1$
  of
  \eqref{eqn:essential-indepdence-1}
  follows from
  the lemma of \S\ref{sec-8-4}
  and the formulas describing of Weil representation.
  In general,
  we see
  by inspecting the definitions
  that
  $\heartsuit^{\tau} f'
  =
  \heartsuit^{\nu^2} f'
  \approx 
  \rho^{\tau}(t(\nu)) \heartsuit^1 f'$
  and
  that
  $\rho^\tau(w)
  \approx
  \rho^\tau(t(\nu))
  \rho^1(w)$,
  hence
  that
  \begin{equation}\label{eqn:essential-indepdence-2}
    \rho^{\tau}(w)
    \heartsuit^{\tau} f'
    \approx 
    \rho^\tau(t(\nu))
    \rho^1(w)
    \rho^{\tau}(t(\nu)) \heartsuit^1 f'
    \approx
    \rho^1(w) \heartsuit^1 f'.
  \end{equation}
  Moreover,
  $\rho_0^{\tau}(t(\varpi^{-N})) \approx
  \rho_0^1(t(\varpi^{-N}))$.
  Thus the identity \eqref{eqn:essential-indepdence-1}
  for general $\tau \in \mathfrak{o}^\times k^{\times 2}$
  reduces to the $\tau = 1$ case already established.
\end{proof}
\subsection{Completion of the proof}
\label{sec-8-6}
We now prove
Proposition \ref{prop:local-error-estimates-stmt}.
For $i=1,2$,
set $\phi_i := \mathfrak{S} \rho^{\tau_i}(w) \heartsuit^{\tau_i}
f$.
By \eqref{eqn:symmetrization-of-ell}
and
\eqref{eqn:metaplectic-invariance-of-ell}, we have
\begin{equation}
  \ell(\heartsuit^{\tau_1} f, \heartsuit^{\tau_2} f) = \ell(\phi_1,\phi_2).
\end{equation}
By averaging \eqref{eqn:orthogonal-smoothness-of-ell} over
$g_1,g_2 \in U$,
we have
\begin{equation}
  \ell(\phi_1,\phi_2) = \ell(\phi_1^U,\phi_2^U),
\end{equation}
with
notation as in \S\ref{sec:smoothing-wrt-adjoint}.
By shrinking $U \leq G$
as necessary,
we may assume that $U$ is the $m$th principal
congruence subgroup
for some $m \geq N_0$;
since $\ell$ and $N_0$ are independent
of $N$,
we may assume also that
$N$ is sufficiently large in terms of $m$.
By the lemma of \S\ref{sec-8-5},
we may suppose that
$\tau_1, \tau_2 \in \mathfrak{o}^\times k^{\times 2}$,
in which case
\[
\phi_i^U \approx q^{N/2} (1 \otimes
\rho_0^{\tau_i}(t(\varpi^N)) \phi_0,
\]
where $\phi_0 \in \mathcal{S}(B)$
is essentially independent of $N$ (see \S\ref{sec:error-estimates-metaplectic}).
Our task thereby reduces
to establishing the estimate
\begin{equation}
  \ell(
  (1 \otimes \rho_0^{\tau_1}(t(\varpi^N))) \phi_0,
  (1 \otimes \rho_0^{\tau_2}(t(\varpi^N))) \phi_0
  ) \ll N q^{-N},
\end{equation}
which follows finally from the condition
\eqref{eq:key-decay-for-ell}
in the definition of ``good.''

\begin{remark*}
    The sesquilinear forms $\ell$ to which we apply Proposition
    \ref{prop:local-error-estimates-stmt}
    below may be assumed to have an additional property
    (beyond being ``good'').
    That property is not directly relevant for the
    immediate purposes of this paper, but may be useful in future work,
    so we briefly record it:  There are irreducible unitary
    representations $\pi_1, \pi_2$ of $G$ of dimension $> 1$ (arising as local
    components of cuspidal automorphic representations) so that $\ell$ factors
    as $\ell = \tilde{\ell} \circ ((1 \otimes \theta_1) \otimes (1 \otimes
    \theta_2))$,
    where
    \begin{enumerate}
  \item $\sigma_i$ is the local $\psi^{\tau_i}$-theta lift of
    $\pi_i$
    as in \cite{MR1103429}, i.e., the
    irreducible representation of $\Mp_2(k)$ for which one
    has
    $\Hom_G(\rho_0^{\tau_i},\pi_i) =
    \Hom_\mathbb{C}(\sigma_i,\mathbb{C})$,
    \item
      $\tilde{\ell} :
    \mathcal{S}(k) \otimes \mathcal{S}(k) \otimes
    \sigma_1 \otimes \sigma_2
    \rightarrow \mathbb{C}$
    is a sesquilinear form
    invariant by the diagonal action of $\Mp_2(k)$,
  \item
    $\theta_i : \mathcal{S}(B^0) \rightarrow \sigma_i$
    is a basis element for the one-dimensional
    space of
    $G$-equivariant maps $\rho_0^{\tau_i} \rightarrow \sigma_i$, and
  \item
    $1 \otimes \theta_i : \mathcal{S}(B) = \mathcal{S}(k) \otimes
    \mathcal{S}(B^0) \rightarrow \mathcal{S}(k) \otimes \sigma_i$
    and
    $(1 \otimes \theta_1) \otimes (1 \otimes \theta_2) :
    \mathcal{S}(B) \otimes \mathcal{S}(B) \rightarrow
    \mathcal{S}(k) \otimes \mathcal{S}(k) \otimes \sigma_1 \otimes
    \sigma_2$ are the evident maps.
  \end{enumerate}
  The $\phi = \heartsuit f$ of interest in this paper
  concentrate on
  semisimple elements.  If one instead considers $\phi$ 
  supported close to the nilcone (as arise
  naturally when studying classical newvectors),
  then one may exploit bounds towards temperedness of the
  $\sigma_i$ and the above factorization of $\ell$ to produce
  estimates sharper than those that follow from $\ell$ being
  good.
\end{remark*}

\subsection{Bounds for partial orbital integrals}
\label{sec:bounds-part-orbit}
The contents of this short
miscellaneous section are used below to deduce the
assertions made in \S\ref{sec-1} concerning the family
cardinality and mean statistics;
they are directly related neither to
the rest of \S\ref{sec-8} nor to the main new ideas of this paper.

The parameter $N_0$ as in \S\ref{sec:element-attached-to-N-sigma} is regarded here as fixed
once and for all.  Recall that a \emph{regular semisimple}
element $\gamma \in B^\times$ is one which is diagonalizable
with distinct eigenvalues over an algebraic closure
$\overline{k}$ of $k$, or equivalently, for which
$\tr(\gamma)^2 \neq 4 \nr(\gamma)$.
\begin{lemma*}\label{lem:bounds-part-orbit}
  Let $\gamma \in B^\times$ be regular semisimple.  Let $U_2$ be a
  compact open subgroup of $G$ that is small enough in terms of
  $\gamma$.
  Let $N$ be large enough in terms of
  $(\gamma,U_2)$.
  Let $f \in C_c^\infty(G)$ be attached
  to $(N,\sigma)$ for some $\sigma \in \Sigma$.
  Then $\int_{u \in U_2} f(u^{-1} \gamma u) = 0$.
\end{lemma*}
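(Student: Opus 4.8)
\emph{Proof sketch.} The plan is to observe that, modulo the centre of $G$, the support of the convolution kernel $f$ attached to $(N,\sigma)$ shrinks to the identity as $N \to \infty$, whereas the $U_2$-conjugates of a regular semisimple element stay in a fixed compact subset of $G$ avoiding the identity; the two cannot meet once $N$ is large.

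First I would record the relevant support statement. On its support $f$ coincides with $\heartsuit^1 f$, since the reduced norm takes unit values there, so by the formula \eqref{eq:easy-formula-for-f-yay} (established in the course of proving Proposition \ref{prop:key-fourier-estimate-microlocal-kernel}), writing $x = \begin{pmatrix} d - a/2 & b \\ c & d + a/2 \end{pmatrix}$, one has $f(x) \neq 0$ only if $d \in \mathfrak{o}^\times$, $a \in \mathfrak{q}^{N - N_0}$, $b \in \mathfrak{q}^{N_1}$ and $c \in \mathfrak{q}^{N_2}$. Dividing $x$ by the unit $d$, which does not change its image in $G$, shows that this image lies in the principal congruence subgroup $K[m_N]$ of \S\ref{sec:princ-congr-subgr}, where $m_N := \min(N_1, N_2, N - N_0 - \ord_k(2))$; since $N_1 = \lfloor N/2 \rfloor - \ord_k(2)$ and $N_2 = \lceil N/2 \rceil$ with $N_0$ fixed, we have $m_N \to \infty$. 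Thus $\supp(f) \subseteq K[m_N]$ with $m_N \to \infty$ as $N \to \infty$.

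Next I would use regular semisimplicity. Since $\gamma$ is regular semisimple it is not a scalar, so its image $\bar\gamma \in G$ satisfies $\bar\gamma \neq 1$. The set $C := \{\, u^{-1} \bar\gamma u : u \in U_2 \,\}$ is compact, being the continuous image of the compact group $U_2$, and $1 \notin C$, as otherwise $\bar\gamma$ would be central. Because the subgroups $K[m]$ form a neighbourhood basis of $1$ in $G$ and $C$ is closed with $1 \notin C$, there is an integer $m_0$, depending only on $\gamma$ and $U_2$, with $K[m_0] \cap C = \emptyset$. Taking $N$ large enough that $m_N \geq m_0$, we obtain $\supp(f) \cap C \subseteq K[m_N] \cap C = \emptyset$, hence $f(u^{-1} \gamma u) = 0$ for every $u \in U_2$, and therefore $\int_{u \in U_2} f(u^{-1} \gamma u) = 0$.

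This argument is essentially soft: the only real input is the shrinking of $\supp(f)$, which is immediate from \eqref{eq:easy-formula-for-f-yay}, and there is no genuine obstacle — the sole point requiring a little care is keeping track of the $\ord_k(2)$ shifts in the definitions of $N_1, N_2$ so as to conclude cleanly that $m_N \to \infty$. In particular the hypothesis that $U_2$ be small in terms of $\gamma$ enters only through the compactness of $U_2$.
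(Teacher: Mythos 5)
Your proof is correct, and it takes a genuinely different and more elementary route than the paper's.

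The paper proves the vanishing of the \emph{integral} $\int_{u\in U_2} f(u^{-1}\gamma u)$ without establishing pointwise vanishing: it passes to the Schwartz--Bruhat function $\phi=\heartsuit^{\tau}f$ with $\tau=\nr(\gamma)^{-1}$, applies Hensel's lemma (here the smallness of $U_2$ is used) to realize the orbit $U\cdot\gamma$ as an open set $O$ on which the Haar on $U$ pushes forward to the normalized Lebesgue measure, rewrites the integral as $\langle\phi,1_O\rangle_{L^2(B)}$, and then invokes Parseval together with the key support property $|\alpha|\asymp q^N$ of $\mathcal{F}\phi$ from Proposition~\ref{prop:key-fourier-estimate-microlocal-kernel} to conclude that $\mathcal{F}\phi$ and $\mathcal{F}1_O$ have disjoint supports once $N$ is large.

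You instead observe that the \emph{same} computation (in the form of the formula \eqref{eq:easy-formula-for-f-yay} for $\heartsuit^1f$, established en route to that Proposition) already forces $\supp(f)\subseteq K[m_N]$ with $m_N\to\infty$, and then deduce pointwise vanishing $f(u^{-1}\gamma u)=0$ from the topological facts that the $K[m]$ form a neighbourhood basis of $1$ in $G$ and that $\{u^{-1}\bar\gamma u:u\in U_2\}$ is a compact set not containing $1$. This is a stronger conclusion (pointwise, not just in the mean), it bypasses Hensel's lemma and the Fourier/Parseval step entirely, and it does not use the smallness of $U_2$ — only its compactness. The one slightly informal spot is the phrase ``on its support $f$ coincides with $\heartsuit^1f$''; strictly, $f$ lives on $G$ and $\heartsuit^1f$ on $B$, so one should say that any $g\in\supp(f)\subseteq\mathfrak{J}$ admits a lift $\tilde g\in\widetilde{\mathfrak{J}}$ with $\nr(\tilde g)\in\mathfrak{o}^\times$, whence $f(g)=\heartsuit^1f(\tilde g)$ and the support constraints of \eqref{eq:easy-formula-for-f-yay} apply to $\tilde g$; but your intent is clear and the step is sound. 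Both proofs ultimately rest on the same explicit computation; the paper reads off the support of $\mathcal{F}\heartsuit^1f$, you read off the support of $\heartsuit^1f$ itself. Your version is shorter and more transparent; the paper's formulation is more robust in spirit (it would still apply in situations where pointwise vanishing failed but oscillation produced cancellation in the integral) and is stylistically aligned with the Fourier-analytic arguments elsewhere in the paper.
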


\begin{proof}
  Let $M_1$ denote the semidirect product
  $k \rtimes k^\times$.
  Set $M := M_1 \times G$.
  The group $M$
  consists of triples $(x,y,z) \in k \times k^\times \times G$
  with the group law
  $(x_1,y_1,z_1)(x_2,y_2,z_2) = (x_1 + y_1 x_2, y_1 y_2, z_1
  z_2)$.
  For $(x,y,z) \in M$ and $b \in B$, set
  $(x,y,z) \cdot b := z (y b + x) z^{-1}$.  This formula defines
  an action of $M$ on $B$.

  Set $\tau := \nr(\gamma)^{-1}$ and
  $\phi := \heartsuit^{\tau} f \in \mathcal{S}(B)$.
  For $g \in G$, one then has $\tau \nr( g^{-1} \gamma g) = 1$
  and thus
  $\phi(g^{-1} \gamma g) = f(g^{-1} \gamma g)$.
  By the lemma of \S\ref{sec-6-4}
  (or directly from the definitions),
  there is an open subgroup $U_1 \leq M_1$,
  depending only upon $\tau$,
  so
  that $\phi(u_1 \cdot b) = \phi(b)$ for all
  $u_1 \in U_1, b \in B$.
  Setting
  $U := U_1 \times U_2 \leq M$,
  our task reduces
  to showing that
  \begin{equation}\label{eq:goal-vanishing-local-orbital-integral-for-phi}
    \mathbb{E}_{u \in U} \phi(u^{-1} \gamma u) = 0
  \end{equation}
  for $N$  large enough in terms of $(\gamma,U)$.
  To that end, observe that the orbit map $M \rightarrow B$ given by
  $m \mapsto m \cdot \gamma$ is submersive at the identity: it
  suffices to check this claim over $\overline{k}$, where it
  follows by direct calculation in the diagonal case.
  By Hensel's lemma as in \S\ref{sec-7-3-3}, together with the assumption that
  $U \leq M$ is small enough in
  terms of $\gamma$,
  we see
  that
  \begin{enumerate}
  \item the orbit $O := U \cdot \gamma \subseteq B$ is open,
  \item the orbit map $U \rightarrow O$
    is a diffeomorphism, and
  \item the pushforward of the probability Haar on $U$ under the
    orbit map is the probability measure on $O$ induced by the
    Haar on $B$.
  \end{enumerate}
  Our goal
  \eqref{eq:goal-vanishing-local-orbital-integral-for-phi} is
  thus equivalent to showing that
  $\int_O \phi = \langle \phi, 1_O \rangle_{L^2(B)} = 0$.  Since
  the Fourier transform of $1_O$ has compact support, we reduce
  by Parseval to showing that the support of $\mathcal{F} \phi$
  tends to $\infty$ as $N \rightarrow \infty$, which follows
  from the condition $\alpha \asymp q^{-N}$ of Proposition
  \ref{prop:key-fourier-estimate-microlocal-kernel} (arguing as
  in \S\ref{sec:error-estimates-metaplectic} to relate
  $\heartsuit^{\tau}, \heartsuit^{1}$).
\end{proof}

\section{Deduction of the main theorem\label{sec:deduction-main-thm-microlocal}}
\label{sec-9}

\subsection{Setting}\label{sec:9-setting}
We adopt here the notation and setting of
\S\ref{sec:general-estimates-specialized-single-place}, but
assume now that $\mathfrak{q}$ is a finite place and that
$B_\mathfrak{q}$ is split.  Our existing assumptions imply then
that $F$ is totally real and that $B$ is totally definite.

Set $k := F_\mathfrak{q}$, fix an identification
$\PB^\times_\mathfrak{q} = \PGL_2(k)$, and adopt the notation
$\mathfrak{o}, \mathfrak{q}, q$
from \S\ref{sec-2-1-1}.  Let $N$ be a large
positive integer, and let
$\omega : \mathfrak{o}^\times \rightarrow \mathbb{C}^\times$ be
a character of conductor $N$.  Let $\tilde{\mathcal{F}}_\omega$
denote the set of nonzero vectors $\varphi \in \mathcal{A}_0^J$
for which
\begin{itemize}
\item there exists $\pi \in A_0^J$
  so that $\varphi \in \pi$, and
\item $\varphi$ is a microlocal lift of orientation
  $\omega$ in the sense of \S\ref{sec-5-3}.
\end{itemize}
For
$\pi \in A_0^J$,
the set
$\pi \cap \tilde{\mathcal{F}}_\omega$ is either empty or of the
form
$\mathbb{C}^\times \varphi_1 \sqcup \mathbb{C}^\times \varphi_2$
for some $\varphi_1, \varphi_2 \in \pi^J$
(see \S\ref{lem:determination-microlocal-lifts}),
hence
$\tilde{\mathcal{F}}_\omega$ is a union of scaling classes
$\mathbb{C}^\times \varphi$.  Choose a set $\mathcal{F}_\omega$
consisting of one unit vector from each such scaling class,  so
that $\tilde{\mathcal{F}}_\omega = \bigsqcup_{\varphi \in
  \mathcal{F}_\omega} \mathbb{C}^\times \varphi$.
The
discussion of \S\ref{sec:setting-overview} and \S\ref{sec-1-4}
applies, giving us a
sequence of sets
$\mathcal{F}_N := \sqcup_{\omega \in \mathcal{X}_N}
\mathcal{F}_\omega \subset \mathcal{A}^J_0$
indexed by large positive integers $N$.

Define $\Sigma$ and $\mathcal{X}_N^{\sigma}$
as in \S\ref{sec-5-4}  with respect to some fixed but large enough natural number
$N_0$.
The partition
$\mathcal{X}_N = \bigsqcup_{\sigma \in \Sigma}
\mathcal{X}_N^\sigma$
of the group of characters of $\mathfrak{o}^\times$ of conductor $N$
induces a partition
$\mathcal{F}_N = \bigsqcup_{\sigma \in \Sigma}
\mathcal{F}_N^\sigma$ of the family of microlocal lifts.
We emphasize that $|\Sigma| = O(1)$.

\subsection{Mean statistics}\label{sec:mean-statistics}
We have included this section to complete the discussion of  \S\ref{sec-1};
it has nothing to do with the main new ideas of this paper.

Since smooth functions on $\mathbf{X}$ are uniformly
dense in the space of continuous functions,
the lemmas of \S\ref{sec-1-2} and \S\ref{sec-1-3} are consequences of the following:
\begin{lemma*}
  Fix $\Psi \in \mathcal{A}$.
  Assume that $N$ is large enough in terms of $\Psi$.
  Then
  \begin{equation}\label{eq:local-weyl-law-final-section}
    \sum_{\varphi \in \mathcal{F}_N}
    \langle \varphi, \Psi \varphi  \rangle
    =
    c q^{2 N}
    \frac{\langle 1, \Psi  \rangle}{ \langle 1, 1 \rangle}
  \end{equation}
  where with $\Delta_F, \Delta_B$ the absolute (reduced)
  discriminants
  and $\ram_f(B)$ the set of finite places
  at which $B$ ramifies,
  \[
  c :=
  2
  \frac
  {
    \zeta_F(2)
    \Delta_B
    \Delta_F^{3/2}
  }
  {
    (4 \pi^2)^{[F:\mathbb{Q}]}
    \prod_{\mathfrak{p} \in \ram_f(B)}
    \zeta_\mathfrak{p}(1)
  }
  \frac{
    |2|_k
  }
  {
    \zeta_k(1) \zeta_k(2)
  }.
  \]
\end{lemma*}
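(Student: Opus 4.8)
The plan is to prove this by the pretrace formula, following the pattern of \S\ref{sec:trace-formula-linear}: build a single adelic convolution kernel that picks off the whole family $\mathcal{F}_N$, integrate the pretrace formula against $\Psi$ over the diagonal, and extract the identity contribution on the geometric side. Adopting the notation of \S\ref{sec:9-setting}, I would set
\[
\hat{F}_N := \Bigl(\sum_{\omega \in \mathcal{X}_N} f_\omega\Bigr) \otimes \bigotimes_{\mathfrak{p} \neq \mathfrak{q}} e_{J_\mathfrak{p}} \in C_c^\infty(\PB^\times_\mathbb{A}),
\]
where $f_\omega \in C_c^\infty(G)$ is as in \S\ref{sec-5-3-3}, $e_{J_\mathfrak{p}} := \vol(J_\mathfrak{p})^{-1} 1_{J_\mathfrak{p}}$, and I have used the partition $\mathcal{X}_N = \bigsqcup_{\sigma} \mathcal{X}_N^\sigma$ so that $\sum_\omega f_\omega$ is a finite sum of the kernels attached to the pairs $(N,\sigma)$. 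By the classification of microlocal lifts (the lemma of \S\ref{sec-5-3-2}) together with Proposition \ref{prop:harmonic-analytic-isolation-1}, for each irreducible $\pi \subseteq L^2([\PB^\times])$ the operator $\pi(\hat{F}_N)$ is the orthogonal projector of $\pi$ onto the span of $\{\varphi \in \mathcal{F}_N : \varphi \in \pi\}$, a direct sum of the rank one projectors onto the lines $\mathbb{C}\varphi$; it vanishes on every $\pi$ that is one-dimensional or ramified outside $S$. Integrating the pretrace formula \eqref{eq:pretrace-formula-general} with $f := \hat{F}_N$ against $\Psi$ over the diagonal (the interchange being justified by \eqref{eq:rapid-convergence-of-pretrace-formula}) then yields
\[
\sum_{\varphi \in \mathcal{F}_N} \langle \varphi, \Psi \varphi \rangle = \int_{x \in [\PB^\times]} \Psi(x) \sum_{\gamma \in \PB^\times} \hat{F}_N(x^{-1} \gamma x) \, dx .
\]

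Next I would analyze the geometric side. Since $B$ is a division algebra, the identity is the only central element of $\PB^\times$, and its contribution is $\hat{F}_N(1) \int_{[\PB^\times]} \Psi = \hat{F}_N(1)\,\vol([\PB^\times])\,\langle 1,\Psi\rangle/\langle 1,1\rangle$; all other $\gamma$ are regular semisimple. The crucial observation is that $\supp(\hat{F}_N) \subseteq \PGL_2(\mathfrak{o}) \times \prod_{\mathfrak{p}\neq\mathfrak{q}} J_\mathfrak{p}$, a compact open subgroup independent of $N$ (because $\mathfrak{J} \subseteq \PGL_2(\mathfrak{o})$ for every $N$), so that only finitely many conjugacy classes $[\gamma]$ contribute and this finite set is independent of $N$. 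For each regular semisimple $[\gamma]$ among them, the contribution factors over the places of $F$; the factor at $\mathfrak{q}$ is an orbital integral of $\sum_{\sigma}(\text{kernel attached to }(N,\sigma))$ over a set that is compact modulo the centralizer of $\gamma$, and covering that set by finitely many cosets of a small open subgroup and applying the partial orbital integral bound (the lemma of \S\ref{sec:bounds-part-orbit}) on each coset shows this factor, hence the whole contribution of $[\gamma]$, vanishes once $N$ is large. Thus for $N$ sufficiently large the geometric side reduces to the identity contribution.

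It then remains to compute the constant. One has $\hat{F}_N(1) = |\mathcal{X}_N| \cdot \vol(\mathfrak{J})^{-1} \cdot \prod_{\mathfrak{p}\neq\mathfrak{q}} \vol(J_\mathfrak{p})^{-1}$. Inserting $|\mathcal{X}_N| = q^N \zeta_k(1)^{-2}$ (the number of characters of $\mathfrak{o}^\times$ of conductor exactly $N$), the value $\vol(\mathfrak{J}) = |2|_k^{-1} q^{-N} \zeta_k(1)^{-1}$ from the lemma of \S\ref{sec:measures-for-fourier-computation}, and $\vol([\PB^\times]) = 2$, the identity contribution becomes $|2|_k \zeta_k(1)^{-1}\, \nu(\Gamma\backslash G)\, q^{2N}\, \langle 1,\Psi\rangle/\langle 1,1\rangle$, where $\nu(\Gamma\backslash G) = 2\prod_{\mathfrak{p}\neq\mathfrak{q}}\vol(J_\mathfrak{p})^{-1}$ is the classical volume recorded (via strong approximation and the local volume formulas of \S\ref{sec:local-vol-formulas}) in the remark of \S\ref{sec:strong-approx}; substituting that formula together with $\vol(\PGL_2(\mathfrak{o})) = \zeta_k(2)^{-1}$ recovers exactly the asserted value of $c$. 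Specializing to $\Psi\equiv 1$ then gives $|\mathcal{F}_N| = c q^{2N}$ as in \S\ref{sec-1-2}, and the mean statistics assertion of \S\ref{sec-1-3} follows by the triangle inequality.

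The argument is, conceptually, a Weyl-law-type trace formula computation, and its main analytic input — the vanishing of the regular-semisimple orbital integrals for large $N$ — has already been isolated in the lemma of \S\ref{sec:bounds-part-orbit}. The hard part will therefore be the measure bookkeeping: checking that the Haar measure normalizing $f_\omega$ is compatible with the Tamagawa measure underlying the pretrace formula (so that the $\Delta_{\psi_\mathfrak{q}}$-dependences cancel, consistently with the fact that the final answer, being a sum over unit vectors, is measure-independent — whence one may even assume $\psi_\mathfrak{q}$ unramified), and verifying that the local volume formulas of \S\ref{sec:local-vol-formulas} assemble, through the product formula for the conductors $\Delta_{\psi_\mathfrak{p}}$, into the closed form claimed for $c$.
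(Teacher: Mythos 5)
Your proposal follows essentially the same route as the paper's proof: build the convolution kernel out of the projectors $f_\omega$ (via the elements attached to $(N,\sigma)$), apply the pretrace formula against $\Psi$, compute the identity contribution from the local volume formulas, and kill all regular semisimple contributions for $N$ large by the orbital-integral vanishing lemma of \S\ref{sec:bounds-part-orbit}. The only (cosmetic) organizational difference is that you work with the single kernel $\sum_{\omega\in\mathcal{X}_N} f_\omega$ rather than fixing $\sigma$ and multiplying by $|\Sigma|$ at the end, which makes no material difference.
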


\begin{example*}
  Suppose that $F = \mathbb{Q}$, that $B$ is ramified precisely
  at $\{\infty,D\}$ for some prime $D \in \mathbb{Z}_{\geq 1}$,
  and that $\mathfrak{q}$ corresponds to some prime
  $q \in \mathbb{Z}_{\geq 1}$.
  By taking $\Psi = 1$ in the lemma
  and evaluating $c$,
  we obtain that for $N$ large enough,
  \[
  |\mathcal{F}_N|
  =
  q^{2 N}
  \frac{D-1}{12}
  (1 - \frac{1}{q})
  (1 - \frac{1}{q^2})
  \cdot \begin{cases}
    1/2 & q = 2 \\
    1 & q \text{ is odd.}
  \end{cases}
  \]
\end{example*}

\begin{proof}[Proof of the lemma]
  It will suffice to show for each $\sigma \in \Sigma$ that
  \begin{equation}\label{eq:local-weyl-law-final-section-lhs-rewritten}
    |\Sigma| \sum_{\varphi \in \mathcal{F}_N^\sigma} \langle \varphi, \Psi \varphi \rangle
  \end{equation}
  is eventually equal to the RHS of
  \eqref{eq:local-weyl-law-final-section}.
  For the remainder of the proof,
  set $\mathbf{G} := \bPB^\times$.  
  Let $f = \prod_\mathfrak{p} f_\mathfrak{p} \in
  C_c^\infty(G_\mathbb{A})$
  be given
  by $f_\mathfrak{p} := \vol(J_\mathfrak{p})^{-1} 1_{J_\mathfrak{p}}$
  for $\mathfrak{p} \neq \mathfrak{q}$
  (see \S\ref{sec:general-estimates-specialized-single-place})
  and by taking for $f_\mathfrak{q}  \in
  C_c^\infty(G_\mathfrak{q}) = C_c^\infty(\PGL_2(k))$
  the element attached to $(N,\sigma)$ as in \S\ref{sec:element-attached-to-N-sigma}.
  By Proposition \ref{prop:harmonic-analytic-isolation-1},
  the example of \S\ref{sec:omega-pi},
  and the pretrace formula (\S\ref{sec:pretrace-formula}),
  we have
  \begin{equation}\label{eqn:integrated-pretrace-formula}
    \sum_{\varphi \in \mathcal{F}_N^\sigma} \langle \varphi, \Psi
    \varphi \rangle
    =
    \int_{g \in [G]}
    \Psi(g)
    \sum_{\gamma \in G}
    f(g^{-1} \gamma g).
  \end{equation}
  As in the proof of the trace formula, the RHS of
  \eqref{eqn:integrated-pretrace-formula} may be folded as
  $\sum_{\{\gamma \}} I(\gamma)$, where $\gamma$ traverses
  a set of representatives
  for the $G$-conjugacy classes in $G$ and
  $I(\gamma) := \int_{h \in [G_\gamma]} \int_{g \in
    G_{\gamma,\mathbb{A}} \backslash \mathbf{G}_{\mathbb{A}}} \Psi(h g)
  f(g^{-1} \gamma g)$;
  here $\mathbf{G}_\gamma$ denotes the centralizer.  The function $f$ is
  supported in a fixed (i.e., independent of $N$) compact subset
  of $G_\mathbb{A}$, hence $I(\gamma) = 0$ for $\gamma$ outside
  some fixed finite collection of representatives.

  Let $\mu$ denote the Tamagawa measure
  on $\PB^\times_\mathbb{A}$.
  Recall the subgroup $\mathfrak{J} \leq G_\mathfrak{q}$
  arising in the definition of $f$.
  Set
  $\mathfrak{J} ' := \mathfrak{J}  \times \prod_{\mathfrak{p} \neq \mathfrak{q}} J_\mathfrak{p}$.
  Since $|\Sigma| \cdot |\mathcal{X}_N^\sigma| =
  |\mathcal{X}_N|$,
  one has
  \begin{equation}\label{eq:Sigma-I-of-1-rewrite}
    |\Sigma|
    I(1)
    =
    \langle \Psi, 1  \rangle
    |\Sigma| f(1)
    =
    \langle \Psi, 1  \rangle
    |\mathcal{X}_N|
    \mu(\mathfrak{J} ')^{-1}.
  \end{equation}
  To compute the latter volume, it is convenient to factor
  $\mu = \prod \mu_\mathfrak{p}$ into the local measures
  $\mu_\mathfrak{p}$ as defined in \S\ref{sec:local-measures} relative to the
  \emph{standard} nontrivial unitary character
  $\psi = \prod \psi_\mathfrak{p}$ of $\mathbb{A}/F$, i.e., that
  for which $\psi_\mathfrak{p}(x) = e^{2 \pi i x}$ for infinite
  places $\mathfrak{p}$.
  For a finite place $\mathfrak{p}$,
  $\Delta_{F_\mathfrak{p}} = \Delta_{\psi_\mathfrak{p}}$ is then
  the absolute discriminant of $F_\mathfrak{p}$.
  We record some consequences of
  the volume formulas of \S\ref{sec:local-vol-formulas}
  and \S\ref{sec:measures-for-fourier-computation}:
  \begin{enumerate}
  \item If $\mathfrak{p}$  is finite
    and $B_\mathfrak{p}$ splits,
    then
    $\mu_\mathfrak{p}(J_\mathfrak{p}) =
    \zeta_\mathfrak{p}(2)^{-1}
    \Delta_{B_\mathfrak{p}}^{-1}
    \Delta_{F_\mathfrak{p}}^{-3/2}$.
  \item If $\mathfrak{p}$  is finite
    and $B_\mathfrak{p}$ ramifies,
    then
    $\mu_\mathfrak{p}(J_\mathfrak{p}) =
    \zeta_\mathfrak{p}(1)
    \zeta_\mathfrak{p}(2)^{-1}    \Delta_{B_\mathfrak{p}}^{-1}
    \Delta_{F_\mathfrak{p}}^{-3/2}$.
  \item If $\mathfrak{p}$ is infinite,
    then $\mu_\mathfrak{p}(J_\mathfrak{p}) =
    \mu_\mathfrak{p}(G_\mathfrak{p})
    = 4 \pi^2$.
  \item 
  $\mu_{\mathfrak{q}}(\mathfrak{J}) /
    \mu_{\mathfrak{q}}(J_\mathfrak{p})
    = |2|_k^{-1} q^{-N} \zeta_k(1)^{-1} \zeta_k(2)$.
  \end{enumerate}
  Therefore
  \begin{equation}
    \mu(\mathfrak{J} ')^{-1}
    =
    \frac{
      \zeta_F(2)
      \Delta_B
      \Delta_F^{3/2}
    }
    {
      (4 \pi^2)^{[F:\mathbb{Q}]}
      \prod_{\mathfrak{p} \in \ram_f(B)}
      \zeta_\mathfrak{p}(1)
    }
    \frac{
      |2|_k
      q^N
      \zeta_k(1)}{\zeta_k(2)}.
  \end{equation}
  Since $|\mathcal{X}_N| = q^N / \zeta_k(1)^2$
  and $\langle 1,1  \rangle = \mu([\PB^\times]) = 2$,
  we obtain
  \begin{equation}\label{eq:Sigma-I-of-1-rewrite-2}
    |\mathcal{X}_N|
    \mu(\mathfrak{J} ')^{-1}
    = c q^{2 N}/ 2 =  c q^{2 N} / \langle 1,1 \rangle.
  \end{equation}
  By \eqref{eq:Sigma-I-of-1-rewrite}
  and \eqref{eq:Sigma-I-of-1-rewrite-2},
  the contribution from $I(1)$
  to \eqref{eq:local-weyl-law-final-section-lhs-rewritten}
  gives the required RHS of \eqref{eq:local-weyl-law-final-section}.

  To complete the proof, it suffices now to show for each
  fixed $1 \neq \gamma \in G$ that $I(\gamma) = 0$ for $N$
  large enough.
  Let $U$ be a small enough but fixed compact open subgroup
  of $G_\mathfrak{q}$
  under which $\Psi$ is invariant.
  By a change of variables in the definition of $I(\gamma)$,
  our task reduces to showing for all $g \in G_{\mathbb{A}}$
  that
  \begin{equation}\label{eq:penultimate-goal-for-linear-stats}
    \int_{u \in U}
    f(u^{-1} g^{-1} \gamma g u) = 0 \text{ for $N$ large enough.}
  \end{equation}
  Fix a compact set $E$ (independent of $N$) containing the
  support of $f$, and let
  $j : G_{\mathbb{A}} \rightarrow G_{\mathbb{A}}$ denote the
  orbit map $j(g) := g^{-1} \gamma g$.  Since $B$ is non-split,
  the group element $\gamma$ is regular semisimple
  (see \S\ref{sec:bounds-part-orbit}).  The map
  $G_{\gamma,\mathbb{A}} \backslash G_{\mathbb{A}} \rightarrow
  G_{\mathbb{A}}$
  induced by $\gamma$ is thus proper, and so the set $j^{-1}(E)$
  meets only a fixed finite collection of double cosets
  $G_{\gamma,\mathbb{A}} g U$.  For this reason, it suffices to
  establish \eqref{eq:penultimate-goal-for-linear-stats} for
  each \emph{fixed} $g \in G_{\mathbb{A}}$.
  The conclusion follows in that case from
  the lemma of \S\ref{sec:bounds-part-orbit}.
\end{proof}

\subsection{Variance statistics}\label{sec:variance-statistics}

\subsubsection{The sums}
Define the sesquilinear
forms
$V_N : \mathcal{A}_{0+}^J \otimes \mathcal{A}_{0+}^J
\rightarrow \mathbb{C}$
by
\[
V_N(\Psi_1,\Psi_2)
=
\sum_{\varphi \in \mathcal{F}_N}
L^{(S)}(\ad \varphi,1)
\langle
\varphi, \Psi_1 \varphi 
\rangle
\langle \Psi_2 \varphi, \varphi 
\rangle.
\]
We have written $L^{(S)}(\ad \varphi,1) := L^{(S)}(\ad \pi,1)$
for $\varphi \in \pi \in A_0^J$.

\subsubsection{The leading constant}
Set $\mathbf{X} := [\PB^\times]/J$.
Equip it with the quotient measure
induced by any Haar measure on $[\PB^\times]$;
the formulation of our results
will not depend upon this choice.
Set
\begin{equation}\label{eq:defn-of-c0}
  c_0 := 2^{\# \ram_f(B)}
  \zeta_F^{(S)}(2)
  \vol(\mathbf{X})^{-1} \frac{1}{2 \zeta_k(1)}.
\end{equation}

\begin{example*}
  In the setting of the example of \S\ref{sec:mean-statistics},
  suppose that we identify
  $\mathbf{X}$ with $\Gamma \backslash \PGL_2(\mathbb{Q}_q)$ as
  in \S\ref{sec:strong-approx}
  and equip $\mathbf{X}$
  with the quotient measure induced by
  the Haar on $\PGL_2(\mathbb{Q}_q)$
  assigning volume one to $\PGL_2(\mathbb{Z}_q)$.
  Then
  $\vol(\mathbf{X})= (D-1)/12$ (see the remark of \S\ref{sec:strong-approx}).
  After some calculation, we obtain
  \[
  c_0
  =
  2 \pi^2
  \frac{2 (1 + D^{-1})}{D}
  \frac{(1 - q^{-1}) (1 - q^{-2})}{2}
  \]
\end{example*}

\subsubsection{The proposed limit}
Let
$V_\infty : \mathcal{A}_{0+}^J \otimes \mathcal{A}_{0+}^J
\rightarrow \mathbb{C}$ denote the sesquilinear form given for
$\Psi_i \in \pi_i \in A_{0+}^J$ ($i=1,2$) by
$V_\infty(\Psi_1,\Psi_2) := 0$ unless $\pi_1 = \pi_2 =: \pi$, in
which case
\[
V_\infty(\Psi_1,\Psi_2) :=
c_0 L^{(S)}(\pi,\tfrac{1}{2})
\int_{h \in N(H)}
\langle \pi(h) \Psi_1, \Psi_2 \rangle_{L^2(\mathbf{X})},
\]
where the measure on $N(H)$ is as in
\S\ref{sec:local-estimates-main-statements}.

\subsubsection{Main result}
In view of the discussion of \S\ref{sec:strong-approx} and \S\ref{sec:hecke-ops},
the following result
makes precise
(and mildly generalizes) Theorem \ref{thm:main-result-for-microlocal-stuff}:
\begin{theorem}\label{thm:main-variance-microlocal-adelic-formulation}
  Let $\Psi_1, \Psi_2 \in \mathcal{A}_{0+}^J$ be fixed  (i.e., independent of $N$).
  Then
  \begin{equation}q^{-N} V_N(\Psi_1,\Psi_2)
    = V_\infty(\Psi_1,\Psi_2)
    + O(N q^{-N}).
  \end{equation}
\end{theorem}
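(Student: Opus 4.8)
The plan is to deduce Theorem~\ref{thm:main-variance-microlocal-adelic-formulation} by feeding the local analysis of \S\ref{sec:fourier-transf-conv-kern}--\S\ref{sec:local-estimates-error} into the general estimate of Theorem~\ref{thm:main-estimate-general-variance}, specialized to the single place $\mathfrak{q}$ as in \S\ref{sec:general-estimates-specialized-single-place}. I would fix $N_0$ large enough and, for each $\sigma\in\Sigma$, take $f_{N,\sigma}\in C_c^\infty(G)$ to be the element attached to $(N,\sigma)$ (\S\ref{sec:element-attached-to-N-sigma}), with $\tilde f_{N,\sigma}\in C_c^\infty(\PB^\times_S)$ its extension by the $e_{J_\mathfrak{p}}$'s. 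Choosing the auxiliary function $W_S$ of \S\ref{sec:heartsuit} so that its component at $\mathfrak{q}$ is $1_{\mathfrak{o}^\times}$ (so that the global operator $\heartsuit^\tau$ restricts at $\mathfrak{q}$ to the local operator of \S\ref{sec:defn-heartsuit-local-again-without-Omega}) and is compatible with the $e_{J_\mathfrak{p}}$ at $\mathfrak{p}\in S\setminus\{\mathfrak{q}\}$, Theorem~\ref{thm:main-estimate-general-variance} yields, for each $\sigma$, the identity of sesquilinear forms on $\mathcal{A}^J_{0+}$
\[
V_{f_{N,\sigma}} \;=\; M_{f_{N,\sigma}} \;+\; \sum_{\tau_1,\tau_2\in X} \eps_{\tau_1,\tau_2}\bigl(\heartsuit^{\tau_1}\tilde f_{N,\sigma},\heartsuit^{\tau_2}\tilde f_{N,\sigma},\cdot,\cdot\bigr).
\]
By Proposition~\ref{prop:harmonic-analytic-isolation-1} and the example of \S\ref{sec:omega-pi}, for $\pi\in A_0^J$ the operator $\pi(f_{N,\sigma})$ is the orthogonal projector onto the line spanned by the microlocal lift of $\pi$ of orientation in $\mathcal{X}_N^\sigma$ (when one exists) and is $0$ otherwise; hence $\omega_\pi(f_{N,\sigma},\Psi)=\langle\varphi,\Psi\varphi\rangle$ with $\varphi\in\mathcal{F}_N^\sigma$ the relevant lift, so that $V_{f_{N,\sigma}}(\Psi_1,\Psi_2)=\sum_{\varphi\in\mathcal{F}_N^\sigma}L^{(S)}(\ad\varphi,1)\langle\varphi,\Psi_1\varphi\rangle\overline{\langle\varphi,\Psi_2\varphi\rangle}$. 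Summing over the $|\Sigma|=\zeta_k(1)^{-1}q^{N_0}$ classes and using the partition $\mathcal{F}_N=\bigsqcup_\sigma\mathcal{F}_N^\sigma$ gives $\sum_\sigma V_{f_{N,\sigma}}=V_N$.

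Next I would evaluate the main term. By \S\ref{sec:general-estimates-specialized-single-place}, $M_{f_{N,\sigma}}(\Psi_1,\Psi_2)$ vanishes unless $\Psi_1,\Psi_2$ generate a common $\pi\in A^J_{0+}$, in which case it equals $c_4 L^{(S)}(\pi,\tfrac12)\int_G\langle\Ad(g)\mathfrak{S} f_{N,\sigma},\mathfrak{S} f_{N,\sigma}\rangle_{L^2(G)}\langle\pi(g)\Psi_1,\Psi_2\rangle$. Since $\dim\pi_\mathfrak{q}>1$ (as $B_\mathfrak{q}$ splits), Remark~\ref{rmk:main-term-thm-applies-to-mx-coefs} lets me apply Proposition~\ref{prop:desired-main-term-identity-do-it-up} with $\Psi(g):=\langle\pi(g)\Psi_1,\Psi_2\rangle$ — legitimate because $\Psi_1,\Psi_2$ are fixed and $N$ is taken large — giving $\int_G\langle\Ad(g)\mathfrak{S} f_{N,\sigma},\mathfrak{S} f_{N,\sigma}\rangle\langle\pi(g)\Psi_1,\Psi_2\rangle = \tfrac12 q^{N-N_0}\int_{N(H)}\langle\pi(h)\Psi_1,\Psi_2\rangle$, independent of $\sigma$. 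Summing over $\sigma$ and carrying out the bookkeeping with $c_4$, the factor $\tfrac12 q^{N-N_0}$, $|\Sigma|=\zeta_k(1)^{-1}q^{N_0}$, and the comparison between the inner product on $[\PB^\times]$ and that on $\mathbf{X}=[\PB^\times]/J$ (which is exactly what is encoded in the definition \eqref{eq:defn-of-c0} of $c_0$), I expect $\sum_\sigma M_{f_{N,\sigma}}(\Psi_1,\Psi_2)=q^N V_\infty(\Psi_1,\Psi_2)$.

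Finally I would bound the error terms. For each pair $\tau_1,\tau_2\in X$ and the fixed $\Psi_1,\Psi_2$, freezing the components at $S\setminus\{\mathfrak{q}\}$ (which are the fixed Schwartz functions $\heartsuit^{\tau_i}e_{J_\mathfrak{p}}$, since $\PB^\times_\mathfrak{p}$ is compact there) defines a sesquilinear form $\ell_{\tau_1,\tau_2}:\mathcal{S}(B)\otimes\mathcal{S}(B)\to\mathbb{C}$ with $\ell_{\tau_1,\tau_2}(\heartsuit^{\tau_1}f_{N,\sigma},\heartsuit^{\tau_2}f_{N,\sigma})=\eps_{\tau_1,\tau_2}(\heartsuit^{\tau_1}\tilde f_{N,\sigma},\heartsuit^{\tau_2}\tilde f_{N,\sigma},\Psi_1,\Psi_2)$. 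The $\O_1$-invariance, $\SO(B^0_S)$-invariance, metaplectic invariance, and main estimate in Theorem~\ref{thm:main-estimate-general-variance} — in which the global $\Xi$ collapses to $\Xi_\mathfrak{q}$ and $\|\Psi_i\|_{L^1}$ is a constant, because the operators act only at $\mathfrak{q}$ while the local factors at the remaining places of $S$ are unitary on compact groups — show that each $\ell_{\tau_1,\tau_2}$ is \emph{good} in the sense of \S\ref{sec:local-estimates-error}. Proposition~\ref{prop:local-error-estimates-stmt} then gives $|\ell_{\tau_1,\tau_2}(\heartsuit^{\tau_1}f_{N,\sigma},\heartsuit^{\tau_2}f_{N,\sigma})|\le CN$ uniformly over $\sigma\in\Sigma$, so the total error contribution is $\sum_\sigma\sum_{\tau_1,\tau_2}O(N)=O(N)$, since $|\Sigma|=O(q^{N_0})$ and $|X|=O(1)$ are fixed. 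Combining the three parts, $V_N(\Psi_1,\Psi_2)=q^N V_\infty(\Psi_1,\Psi_2)+O(N)$, which is the assertion.

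I expect the only genuinely substantive point to be the verification that the ``local piece'' $\ell_{\tau_1,\tau_2}$ satisfies the goodness axioms of \S\ref{sec:local-estimates-error}: this requires tracing how the construction of the $\eps_{\tau_1,\tau_2}$ in \S\ref{sec:proof-theor-refthm:m} (the maps $\diamondsuit^{\tau y}$, the compact integral over $Y$, the passage between $\Omega$ and $\mathcal{S}(B_S)$) interacts with operators supported at $\mathfrak{q}$, and confirming the reduction of the global $\Xi$ to $\Xi_\mathfrak{q}$. The remaining work — the identification $\sum_\sigma V_{f_{N,\sigma}}=V_N$ and the constant chase pinning down $c_0$ — is routine given Propositions~\ref{prop:harmonic-analytic-isolation-1}, \ref{prop:desired-main-term-identity-do-it-up}, and \ref{prop:local-error-estimates-stmt}.
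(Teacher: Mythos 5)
Your proposal is correct and follows essentially the same route as the paper's own proof of Theorem~\ref{thm:main-variance-microlocal-adelic-formulation}: partitioning $\mathcal{F}_N$ by $\sigma\in\Sigma$, invoking Proposition~\ref{prop:harmonic-analytic-isolation-1} to identify $V_N^\sigma$ with $V_{f_{N,\sigma}}$, evaluating the main term via Proposition~\ref{prop:desired-main-term-identity-do-it-up} together with the $c_4/c_0$ bookkeeping, and freezing the $S\setminus\{\mathfrak{q}\}$ components of $\eps_{\tau_1,\tau_2}$ to obtain a good local form to which Proposition~\ref{prop:local-error-estimates-stmt} applies. The "genuinely substantive point" you flag (that the frozen form $\ell_{\tau_1,\tau_2}$ inherits the goodness axioms, including the collapse of $\Xi$ to $\Xi_\mathfrak{q}$) is indeed where the paper, like you, leans on the invariance statements of Theorem~\ref{thm:main-estimate-general-variance} and the factorization $\heartsuit^\tau\tilde f = (\heartsuit^\tau f)\otimes\phi^\tau$.
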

\begin{proof}
  The proof divides into five steps:
  \begin{enumerate}
  \item 
    The partition
    $\mathcal{F}_N = \bigsqcup_{\sigma \in \Sigma}
    \mathcal{F}_N^\sigma$ of the family
    induces a decomposition
    $V_N = \sum_{\sigma \in \Sigma} V_N^\sigma$ of the variance.
    Since $|\Sigma| \asymp 1$, it will suffice to show that
    \begin{equation}q^{-N} V_N^{\sigma}(\Psi_1,\Psi_2)
      = |\Sigma|^{-1} V_\infty(\Psi_1,\Psi_2)
      + O(N q^{-N}).\end{equation}
    
  \item   Let $f \in C_c^\infty(\PB^\times_\mathfrak{q})$ be attached to $(N,\sigma)$
    (see \S\ref{sec:element-attached-to-N-sigma}).
    Recall from
    \S\ref{sec:general-estimates-specialized-single-place}
    the definitions of $V_f,M_f$.
    By Proposition \ref{prop:harmonic-analytic-isolation-1}
    and
    the example of \S\ref{sec:omega-pi},
    we have $V_N^\sigma = V_f$.
  \item 
    By \eqref{eq:defn-of-c4} and \eqref{eq:Sigma-cardinality},
    we have
    $c_4 q^{N-N_0} \frac{1}{2}
    =
    q^N |\Sigma|^{-1} c_0$.
    Feeding  this calculation into Proposition
    \ref{prop:desired-main-term-identity-do-it-up}
    gives
    \begin{equation}M_f(\Psi_1,\Psi_2)
      = q^N |\Sigma|^{-1} V_\infty(\Psi_1,\Psi_2) \end{equation}
    for $N$ large enough in terms of $\Psi_1,\Psi_2$.
    We reduce to showing that
    \begin{equation}\label{eqn:desired-error-estimate--do-it-1}
      V_f(\Psi_1,\Psi_2) - M_f(\Psi_1,\Psi_2) \ll N.
    \end{equation}

  \item   Fix a nontrivial unitary character $\psi$ of
    $\mathbb{A}/F$
    whose component $\psi_\mathfrak{q}$ is unramified.
    Fix a nonzero element $W_S = \prod_{\mathfrak{p} \in S}
    W_\mathfrak{p} \in C_c^\infty(F_S^\times)$
    for which $W_\mathfrak{q} := 1_{\mathfrak{o}^\times}$.
    We apply Theorem \ref{thm:main-estimate-general-variance}
    with respect to $\psi$ and $W_S$; our task thereby reduces to showing for fixed
    $\tau_1,\tau_2 \in F^\times$ that
    \begin{equation}\label{eq:penultimate-task-almost-there}
      \eps_{\tau_1,\tau_2}(\heartsuit^{\tau_1} \tilde{f},
      \heartsuit^{\tau_2} \tilde{f},
      \Psi_1,\Psi_2)
      \ll N,
    \end{equation}
    where $\tilde{f} \in C_c^\infty(\PB^\times_S)$ is as in \S\ref{sec:general-estimates-specialized-single-place}
    and
    $\heartsuit^{\tau} \tilde{f} \in \mathcal{S}(B_S)$
    as in \S\ref{sec:heartsuit}.

  \item   One has
    $\heartsuit^{\tau} \tilde{f}
    = (\heartsuit^{\tau} f) \otimes \phi^{\tau}$
    where $\heartsuit^{\tau} f \in \mathcal{S}(B_\mathfrak{q})$ is
    as in \S\ref{sec:defn-heartsuit-local-again-without-Omega} and
    $\phi^{\tau} = \otimes_{\mathfrak{p} \in S - \{\mathfrak{q} \}}
    \phi_\mathfrak{p}^{\tau}$ with $\phi_\mathfrak{p}^{\tau} \in
    \mathcal{S} (B_\mathfrak{p} )$
    independent of $N$.
    The LHS of \eqref{eq:penultimate-task-almost-there}
    is thus equal to $\ell(\heartsuit^{\tau_1} f,
    \heartsuit^{\tau_2} f)$,
    where $\ell : \mathcal{S}(B_\mathfrak{q}) \otimes
    \mathcal{S}(B_\mathfrak{q})
    \rightarrow \mathbb{C}$
    denotes the sesquilinear form given by
    \[
    \ell(\Phi_1,\Phi_2)
    :=
    \eps_{\tau_1,\tau_2}( \Phi_1
    \otimes \phi^{\tau_1},
    \Phi_2
    \otimes \phi^{\tau_2},
    \Psi_1,\Psi_2).
    \]
    Observe that $\ell$ is independent of $N$.
    Theorem \ref{thm:main-estimate-general-variance} says that
    $\ell$ is good in the sense of \S\ref{sec-8-2} with respect to
    $(\psi_\mathfrak{q},\tau_1,\tau_2)$
    (taking for $U \leq \PB^\times_\mathfrak{q}$
    any open subgroup under which $\Psi_1,\Psi_2$ are invariant).
    Proposition \ref{prop:local-error-estimates-stmt}
    implies
    finally that $\ell(\heartsuit f, \heartsuit f) \ll N$,
    giving \eqref{eq:penultimate-task-almost-there},
    as required.
  \end{enumerate}
\end{proof}

\subsection*{Acknowledgements}
We gratefully acknowledge the support of
NSF grant OISE-1064866
and SNF grant SNF-137488 during the work leading
to this paper.

\bibliography{refs}{}
\bibliographystyle{plain}
\end{document}